\numberwithin{equation}{section}
\theoremstyle{plain}
\newtheorem{theorem}{Theorem}[section]
\theoremstyle{theorem}
\newtheorem{prop}[theorem]{Proposition}
\newtheorem{lem}[theorem]{Lemma}
\newtheorem{cor}[theorem]{Corollary}
\newtheorem{question}[theorem]{Question}
\newtheorem*{question*}{Question}
\theoremstyle{definition}
\newtheorem{defn}[theorem]{Definition}
\newtheorem{rmk}[theorem]{Remark}
\newtheorem*{remark}{Remark}
\newcommand{\R}{\mathbb{R}}
\newcommand{\C}{\mathbb{C}}
\newcommand{\M}{\mathcal{M}}
\newcommand{\HC}{\mathcal{H}}
\newcommand{\Com}{\mathbb{C}}
\newcommand{\Proj}{\mathbb{P}}
\newcommand{\Q}{\mathbb{Q}}
\newcommand{\Z}{\mathbb{Z}}
\newcommand{\Hyp}{\mathbb{H}}
\newcommand{\PS}{\mathbb{L}}
\newcommand{\N}{\mathbb{N}}
\newcommand{\E}{\mathscr{E}}
\newcommand{\Aff}{\mathbb{A}}
\DeclareMathOperator{\dist}{d}
\DeclareMathOperator{\Rat}{Rat}
\DeclareMathOperator{\Poly}{Poly}
\DeclareMathOperator{\PSL}{PSL}
\DeclareMathOperator{\Isom}{Isom}
\DeclareMathOperator{\Res}{Res}
\DeclareMathOperator{\Affine}{Aff}
\DeclareMathOperator{\Int}{Int}
\DeclareMathOperator{\hull}{hull}
\newcommand{\powerset}{\raisebox{.15\baselineskip}{\Large\ensuremath{\wp}}}
\numberwithin{figure}{section}
\title{Trees, length spectra for rational maps via barycentric extensions and Berkovich spaces}
\author{Yusheng Luo}
\address{Institute of Mathematical Science, Dept. of Mathematics, Stony Brook University, Stony Brook, NY 11794 USA}
\email{yusheng.s.luo@gmail.com}
\date{\today}
\begin{document}

\begin{abstract}
In this paper, we study the dynamics of degenerating sequences of rational maps on Riemann sphere $\hat \C$ using $\R$-trees.
As an analogue of isometric group actions on $\R$-trees for Kleinian groups, we give two constructions for limiting dynamics on $\R$-trees: one geometric and one algebraic.
The geometric construction uses the limit of rescalings of barycentric extensions of rational maps, while the algebraic construction uses the Berkovich space of complexified Robinson's field.
We show that the two approaches are equivalent.
As an application, we use it to give a classification of hyperbolic components of rational maps that admit degeneracies with bounded multipliers.
\end{abstract}

\maketitle

\setcounter{tocdepth}{1}
\tableofcontents


\section{Introduction}
The study of dynamics of rational maps $f:\hat\C \longrightarrow \hat\C$ has been a central topic in mathematics.
The space $\Rat_d(\C)$ of degree $d$ rational maps is not compact, so it is interesting and useful to understand the dynamics as rational maps degenerate.
Let $f_n \to \infty$ in $\Rat_d(\C)$.
In \cite{L19}, we started the investigation of degenerating rational maps using the barycentric method. In particular, we constructed a limiting branched covering on $\R$-trees.
The construction is done in $2$ steps (see \S \ref{LimitingMapViaBarycentricExt} and \S \ref{UltralimitAC}):
\begin{enumerate}
\item Using the barycentric extension, we first extend the rational map $f_n$ to $\E f_n : \Hyp^3 \longrightarrow \Hyp^3$.
\item By choosing an appropriate rescalings 
$$
r_n = \max_{y\in \E f_n^{-1}(\bm 0)} d_{\Hyp^3} (\bm 0, y),
$$ 
we get a limiting map on the ultralimit ${^r\Hyp^3}$ (also known as the {\em asymptotic cone}) of pointed metric spaces $(\Hyp^3, \bm 0, d_{\Hyp^3}/r_n)$
$$
F: {^r\Hyp^3} \longrightarrow {^r\Hyp^3}.
$$
\end{enumerate}

Dynamics on $\R$-trees also arises naturally via Berkovich spaces (see \cite{BakerRumely10,Kiwi15}).
Using the Berkovich space of the complexified Robinson's field, we construct a limiting dynamical system on the $\R$-tree. This algebraic construction is done in $2$ steps as well (see \S \ref{LimitingMapViaBerk} and \S \ref{CRF}):
\begin{enumerate}
\item First we associate to the sequence $f_n$ a degree $d$ rational map $\mathbf{f}$ with coefficients in the complexified Robinson's field ${^\rho\Com}$ associated with the sequence $\rho_n = e^{-r_n}$.
\item Using the Berkovich extension, the rational map $\mathbf{f}$ naturally extends to a map on the Berkovich hyperbolic space, 
$$
B: \Hyp_{Berk}({^\rho\Com}) \longrightarrow \Hyp_{Berk}({^\rho\Com}).
$$ 
\end{enumerate}

We establish a connection between the two constructions:
\begin{theorem}\label{EquivTheorem}
There is a canonical isometric bijection
$$
\Phi: \Hyp_{Berk}({^\rho\Com}) \longrightarrow {^r\Hyp^3},
$$
such that
$$
\Phi\circ B = F \circ \Phi.
$$
\end{theorem}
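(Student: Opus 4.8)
The plan is to construct $\Phi$ directly from a description of both spaces as the poset of non-archimedean disks over ${^\rho\Com}$, verify it is an isometric bijection by one explicit metric computation, and then check the intertwining relation on the dense set of type II points, extending by continuity. Work in the upper half-space model $\Hyp^3=\C\times\R_{>0}$ with $\bm 0=(0,1)$, so that a point $(z,s)$ is identified with the round disk $\bar D(z,s)\subset\C=\partial\Hyp^3\setminus\{\infty\}$ (its shadow, i.e. the boundary of its isometric hemisphere). A point of ${^r\Hyp^3}$ is a class $[(z_n,s_n)]_\omega$ with $d_{\Hyp^3}((z_n,s_n),\bm 0)=O(r_n)$, and this bound forces $|z_n|=O(\rho_n^{-N})$ for some $N$ and $\log s_n/\log\rho_n$ bounded, so $\mathbf z:=[(z_n)]_\omega$ defines an element of ${^\rho\Com}$ and $\sigma:=\lim_\omega \log s_n/\log\rho_n$ lies in the (divisible, real) value group. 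I would then set $\Phi([(z_n,s_n)]_\omega)$ equal to the point of $\Hyp_{Berk}({^\rho\Com})$ attached to the disk $\bar D(\mathbf z,|\rho|^{\sigma})$ (the sup-seminorm on it); these are the type II points, and the remaining points of the Berkovich tree arise from genuinely nested sequences of disks. The routine checks are that $\Phi$ is independent of representatives, sends $\bm 0$ to the Gauss point, and is surjective — because every point of $\Hyp^3$ literally is a round disk, so every asymptotic-cone point is an ultraproduct of disks, which (given the growth bound) is exactly such a Berkovich point; injectivity follows from the isometry below.

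For the isometry the whole content is the single identity
\[
\tfrac1{r_n}\,d_{\Hyp^3}\big((z_n,s_n),(w_n,t_n)\big)\ \longrightarrow\ \sigma+\tau-2\min(\sigma,\tau,\delta),\qquad \delta:=v(\mathbf z-\mathbf w),
\]
which drops out of $\cosh d_{\Hyp^3}((z,s),(w,t))=1+\frac{|z-w|^2+(s-t)^2}{2st}$ together with $d_{\Hyp^3}\asymp r_n\to\infty$. On the Berkovich side the right-hand side is exactly $d_{Berk}(\zeta_{\mathbf z,\sigma},\zeta_{\mathbf w,\tau})$: the join of the two disks is the smallest disk containing both, with radius parameter $\mu:=\min(\sigma,\tau,\delta)$, and the path metric adds the two parameter drops $\sigma-\mu$ and $\tau-\mu$. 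Equivalently, $\Phi$ preserves Gromov products based at $\bm 0$ and at the Gauss point. Density of the type II points and completeness of both $\R$-trees upgrade this to a global isometry and account for the nested-limit points.

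For $\Phi\circ B=F\circ\Phi$, continuity of $B$, $F$, $\Phi$ reduces us to a type II point $\zeta=\zeta_{\mathbf a,\sigma}$, with $\mathbf a=[(a_n)]_\omega$ and representing disks $D_n=\bar D(a_n,s_n)$. On the algebraic side, the Berkovich extension of the degree-$d$ map $\mathbf f\in\Rat_d({^\rho\Com})$ — whose coefficients are the $\omega$-classes of those of $f_n$ — sends $\zeta_{\mathbf a,\sigma}$ to the point $\zeta_{\mathbf f(\mathbf a),\sigma'}$ of the image disk, where, writing $\mathbf f(\mathbf a+\epsilon)=\mathbf f(\mathbf a)+\sum_{j\ge1}c_j\epsilon^j$, the Newton-polygon formula gives $\sigma'=\min_{j\ge1}\big(v(c_j)+j\sigma\big)$ (with the evident modification when $D_n$ meets a pole of $\mathbf f$). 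On the geometric side, $F(\Phi(\zeta))=[(\E f_n(\text{top of }D_n))]_\omega$. The bridge — the technical heart — is a coarse disk-map estimate for the Douady–Earle extension: for every round disk $D\subset\hat\C$, $d_{\Hyp^3}\big(\E f_n(\text{top of }D),\ \text{top of }\widehat{f_n(D)}\big)=O(1)$ (or at least $o(r_n)$), where $\widehat{f_n(D)}$ is the round disk matching the Newton-polygon image of $D$. Granting this, the error washes out in the ultralimit and $F(\Phi(\zeta))=\Phi(\zeta_{\mathbf f(\mathbf a),\sigma'})=\Phi(B(\zeta))$; the normalization $r_n=\max_{y\in\E f_n^{-1}(\bm 0)}d_{\Hyp^3}(y,\bm 0)$ is exactly what guarantees $\mathbf f$ has full degree $d$ over ${^\rho\Com}$ and that $\Phi$ matches the fiber $F^{-1}(\bm 0)$ with $B^{-1}(\zeta_{Gauss})$, so no further consistency check at the basepoint is needed.

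The main obstacle is this coarse disk-map estimate. On disks where $f_n$ has no critical-value trouble it should follow from the quasiconformality and Möbius-naturality of the Douady–Earle extension together with the classical fact that a rational map carries such a disk to a near-round disk; the delicate configurations are disks that engulf a definite portion of the sphere, disks swallowing clusters of critical points, and disks meeting poles of $\mathbf f$ (where the Newton polygon degenerates), where one must still pin the barycenter near the top of the image disk up to bounded error. If a clean $O(1)$ bound is unavailable, the fallback is that the critical and polar configurations relevant at scale $r_n$ are themselves controlled at that scale, so an $o(r_n)$ bound — all that is needed — persists; alternatively one could try to pin down $B$ and $F$ jointly by their action on ends (the ultralimit of $f_n|_{\hat\C}$, which under $\Phi$ is $\mathbf f$ on $\Proj^1({^\rho\Com})$) together with local degrees at type II points, but making "local degree" precise on the asymptotic-cone side runs into the same estimate.
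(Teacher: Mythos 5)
Your construction of $\Phi$ (identifying a point of $\Hyp^3$ with the round disk it shades, passing to ultralimits, and landing on the corresponding type II point) and your isometry check via the $\cosh$-distance identity are essentially the paper's Proposition \ref{IsometricBijection}, just phrased with disks rather than with the Möbius maps $M(z)=az+b$ carrying the Gauss point to the disk; that part is fine.

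The conjugacy $\Phi\circ B = F\circ\Phi$ is where you have a genuine gap, and you flag it yourself: you reduce everything to a ``coarse disk-map estimate'' $d_{\Hyp^3}\bigl(\E f_n(\text{top of }D),\ \text{top of }\widehat{f_n(D)}\bigr)=o(r_n)$ for \emph{every} round disk $D$, where $\widehat{f_n(D)}$ is the round disk predicted by the Newton polygon. That estimate is nontrivial precisely in the regimes you list (disks engulfing a definite portion of the sphere, disks absorbing clusters of critical points, disks meeting poles), and you offer no proof. Without it the argument does not close, and your fallback (pinning $B$ and $F$ by local degrees at type II points) runs, as you note, straight back into the same estimate.

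The paper avoids this entirely by exploiting conformal naturality on both sides to collapse the conjugacy check to a \emph{single point}. Since $\E(M_1\circ f\circ M_2)=M_1\circ\E f\circ M_2$ and the Berkovich action of $\PSL_2({^\rho\Com})$ is likewise equivariant, showing $F\bigl(\Phi(M(x_g))\bigr)=\Phi\bigl(L(x_g)\bigr)$ is equivalent to showing that $\lim_\omega\E\bigl(L_n^{-1}\circ f_n\circ M_n\bigr)$ fixes the basepoint $x^0$, and by Proposition \ref{ReductionGEQ1} the Berkovich side is equivalent to $L^{-1}\circ\mathbf f\circ M$ having non-constant reduction. So the whole content of the intertwining is the one statement at the basepoint (Lemma \ref{ReductionAtGaussPoint}): $\tilde{\mathbf f}$ has degree $\geq 1$ if and only if $F(x^0)=x^0$. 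That lemma is \emph{qualitative} --- it is proved from Lemma \ref{bdd} (the barycentric extension moves $\bm 0$ a bounded distance iff the algebraic limit has positive degree), combined with an elementary normalization by a linear map moving $\bm 0$ by $o(r_n)$. No disk-by-disk quantitative control of $\E f_n$ is required. If you want to salvage your route, the coarse disk-map estimate would have to be proved; alternatively, conjugate by Möbius transformations to reduce to the basepoint and invoke a Lemma \ref{bdd}-type statement, which is what the paper does. As a side remark, your claim that the normalization $r_n=\max_{y\in\E f_n^{-1}(\bm 0)}d_{\Hyp^3}(y,\bm 0)$ ``guarantees $\mathbf f$ has full degree $d$'' is not immediate: the paper derives $\deg\mathbf f=d$ only \emph{after} establishing the conjugacy (from $\deg F=d$), and even flags that a priori the degree might drop.
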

\begin{remark}
Both the asymptotic cone ${^r\Hyp^3}$ and the complexified Robinson's field ${^\rho\Com}$ use non-principal ultrafilters in the construction. 
We remark that the same ultrafilter $\omega$ is used in both constructions.

We want to emphasize that each perspective of the limiting map brings its own unique advantages and benefits. 
The barycentric method allows us to work with degenerating sequence of conjugacy classes (see Theorem \ref{DGLC}) and gives geometric information for the dynamics of $f_n$. The Berkovich approach gives additional algebraic structure of the $\R$-tree.

We also give a version (see Theorem \ref{ConnectionToBerkovichDynamicsFamily}) for degenerating families of rational maps that shows our construction generalizes the use of the Berkovich space of the field of Puiseux series (cf. \cite{Kiwi15}).
\end{remark}

The limiting dynamics of $F$ (or equivalently $B$) on the $\R$-tree is useful in recovering the limiting ratios of the length spectra for rational maps.
Motivated by the theory of Teichm\"uller space and Kleinian groups, it is more natural to discuss length spectra for rational maps in a single hyperbolic component $\HC$ (cf. \cite{McMS98}), although most of the discussions also work without this assumption.

\subsection*{Markings and the length spectra.}
A conjugacy class of a rational map $[f]$ is called {\em hyperbolic} if the orbit of every critical point converges to some attracting periodic cycle. 
The space of hyperbolic rational maps is open in the moduli space $\M_d = \Rat_d/\PSL_2(\C)$, and a connected component $\HC$ is called a {\em hyperbolic component}.
For each hyperbolic component $\HC$, there is a topological model
$$
\sigma: J \longrightarrow J
$$
for the dynamics on the Julia set.
That is, for any $[f]\in \HC$, there is a homeomorphism
$$
\phi(f): J \longrightarrow J(f)
$$
which conjugates $\sigma$ and $f$.
A particular choice of such $\phi(f)$ will be called a {\em marking} of the Julia set.

Let $[f]\in \HC\subseteq \M_d$ be a hyperbolic rational map with a marking $\phi: J \longrightarrow J(f)$.
Let $\mathscr{S}$ be the space of periodic cycles of
$\sigma:J\longrightarrow J$.
We define the length of a periodic cycle $C\in \mathscr{S}$ for $[f]$ by
$$
L(C, [f]) = \log |(f^q)'(z)|,
$$
where $q = |C|$ and $z \in \phi(C)$. 
The collection $(L(C, [f]): C\in \mathscr{S}) \in \R_{+}^\mathscr{S}$ will be called the {\em marked length spectrum} of $[f]$.
As $[f]$ varies over the hyperbolic component, we aim to understand how the length spectrum changes.
In particular, we will investigate the behavior of the length spectrum for a degenerating sequence $[f_n]$ via the limiting dynamics $F$.

\subsection*{The ends of a tree and translation lengths}
Let $\alpha$ be an end of the tree ${^r\Hyp^3}$. The {\em translation length} of an end $\alpha$ measures the rate at which $F$ moves points of ${^r\Hyp^3}$ towards $x^0$; it is defined by
$$
L(\alpha, F) = \lim_{x_i \to \alpha} d(x_i, x^0) - d(F (x_i), x^0).
$$

Let $[f_n]\in \HC$ be a degenerating sequence with markings $\phi_n$, and
$$
r([f_n]) := \inf_{x\in \Hyp^3} \max_{y\in \E f_n^{-1}(x)} d_{\Hyp^3} (x, y).
$$
Then $r([f_n]) \to \infty$ (see Proposition \ref{LossOfMassC}), and we choose a sequence of representatives $f_n$ with
$$ 
\max_{y\in \E f_n^{-1}(\bm 0)} d_{\Hyp^3} (\bm 0, y) \leq r([f_n]) + 1.
$$
Let $F$ be the limiting map on ${^r\Hyp^3}$. 
The markings $\phi_n$ give a marking $\phi$ on the ends of ${^r\Hyp^3}$  (see \S \ref{MandL}).
If $C\in \mathscr{S}$ is a periodic cycle, then $\phi(C) = \{\alpha_1,..., \alpha_q\}$ is a cycle of periodic ends.
We define its {\em translation length}
$$
L(C, F) = \sum_{i=1}^q L(\alpha_i, F).
$$

It is a well-known principle that the translation lengths on the limiting $\R$-trees give information on the length spectrum (cf. \cite{DeM08, L19, McM09, MorganShalen84}). The precise statement that we will prove and use is the following:

\begin{theorem}\label{TL}
Let $[f_n]\in \HC$ be degenerating with markings $\phi_n$, and let $F$ be the associated limiting maps on ${^r\Hyp^3}$.
If $C\in \mathscr{S}$ is a periodic cycle, then
$$
L(C, F) = \lim_\omega L(C, [f_n])/r([f_n]).
$$
Moreover, every cycle of repelling periodic ends of $F$ is represented by some cycle $C \in \mathscr{S}$.
\end{theorem}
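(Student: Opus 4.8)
The plan is to reduce Theorem~\ref{TL} to a local computation with the barycentric extensions $\E f_n$ near repelling periodic points, using throughout the dictionary between ends of ${^r\Hyp^3}$ and sequences of points of $\hat\C = \partial\Hyp^3$. Write $C = \{c_1,\dots,c_q\}$ with $\sigma(c_i)=c_{i+1}$, use the normalized lifts $f_n$ from Section~\ref{MandL}, and set $z_n^{(i)} := \phi_n(c_i)$; these form a repelling periodic cycle of $f_n$ with $f_n(z_n^{(i)})=z_n^{(i+1)}$ and
\[
L(C,[f_n]) = \log\bigl|(f_n^q)'(z_n^{(1)})\bigr| = \sum_{i=1}^q \log\bigl|f_n'(z_n^{(i)})\bigr| .
\]
By the definition of the limiting marking $\phi$ on ends (Section~\ref{UltralimitAC}), the sequences $(z_n^{(i)})_n$ in $\hat\C$ represent ends $\alpha_i$ with $\phi(C)=\{\alpha_1,\dots,\alpha_q\}$ and $F(\alpha_i)=\alpha_{i+1}$. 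Since $d(x,x^0)-d(F^q x,x^0)=\sum_{j=0}^{q-1}\bigl(d(F^j x,x^0)-d(F^{j+1}x,x^0)\bigr)$ and $F^j x\to\alpha_{1+j}$ as $x\to\alpha_1$, the definition of $L(C,F)$ gives $L(C,F)=\sum_i L(\alpha_i,F)=L(\alpha_1,F^q)$; so it suffices to prove $L(\alpha_1,F^q)=\lim_\omega \log|(f_n^q)'(z_n^{(1)})|/r([f_n])$.

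For this, let $x_n(s)\in\Hyp^3$ be the point on the geodesic ray $[\bm 0, z_n^{(1)})$ at distance $s\cdot r([f_n])$ from $\bm 0$; in the ultralimit $x_n(s)$ converges to the point of $[x^0,\alpha_1)$ at distance $s$ from $x^0$, hence to $\alpha_1$ as $s\to\infty$. Because barycentric extension is coarsely functorial, the limiting map $F^q$ is also the one built from the rescalings of $\E(f_n^q)$; and near the repelling fixed point $z_n^{(1)}$ of $f_n^q$ — using the distortion estimates for barycentric extensions from \cite{L19}, which apply because $f_n|_{J(f_n)}$ is uniformly expanding — $\E(f_n^q)$ agrees, up to an error negligible after rescaling, with the hyperbolic isometry extending the affine map $w\mapsto z_n^{(1)}+(f_n^q)'(z_n^{(1)})(w-z_n^{(1)})$. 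For that isometry a direct computation in the upper half-space model (the two basepoint terms cancel precisely because $z_n^{(1)}$ is \emph{fixed}) gives
\[
d_{\Hyp^3}\bigl(x_n(s),\bm 0\bigr)-d_{\Hyp^3}\bigl(\E(f_n^q)(x_n(s)),\bm 0\bigr)=\log\bigl|(f_n^q)'(z_n^{(1)})\bigr|+O(1)
\]
once $s$ is large; dividing by $r([f_n])$, passing to $\lim_\omega$, and letting $s\to\infty$ yields the identity, and in particular shows each $\alpha_i$ is a repelling end. The uniformity of this estimate in $n$ — controlling $\E(f_n^q)$ on a horoball of definite rescaled size around $z_n^{(1)}$ as the $f_n$ degenerate — is the main obstacle, and is where the expansion of $f_n|_{J(f_n)}$ and the barycentric distortion bounds of \cite{L19} must be combined carefully.

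For the surjectivity statement, let $\{\beta_1,\dots,\beta_p\}$ be a cycle of repelling periodic ends with $F\beta_j=\beta_{j+1}$ and $L:=\sum_j L(\beta_j,F)>0$, and represent $\beta_1$ by $(\zeta_n)_n$ in $\hat\C$. Unwinding $F^p\beta_1=\beta_1$ together with $L(\beta_1,F^p)=L>0$ shows that, $\omega$-a.e., $f_n^p(\zeta_n)$ is closer to $\zeta_n$ than $e^{-C\,r([f_n])}$ for every $C$, while $|(f_n^p)'(\zeta_n)|\approx e^{L\,r([f_n])}$, so $f_n^p$ is strongly expanding at $\zeta_n$. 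Choosing a sufficiently small round disk $D_n$ about $\zeta_n$ on which a suitable inverse branch $g_n$ of $f_n^p$ is defined — using that for hyperbolic $f_n$ the critical orbits stay in the attracting basins, away from $\beta_1$, so $D_n$ avoids the critical values of $f_n^p$ — we get $g_n(D_n)$ compactly contained in $D_n$, hence an attracting fixed point $w_n\in D_n$ of $g_n$. Then $w_n$ is a repelling periodic point of $f_n$ of period dividing $p$ representing $\beta_1$; since $[f_n]\in H$ is hyperbolic, $w_n\in J(f_n)$, so $c_n:=\phi_n^{-1}(w_n)$ is a periodic point of the expanding topological model $\sigma$ of period at most $p$. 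There being only finitely many such, $c_n$ equals a fixed $c$ for $\omega$-a.e.\ $n$; letting $C$ be the cycle of $c$ in $\mathscr{S}$, the first part gives $\phi(C)=\{\beta_1,\dots,\beta_p\}$. A secondary technical point here is verifying that the $D_n$ can indeed be chosen to avoid the critical values of $f_n^p$; alternatively one may argue on the Berkovich side via Theorem~\ref{EquivTheorem}, where a repelling periodic end of $B$ is a classically repelling periodic point of $\mathbf f$ in $\Proj^1({^\rho\Com})$, which one perturbs by Hensel's lemma to genuine repelling cycles of the $f_n$.
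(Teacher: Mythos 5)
Your proof takes a genuinely different route from the paper's. The paper explicitly says it is giving ``an algebraic proof'' of Theorem~\ref{TL}: it transfers everything to the Berkovich side via Theorem~\ref{EquivTheorem}, reduces to a period-one cycle at the origin, writes $\mathbf f(z)=az(1+\mathbf h(z))$ with $a=\mathbf f'(0)\in{^\rho\Com}$ and $|a|\geq 1$, exhibits the scalings $M(z)=kz$, $L(z)=akz$ with $L^{-1}\circ\mathbf f\circ M$ of nonconstant reduction for small $|k|$, reads off $L(C,F)=\log|a|$ directly from the definition of $\Phi$, and then unwinds $\log|a|=-\nu(a)=\lim_\omega L(C,[f_n])/r_n$. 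The ``moreover'' part is a counting argument in ${^\rho\Com}$: periodic points of $f_n$ off $J(f_n)$ have multiplier $\leq 1$, hence the corresponding $\mathbf f$-periodic points are non-repelling, and since $\mathbf f^q$ has the same total number $d^q+1$ of fixed points (with multiplicity) as $f_n^q$, every repelling $\mathbf f$-periodic point is represented by Julia-set periodic points of the $f_n$. Your proof, by contrast, works entirely with $\E f_n$ in $\Hyp^3$ and compares $\E(f_n^q)$ near the boundary repelling fixed point $z_n^{(1)}$ to the linearizing isometry; this is essentially the strategy of the geometric proof in \cite{L19}, which the paper is deliberately bypassing.

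As written your argument has real gaps, which you yourself flag but do not close. The crucial step is the claim that on a horoball of definite rescaled size around $z_n^{(1)}$, $\E(f_n^q)$ differs from the isometric extension of $w\mapsto z_n^{(1)}+(f_n^q)'(z_n^{(1)})(w-z_n^{(1)})$ by an error that is $o(r([f_n]))$, \emph{uniformly in $n$}. You call this ``the main obstacle'' and defer it to ``distortion estimates from \cite{L19}'' combined with expansion on $J(f_n)$, but no such uniform statement is proved or quoted precisely, and it is not automatic: as $f_n$ degenerates, the nonlinearity of $f_n^q$ can become large uniformly close to $z_n^{(1)}$, and you would need a concrete Koebe-type bound showing the nonlinear error contributes $o(r_n)$ hyperbolic distance on the relevant geodesic ray. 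Second, ``barycentric extension is coarsely functorial'' so that $F^q=\lim_\omega\E(f_n^q)$ is true but is itself a theorem from \cite{L19} and should be cited as such. Third, in the surjectivity argument you need the disks $D_n$ to avoid the critical values of $f_n^p$ at the scale $e^{-Mr_n}$, and you flag this as ``a secondary technical point''; one has to rule out that a critical orbit in the Fatou set shadows $\zeta_n$ to accuracy $e^{-Mr_n}$ for every $M$, which again requires a uniform hyperbolicity estimate. The paper's algebraic route avoids all three issues at once, since in ${^\rho\C}$ the linearization identity $\mathbf f(z)=az(1+\mathbf h(z))$ is exact, and the multiplicity count replaces the inverse-branch construction. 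If you want to salvage the geometric proof you would need to make the first uniform estimate precise (a lemma in its own right), or simply appeal to the equivalence Theorem~\ref{EquivTheorem} and run the paper's computation; your parenthetical Berkovich alternative at the end is closer to the right idea, though the paper uses a degree count rather than Hensel's lemma.
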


\subsection*{Degeneracy with bounded length spectra.}
Let $\HC$ be a hyperbolic component.
It is natural to ask whether the marked length spectrum is a proper map on $\HC$.
In other words, is there a degenerating sequence $[f_n]\in \HC$ with $L(C, [f_n])$ stays bounded for {\em every} $C\in \mathscr{S}$?
Hence, we define

\begin{defn}
A hyperbolic component $\HC$ is said to admit {\em bounded escape} if there exists a sequence $[f_n] \in \HC$ with markings $\phi_n$ so that
\begin{enumerate}
\item $[f_n]$ is degenerating;
\item For any periodic cycle $C\in \mathscr{S}$, the sequence of lengths $L(C, [f_n])$ is bounded.
\end{enumerate}
\end{defn}

Since there are only finitely many periodic points of a fixed period, we can formulate the definition without using the markings and replace the second condition by
\begin{enumerate}
\item[(2')] For any $p\in \N$ and any sequence of periodic points $x_n$ of $f_n$ with period $p$, the multipliers of $f_n$ at $x_n$ stay bounded.
\end{enumerate}

We remark that the analogue of such a phenomenon {\em cannot} happen for degenerating Kleinian groups (cf. \cite{MorganShalen84}), so it might already be surprising that such a hyperbolic component exists.

The first example comes from the McMullen family $f_n(z) = z^2+\frac{1}{n z^3}$ \cite{McM88}.
The sequence $f_n$ is contained in a hyperbolic component for all large $n$, and the Julia set $J$ for this hyperbolic component is homeomorphic to a Cantor set of circles.
In particular, any component of the Julia set separates the two points $0$, $\infty$, and the Julia set is disconnected.
We will show these two topological characteristics of the Julia set classify all examples admitting bounded escape.

\begin{defn}
Let $f\in \Rat_d(\C)$ be a hyperbolic rational map. We say $J(f)$ is {\em nested} if
\begin{enumerate}
\item There are two points $p_1, p_2\in \hat\C$ such that any component of $J(f)$ separates $p_1$ and $p_2$;
\item $J(f)$ contains more than one component.
\end{enumerate}
A hyperbolic component $\HC$ is said to have nested Julia sets if the Julia set of any rational map in $\HC$ is nested.
\end{defn}

\begin{theorem}\label{CantorCircle}
A hyperbolic component $\HC$ admits bounded escape if and only if it has nested Julia sets.
\end{theorem}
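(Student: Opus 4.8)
The bridge between the two conditions is Theorem~\ref{TL}, which converts the length spectrum of a degenerating sequence into translation lengths of the limiting map $F$ on the ends of $^r\Hyp^3$; the plan is to match, in both directions, the topology of $J(f)$ for $f\in H$ with the structure of $(^r\Hyp^3, F)$. I first record a preliminary reduction: being nested is invariant under quasiconformal conjugacy (a conjugacy carries a nested structure for $J(f)$ around $p_1,p_2$ to one for $J(g)$ around the images of $p_1,p_2$), and all maps in a hyperbolic component are quasiconformally conjugate. Hence it suffices to verify nestedness for a single $f\in H$, and conversely to exhibit a single nested $f\in H$ yielding the required degenerating sequence.

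\textbf{Bounded escape implies nested Julia sets.} Let $[f_n]\in H$ be a degenerating sequence with markings $\phi_n$ and bounded length spectrum, normalized as in Section~\ref{MandL} so that $r([f_n])$ is realized at $\bm 0$, and let $F$ be the associated limiting map. Since $[f_n]\to\infty$ we have $r([f_n])\to\infty$, so Theorem~\ref{TL} gives $L(C,F)=\lim_\omega L(C,[f_n])/r([f_n])=0$ for every $C\in\mathscr{S}$; as translation lengths are non-negative on repelling periodic ends and every cycle of repelling periodic ends is represented by some $C\in\mathscr{S}$, the map $F$ has \emph{no} repelling periodic end of positive translation length. The core step is a structural statement for such $F$: its essential (non-isometric) dynamics is concentrated on a single arc $I\subset{}^r\Hyp^3$ on which $F$ acts as a covering of an interval, while the Julia set of $F$ collapses to the cycle of the two endpoints of $I$. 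Transporting this back to finite level along the approximation of $F$ by the $\E f_n$, for $\omega$-almost every $n$ the arc $I$ is shadowed by an annular region $A_n$ in the Fatou set of $f_n$ (invariant under an iterate of $f_n$), outside of which the rescaled sphere is pinched; one then reads off that $J(f_n)$ has more than one component and that every component of $J(f_n)$ separates the two points $p_1^{(n)},p_2^{(n)}$ corresponding to the endpoints of $I$. Thus $J(f_n)$ is nested, and by the reduction above $J(f)$ is nested for all $f\in H$.

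\textbf{Nested Julia sets implies bounded escape.} Suppose $J(f)$ is nested around $p_1,p_2$ for some $f\in H$; normalize $p_1=0$, $p_2=\infty$. Since $J(f)$ is nested and disconnected and a hyperbolic Julia set has empty interior, the Fatou set contains an annular component (a ``gap'' of the nested structure) separating $0$ from $\infty$, and, passing to an iterate, a periodic cycle of such annuli; these determine an $f$-invariant multicurve in the Fatou set whose components all separate $0$ from $\infty$. I then perform a quasiconformal grafting along this multicurve: insert straight cylinders of modulus $m$ consistently with the dynamics and solve the Beltrami equation to obtain a rational map $f^{(m)}$, which by construction is quasiconformally conjugate to $f$ on a neighborhood of $J(f)$, compatibly with the marking. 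Hence $f^{(m)}\in H$, the marking $\phi(f^{(m)})$ is the induced one, and $L(C,[f^{(m)}])=L(C,[f])$ for all $C\in\mathscr{S}$. Taking $m=m_n\to\infty$, the moduli of annuli in $\hat\C\setminus J(f^{(m_n)})$ blow up, which forces $[f^{(m_n)}]\to\infty$ in $M_d$; thus $[f_n]:=[f^{(m_n)}]$ is a degenerating sequence in $H$ with constant, hence bounded, length spectrum, and $H$ admits bounded escape.

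The main obstacle is the structural step in the first implication: classifying limiting maps $F$ with no expanding periodic end, showing the non-isometric part of $(^r\Hyp^3,F)$ is a single arc with covering-of-an-interval dynamics, and then running the shadowing argument that converts this tree-level statement into the nested topology of $J(f_n)$ on the sphere --- the delicate point being to relate the ends of the asymptotic cone to Julia components of $f_n$ uniformly in $n$, in the spirit of the rescaling limits of \cite{L19}. The grafting construction in the converse is comparatively routine, but one must check that the surgery is supported off the Julia set, so that $f^{(m)}$ stays in the same hyperbolic component, and that the inserted moduli genuinely force $[f^{(m)}]$ to escape to infinity in $M_d$.
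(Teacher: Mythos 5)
Your high-level plan is correct and tracks the paper: use Theorem~\ref{TL} to pass between bounded length spectrum and absence of repelling periodic ends for $F$, classify such $F$, transport the tree structure back to the sphere for one direction, and use quasiconformal surgery on a gap for the other. However there are two real problems, one a genuine error and one an unproved gap.

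\textbf{Error in the direction ``nested implies bounded escape.''} You claim that the grafted map $f^{(m)}$ satisfies $L(C,[f^{(m)}])=L(C,[f])$ for all $C\in\mathscr{S}$, i.e.\ that the surgery preserves multipliers exactly. This is false: the straightening map solving the Beltrami equation is only quasiconformal, and quasiconformal conjugacy does not preserve multipliers of repelling cycles. Worse, the Beltrami coefficient one introduces to graft a cylinder of modulus $m$ has dilatation growing without bound as $m\to\infty$, and its support (the grand orbit of the modified gap) accumulates on the whole Julia set, so there is no neighborhood of a repelling cycle on which the straightening is conformal. The multipliers therefore change with $m$, and the correct statement one can hope for is boundedness, not constancy. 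This is exactly how the paper argues in Theorem~\ref{NestedJuliaSetImpliesBoundedEscape}: for a cycle on an extremal Julia component one exhibits polynomial-like restrictions $f_n^p\colon U_n\to V_n$ of fixed degree with $m(V_n\setminus U_n)$ bounded below, invokes compactness of such polynomial-like germs (Theorem 5.8 in \cite{McM94}), and for cycles on buried components one uses an analogous compactness argument for annular coverings; since the bound may depend on $C$, a diagonal argument over the countably many cycles is then required. Your sentence collapses all of this into an identity that does not hold.

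\textbf{Gap in the direction ``bounded escape implies nested.''} You correctly identify the essential structural step --- classifying limiting maps $F$ with no repelling periodic ends as acting by an interval covering on a single geodesic segment --- and you correctly identify the shadowing of this segment by annuli on the sphere as the bridge to nestedness. But in your writeup this is stated as ``the main obstacle'' rather than proved, whereas it is the heart of the argument. The paper carries this out in a chain of lemmas: Lemma~\ref{RepellingEnd}, Lemma~\ref{FixedPointMultiplicityGEQ2} (a fixed point of local degree $\geq 2$ exists), the proposition that $P=\bigcup F^{-i}(x)$ is contained in a geodesic segment (proved by a ``fan'' construction producing a repelling end whenever $P$ branches), Proposition~\ref{PropertyLimitingMap} (the interval-covering structure), and then Lemma~\ref{ApproxAnnuli} and the nested-circles argument to read off the topology of $J(f_n)$. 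One genuinely subtle point you omit: Proposition~\ref{PropertyLimitingMap} alone allows $\sum 1/d_i = 1$ (as for flexible Latt\`es families), which would give a \emph{connected} Julia set; the paper needs a separate modulus-subadditivity argument using hyperbolicity of $f$ to rule this out and conclude $J$ is disconnected. Your sketch ``outside of which the rescaled sphere is pinched; one then reads off that $J(f_n)$ has more than one component'' does not address this borderline case.

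In short: the skeleton is right, but the quantitative heart of each direction is either wrong (constancy of multipliers) or absent (the classification and the strict inequality $\sum 1/d_i<1$).
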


\begin{figure}[h]
\centering
\includegraphics[width=5cm]{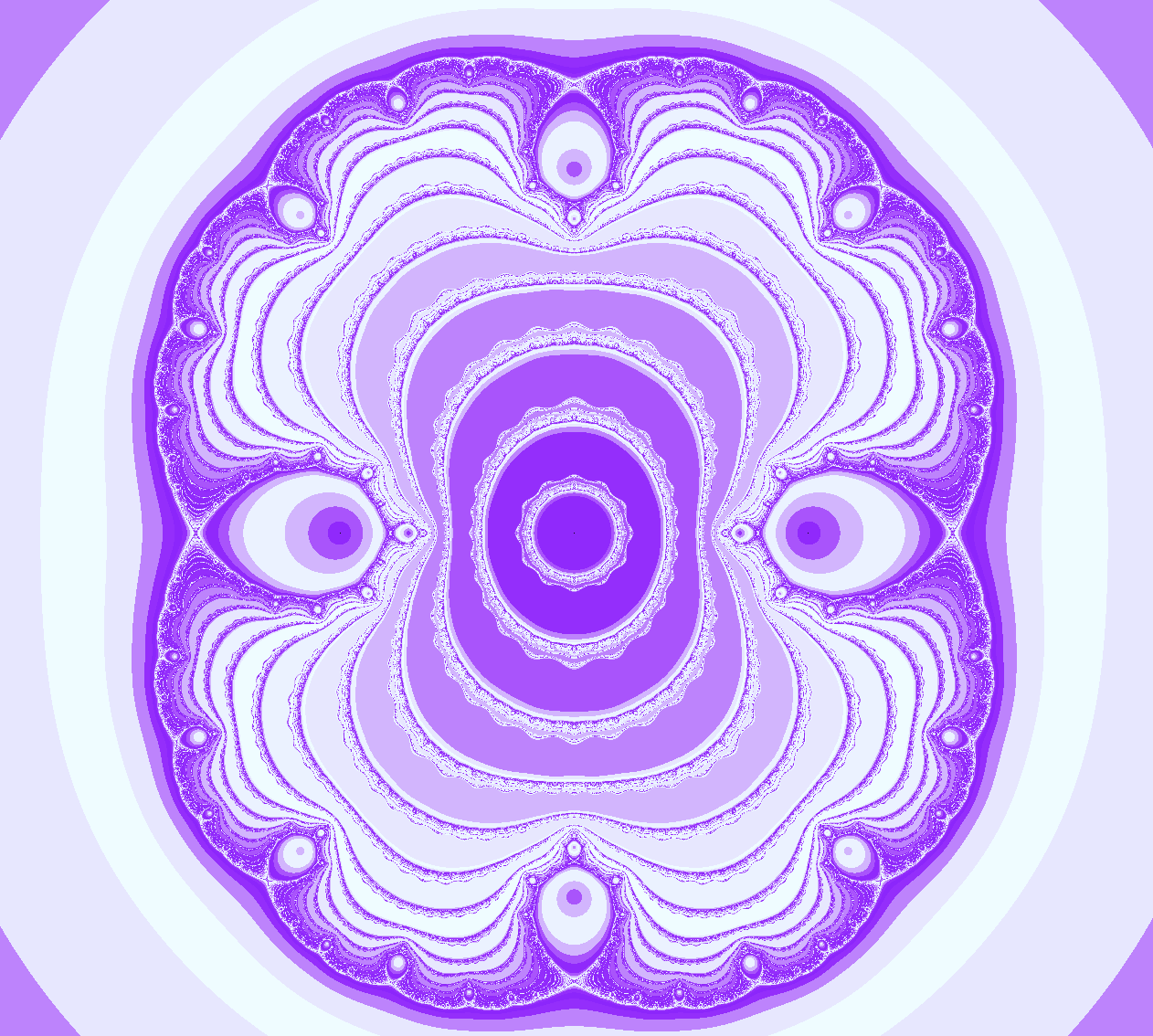}
\includegraphics[width=4.72cm]{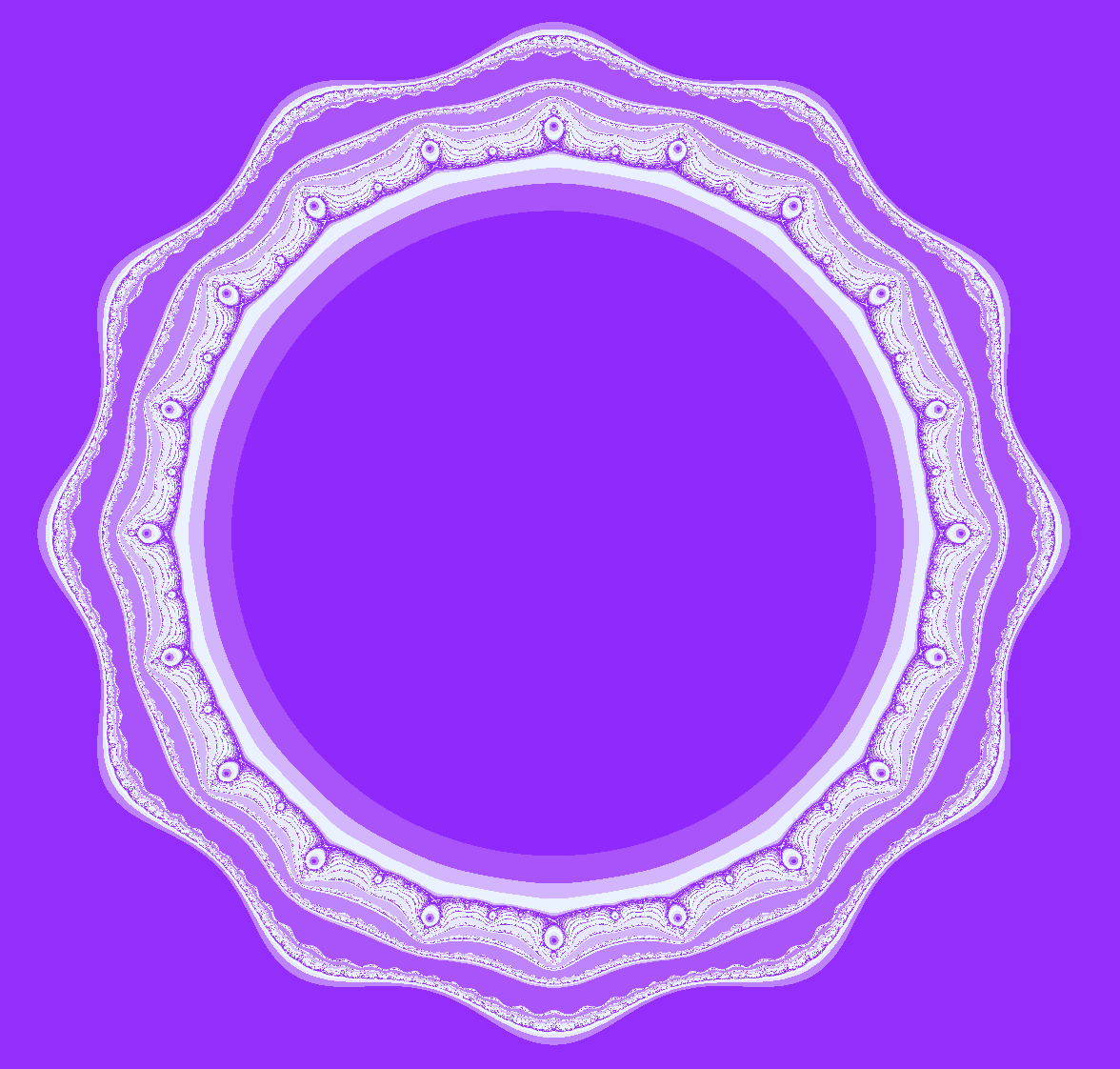}
\caption{The Julia set of $z^2/(1-z^2) + p/z^{10}$ with $p=10^{-7}$ on the left, and a zoom of the Julia set near $0$ on the right. The Julia set is a Cantor set of closed curves. Any `buried' closed curve is a circle. Any boundary component of the `gaps' is a covering of the Julia set of $z^2-1$ (which is conjugate via $z\mapsto 1/z$ to $z^2/(1-z^2)$).}
\label{NestedJuliaSetExample}
\end{figure}

\begin{figure}[h]
\centering
\includegraphics[width=10cm]{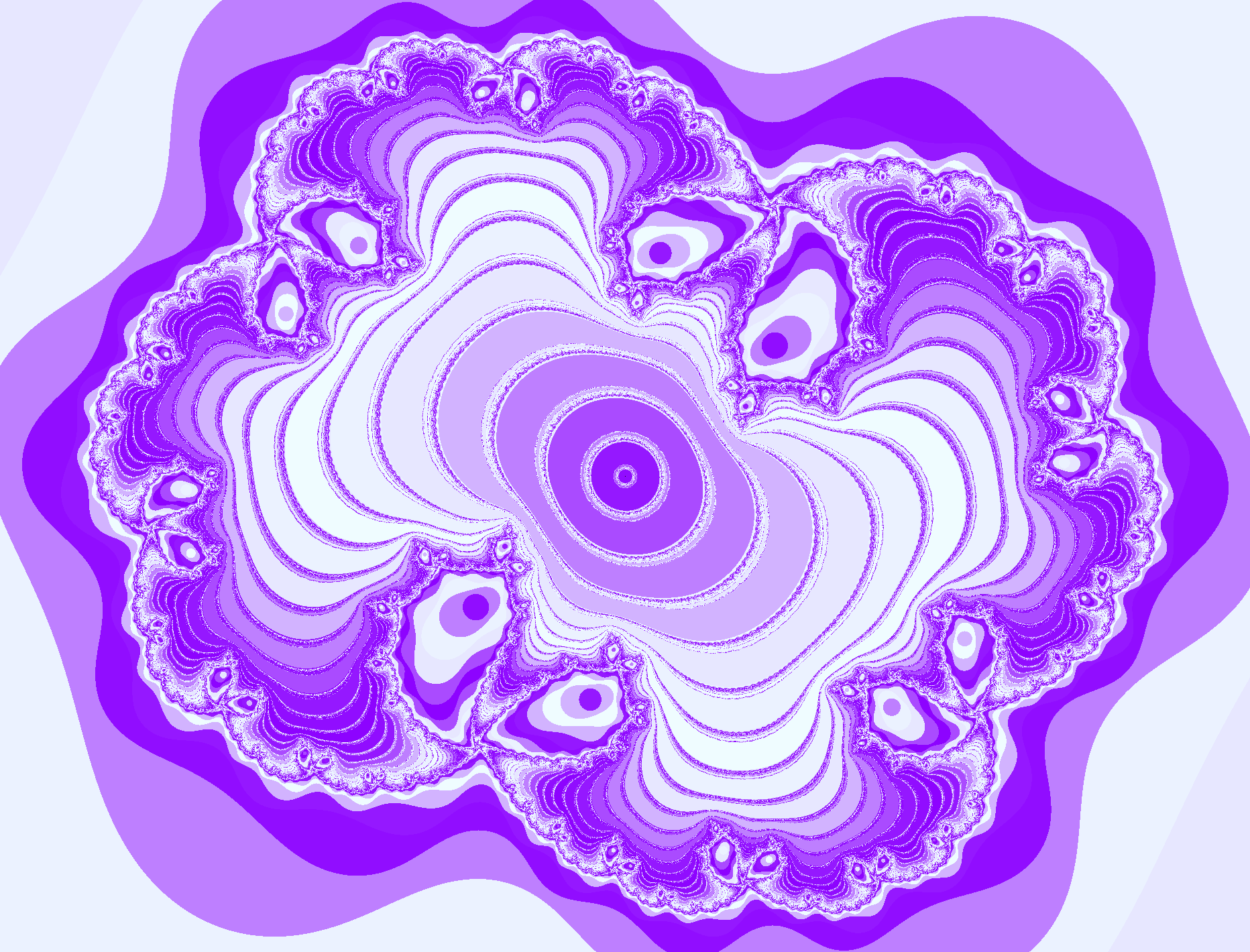}
\caption{The Julia set of $z^2/(1+cz^2) + p/z^{10}$ with $p=10^{-7}$ and $c$ in the `rabbit' component of the Mandelbrot set. Each Julia component is either a circle or a covering of the Julia set of the quadratic polynomial $z^2+c$.}
\label{NestedJuliaSetExample2}
\end{figure}

In \S \ref{HypComNestJ}, we associate a combinatorial invariant to a hyperbolic rational map with nested Julia sets (see Proposition \ref{NestJuliaSetInvariants}).
We also show any such combinatorial data is realizable (see Theorem \ref{NestedMatingPoly}) (cf. \cite{L20}).
Using this, we prove nested Julia sets implies bounded escape.

To prove the other direction, from the definition, we have a degenerating sequence $[f_n]\in \HC$ with bounded multipliers. This gives a limiting dynamics on an $\R$-tree with no repelling periodic ends. In \S \ref{ProofOfCantorCircle}, we classify such limiting dynamics and use our classification to derive topological properties of the Julia set.
In the course of the proof, we also get a quantitative result: 
if $\HC$ is not nested, then there exists some periodic cycle whose multiplier escapes to infinity comparable to $r([f_n])$ (see Theorem \ref{LimitingLengthSpectrum}).
We refer the readers to the discussion at the end of \S \ref{ProofOfCantorCircle} for open questions.

\subsection*{Comparison with Kleinian groups}
We conclude our discussion by comparing our results above with some well-known results for Kleinian groups.
\begin{enumerate}
\item {\em Hyperbolic components and markings.} A hyperbolic component of rational maps is an analogue of $AH(M)$, the space of conjugacy classes of discrete faithful representations of $\pi_1(M)$ in $\PSL_2(\C)$.
The repelling periodic cycles are in correspondence with the closed geodesics on the quotient hyperbolic 3-manifold.
For Kleinian groups, closed geodesics are naturally marked by the representation.
The markings of periodic cycles are, in general, not canonical.

\item {\em Limiting dynamics on $\R$-trees.} In \cite{MorganShalen84}, based on the study of valuations on the function field of the character variety, Morgan and Shalen showed how to compactify $AH(M)$ using isometric actions on $\R$-trees, which shed new lights and generalized part of Thurston's Hyperbolization Theorem. 
Bestvina and Paulin gave a more geometric perspective of this theory in \cite{Bestvina88, Paulin88}.
In the rational map setting, the construction of limiting map on $\R$-trees using barycentric extensions is in the same spirit as Bestvina and Paulin's construction, while the Berkovich dynamics, which studies valuations on polynomial rings, adapts the algebraic perspective as in Morgan and Shalen's formulation.
Theorem \ref{EquivTheorem} is analogous to the equivalence of various constructions of $\R$-trees for Kleinian groups.
We remark that the construction does not require the sequence to come from a single hyperbolic component in the rational map setting.

\item {\em Limiting ratios of length spectra.} The limiting ratios of lengths of marked geodesics for degenerating sequence of Kleinian groups are naturally recorded in the limiting isometric action on the $\R$-tree (see \cite{MorganShalen84, O96}).
Theorem \ref{TL} gives analogous results in complex dynamics.

\item {\em Bounded escape and nested Julia sets.} These phenomena are new in the complex dynamics setting:
\begin{itemize}
\item A sequence in $AH(M)$ is degenerate if and only if the length of some closed geodesic escapes to infinity (see \cite{MorganShalen84}).
\item Any {\em buried component} of the limit set of a finitely generated Kleinian group is a single point, while if the Julia set of a rational map is nested, it has uncountably many buried components which are non-degenerate (cf. \cite{McM88, AM77}).
\end{itemize}

\item {\em Blaschke products and Fuchsian groups/Teichm\"uller spaces.} Under our dictionary, the space of Blaschke products is analogous to the space of Fuchsian representations. Our construction of dynamics on $\R$-trees is a direct generalization of the Ribbon $\R$-tree construction in \cite{McM09} for Blaschke products. We also refer to \cite{McM08, McM09b, McM10} for more comparisons between Blaschke products and many other aspects of the Teichm\"uller spaces.
\end{enumerate}

\subsection*{Notes and references}
For more on $\R$-trees, degenerations of hyperbolic manifolds and rational maps, see e.g. \cite{MorganShalen84, Bestvina88, Bestvina01, Paulin88, O96, McM09}.
The use of asymptotic cone and the connection of $\R$-trees with the nonstandard analysis are developed and explained in \cite{KapovichLeeb95, Chiswell91}.
Other application of trees in complex dynamics can be found in \cite{Shishikura89, DeM08}.

The use of Berkovich space of formal Puiseux series and rescalings to understand asymptotic behaviors for a degenerating holomorphic family of rational maps was introduced and made precise in \cite{Kiwi15}.
Similar ideas have also been explored in \cite{Stimson93, Epstein00, DeM07, Arfeux17}.
In many situations, the study of parameter spaces leads us to consider sequences of rational maps.
Our constructions of limiting maps on $\R$-trees, which use sequences, are usually better suited to answer such questions.
For example, our theory easily gives a direct and uniform proof of the sequential version (compared to holomorphic family version) of the rescaling limit theorem in \cite{Kiwi15}, and also allows us to eliminate the smoothness assumptions of the boundary when studying hyperbolic components in \cite{NP18}.
Other applications of Robinson's field can be found in \cite{deFM09}.

Many examples of degenerating families of rational maps with bounded multipliers (Latt\`es family) without the hyperbolicity assumption are studied in \cite{FR10} using Berkovich dynamics.
Rational maps with disconnected Julia sets are studied extensively in \cite{PilgrimTan00}, and some examples of rational maps with nested Julia sets also appear there.

\subsection*{Acknowledgments}
The author thanks C. T. McMullen for his advice and helpful discussion on this problem.
The author gratefully thanks the anonymous referees for valuable comments and suggestions.

\section{$\R$-trees and branched coverings}\label{BCR}
In this section, we give a brief introduction of $\R$-trees and set up some notations.

\subsection*{$\R$-trees.}
An $\R$-tree is a nonempty metric space $(T,d)$ such that any two points $x, y\in T$ are connected by a unique topological arc $[x,y]\subseteq T$, and every arc of $T$ is isometric to an interval in $\R$.

We say $x$ is an endpoint of $T$ if $T-\{x\}$ is connected; otherwise $x$ is an interior point. If $T - \{x\}$ has three or more components, we say $x$ is a branch point. 
The set of branch points will be denoted $B(T)$. 
We say $T$ is a {\em finite tree} if $B(T)$ is finite.
We will write $[x, y)$ and $(x, y)$ for $[x, y]$ with one or both of its endpoints removed.

\subsection*{Ends and tangent vectors}
A ray $\alpha$ in the $\R$-tree $T$ is an isometric embedding of $[0, \infty)$.
We identify a ray as a map and its image in $T$.
Two rays are said to be {\em equivalent} if $\alpha_1 \cap \alpha_2$ is still a ray.
The collection $\epsilon(T)$ of all equivalence classes of rays forms the set of {\em ends} of $T$.

We will use $\alpha$ to denote both a ray and the end it represents.
We say a sequence of points $x_i$ converges to an end $\alpha$, denoted by $x_i\to \alpha$, if for all $\beta \sim \alpha$, $x_i \in \beta$ for all sufficiently large $i$.

Let $x\in T$. Two segments $[x,y_1]$ and $[x, y_2]$ are said to represent the same tangent vector at $x$ if $[x,y_1] \cap [x,y_2]$ is non-degenerate segment.
The set of equivalence classes of tangent vectors at $x$ is called the {\em tangent space} at $x$, and denoted by $T_xT$.
Equivalently, the tangent space $T_xT$ can be identified with the set of components of $T-\{x\}$.

Let $\vec{v} \in T_xT$. We will use $U_{\vec{v}}$ to denote the component of $T-\{x\}$ corresponding to $\vec{v}$.
More generally, if $x, y \in T$, we use $U^{x,y}$ to denote the component of $T-\{x,y\}$ with boundary $x, y$.

\subsection*{Convexity and subtrees.}
A subset $S$ of $T$ is called {\em convex} if $x, y\in S \implies [x,y]\subset S$.
The smallest convex set containing $E\subset T$ is called the {\em convex hull} of $E$, and is denoted by $\hull(E)$.
More generally, we can easily extend the above definitions to subsets of $T\cup \epsilon(T)$.

Note that subset $S\subset T$ is convex $\iff$ $S$ is connected $\iff$ $S$ is a subtree.
Moreover, $S$ is a {\em finite subtree} of $T$ $\iff$ $S$ is the convex hull of a finite set $E\subset T\cup \epsilon(T)$.

\subsection*{Branched coverings between $\R$-trees.}
Let $f:T_1 \longrightarrow T_2$ be a continuous map between two $\R$-trees, 
we say $f$ is a (tame) degree $d$ branched covering if 
there is a finite subtree $S\subseteq T_1$ such that
\begin{enumerate}
\item $S$ is nowhere dense in $T_1$, and $f(S)$ is nowhere dense in $T_2$.
\item For every $y\in T_2 - f(S)$, there are exactly $d$ preimages in $T_1$.
\item For every $x\in T_1 - S$, $f$ is a local isometry.
\item For every $x\in S$, and any sufficiently small neighborhood $U$ of $f(x)$, $f: V-f^{-1}(f(V\cap S)) \longrightarrow U-f(V\cap S)$ is an isometric covering, where $V$ is the component of $f^{-1}(U)$ containing $x$.
\end{enumerate}


\section{Barycentric extensions for rational maps}\label{LimitingMapViaBarycentricExt}
In this section, we briefly review the theory of barycentric extensions for rational maps. 
We will summarize some properties and refer the readers to \cite{L19} for details.

The barycentric extension was extensively studied for circle homeomorphisms in \cite{DouadyEarle86}. 
The construction can be easily generalized to any continuous maps on the unit sphere $S^{n-1}$, (see \cite{L19, Petersen11}). For our purposes, we will focus on the barycentric extensions for rational maps on $\hat\C$, which is identified with $S^2$ under the stereographic projection.

We identify the hyperbolic space $\Hyp^3$ with the ball model $B(\bm 0,1)\subseteq \R^3$.
The conformal boundary of $\Hyp^3$ is naturally identified with $S^2$ in this way.
A measure $\mu$ on $S^2$ is said to be {\em balanced} at a point $y$ if 
$$
\int_{S^2} \zeta \, dM_*\mu(\zeta) = \vec{0} \in \R^3,
$$
where $M \in \PSL_2(\C) = \Isom(\Hyp^3)$ is any isometry sending $y$ to $\bm 0$.

Given a probability measure $\mu$ on $S^2$ with no atoms of mass $\geq 1/2$, there is a unique point $\beta(\mu) \in \Hyp^3$, called the {\em barycenter} of $\mu$, for which the measure is {\em balanced} (see \cite{DouadyEarle86, Hubbard06} or \cite{Petersen11}). 

Let $\mu_{S^2}$ be the probability measure coming from the spherical metric on $S^2$, 
and let $f : S^2 \longrightarrow S^2$ be a non-constant rational map. The {\em barycentric extension} 
$$
\E f: \Hyp^3 \longrightarrow \Hyp^3
$$ 
is a map sending the point $x\in \Hyp^3$ to the barycenter of the measure $f_*(M_{x})_*(\mu_{S^2})$, where $M_{x}$ is any isometry sending $\bm {0}$ to $x$.

The extension $\E(f)$ is {\em conformally natural}: if $M_1, M_2 \in \PSL_2(\C)$, then
$$
M_1 \circ \E(f) \circ M_2 = \E (M_1 \circ f \circ M_2).
$$

The following theorem concerning the regularities of the barycentric extensions is proved in \cite{L19} (see Theorem 1.1 in \cite{L19}):
\begin{theorem}\label{RationalLip}
For any rational map $f: \hat\C \longrightarrow \hat\C$ of degree $d$, the norm of the derivative of its barycentric extension $\E f: \Hyp^3 \longrightarrow \Hyp^3$ satisfies
$$
\sup_{x\in \Hyp^3} \| D \E f(x)\| \leq C \deg(f).
$$
Here the norm is computed with respect to the hyperbolic metric, and $C$ is a universal constant.
\end{theorem}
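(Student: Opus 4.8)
The plan is to estimate the operator norm $\|D\E f_x\|$ by differentiating the implicit equation that characterizes the barycenter. Fix $x \in \Hyp^3$ and, using conformal naturality $M_1 \circ \E(f) \circ M_2 = \E(M_1 \circ f \circ M_2)$, postcompose and precompose with isometries so that it suffices to estimate $\|D\E f_{\bm 0}\|$ at the origin in the case where $\E f(\bm 0) = \bm 0$ as well; that is, we may assume the pushed-forward measure $\nu := f_*(\mu_{S^2})$ is already balanced at $\bm 0$. Recall the balancing condition: a probability measure $\nu$ on $S^2 = \partial \Hyp^3$ has barycenter at a point $p \in \Hyp^3$ iff $\int_{S^2} \xi_p(\zeta)\, d\nu(\zeta) = 0$, where $\xi_p(\zeta) \in \R^3$ is the Euclidean position (in the ball model) of $\zeta$ after moving $p$ to $\bm 0$ by an isometry; equivalently one writes a vector field $G(p, \nu) = \int \frac{\zeta - \langle\text{stuff}\rangle}{\cdots}\, d\nu$ whose zero locates $\beta(\nu)$. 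Differentiating the identity $G(\E f(x), f_*(M_x)_*\mu_{S^2}) \equiv 0$ in $x$ and solving for $D\E f_x$ via the implicit function theorem gives
$$
D\E f_x = -\left(\partial_p G\right)^{-1} \circ \left(\partial_\nu G\right) \circ D\Big[x \mapsto f_*(M_x)_*\mu_{S^2}\Big].
$$
So the estimate reduces to three pieces: a uniform lower bound on the invertible self-adjoint operator $\partial_p G$ (this is the Douady--Earle nondegeneracy, which holds with a universal constant once we know $\nu$ has no atom of mass $\ge 1/2$ — and here $\nu = f_* \mu_{S^2}$ is absolutely continuous, even better); a uniform bound on $\partial_\nu G$ (elementary, since the integrand defining $G$ is a bounded smooth function of $\zeta$); and the bound on how fast the family of measures $f_*(M_x)_*\mu_{S^2}$ moves as $x$ moves.

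The essential point — and the place where the factor $\deg(f)$ enters — is the last piece. As $x$ moves in $\Hyp^3$ at unit hyperbolic speed, $M_x$ moves, and $(M_x)_*\mu_{S^2}$ changes by an infinitesimal Möbius motion of $S^2$; this motion has uniformly bounded "size" (the vector field generating it is $O(1)$ in the spherical metric) independent of everything. Pushing forward by the fixed rational map $f$ then distorts this: the derivative of $f$ can be large, but the relevant quantity is an average against $\mu_{S^2}$ of $\|Df\|$-type terms, and the key classical fact is that a degree $d$ rational map satisfies $\int_{S^2} \|Df_\zeta\|_{sph}\, d\mu_{S^2}(\zeta) \le C d$ — indeed $\int \|Df\|_{sph}^2\, d\mu_{S^2}$ is (up to constants) the area of the image counted with multiplicity, which is exactly $\pi d$ times the normalization, and then Cauchy--Schwarz controls the $L^1$ norm. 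This is the real reason the Lipschitz constant is linear in the degree rather than, say, controlled only by $\sup\|Df\|$ which has no degree bound at all. Combining: $\|D\E f_x\| \le (\text{const})\cdot (\text{const}) \cdot \int_{S^2}\|Df\|_{sph}\, d\mu_{S^2} \le C\deg(f)$.

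I would organize the write-up as: (i) reduce to $x = \bm 0$, $\E f(\bm 0) = \bm 0$ by conformal naturality; (ii) recall the explicit balancing vector field $G$ and the Douady--Earle estimate giving $\|(\partial_p G)^{-1}\| \le C_1$ uniformly (citing \cite{DouadyEarle86} or \cite{Petersen11}); (iii) bound $\|\partial_\nu G\| \le C_2$ by inspection of the integrand; (iv) prove the degree-linear integral bound $\int_{S^2}\|Df\|_{sph}\, d\mu_{S^2} \le C_3 d$ via area/multiplicity and Cauchy--Schwarz, and use it together with the uniform boundedness of the infinitesimal Möbius deformation $\frac{d}{dt}(M_{x(t)})_*\mu_{S^2}$ to bound the derivative of $x \mapsto f_*(M_x)_*\mu_{S^2}$; (v) assemble via the implicit function theorem. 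The main obstacle is step (iv) done carefully: one must make sure the pushforward measure and its variation are handled in a way that genuinely only sees $\int \|Df\|$ and not pointwise derivative bounds — in particular keeping track that the test functions appearing in $\partial_\nu G$ are Lipschitz on $S^2$ with universal constant so that the pairing against $f_*(\text{variation of } (M_x)_*\mu_{S^2})$ reduces, after a change of variables, to integrating $\|Df\|_{sph}$ against the bounded generating vector field. Everything else is soft (implicit function theorem) or is quoted from the barycentric extension literature.
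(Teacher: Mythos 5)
Your framework---conformal naturality to reduce to the origin, the implicit function theorem on the balancing field $G$, and factoring $D\E f$ through $(\partial_p G)^{-1}$, $\partial_\nu G$, and the variation of $x\mapsto f_*(M_x)_*\mu_{S^2}$---is natural and in the spirit of Douady--Earle, but your step (ii) contains a genuine error, and it is not a peripheral one: it is exactly where the content of the theorem lives. You assert $\|(\partial_p G)^{-1}\|\leq C_1$ uniformly on the grounds that $\nu=f_*\mu_{S^2}$ is absolutely continuous, hence has no atom of mass $\geq 1/2$. But the Douady--Earle hypothesis gives only \emph{qualitative} invertibility, with no universal quantitative bound, and absolute continuity does not supply one. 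Concretely, at a balanced point one computes $\partial_p G=-2(\mathrm{Id}-Q)$ with $Q=\int_{S^2}\zeta\zeta^{T}\,d\nu$ symmetric, positive semidefinite, of trace $1$, so that
$$
\|(\partial_p G)^{-1}\|=\frac{1}{2\bigl(1-\lambda_{\max}(Q)\bigr)},
$$
and $\lambda_{\max}(Q)$ can be arbitrarily close to $1$. For $f(z)=z^d$ the measure $f_*\mu_{S^2}$ concentrates at the two poles as $d\to\infty$, so $Q$ tends to a rank-one projection and $\|(\partial_p G)^{-1}\|\to\infty$.

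There is also a clean consistency check showing the factorization cannot give the theorem as you have set it up. By conformal naturality, $\E(z\mapsto z^d)$ maps the geodesic from $0$ to $\infty$ to itself by $t\mapsto dt$ in arc length, so $\|D\E f_{\bm 0}\|\geq d$ and the bound $Cd$ is essentially sharp. On the other hand the integral estimate you invoke is actually stronger than what you wrote: since $\int_{S^2}\|Df\|^2_{\mathrm{sph}}\,d\mu_{S^2}=d$ (the area-with-multiplicity identity you quote), Cauchy--Schwarz gives $\int_{S^2}\|Df\|_{\mathrm{sph}}\,d\mu_{S^2}\leq\sqrt{d}$, not merely $Cd$. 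If your factors (ii) and (iii) really were $O(1)$ with universal constants, you would have proved $\|D\E f\|\leq C\sqrt{d}$, which $z\mapsto z^d$ contradicts. So the remaining factor of roughly $\sqrt{d}$ has to come from $(\partial_p G)^{-1}$, and the real difficulty of the theorem is to control how its blowup couples with the directional structure of $\partial_\nu G\circ D\bigl[x\mapsto f_*(M_x)_*\mu_{S^2}\bigr]$ so that the product stays $O(d)$ rather than something worse. Your proposal does not engage with that coupling at all; a correct argument along these lines would need a quantitative upper bound on $\lambda_{\max}(Q)$ in terms of $d$ for measures of the form $f_*\mu_{S^2}$, together with an analysis of how the degenerate eigendirection of $\mathrm{Id}-Q$ lines up with the direction in which the variation of the family is large. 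That interaction is the hard part, and it is missing.
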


\subsection*{The space of rational maps}
The space $\Rat_d(\C)$ of rational maps of degree $d$ is an open variety of $\Proj^{2d+1}_\C$.
More concretely, fixing a coordinate system of $\Proj^1_\C$, then a rational can be expressed as a ratio of homogeneous polynomials $f(z:w) = (P(z,w): Q(z,w))$, where $P$ and $Q$ have degree $d$ with no common divisors. Using the coefficients of $P$ and $Q$ as parameters, then
$$
\Rat_d(\C) = \Proj^{2d+1}_\C - V(\Res)
$$
where $V(\Res)$ is the vanishing locus for the resultant of $P$ and $Q$.

The space $\Rat_d(\C)$ is not compact. 
A sequence $f_n\in\Rat_d(\C)$ is said to be {\em degenerating}, denoted by $f_n \to \infty$, if $f_n$ escapes every compact set of $\Rat_d(\C)$.
One natural compactification of $\Rat_d(\C)$ is $\overline{\Rat_d(\C)} = \Proj^{2d+1}_\C$. 

Every map in $f\in \overline{\Rat_d(\C)}$ determines the coefficients of a pair of homogeneous polynomials, and we write
$$
f= (P: Q) = (Hp:Hq) = H\varphi_f,
$$
where $H = \gcd (P, Q)$ and $\varphi_f = (p:q)$ is a rational map of degree at most $d$.
A zero of $H$ is called a {\em hole} of $f$ and the set of zeros of $H$ is denoted by
$\mathcal{H}(f)$.
We define the degree of $f\in \overline{\Rat_d(\C)}$ as the degree of $\varphi_f$.

The limit map describes the limiting dynamics away from the holes $\mathcal{H}(f)$ (see Lemma 4.2 in \cite{DeM05}):
\begin{lem}\label{comcon}
Let $f_n\in\Rat_d(\C)$ converge to $f \in \overline{\Rat_d(\C)}$. Then $f_n$ converges compactly to $\varphi_f$ on $\hat\C - \mathcal{H}(f)$.
\end{lem}

Note that if $f_n$ converges to a degree $0$ map, the measure $(f_n)_*\mu_{S^2}$ converges weakly to a delta measure. So the image of $\bm 0$
$$
\E f_n (\bm 0) \to \infty.
$$
On the other hand, if $f_n$ converges to $f = H\varphi_f$ with $\deg(\varphi_f)\geq 1$, $(f_n)_*\mu_{S^2}$ converges weakly to $(\varphi_f)_*\mu_{S^2}$, so
$$
\E f_n (\bm 0) \to \E \varphi_f (\bm 0).
$$
Therefore, we have the following proposition (see Proposition 5.5 in \cite{L19}):
\begin{prop}\label{bdd}
Let $f_n \in \Rat_d(\C)$. Then
$\E f_n (\bm 0)$ is uniformly bounded if and only if degree $0$ maps are not in the limit set of $\{f_n\}$ in $\overline{\Rat_d(\C)}$.
\end{prop}

The following proposition gives a criterion for degeneracy (see Proposition 8.1 in \cite{L19}):
\begin{prop}\label{LossOfMass}
Let $f_n\in \Rat_d(\C)$, and 
$r_n:=\max_{y\in \E f_n^{-1}(\bm 0)} d_{\Hyp^3}(\bm 0, y)$. 
Then $f_n$ is degenerating if and only if $r_n \to \infty$.
\end{prop}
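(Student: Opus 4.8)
The plan is to prove Proposition \ref{LossOfMass} by reducing to the previously stated lemmas via the conformal naturality of the barycentric extension. First I would observe that one direction is essentially immediate from Lemma \ref{bdd}: if $f_n$ is \emph{not} degenerating, then after passing to a subsequence $f_n$ converges in $\Rat_d(\C)$ to some honest degree $d$ map $g$, so in particular a degree $0$ map is not in the limit set; by Lemma \ref{bdd} this gives $\E f_n(\bm 0)$ bounded away from $\bm 0$, and since $\E f_n$ is $Cd$-Lipschitz (Theorem \ref{RationalLip}), the preimages $\E f_n^{-1}(\bm 0)$ all lie within distance $\tfrac1{Cd} d_{\Hyp^3}(\E f_n(\bm 0),\bm 0)^{-1}$... more carefully, if $y \in \E f_n^{-1}(\bm 0)$ then $d_{\Hyp^3}(\bm 0, \E f_n(\bm 0)) = d_{\Hyp^3}(\E f_n(y), \E f_n(\bm 0)) \le Cd\, d_{\Hyp^3}(y,\bm 0)$, which only bounds $d(y,\bm 0)$ from below; so instead I should argue directly that when $f_n \to g$ in $\Rat_d(\C)$, the maps $\E f_n \to \E g$ uniformly on compact sets (continuity of the extension in $f$), and $\E g$ is a proper map with $\E g^{-1}(\bm 0)$ a fixed compact set, so $\E f_n^{-1}(\bm 0)$ stays in a bounded neighborhood, giving $r_n$ bounded.

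For the converse — $f_n$ degenerating $\implies r_n \to \infty$ — I would use the construction recalled just before the statement: passing to a subsequence, there is $M_n \to \infty$ in $\PSL_2(\C)$ with $f_n \circ M_n \to f = H\varphi_f$ algebraically and $\deg(\varphi_f) \ge 1$. By Lemma \ref{bdd} applied to the sequence $f_n \circ M_n$, the point $\E(f_n \circ M_n)(\bm 0)$ stays bounded away from $\bm 0$; by conformal naturality, $\E(f_n \circ M_n) = \E f_n \circ M_n$, so $\E f_n(M_n \bm 0)$ stays in a bounded set $K \subset \Hyp^3$. Now pick any $y_n \in \E f_n^{-1}(\bm 0)$. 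I want to compare $y_n$ with $M_n \bm 0$: since $\E f_n$ is $Cd$-Lipschitz, $d_{\Hyp^3}(\bm 0, \E f_n(M_n \bm 0)) = d_{\Hyp^3}(\E f_n(y_n), \E f_n(M_n \bm 0)) \le Cd\, d_{\Hyp^3}(y_n, M_n \bm 0)$. The left side is bounded, which again goes the wrong way. The right fix is instead to feed a point where $\E f_n$ is \emph{known to move things far}: the Lipschitz bound should be used to transfer the escape of $M_n \bm 0$ to the escape of some preimage of $\bm 0$, so I would instead consider the preimage of $\bm 0$ nearest to $M_n \bm 0$ and use that $\E f_n$ restricted near $M_n \bm 0$ behaves (up to bounded distortion from the algebraic limit $\varphi_f$) like $\E \varphi_f$, which has a preimage of $\bm 0$ at bounded distance from $\bm 0$; pulling back through $M_n$ produces a preimage of $\bm 0$ under $\E f_n$ within bounded distance of $M_n \bm 0$, hence escaping, so $r_n \ge d_{\Hyp^3}(\bm 0, M_n \bm 0) - O(1) \to \infty$.

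Making that last comparison precise is the main obstacle. The clean way is: write $g_n = f_n \circ M_n$, which converges algebraically to $f = H\varphi_f$ with $\deg \varphi_f \ge 1$. Then $\E g_n \to \E \varphi_f$ uniformly on compact subsets of $\Hyp^3$ — this follows because $(g_n)_*\mu_{S^2} \to (\varphi_f)_*\mu_{S^2}$ weakly (Lemma \ref{comcon}) together with continuity of the barycenter map in the weak topology, applied uniformly over the compact family $\{(M_{\bm x})_*\mu_{S^2} : \bm x \in K'\}$. Since $\deg \varphi_f \ge 1$, the measure $(\varphi_f)_*\mu_{S^2}$ has no atom of mass $\ge 1/2$, so $\E \varphi_f$ is a well-defined continuous map and $\E\varphi_f^{-1}(\bm 0)$ is nonempty; fix $z_0$ in it. By uniform convergence $\E g_n(z_n) = \bm 0$ for some $z_n \to z_0$, hence $d_{\Hyp^3}(\bm 0, z_n)$ is bounded. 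Applying $M_n$: $\E f_n(M_n z_n) = \E g_n(z_n) \cdot$... wait, by naturality $\E(f_n \circ M_n) = \E f_n \circ M_n$, so $\E f_n(M_n z_n) = \E g_n(z_n) = \bm 0$, giving $M_n z_n \in \E f_n^{-1}(\bm 0)$. Therefore $r_n \ge d_{\Hyp^3}(\bm 0, M_n z_n) \ge d_{\Hyp^3}(\bm 0, M_n \bm 0) - d_{\Hyp^3}(M_n \bm 0, M_n z_n) = d_{\Hyp^3}(\bm 0, M_n \bm 0) - d_{\Hyp^3}(\bm 0, z_n) \to \infty$ because $M_n \to \infty$ in $\PSL_2(\C)$ forces $d_{\Hyp^3}(\bm 0, M_n \bm 0) \to \infty$ and $d_{\Hyp^3}(\bm 0, z_n)$ stays bounded. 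Since every subsequence of a degenerating sequence has a further subsequence with $r_n \to \infty$, the full sequence satisfies $r_n \to \infty$, completing the proof.
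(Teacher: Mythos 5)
Your overall strategy matches the sketch the paper gives just before the statement: pass to a subsequence, rescale by $M_n\to\infty$ so that $g_n=f_n\circ M_n\to H\varphi_f$ with $\deg\varphi_f\ge 1$, and combine conformal naturality, Lemma \ref{bdd} and the triangle inequality; the final estimate $r_n\ge d_{\Hyp^3}(\bm 0, M_n z_n)\ge d_{\Hyp^3}(\bm 0,M_n\bm 0)-d_{\Hyp^3}(\bm 0,z_n)\to\infty$ is correct \emph{provided} you can produce a bounded $z_n\in\E g_n^{-1}(\bm 0)$. That is where the gap is: the step ``by uniform convergence $\E g_n(z_n)=\bm 0$ for some $z_n\to z_0$'' does not follow. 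Uniform convergence on compacta only gives $\E g_n(z_0)\to\bm 0$; a sequence of continuous maps converging to a map that hits $\bm 0$ at $z_0$ need not itself hit $\bm 0$ anywhere near $z_0$. What rescues this is a stability-of-preimages argument, e.g.\ Brouwer degree: $\E\varphi_f$ is a continuous self-map of the closed ball extending the degree-$(\deg\varphi_f)$ boundary map, so for any $B=B(\bm 0,R)\supset\E\varphi_f^{-1}(\bm 0)$ one has $\deg(\E\varphi_f,B,\bm 0)=\deg\varphi_f\ge 1$; since $\E g_n\to\E\varphi_f$ uniformly on $\overline B$, $\bm 0\notin\E g_n(\partial B)$ for $n$ large and, by homotopy invariance, $\deg(\E g_n,B,\bm 0)\neq 0$, which gives the bounded $z_n\in B$.

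The same defect appears in your other direction. The inference ``$\E f_n\to\E g$ uniformly on compacta, $\E g$ proper with $\E g^{-1}(\bm 0)$ compact, hence $\E f_n^{-1}(\bm 0)$ stays bounded'' is not valid: uniform convergence on compacta gives no control near $\partial\Hyp^3$, and properness of the \emph{limit} does not rule out the $\E f_n$ acquiring extra preimages of $\bm 0$ that escape to the boundary (one can cook up sequences of proper maps converging on compacta to a proper map yet with unbounded zero sets). The correct argument works at the boundary: if $y_n\in\E f_n^{-1}(\bm 0)$ with $d_{\Hyp^3}(\bm 0,y_n)\to\infty$, pass to a subsequence so $y_n\to\zeta\in S^2$; then $(M_{y_n})_*\mu_{S^2}\to\delta_\zeta$ weakly and, since $f_n\to g$ uniformly on $S^2$ (no holes), $(f_n)_*(M_{y_n})_*\mu_{S^2}\to\delta_{g(\zeta)}$, so the barycenter $\E f_n(y_n)$ tends to the boundary point $g(\zeta)$ — contradicting $\E f_n(y_n)=\bm 0$. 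The absence of holes is exactly what makes this direction go through directly; in the degenerating direction the loss of mass near $\mathcal{H}(f)$ is why you instead need the degree argument above.
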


\subsection*{The moduli space of rational maps.}
The group of M\"obius transformations $\PSL_2(\C)$ acts on $\Rat_d(\C)$ by conjugation. 
The quotient space $\M_d$ is known as the {\em moduli space} of rational maps.
A sequence $[f_n]\in \M_d$ is said to be {\em degenerating}, denoted by $[f_n] \to\infty$, if $[f_n]$ escapes every compact set of $\M_d$.
Equivalently, $[f_n] \to\infty$ if and only if {\em every} sequence of representatives $f_n \in \Rat_d$ is degenerating.

Since choosing a base point $\bm 0\in \Hyp^3$ is equivalent to choosing a representative of the conjugacy class $[f]$ up to the compact group $SO(3)$, we define
$$
r([f]) := \inf_{x\in \Hyp^3} \max_{y\in \E f^{-1}(x)} d_{\Hyp^3}(x, y).
$$

The following proposition follows immediately from the definition and Proposition \ref{LossOfMass}  (see also Proposition 8.10 in \cite{L19}):
\begin{prop}\label{LossOfMassC}
The sequence $[f_n] \to \infty$ in $\M_d$ if and only if $r([f_n]) \to \infty$.
\end{prop}
Note that it is possible that $r([f]) = 0$ as in the example of $f(z) = z^2$ (see Appendix A in \cite{L19}).
For degenerating sequences $[f_n]$, Proposition \ref{LossOfMassC} in particular gives that $r([f_n]) \neq 0$ for all large $n$.

\section{Ultralimits and asymptotic cones}\label{UltralimitAC}
In this section, we will review a standard construction of ultralimits for sequences of pointed metric spaces.
The construction uses a non-principal ultrafilter, which is an efficient technical device for simultaneously taking limits of all sequences without passing to subsequences and putting them together to form one object.
We refer the readers to \cite{Gromov92, KapovichLeeb95, Roe03} for more details.

\subsection*{Ultrafilters on $\N$.}
We begin by reviewing the theory of ultrafilter on $\N$. 
A subset $\omega\subset \powerset(\N)$ of the power set of $\N$ is called an ultrafilter if
\begin{enumerate}
\item If $A, B\in \omega$, then $A\cap B\in \omega$;
\item If $A\in \omega$ and $A\subset B$, then $B\in \omega$;
\item $\emptyset\notin \omega$;
\item If $A\subset \N$, then either $A \in \omega$ or $\N-A\in \omega$
\end{enumerate}

By virtue of the above $4$ properties, one can think of an ultrafilter $\omega$ as defining a {\em finitely additive $\{0,1\}$-valued probability measure} on $\N$. The sets of measure $1$ are precisely those belonging to the ultrafilter $\omega$. 

We will call a set in $\omega$ as {\em $\omega$-big} or simply {\em big}. Its complement is called {\em $\omega$-small} or simply {\em small}.
If a specific property is satisfied by a $\omega$-big set, we will also say this property holds $\omega$-almost surely.

Let $a\in \N$. We can construct an ultrafilter by
$$
\omega_a:=\{A\subset \powerset(\N): a\in A\}.
$$
Any ultrafilter of the above type is called a {\em principal ultrafilter}. 
It can be verified that an ultrafilter is principal if and only if it contains a finite set.
An ultrafilter that is not principal is called a {\em non-principal ultrafilter}.
The existence of a non-principal ultrafilter is guaranteed by Zorn's lemma.

Let $\omega$ be a non-principal ultrafilter on $\N$.
Let $x_n$ be a sequence in a metric space $(X,d)$ and $x\in X$.
We say $x$ is the {\em $\omega$-limit} of $x_n$, denoted by 
$$
\lim_\omega x_n = x
$$
if for every $\epsilon>0$, the set $\{n: d(x_n, x) < \epsilon\}$ is $\omega$-big.

It can be easily verified (see \cite{KapovichLeeb95}) that 
\begin{enumerate}
\item If the $\omega$-limit exists, then it is unique.
\item If $x_n$ is contained in a compact set, then the $\omega$-limit exists.
\item If $x = \lim_{n\to\infty} x_n$ in the standard sense, then $x = \lim_{\omega} x_n$.
\item If $x = \lim_{\omega} x_n$, then there exists a subsequence $n_k$ such that $x = \lim_{k\to\infty} x_{n_k}$ in the standard sense.
\end{enumerate}

From these properties, one should think of the non-principal ultrafilter $\omega$ as performing all the subsequence-selection in advance. All sequences in compact spaces will automatically converge without the need to pass to any further subsequences.

From now on and throughout the rest of the paper, we will fix a non-principal ultrafilter $\omega$ on $\N$.

\subsection*{Ultralimit of pointed metric spaces.}
Let $(X_n, p_n, d_n)$ be a sequence of pointed metric spaces with basepoints $p_n$. Let $\mathcal{X}$ denote the set of sequences $\{x_n\}, x_n \in X_n$ such that $d_n(x_n, p_n)$ is a bounded function of $n$. We also define an equivalence relation $\sim$ by
$$
(x_n) \sim (y_n) \Leftrightarrow \lim_\omega d_n(x_n, y_n) = 0.
$$
We use $[(x_n)]$ to denote the equivalence class of the sequence $(x_n)$.

Let $X_\omega = \mathcal{X} / \sim$, and we define
$$
d_\omega ([(x_n)], [(y_n)]) = \lim_\omega d_n(x_n, y_n).
$$

The function $d_\omega$ makes $X_\omega$ a metric space, and is called the {\em ultralimit} of $(X_n, p_n, d_n)$ with respect to the ultrafilter $\omega$, and is written as $\lim_\omega (X_n, p_n, d_n)$ or simply $\lim_\omega X_n$ for short.

\subsection*{Asymptotic cones of $\Hyp^3$.}
Given a positive sequence $r_n\to \infty$, we define the {\em asymptotic cone} 
$({^r\Hyp^3}, x^0, \dist)$ with rescaling $r_n$ as the ultralimit
$$
({^r\Hyp^3}, x^0, \dist) = \lim_\omega (\Hyp^3, \bm 0, \frac{1}{r_n}\dist_{\Hyp^3}).
$$
From the definition, a point $x\in {^r\Hyp^3}$ is represented by a sequence $x_n \in \Hyp^3$ with $\lim_\omega \frac{\dist_{\Hyp^3}(\bm 0, x_n)}{r_n} < \infty$, and $[(x_n)] = [(y_n)]\iff \lim_\omega \frac{\dist_{\Hyp^3}(x_n, y_n)}{r_n} = 0$.

It is well known that ${^r\Hyp^3}$ is an $\R$-tree that has uncountably many branches at every point (see \cite{Roe03,KapovichLeeb95}).

\subsection*{Ends of ${^r\Hyp^3}$.}
Let $z\in \hat\C$, viewed as a point on the conformal boundary of $\Hyp^3$. 
We denote $\gamma(t, z)\in \Hyp^3$ as the point of distance $t$ from $\bm 0$ in the direction of $z$.
Given any sequence $z_n \in \hat\C$, the ray
$$
s(t) = [(\gamma(t\cdot r_n, z_n))]
$$
is a geodesic ray parameterized by arc length in ${^r\Hyp^3}$.
Thus, $(z_n)$ corresponds to an end $\alpha \in \epsilon({^r\Hyp^3})$. We denote this by
$z_n \twoheadrightarrow_\omega \alpha$.

\subsection*{Limiting map on ${^r\Hyp^3}$.}
Let $f_n\in \Rat_d(\Com)$ be a degenerating sequence of rational maps. 
The appropriate rescaling to use is
$$
r_n := \max_{y\in \E f_n^{-1}(\bm{0})} d_{\Hyp^3}(\bm{0}, y),
$$
as it brings all the preimages of $\bm 0$ in view.
Let $({^r\Hyp^3}, x^0, d)$ be the asymptotic cone of $(\Hyp^3,\bm 0)$ with respect to the rescaling $r_n$.

The limiting map $F = \lim_\omega \E f_n : {^r\Hyp^3} \longrightarrow {^r\Hyp^3}$ is defined by
$$
F ([(x_n)]) = [(\E f_n (x_n))].
$$

Since all $\E f_n$ are $Cd$-Lipschitz by Theorem \ref{RationalLip} with a universal constant $C$, it follows that $F$ is well defined. 
In \cite{L19}, we showed that the limiting map is a degree $d$ branched covering (see Theorem 1.2 in \cite{L19}):
\begin{theorem}\label{DGL}
Let $f_n \to \infty$ in $\Rat_d(\C)$, with $r_n = \max_{y\in \E f_n^{-1}(\bm{0})} \dist_{\Hyp^3}(\bm{0}, y)$.
Then the limiting map 
$$
F = \lim_\omega \E f_n : {^r\Hyp^3} \longrightarrow {^r\Hyp^3}
$$ 
is a degree $d$ branched covering of the $\R$-tree ${^r\Hyp^3}$.
\end{theorem}

\subsection*{Degenerating sequences in $\M_d$.}
Let $[f_n] \to \infty$ in $\M_d$.
Recall that by Proposition \ref{LossOfMassC}, we have
$$
r([f_n]) := \inf_{x\in \Hyp^3}\max_{y\in \E f_n^{-1} (x)} d_{\Hyp^3} (x, y) \to \infty.
$$
We can choose representatives $f_n$ so that
$$
\max_{y\in \E f_n^{-1} (\bm 0)} d_{\Hyp^3} (\bm 0, y) \leq r([f_n]) + 1.
$$
The following theorem is the version for $\M_d$ (see Theorem 8.11 in \cite{L19}):

\begin{theorem}\label{DGLC}
Let $[f_n] \to \infty$ in $\M_d$ and $r_n:= r([f_n])$.
Let $f_n$ be a representative of $[f_n]$ with
$$
\max_{y\in \E f_n^{-1}(\bm{0})} \dist_{\Hyp^3}(\bm{0}, y) \leq r_n+1.
$$
Then the corresponding limiting map
$$
F = \lim_\omega \E f_n: {^r\Hyp^3} \longrightarrow {^r\Hyp^3}
$$
is a branched covering of degree $d$ with no totally invariant point.
\end{theorem}

\section{Rational maps on Berkovich projective space $\Proj^1_{Berk}$}\label{LimitingMapViaBerk}
In this section, we give a brief review of the Berkovich projective space  $\Proj^1_{Berk}$ for a complete, algebraically closed non-Archimedean field $K$, and the dynamics of rational maps on it.
We refer the readers to \cite{BakerRumely10} for a more detailed exposition of this theory.

Let $K$ be a complete, algebraically closed non-Archimedean field. 
We use
$B(a,r) := \{z\in K: |z-a| \leq r\}$ and $B(a,r)^- :=\{z\in K: |z-a| < r\}$ to denote the {\em closed ball} and {\em open ball} centered at $a$ with radius $r$ respectively.
Recall that in a non-Archimedean field, any point $z\in B(a,r)$ (or $z\in B(a,r)^-$) is the center.
If two balls intersect, then one is contained in the other.

The valuation ring of $K$ will be denoted as $\mathfrak{D}_K = B(0, 1)$,
and its maximal ideal is $\mathfrak{M}_K = B(0, 1)^-$.
The residual field is $\widetilde{K} = \mathfrak{D}_K / \mathfrak{M}_K$.

Let $f\in\Rat_d(K)$ be a rational map with coefficients in $K$. After multiplying the denominator and numerator by a common factor, we may assume that the maximum norm of the coefficients is $1$.
The reduction map $\tilde{f}$ is given by taking the reduction on its coefficients.

\subsection*{The Berkovich affine space and the Berkovich projective space}
As a topological space, $\Aff^1_{Berk}$ can be defined as follows.
The underlying point set is the collection of all the multiplicative seminorms $[\,]_x$ on the polynomial ring $K[T]$ which extend the absolute value on $K$.
The topology on $\Aff^1_{Berk}$ is the weakest one for which $x\to [f]_x$ is continuous for all $f\in K[T]$.
The field $K$ can be thought of as a subspace of $\Aff^1_{Berk}$, via the evaluation map.
That is, we can associate to a point $x\in K$ the seminorm
$$
[f]_x = |f(x)|.
$$
The seminorms of this form will be called {\em classical points}.

The Berkovich projective space $\Proj^1_{Berk}$ is the one point compactification of $\Aff^1_{Berk}$.
The extra point, which is denoted, as usual, by $\infty$, can be regarded as the point $\infty\in\Proj^1_K$ embedded in $\Proj^1_{Berk}$.

\subsection*{Berkovich classification.}
Given closed ball $B(a, r)$, one can construct the {\em supremum norm}
$$
[f]|_{B(a, r)} = \sup_{z\in B(a, r)}|f(z)|.
$$
One of the miracles of the non-Archimedean universe is that this norm is {\em multiplicative}.
More generally, given any decreasing sequence of closed balls $x = \{B(a_i, r_i)\}$, we can consider the limiting seminorm
$$
[f]_x = \lim_{i\to\infty} [f]_{B(a_i, r_i)}.
$$
Berkovich's classification asserts that every point $x\in\Aff^1_{Berk}$ arises in this way, and we can classify them into $4$ types:
\begin{enumerate}
\item Type I: Points in $\Aff^1_K$, which we will also call the {\em classical points};
\item Type II: Points corresponding to a closed ball $B(a, r)$ with $r\in |K^\times|$;
\item Type III: Points corresponding to a closed ball $B(a, r)$ with $r\notin |K^\times|$;
\item Type IV: Points corresponding to a nested sequence $\{B(a_i, r_i)\}$ with empty 
intersection.
\end{enumerate}

Type I, II and III can all be thought of a special case of Type IV:
the classical points correspond to a nested sequence $\{B(a_i, r_i)\}$ with $\lim r_i = 0$;
the Type II points correspond to a nested sequence $\{B(a_i, r_i)\}$ with nonempty intersection and $r=\lim r_i >0$ belongs to the value group $|K^\times|$;
the Type III points correspond to a nested sequence $\{B(a_i, r_i)\}$ with nonempty intersection but $r=\lim r_i >0$ does not belong to the value group $|K^\times|$.

We will call the point corresponding to $B(0,1)$ the {\em gauss point} and is denoted by $x_g$.

\subsection*{Rational maps on $\Proj^1_{Berk}$}
There is a `Proj' construction of $\Proj^1_{Berk}$, which allows us to extend the natural action of $f\in \Rat_d(K)$ on $\Proj^1_K$ to $\Proj^1_{Berk}$, preserving the types of the points (see \S 2 in \cite{BakerRumely10}).

Given any Type II point $x$, there exists $M\in \PSL_2(K)$ such that $M(x_g) = x$. 
We will regard $M$ as a `change of coordinates'.
For our purposes, we can treat the following proposition as the definition of rational maps on Type II points (see Lemma 2.17 in \cite{BakerRumely10}).
\begin{prop}\label{ReductionGEQ1}
Let $f\in\Rat_d(K)$, and let $x, y\in\Proj^1_{Berk}$ be Type II points. Assume that $x = M(x_g)$ and $y = L(x_g)$ with $M, L \in \PSL_2(K)$. 
Then $f(x) = y$ if and only if $L^{-1} \circ f \circ M$ has non-constant reduction.
\end{prop}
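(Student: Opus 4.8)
The plan is to reduce everything to the affine statement about reduction of rational maps, i.e.\ to Lemma~2.17 in \cite{BakerRumely10}, by using the change-of-coordinates $M$ and $L$ to move $x$ and $y$ to the Gauss point $x_g$. The point is that $f(x) = y$ is a statement that is equivariant under pre- and post-composition by Möbius transformations in the sense that $f(x)=y$ if and only if $(L^{-1}\circ f \circ M)(x_g) = x_g$. This equivariance is exactly the functoriality of the natural action described in the `Proj' construction: for $M\in\PSL_2(K)$ the induced action on $\Proj^1_{Berk}$ sends the nested sequence of balls representing a point to the image nested sequence, and composition of rational maps corresponds to composition of the induced actions (which can be checked directly from the definition $[\![G]\!]_{f(x)} := [\![G\circ (F_1,F_2)]\!]_x$, since substituting homogeneous lifts is associative). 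So the first step is to record this equivariance carefully and conclude that it suffices to prove the special case $x=y=x_g$: namely, for $g\in\Rat_d(K)$ with coefficients normalized so that the maximum coefficient norm is $1$, $g(x_g) = x_g$ if and only if the reduction $\tilde g$ is non-constant.

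The second step is to prove that special case. One direction is the easier one: if $\tilde g$ is non-constant then $g$ maps $B(0,1)$ onto $B(0,1)$ in the appropriate reduction-theoretic sense, which on the level of supremum seminorms forces $[\![G]\!]_{g(x_g)} = [\![G]\!]_{x_g}$ for all homogeneous $G$, i.e.\ $g(x_g)=x_g$. Concretely, writing $g = (F_1:F_2)$ with homogeneous lifts whose coefficients have max norm $1$, non-constant reduction means that $\tilde F_1, \tilde F_2$ have no common zero in $\Proj^1_{\tilde K}$, so the resultant of $F_1, F_2$ is a unit; then for any homogeneous $G$ one computes $[\![G(F_1,F_2)]\!]_{x_g} = 1$ whenever $[\![G]\!]_{x_g}=1$, using that the Gauss norm is multiplicative and that the coefficients stay in the valuation ring with at least one a unit. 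For the converse, if $\tilde g$ is constant (equivalently $\tilde F_1, \tilde F_2$ share a zero after reduction, equivalently the resultant lies in $\mathfrak M_K$), then one exhibits a homogeneous polynomial $G$ — essentially a linear form cutting out the common zero of $\tilde F_1,\tilde F_2$ — for which $[\![G(F_1,F_2)]\!]_{x_g} < 1 = [\![G]\!]_{x_g}$, so $g(x_g)\neq x_g$. This is the content of Lemma~2.17 of \cite{BakerRumely10}, so I would simply cite it here rather than redo the resultant bookkeeping.

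The third step assembles the pieces. Given the type II points $x = M(x_g)$, $y = L(x_g)$, set $g := L^{-1}\circ f\circ M \in \Rat_d(K)$. By the equivariance of Step 1, $f(x)=y$ if and only if $g(x_g)=x_g$. By Step 2, $g(x_g)=x_g$ if and only if $\tilde g$ is non-constant. Combining, $f(x)=y$ if and only if $L^{-1}\circ f\circ M$ has non-constant reduction, which is the claim. I would also remark that $M, L$ exist by the stated fact that $\PSL_2(K)$ acts transitively on type II points, and that $g$ indeed has degree $d$ since $M, L$ are Möbius; no normalization ambiguity arises because non-constant reduction is independent of the choice of normalized representative of $g$, as the `Proj' construction makes the action depend only on the equivalence class.

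The main obstacle is Step~1: being careful that the induced action on $\Proj^1_{Berk}$ genuinely satisfies $\widehat{f\circ M} = \hat f \circ \hat M$ and behaves correctly under the identification of type II points with $\PSL_2(K)$-translates of $x_g$ via the ball picture. Once that functoriality is nailed down, the rest is a direct appeal to \cite{BakerRumely10}.
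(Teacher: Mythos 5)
Your proposal is correct and matches the paper's intended argument: the paper itself simply remarks that the proposition ``can be proved using Lemma 2.17 in \cite{BakerRumely10},'' and your three-step outline — conjugating by $M$ and $L$ to move to the Gauss point via functoriality of the induced action, then invoking Lemma 2.17 for the special case $x=y=x_g$ — is exactly how that reduction goes.
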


\subsection*{The tree structure on $\Hyp_{Berk}$.}
The {\em Berkovich hyperbolic space} $\Hyp_{Berk}$ is defined by
$$
\Hyp_{Berk} = \Proj^1_{Berk} - \Proj^1_K = \Aff^1_{Berk} - \Aff^1_K
$$
Note that $\Hyp_{Berk}$ is also the space of Type II, III and IV points.

Given two Type II or III points $x, y$ corresponding to the balls $B(a,r)$ and $B(b, s)$ respectively, we let $B(a, R)$ be the smallest ball containing both $B(a,r)$ and $B(b, s)$. Note that $R = \max(r,s, |a-b|)$. 
We define the distance function
$$
d(x,y) = 2\log R - \log r - \log s
$$
Note that if $B(a,r)$ is contained in $B(b, s)$, then 
$$
d(x,y) = \log s - \log r = \log s/r,
$$ 
which should be interpreted roughly as the modulus of the open annulus $B(b,s)^- - B(a,r)$.
In general, the distance is the sum of modulus of $B(a,R)^- - B(a,r)$ and $B(b,R)^- - B(b, s)$.

One can extend this distance formula continuously to arbitrary points $x, y \in \Hyp_{Berk}$.
The metric space $(\Hyp_{Berk}, d)$ can be shown to be a complete $\R$-tree (see Proposition 2.29 in \cite{BakerRumely10}).
Moreover, the finite ends of the $\R$-tree correspond to the Type IV points, while the infinite ends of the $\R$-tree correspond to the classical (Type I) points.
The group $\PSL_2(K)$ acts isometrically on $\Hyp_{Berk}$ which is transitive on Type II points.

We remark that the topology generated by the metric $d$ is strictly finer than the subspace topology of the Berkovich topology on $\Hyp_{Berk}$.
In this paper, we shall mainly use the topology generated by this metric.

\subsection*{Tangent maps and multiplicities.}
The tangent space $T_x\Hyp_{Berk}$ at $x$ is the space of components of $\Hyp_{Berk} - \{x\}$.
If $x=x_g$ is the Gauss point, the tangent space $T_{x_g}\Hyp_{Berk}$ is identified with the projective line of the residual field $\Proj^1_{\widetilde{K}}$.
More generally, if $x$ is a Type II point, we can choose $M\in \PSL_2(K)$ so that $x = M(x_g)$.
The isometry $M$ allows us to identify $T_x\Hyp_{Berk}$ with $\Proj^1_{\widetilde{K}}$.

If $f\in \Rat_d(K)$, then $f$ induces a natural tangent map 
$$
D_x f: T_x \Hyp_{Berk} \longrightarrow T_{f(x)} \Hyp_{Berk}.
$$
The following theorem follows from Corollary 9.25 in \cite{BakerRumely10}:
\begin{theorem}\label{ReductionAsTangentMap}
Let $x\in \Hyp_{Berk}$ be a Type II point, and $y=f(x)$.
Choose $M, L\in \PSL_2(K)$ so that $x = M(x_g)$ and $y = L(x_g)$.
Let $g = L^{-1} \circ f \circ M$, and let $\tilde g$ be the reduction of $g$.
Then under the identification of $T_x\Hyp_{Berk}$ (and $T_y\Hyp_{Berk}$) with $\Proj^1_{\tilde K}$ by $M_*$ (and $L_*$ respectively), the tangent map $D_x f$ equals to the reduction map $\tilde g$.
\end{theorem}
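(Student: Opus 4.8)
I would prove Theorem~\ref{ReductionAsTangentMap} by reducing to the Gauss point via the $\PSL_2(K)$-equivariance of the whole construction, and then carrying out a short non-Archimedean estimate on the image of a residue disk. Since $M$ and $L$ act as isometries of $\Hyp_{Berk}$ carrying $x_g$ to $x$ and to $y=f(x)$ respectively, the tangent map is functorial, $D_xf = L_*\circ D_{x_g}g\circ (M_*)^{-1}$ with $g=L^{-1}\circ f\circ M$; and the identifications $T_x\Hyp_{Berk}\cong\Proj^1_{\tilde K}$, $T_y\Hyp_{Berk}\cong\Proj^1_{\tilde K}$ appearing in the statement are by definition the ones given by $M_*$ and $L_*$. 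Hence the theorem is equivalent to the case $x=y=x_g$, $M=L=\mathrm{id}$: if $g(x_g)=x_g$, then $D_{x_g}g=\tilde g$ under the canonical identification of $T_{x_g}\Hyp_{Berk}$ with $\Proj^1_{\tilde K}$ that sends the direction pointing into the residue disk $B(a,1)^-$ to $\bar a\in\tilde K$, and the direction towards $\infty$ to the point $\infty$. By Proposition~\ref{ReductionGEQ1}, $g(x_g)=x_g$ already forces $\tilde g$ to be non-constant, so the assertion at least typechecks.

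The next step is to unwind $D_{x_g}g$ and reduce further. A tangent vector at $x_g$ is a component of $\Hyp_{Berk}-\{x_g\}$; the vector $\vec v_{\bar a}$ associated with $\bar a\in\Proj^1_{\tilde K}$ contains the ball point $\zeta_{a,r}$ attached to $B(a,r)$ for any lift $a$ of $\bar a$ and any $r<1$, and $D_{x_g}g(\vec v_{\bar a})$ is the component containing $g(\zeta_{a,r})$ for $r$ near $1$. Pre-composing $g$ with a good-reduction $M_0\in\PSL_2(\mathfrak{D}_K)$ with $\tilde M_0(0)=\bar a$ and post-composing with $L_0^{-1}$, $L_0\in\PSL_2(\mathfrak{D}_K)$ with $\tilde L_0(0)=\tilde g(\bar a)$ (allowing the coordinate swap $T\mapsto 1/T$, also in $\PSL_2(\mathfrak{D}_K)$, if $\tilde g(\bar a)=\infty$) --- all of which fix $x_g$ and act on $T_{x_g}\Hyp_{Berk}$ through their reductions, hence only relabel directions through the corresponding M\"obius maps of $\Proj^1_{\tilde K}$ --- I may assume $\bar a=0$ and $\tilde g(0)=0$. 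The claim then reduces to: $g(\zeta_{0,r})\in U_{\vec v_{\bar 0}}$ for all $r<1$.

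The heart of the matter is the estimate. Write $g=P/Q$ with $P,Q\in\mathfrak{D}_K[T]$ whose coefficients have maximum absolute value $1$, so $\tilde g=\tilde P/\tilde Q$. From $\tilde g(0)=0$ we get $\tilde Q(0)\neq 0$, hence $|Q(0)|=1$, and $\tilde P(0)=0$, hence $P(0)\in\mathfrak{M}_K$. For any $w$ with $|w|\le r<1$ we have $\bar w=0$, so $|Q(w)|=|Q(0)|=1$ (the higher terms of $Q$ have absolute value $\le r<1$), and therefore $|g(w)|=|P(w)|\le\max(|P(0)|,\,r)<1$, since every higher term of $P(w)$ has absolute value $\le r$. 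In particular $g$ has no pole on $|w|\le r$ and $[T]_{g(\zeta_{0,r})}=\sup_{|w|\le r}|g(w)|<1$; as $g$ sends Type~II/III points to Type~II/III points, $g(\zeta_{0,r})$ is a ball point whose underlying ball is then contained in $B(0,1)^-$, which is exactly the statement $g(\zeta_{0,r})\in U_{\vec v_{\bar 0}}$. Thus $D_{x_g}g(\vec v_{\bar 0})=\vec v_{\bar 0}$; transporting this back through the two equivariance reductions yields $D_{x_g}g(\vec v_{\bar a})=\vec v_{\tilde g(\bar a)}$ for every $\bar a$, i.e.\ $D_{x_g}g=\tilde g$, and then $D_xf=\tilde g$ in general.

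I expect the main obstacle to be bookkeeping rather than anything deep: verifying that the normalizations of the second step genuinely implement $M_*$ and $L_*$ on the tangent spaces, so no spurious M\"obius factor is introduced, and keeping the estimate of the third step honest despite the fact that $g$ need not have good reduction --- only $\tilde g$ non-constant --- so that the coefficients of $g$ lying in $\mathfrak{M}_K$ cannot spoil the inequality $\sup_{|w|\le r}|g(w)|<1$. This theorem is Corollary~9.25 of \cite{BakerRumely10}, where precisely these estimates are organized; in the body of the paper we simply invoke it.
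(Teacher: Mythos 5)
Your equivariance reduction to $x=y=x_g$, $\bar a=0$, $\tilde g(\bar 0)=0$ by pre- and post-composing with good-reduction elements of $\PSL_2(\mathfrak{D}_K)$ is fine: for such $M_0$ one can check directly that $M_0(x_g)=x_g$ and that the direction of $B(0,r)$ goes to the direction of $\tilde M_0(\bar 0)$, so $(M_0)_*=\tilde M_0$ and no spurious M\"obius factor appears. Note also that the paper itself gives no proof of this statement; it is cited directly as Corollary~9.25 of \cite{BakerRumely10}, so there is no paper argument to compare against, and I am judging your attempt on its own terms.

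The gap is exactly the one you raise in your final paragraph and then dismiss as bookkeeping; it is not. The inference ``$\tilde g(0)=0\Rightarrow\tilde Q(0)\neq 0\Rightarrow|Q(0)|=1$'' is false whenever $\tilde P$ and $\tilde Q$ share a factor of $T$, i.e.\ precisely when the reduction degree drops. A concrete counterexample: $g(T)=T^2/(T+\pi)$ with $\pi\in\mathfrak{M}_K\setminus\{0\}$. Then $\tilde P=T^2$, $\tilde Q=T$, so $\tilde g(T)=T$ and $\tilde g(0)=0$, yet $|Q(0)|=|\pi|<1$, and $g$ has a pole at $-\pi\in B(0,r)^-$ for every $r>|\pi|$. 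Consequently $\sup_{|w|\le r}|g(w)|=\infty$, and the identity $[T]_{g(\zeta_{0,r})}=\sup_{|w|\le r}|g(w)|$ that your estimate relies on is simply false: the sup-over-a-disk formula for the image of a ball point requires $g$ to be pole-free on the disk, which is exactly what can fail here.

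The conclusion is nevertheless correct, but the estimate must be carried out on the Proj side, where poles cause no trouble. Homogenize: write $\hat P(X,Y)=\sum a_iX^iY^{n-i}$, $\hat Q(X,Y)=\sum b_jX^jY^{n-j}$ with $\max_i|a_i|=\max_j|b_j|=1$. Let $k$ (resp.\ $m$) be the order of vanishing of $\tilde P$ (resp.\ $\tilde Q$) at $0$, so $|a_i|<1$ for $i<k$ and $|a_k|=1$, and likewise for $b$; the hypothesis $\tilde g(0)=0$ is exactly $k>m$. From the Proj definition one has $[T]_{g(\zeta_{0,r})}=[\hat P]_{\zeta_{0,r}}/[\hat Q]_{\zeta_{0,r}}$ with $[\hat P]_{\zeta_{0,r}}=\max_i|a_i|r^i$ (and similarly for $\hat Q$). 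Once $r$ is close enough to $1$ that $r^k>\max_{i<k}|a_i|$ and $r^m>\max_{j<m}|b_j|$, these maxima equal $r^k$ and $r^m$, giving $[T]_{g(\zeta_{0,r})}=r^{k-m}<1$, hence $g(\zeta_{0,r})\in U_{\vec v_{\bar 0}}$. That suffices because the tangent map only sees the germ as $r\to 1^-$; and indeed your stronger claim that $g(\zeta_{0,r})\in U_{\vec v_{\bar 0}}$ for \emph{all} $r<1$ is not implied by the reduction alone (e.g.\ $g(T)=(T^2+\pi)/(T+\pi^2)$ has $\tilde g(T)=T$ but $|g(0)|=|\pi|^{-1}>1$). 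With this replacement the rest of your argument goes through.
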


There are many equivalent ways to extend the definition of local degrees for $f$ from $\Proj^1_K$ to $\Proj^1_{Berk}$ in the literature (see \S 2 in \cite{FR10} and Chapter 9 in \cite{BakerRumely10}).
Theorem \ref{ReductionAsTangentMap} allows us to define it for Type II points.
Let $x\in \Hyp_{Berk}$ be a Type II point. 
We define the {\em local degree}
$$
\deg_x f = \deg \tilde{g}.
$$ 
If we now count each point by its multiplicities, then every point has exactly $d$ preimages in $\Proj^1_{Berk}$.


\section{Complexified Robinson's field}\label{CRF}
In this section, we study a complete, algebraically closed, non-Archimedean field ${^\rho\Com}$. This field is first introduced in the real setting by Robinson to formulate the non-standard analysis.

Recall that $\omega$ is a fixed non-principal ultrafilter $\N$.
Let $\Com^\N$ be the set of all sequences in $\Com$. We say two sequence $(z_n)$ and $(w_n)$ are equivalent if 
$$
z_n = w_n \;\;\; \omega-\text{almost surely}.
$$
The set of equivalence classes will be denoted by ${^*\Com}$.

The addition and multiplication are defined naturally:
if $x, y \in {^*\Com}$ are represented by $(x_n)$ and $(y_n)$, then $x+y$ and $x\cdot y$ are the class represented by $(x_n+y_n)$ and $(x_n\cdot y_n)$.
It can be checked that these are indeed well defined and make $ {^*\Com}$ a field.
This field is usually referred to as the {\em ultrapower construction} for $\Com$ (cf. Chapter 2 in \cite{LightstoneRobinson75}).

Given $x,y\in {^*\Com}$ represented by $(x_n)$ and $(y_n)$, we write $|x|\leq |y|$ or $|x| < |y|$ if $|x_n|\leq|y_n|$ or $|x_n|<|y_n|$ $\omega$-almost surely.

The field ${^*\Com}$ is usually too big to work with in our applications and is not equipped with a norm.
We will construct a more useful field ${^\rho\Com}$ as the quotient of a subspace of ${^*\Com}$.

Let $\rho_n \to 0$ be a positive sequence, which is regarded as $\rho\in {^*\Com}$. With the notations above, we construct
$$
M_0=\{t\in {^*\Com}: \text{There exists some }N\in \N \text{ such that } |t| < \rho^{-N} \}
$$
and
$$
M_1=\{t\in {^*\Com}: \text{For all } N\in \N, |t| < \rho^N \}.
$$

We remark that since $\rho_n \to 0$, $M_0$ consists of those (equivalence classes of ) sequences that are not growing to infinity too fast, while $M_1$ consists of those tending to $0$ very fast.
It is easy to show that both $M_0$ and $M_1$ form rings with respect to the addition and multiplication of ${^*\Com}$. It can also be shown that $M_1$ is a maximal ideal of ring $M_0$ (cf. Chapter 3.3 in \cite{LightstoneRobinson75}).
We define
$$
{^\rho\Com} = M_0/M_1
$$ 
as the quotient field.
Note that $\Com$ embeds into ${^\rho\Com}$ via constant sequences.

Intuitively, the field ${^\rho\Com}$ lies in between $\Com$ and ${^*\Com}$ consisting of those large infinitesimals and small infinite numbers.
We shall regard each member of $t\in M_1$ as a small infinitesimal, and its multiplicative inverse (provided that $t\neq 0$) a large infinite number.

We can define an equivalence relation on ${^*\Com}$: $x\sim y$ if $x-y \in M_1$.
Note that if $y\in M_0$, then $x\sim y$ if and only if $x\in [y]$ as a member of ${^\rho\Com}$.

\subsection*{Non-Archimedean norm on ${^\rho\Com}$}
One of the many desired properties of ${^\rho\Com}$ is that we can put a norm on it.
Let $x\in M_0-M_1$ and $i\in M_1$ represented by $(x_n)$ and $(i_n)$ respectively.
Note that there exist two integers $k_1, k_2$ such that $\rho^{k_1} \leq |x| < \rho^{k_2}$, hence the ultralimit
$$
\log_{\rho}|x| := \lim_\omega \log|x_n| / \log \rho_n
$$
is a finite number.
Note that
$$
\log_{\rho}|x+i| - \log_{\rho}|x| = \lim_\omega \log_{\rho_n} |\frac{x_n+i_n}{x_n}| = \lim_\omega \frac{\log|1+i_n/x_n|}{\log \rho_n}.
$$
Since $x_n\notin M_1$, $\lim_\omega i_n/x_n = 0$. Hence $\log_{\rho}|x+i| - \log_{\rho}|x| = 0$.

Therefore, we have a well-defined valuation for $[x]\in{^\rho\Com}$ by
$$
\nu([x]) = \log_{\rho}|x|
$$
where $x\in {^\rho\Com}$ is a representative of $[x]$.

Notice that by definition,
$\nu([\rho]) = 1$. More generally,
$\nu([\rho^t]) = t \text{ for } t\in \R$.

To simplify the notations, from now on, we will use a single roman letter to represent a number in ${^\rho\Com}$, and drop the square bracket.

It can be easily verified that for $x, y\in {^\rho\Com}$ (cf. Chapter 3 Lemma 3.1 and 3.2 in \cite{LightstoneRobinson75}), we have
\begin{align*}
\nu(x\cdot y) &= \nu(x) +\nu(y),\\
\nu(x+y) &\geq \min(\nu(x), \nu(y)).
\end{align*}
Hence, $\nu$ defines a non-Archimedean valuation on ${^\rho\Com}$, and this valuation naturally gives rise to a non-Archimedean norm via
$$
|x|_\nu = e^{-\nu(x)}.
$$
The distance function is given by
$$
d(x,y) = |x-y|_\nu.
$$

\subsection*{The field $({^\rho\Com}, d)$ is spherically complete}
A metric space $X$ is said to be {\em spherically complete} if for any nested sequence of (closed) balls $B_0 \supset B_1 \supset ...$, their intersection $\bigcap_j B_j$ is non-empty.
We show:

\begin{theorem}\label{SphericallyComplete}
The field $({^\rho\Com}, d)$ is spherically complete.
\end{theorem}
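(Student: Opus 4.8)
The plan is to establish spherical completeness directly, by showing that any nested sequence of closed balls $B_1 \supseteq B_2 \supseteq \cdots$ in ${^\rho\Com}$ has a common point. Write $B_j = B(a^{(j)}, r_j)$ with $r_j = e^{-s_j}$; I would first discard the trivial case where some $r_j = 0$ (then $\bigcap_j B_j$ is a single point), and record that nestedness forces $s_1 \le s_2 \le \cdots$ and $\nu\bigl(a^{(j+1)} - a^{(j)}\bigr) \ge s_j$ for every $j$. Fixing a representative $(a^{(j)}_n)_n \in M_0$ of each $a^{(j)}$ (and arranging $\rho_n < 1$, harmless since $\rho_n \to 0$), it suffices to produce one sequence $b = (b_n)_n \in M_0$ whose class satisfies $\nu\bigl(b - a^{(k)}\bigr) \ge s_k$ for all $k$, for then $b \in \bigcap_j B_j$. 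I would build $b$ by a diagonal choice $b_n := a^{(\phi(n))}_n$ for a cut-off function $\phi \colon \N \to \N$ that tends to $\infty$ along $\omega$ but not too fast.

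To construct $\phi$, note that $\nu\bigl(a^{(j+1)} - a^{(j)}\bigr) \ge s_j$ means that for every precision $m \in \N$ the set $E_{j,m} := \{\, n : |a^{(j+1)}_n - a^{(j)}_n| \le \rho_n^{\,s_j - 1/m} \,\}$ is $\omega$-big. Let $F_N := \bigcap_{j \le N,\ m \le N} E_{j,m}$ and $G_N := F_N \cap \{\, n : \rho_n \le e^{-N} \,\}$: each $G_N$ is $\omega$-big (a finite intersection of big sets, the last one cofinite since $\rho_n \to 0$) and $G_1 \supseteq G_2 \supseteq \cdots$. Set $\phi(n) := \sup\{\, N : n \in G_N \,\}$, with $\phi(n) := 0$ if that set is empty. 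The condition $\rho_n \le e^{-N}$ forces $N \le \log(1/\rho_n)$, so this supremum is finite and $\phi(n) \le \log(1/\rho_n)$ throughout; and since $\{\, n : \phi(n) \ge k \,\} = G_k$, the function $\phi$ tends to $\infty$ $\omega$-almost surely. Define $b_n := a^{(\phi(n))}_n$, with $b_n := 0$ on the $\omega$-small set $\{\phi(n) = 0\}$.

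The verification is a telescoping estimate. On $\{\phi(n) = N\}$ one has $n \in F_N$, hence for $k \le i \le N-1$,
$$
b_n - a^{(k)}_n = \sum_{i=k}^{N-1} \bigl( a^{(i+1)}_n - a^{(i)}_n \bigr), \qquad |a^{(i+1)}_n - a^{(i)}_n| \le \rho_n^{\,s_i - 1/N} \le \rho_n^{\,s_k - 1/N},
$$
using $s_i \ge s_k$ and $\rho_n < 1$; so $|b_n - a^{(k)}_n| \le N \rho_n^{\,s_k - 1/N} \le \log(1/\rho_n)\, \rho_n^{\,s_k - 1/N}$ because $N = \phi(n) \le \log(1/\rho_n)$. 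The sequence $\log(1/\rho_n)$ has valuation $0$ (it is $o(\rho_n^{-\delta})$ for every $\delta > 0$), and $1/\phi(n) < \epsilon$ on the $\omega$-big set $\{\phi(n) \ge \lceil 1/\epsilon\rceil\}$; so for each fixed $k$ and $\epsilon > 0$ the above gives $|b_n - a^{(k)}_n| \le \rho_n^{\,s_k - \epsilon}$ $\omega$-almost surely, whence $\nu\bigl(b - a^{(k)}\bigr) \ge s_k - \epsilon$ and therefore $\nu\bigl(b - a^{(k)}\bigr) \ge s_k$. Taking $k = 1$ with the crude bound $1/\phi(n) \le 1$ gives $|b_n - a^{(1)}_n| \le \log(1/\rho_n)\,\rho_n^{\,s_1 - 1}$, a sequence of finite valuation; so $b - a^{(1)} \in M_0$, hence $b \in M_0$ genuinely defines an element of ${^\rho\Com}$. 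This finishes the argument.

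The main obstacle, and the point that dictates the shape of the construction, is obtaining the sharp inequality $\nu(b - a^{(k)}) \ge s_k$ rather than $\nu(b - a^{(k)}) \ge s_k - \epsilon$: the single sequence $b$ must approximate $a^{(k)}$ to precision $\rho_n^{\,s_k - \epsilon}$ simultaneously for all $\epsilon$, which is why the diagonalization must run over both the ball index $j$ and the precision $m$, and why $\phi \to \infty$ is indispensable; at the same time $\phi$ cannot outgrow $\log(1/\rho_n)$, or else the factor $N$ appearing from telescoping will not be reabsorbed — this is exactly the role of the constraint $\rho_n \le e^{-N}$ inside $G_N$. A shorter but less self-contained route would be to quote the $\aleph_1$-saturation of the ultrapower ${^*\Com}$: the countable set of conditions $\{\, |x - a^{(j)}| \le \rho^{\,s_j - 1/m} : j, m \in \N \,\}$ is finitely satisfiable — each finite subfamily is realized by the center of the smallest ball it mentions — hence realized by some $x \in {^*\Com}$, which then automatically lies in $M_0$ and whose class lies in $\bigcap_j B_j$.
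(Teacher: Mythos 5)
Your proof is correct, and it uses the same core diagonalization idea as the paper's proof — both construct a nested sequence of $\omega$-big sets, use them to define a cutoff index $\phi(n)$ that is finite everywhere and tends to $\infty$ along $\omega$, and then set $b_n := a^{(\phi(n))}_n$ — but the bookkeeping around that idea differs in two ways. First, the paper interpolates a strictly nested sequence of \emph{open} balls $B_i$ with $B'_i \supset B_i \supset B'_{i+1}$ between the given closed balls; this turns the relevant inequality into the strict one $\nu(\alpha_i - \alpha_j) > q_i$ for $j \ge i$, which survives the diagonalization unchanged, so no precision parameter (your $m$, the sets $E_{j,m}$, and the closing limit over $\epsilon$) is needed. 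Second, the paper records in its big sets $N_k$ the bound $\nu_l(a_{i,l}-a_{j,l}) > q_i$ for \emph{all} pairs $i \le j \le k$, not just consecutive ones, so there is no telescoping and hence no factor of $N$ to reabsorb; you instead store only consecutive bounds and must control the number $N = \phi(n)$ of terms by capping $\phi(n) \le \log(1/\rho_n)$ and exploiting that $\log(1/\rho_n)$ has valuation zero. Both proofs must ensure $\phi$ is finite everywhere: the paper does so via $\bigcap_k N_k = \emptyset$ (arranged by intersecting with $\{n \ge k+1\}$), while you use the $\rho_n \le e^{-N}$ constraint inside $G_N$. In short, the paper's open-ball trick buys strictness and pairwise bounds, avoiding your telescoping entirely, while your version is more direct but needs the logarithmic growth bound to close. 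Your remark that $\aleph_1$-saturation of ${^*\Com}$ gives an abstract one-line proof is a genuine alternative route the paper does not take.
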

\begin{proof}
Let $B'_0 \supset B'_1 \supset ...$ be a decreasing sequence of closed balls.
We consider a decreasing sequence of open balls $B_i$ so that $B'_i \supset B_i \supset B'_{i+1}$.
We assume that $B_i$ has radius $r_i$, and denote $q_i = -\log r_i$.
Pick $\alpha_i \in B_i$, and assume that $\alpha_i$ is represented by $(a_{i,n})$.
Since $B_j\subset B_i$ for all $j\geq i$, we know
$$
|\alpha_i - \alpha_j| < r_i.
$$
Equivalently,
$$
\nu(\alpha_i- \alpha_j) = \lim_\omega \log|a_{i,n}- a_{j,n}|/ \log \rho_n > q_i.
$$

We can construct inductively a decreasing sequence $\N = N_0 \supset N_1\supset ...$ such that
\begin{enumerate}
\item $N_k$ is $\omega$-big;
\item $\bigcap_{k=1}^\infty N_k = \emptyset$;
\item For any $i\leq j \leq k$ and $l\in N_k$, we have
$$
\nu_l(a_{i,l}-a_{j,l}) := \log |a_{i, l} - a_{j, l}|/\log \rho_l > q_i.
$$
\end{enumerate}

Indeed, we can set $N_0 = \N$ as the base case.
Assume that $N_k$ is constructed.
To construct $N_{k+1}$, we note that for any $i\leq k+1$, 
$$
\nu(\alpha_i- \alpha_{k+1}) = \lim_\omega \log|a_{i,n}- a_{k+1,n}|/ \log \rho_n > q_i.
$$
Hence, there exists an $\omega$-big set $N$ so that for all $i\leq k+1$ and $l\in N$,
$$
\nu_l(a_{i,l}-a_{k+1,l}) > q_i.
$$
We define $N_{k+1} = N\cap N_k \cap \{n: n\geq k+1\}$, then $N_{k+1}\subseteq N_k$ is still $\omega$-big.
Property $(3)$ is satisfied by the induction hypothesis and by the definition of $N$.
Property $(2)$ holds as $N_k\subseteq \{n: n\geq k\}$ by construction.

We now define the sequence $a_n := a_{k, j}$ for $j\in N_k-N_{k-1}$, and let $\alpha = (a_n)$.
Note that for any $l\in N_i$, by Property $(2)$, $l \in N_k- N_{k-1}$ for some $k\geq i$.
Hence for any $i\in \N$ and $l \in N_i$,
$$
\nu_l(a_{i,l}-a_l) = \nu_l (a_{i,l} - a_{k, l}) > q_i.
$$
Therefore, $\nu(\alpha_i- \alpha) > q_i$. This means that $|\alpha_i - \alpha| < r_i$, so $\alpha\in B_i$.

Since this holds for any $i$, we conclude that $\alpha \in \bigcap_i B_i$, so $\bigcap_i B_i \neq \emptyset$. Therefore, $\bigcap_i B'_i \neq \emptyset$ as well.
\end{proof}

As an immediate corollary, we have (cf. Chapter 3 Theorem 4.1 in \cite{LightstoneRobinson75}):
\begin{cor}
The field $({^\rho\Com}, d)$ is complete.
\end{cor}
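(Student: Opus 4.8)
The plan is to deduce completeness directly from spherical completeness (Theorem \ref{SphericallyComplete}) via the standard nested-ball argument, using crucially that $d$ is an \emph{ultrametric}: the inequality $\nu(x+y)\geq\min(\nu(x),\nu(y))$ translates into $d(x,z)\leq\max\bigl(d(x,y),d(y,z)\bigr)$ for all $x,y,z\in{^\rho\Com}$.

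Let $(x_n)$ be a Cauchy sequence in $({^\rho\Com},d)$. First I would dispose of the trivial case: if $(x_n)$ is eventually constant it obviously converges. Otherwise, since the sequence is Cauchy there is an $N$ with $d(x_m,x_{m'})<1$ for all $m,m'\geq N$, and for $n\geq N$ I set
$$
r_n=\sup_{m\geq n}d(x_n,x_m)\in(0,\infty),
$$
noting $r_n\to 0$ (this is precisely the Cauchy condition) and $r_n>0$ because the sequence is not eventually constant. Let $B_n=\{y\in{^\rho\Com}:d(y,x_n)\leq r_n\}$ be the associated closed ball.

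Next I would verify that these balls are nested. For $m\geq n\geq N$ and $y\in B_m$, the ultrametric inequality gives $d(y,x_n)\leq\max\bigl(d(y,x_m),d(x_m,x_n)\bigr)\leq\max(r_m,r_n)$; moreover $r_m\leq r_n$ since for every $k\geq m$ one has $d(x_m,x_k)\leq\max\bigl(d(x_m,x_n),d(x_n,x_k)\bigr)\leq r_n$. Hence $d(y,x_n)\leq r_n$, i.e. $B_m\subseteq B_n$, so $B_N\supseteq B_{N+1}\supseteq\cdots$ is a decreasing sequence of closed balls. By Theorem \ref{SphericallyComplete} the intersection $\bigcap_{n\geq N}B_n$ is non-empty; choosing $x$ in it, we get $d(x_n,x)\leq r_n\to 0$, so $x_n\to x$ and $({^\rho\Com},d)$ is complete.

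Finally I would point out that there is no genuine obstacle here: all the content lies in Theorem \ref{SphericallyComplete}, and the only points needing care are the ultrametric manipulations establishing that the $B_n$ are truly nested and the separate (easy) treatment of the eventually-constant case. One could alternatively run the argument through the equivalence ``$(x_n)$ Cauchy $\iff d(x_n,x_{n+1})\to 0$'' valid in ultrametric spaces, but the formulation above avoids even invoking that.
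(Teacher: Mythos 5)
Your proof is correct. The paper does not actually spell out a proof of this corollary---it labels it an ``immediate corollary'' of Theorem \ref{SphericallyComplete}, deferring to \cite{LightstoneRobinson75}---and what you write out is exactly the standard ultrametric nested-ball argument that spherical completeness implies completeness. The ultrametric inequality $d(x,z)\le\max\bigl(d(x,y),d(y,z)\bigr)$ is applied correctly to establish both $r_m\le r_n$ and the nesting $B_m\subseteq B_n$ for $m\ge n$, and the eventually-constant case is handled separately to guarantee $r_n>0$, so the argument fills the omitted step cleanly.
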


\subsection*{The field ${^\rho\Com}$ is algebraically closed}
We will show that not only does ${^\rho\Com}$ have good completion properties, it is also algebraically closed.
\begin{theorem}
${^\rho\Com}$ is algebraically closed.
\end{theorem}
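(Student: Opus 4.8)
**The plan is to show that ${^\rho\Com}$ is algebraically closed by a combination of Hensel-type lifting and a reduction to the ultrapower ${^*\Com}$, exploiting the fact that each $\Com$ in the ultrapower is itself algebraically closed.** Let $P(T) = T^k + c_{k-1} T^{k-1} + \cdots + c_0$ be a monic polynomial with coefficients $c_i \in {^\rho\Com}$. After a rescaling $T \mapsto \rho^m T$ we may assume all coefficients lie in the valuation ring $\mathfrak{D} = B(0,1)$ (this is the standard Newton-polygon normalization; the valuation $\nu$ takes values in $\R$ and is defined via the ultralimit $\log_\rho|\cdot|$, so such an $m\in\R$ exists, though one must check that $\rho^m$ makes sense for real $m$ — it does, since $\nu(\rho^m)=m$ for $m\in\R$ as noted in the excerpt). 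Choose representatives $c_i = (c_{i,n})$ with $|c_{i,n}|\le 1$ $\omega$-almost surely, forming polynomials $P_n(T) = T^k + c_{k-1,n}T^{k-1}+\cdots+c_{0,n}$ over $\Com$.

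First I would use that each $P_n$ splits completely over $\Com$: write $P_n(T) = \prod_{j=1}^k (T - \lambda_{j,n})$ with $|\lambda_{j,n}|\le 1$ (roots of a monic polynomial with coefficients in the closed unit disk stay bounded; in fact $|\lambda_{j,n}|\le 1$ follows from the non-Archimedean estimate applied pointwise after noting $\Com$ is Archimedean but the roots are still bounded by $\max(1, \text{sum of }|c_{i,n}|) \le k$, so a further harmless rescaling puts them in $\mathfrak{D}_K$ for ${^\rho\Com}$ since $\nu$ only sees exponential growth). The sequences $\lambda_j := (\lambda_{j,n})$ need not lie in $M_0$ a priori if the roots grow, but after the normalization their growth is at most polynomial in $1/\rho_n$, hence they define elements of ${^\rho\Com}$. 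Then $\prod_{j=1}^k (T - \lambda_j)$ and $P(T)$ have the same coefficients in ${^\rho\Com}$: the $i$-th elementary symmetric function of the $\lambda_{j,n}$ equals $\pm c_{i,n}$ for $\omega$-almost all $n$, so the difference is an iota, i.e. zero in ${^\rho\Com}$. This exhibits a root, proving algebraic closure.

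The subtlety I would treat carefully — and which I expect to be the main obstacle — is the bookkeeping of growth rates: I must guarantee that after the initial normalization the roots $\lambda_{j,n}$ satisfy $|\lambda_{j,n}| < \rho_n^{-N}$ $\omega$-almost surely for a fixed $N$, so that $\lambda_j \in M_0$ and descends to ${^\rho\Com}$. Because $\C$ is Archimedean this is not the non-Archimedean root bound but the classical one: if $|c_{i,n}| \le \rho_n^{-A}$ for all $i$ (which holds $\omega$-a.s. for some fixed $A$ once $c_i\in M_0$), then every root of $P_n$ has absolute value at most $1 + \max_i |c_{i,n}|^{1/(k-i)} \le 1 + \rho_n^{-A}$, hence $\lambda_j\in M_0$. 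Dually I should note that if $P$ is genuinely degree $k$ over ${^\rho\Com}$ then no root is a mega, so all $k$ roots descend; but even one suffices for algebraic closure. The remaining points — that the elementary symmetric functions commute with the quotient map $M_0 \to M_0/M_1 = {^\rho\Com}$, and that an equality of sequences $\omega$-almost surely gives equality in ${^\rho\Com}$ — are immediate from the ring structure established earlier in this section.

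Alternatively, one could invoke the general principle that a complete, spherically complete non-Archimedean field whose residue field is algebraically closed and whose value group is divisible is itself algebraically closed (Theorem~\ref{SphericallyComplete} supplies spherical completeness; $\nu({^\rho\Com}^\times) = \R$ is divisible; and $\widetilde{{^\rho\Com}} \cong \C$ is algebraically closed, since the residue field of the ultrapower-type construction is $\C$ itself). I would likely present the direct ultrapower argument as the main proof since it is self-contained, and mention the structural criterion as a remark.
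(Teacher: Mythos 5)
Your main argument is correct and takes essentially the same route as the paper: select roots of the representative polynomials $P_n$ over $\Com$, establish a uniform polynomial-in-$\rho_n^{-1}$ bound on their absolute values so that they lie in $M_0$ and hence descend to ${^\rho\Com}$, and conclude. The paper obtains the root bound via Rouch\'e's theorem applied to $z^d$ versus the lower-order terms on the circle $|z|=d\rho_k^M$; you obtain it from the classical Cauchy/Lagrange root bound after a Newton-polygon normalization. These are interchangeable, and the normalization step you add is harmless but not needed (the paper simply chooses $M<\min(0,\nu(a_0),\dots,\nu(a_{d-1}))$). One simplification you could make: the detour through elementary symmetric functions is superfluous. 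Once $\lambda_j=(\lambda_{j,n})\in M_0$, the relation $P_n(\lambda_{j,n})=0$ for all $n$ already gives $P(\lambda_j)=0$ in ${^\rho\Com}$ directly from the definition of the ring operations on the quotient $M_0/M_1$.

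The alternative you sketch at the end, however, contains a genuine error: the residue field $\widetilde{{^\rho\Com}}$ is \emph{not} isomorphic to $\C$. For instance the class of the sequence $(\log\rho_n^{-1})$ lies in the valuation ring since $\nu\bigl((\log\rho_n^{-1})\bigr)=\lim_\omega \log(\log\rho_n^{-1})/\log\rho_n=0$, but its image in the residue field is distinct from every constant: for any $c\in\Com$ the difference $(\log\rho_n^{-1}-c)$ still has valuation $0$, so it does not lie in the maximal ideal. In fact, as the paper's discussion of the ``cascade of Robinson's fields'' makes explicit, $\widetilde{{^\rho\Com}}$ contains entire further Robinson's fields ${^\sigma\Com}$ for suitable $\sigma$. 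So the claim that the residue field is $\C$, and hence algebraically closed for free, fails; applying the structural criterion (spherically complete, divisible value group, algebraically closed residue field, equal characteristic zero) would require a separate argument that $\widetilde{{^\rho\Com}}$ is algebraically closed, which is no easier than the original statement. Since you present the direct ultrapower argument as the main proof, your proof stands, but the remark should be corrected or dropped.
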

\begin{proof}
Let $z^d+a_{d-1}z^{d-1}+...+a_0$ be a monic polynomial with coefficients $a_n = (a_{n,k})\in {^\rho\Com}$.
We assume that $M < \min(0,\nu(a_0),..., \nu(a_{d-1}))$. Hence there is an $\omega$-big set $N\subseteq \N$ so that for all $k\in N$ and $n=0,..., d-1$,
$$
|a_{n,k}| < \rho_k^M.
$$

Now let $f_k(z) = a_{d-1,k}z^{d-1}+...+a_{0,k}$ and $g(z) = z^d$. 
Note that for any $k\in N$, on the circle centered at $0$ of radius $d\cdot \rho_k^M$ (note that  $\rho_k^M > 1$ as $M<0$) that
\begin{align*}
|f_k(z)| &\leq |a_{d-1,k}|\cdot (d\cdot \rho_k^M)^{d-1} + ... + |a_{0,k}|\\
&<  \rho_k^M \cdot d\cdot (d\cdot \rho_k^M)^{d-1} = |g(z)|.
\end{align*}
By Rouch\'e's theorem, there are $d$ solutions of $g+f_k(z) = 0$ in the ball $B(0, d\cdot \rho_k^M)$.
Let $x_k$ be such a root. 
Note that $x_k$ is defined on an $\omega$-big set $N$, so $x= (x_k)$ represents a point in ${^\rho\Com}$ as $|x_k| < \rho_k^{M+1}$ for all $k\in N$.
Moreover, $x$ satisfies the equation $z^d+a_{d-1}z^{d-1}+...+a_0 = 0$.
Therefore, ${^\rho\Com}$ is algebraically closed.
\end{proof}

Since the value group of ${^\rho\Com}$ is $\R$, as an immediate corollary of the previous two theorems and the Berkovich classification theorem, we have
\begin{cor}\label{TypeIIOnly}
The Berkovich hyperbolic space $\Hyp_{Berk}({^\rho\C})$ consists of only Type II points.
\end{cor}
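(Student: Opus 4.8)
The plan is to show that, over $K = {^\rho\Com}$, the two point types other than Type II in the Berkovich classification simply do not occur. Recall that $\Hyp_{Berk}(K) = \Proj^1_{Berk} - \Proj^1_K$ is precisely the set of Type II, III and IV points. So it suffices to prove there are no Type III points and no Type IV points. I would carry this out as two independent exclusions, one using spherical completeness and one using the explicit computation of the value group.

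\emph{No Type IV points.} By definition a Type IV point of $\Aff^1_{Berk}(K)$ is represented by a nested sequence of closed balls $\{B(a_i,r_i)\}$ with $\bigcap_i B(a_i,r_i) = \emptyset$. But Theorem \ref{SphericallyComplete} asserts that $({^\rho\Com},d)$ is spherically complete, i.e.\ \emph{every} nested sequence of closed balls has nonempty intersection. Hence no nested sequence of closed balls over ${^\rho\Com}$ can have the form required of a Type IV point, and there are no Type IV points. (Here the second of the two preceding theorems, algebraic closedness of ${^\rho\Com}$, together with completeness, is used only to guarantee that we are genuinely in the setting of a complete algebraically closed non-Archimedean field, so that the Berkovich classification applies verbatim.)

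\emph{No Type III points.} A Type III point corresponds to a closed ball $B(a,r)$ whose radius satisfies $r \notin |({^\rho\Com})^\times|$, so it is enough to show $\nu$ is surjective onto $\R$, equivalently $|({^\rho\Com})^\times| = \R_{>0}$. Given $t \in \R$, I would exhibit $\rho^t := [(\rho_n^t)]$ directly: since $\rho_n \to 0$ one has $0 < \rho_n < 1$ $\omega$-almost surely, so $\rho_n^{\lceil|t|\rceil} \le \rho_n^t \le \rho_n^{-\lceil|t|\rceil}$ $\omega$-almost surely, which shows $(\rho_n^t) \in M_0$; and $(\rho_n^t)$ is not an iota, since $\rho_n^t < \rho_n^N$ for all $N$ would force $t > N$ for all $N$. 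Thus $\rho^t$ represents a nonzero element of ${^\rho\Com}$ with $\nu(\rho^t) = \lim_\omega \log(\rho_n^t)/\log\rho_n = t$, which is exactly the computation already recorded in the text for $n \in \R$. Therefore $\nu$ is onto $\R$, the value group is all of $\R_{>0}$, and no ball radius can lie outside it; hence there are no Type III points.

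Combining the two exclusions with the Berkovich classification theorem, every point of $\Hyp_{Berk}({^\rho\Com})$ is of Type II, which is the assertion. I do not expect a real obstacle: the only points needing care are matching the definition of a Type IV point (``nested closed balls with empty intersection'') with the hypothesis of Theorem \ref{SphericallyComplete} so that it applies with no reindexing, and the sign bookkeeping in checking $(\rho_n^t) \in M_0 \setminus M_1$ for negative as well as positive real $t$.
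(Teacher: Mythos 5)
Your argument is correct and follows the route the paper gestures at: the paper just asserts the corollary is ``immediate'' from the two preceding theorems (spherical completeness and algebraic closedness of ${^\rho\Com}$) together with the Berkovich classification, with no written proof. You fill in exactly the right details: spherical completeness kills Type IV, and surjectivity of $\nu$ onto $\R$ (so $|({^\rho\Com})^\times|=\R_{>0}$) kills Type III, with algebraic closedness plus completeness only needed so that the four-type classification applies at all. One small but genuine improvement over the paper's phrasing: as you implicitly point out, algebraic closedness by itself does \emph{not} rule out Type III points (e.g.\ $\C_p$ is complete and algebraically closed yet has Type III points since its value group is $p^{\Q}\subsetneq\R_{>0}$); what actually does the work is the computation $\nu(\rho^t)=t$ for all $t\in\R$, which the paper records only as an illustrative remark after defining $\nu$ and does not cite in the corollary. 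Your verification that $(\rho_n^t)\in M_0\setminus M_1$ for arbitrary real $t$, including the sign check for $t<0$, is exactly the bookkeeping the paper leaves to the reader, and it is correct.
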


\subsection*{The residue field of ${^\rho\Com}$} 
For the complexified Robinson's field ${^\rho\Com}$, 
each non-zero element in $\widetilde{^\rho\Com}$ can be represented by a sequence $(z_n)$ with 
$$
\lim_\omega \log|z_n|/\log\rho_n = 0.
$$
Two sequences $(z_n)$ and $(w_n)$ represent the same element in $\widetilde{^\rho\Com}$ if $\lim_\omega \log|z_n-w_n|/ \log \rho_n > 0$.
Thus, the field $\C$ embeds into the residual field $\widetilde{^\rho\Com}$ by taking constant sequence.

\subsection*{Embedding of the field of Puiseux series $\PS$}
In this subsection, we will show how to embed the {\em field of formal Puiseux series} $\PS$ into the Robinson's field ${^\rho\Com}$ (cf. Chapter 3 \S 6 in \cite{LightstoneRobinson75}).

The field $\PS$ is the algebraic closure of the completion of the {\em field of formal Laurent series} $\Com((t))$.
An element in $\mathbf{a}\in\PS$ can be represented by a formal series
$$
\mathbf{a} = \sum_{j\geq 0} a_j t^{\lambda_j}
$$
where $a_j\in\Com$, $\lambda_j\in \Q$ so that if $a_j$ does not vanish for sufficiently large $j$, 
then $\lambda_j\to\infty$ as $j \to \infty$. 
The absolute value is given by 
$$
|\mathbf{a}| = \exp(-\min\{\lambda_j : a_j\neq 0\}) 
$$
provided $\mathbf{a}\neq \mathbf{0}$.

To show we have an embedding, we first prove the following lemma about the convergence of series in ${^\rho\Com}$.

\begin{lem}\label{pseries}
Let $a_j \in \Com$, and $\lambda_j$ be an unbounded increasing sequence of $\R$.
Then the series
$$
\sum_{j=0}^\infty a_j \rho^{\lambda_j}
$$
converges in ${^\rho\Com}$.

Moreover, $|\sum_{j=0}^\infty a_j \rho^{\lambda_j}| = \exp(-\min\{\lambda_j : a_j\neq 0\})$.
\end{lem}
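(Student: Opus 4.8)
The plan is to show both the convergence and the value formula by reducing everything to a statement about $\omega$-limits of partial sums of explicit complex sequences, using the non-Archimedean (ultrametric) structure of ${^\rho\Com}$ established earlier in this section.

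\textbf{Convergence.} First I would check that the partial sums $S_k = \sum_{j=0}^k a_j \rho^{\lambda_j}$ form a Cauchy sequence in $({^\rho\Com}, d)$. Since $\nu(a_j \rho^{\lambda_j}) = \nu(a_j) + \lambda_j$ and each $a_j \in \Com \subset {^\rho\Com}$ has $\nu(a_j) = 0$ (if $a_j \neq 0$), we get $\nu(a_j \rho^{\lambda_j}) = \lambda_j$ whenever $a_j \neq 0$. By the ultrametric inequality $\nu(x+y) \geq \min(\nu(x), \nu(y))$, for $m > k$ we have $\nu(S_m - S_k) \geq \min_{k < j \leq m} \lambda_j \geq \min_{j > k}\lambda_j$, which tends to $\infty$ as $k \to \infty$ because $\lambda_j$ is unbounded and increasing. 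Hence $|S_m - S_k|_\nu \to 0$, so $(S_k)$ is Cauchy. By the Corollary that $({^\rho\Com}, d)$ is complete, the series converges; call its sum $s$.

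\textbf{The value formula.} Let $\mu = \min\{\lambda_j : a_j \neq 0\}$, attained at some index $j_0$ (the minimum exists since $\lambda_j$ is increasing and unbounded, so only finitely many $j$ have $\lambda_j$ below any bound). Write $s = a_{j_0}\rho^{\lambda_{j_0}} + \sum_{j \neq j_0} a_j \rho^{\lambda_j}$. The leading term has $\nu(a_{j_0}\rho^{\mu}) = \mu$. For the tail, I would argue $\nu\bigl(\sum_{j \neq j_0} a_j \rho^{\lambda_j}\bigr) \geq \min_{j \neq j_0, a_j \neq 0} \lambda_j$; since the minimum over all $j$ is uniquely the index set where $\lambda_j = \mu$, I need to handle the possibility that several indices share the value $\mu$. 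In that case I would group all such terms: their sum is $\bigl(\sum_{\lambda_j = \mu} a_j\bigr)\rho^\mu$, and I must treat the degenerate case where $\sum_{\lambda_j = \mu} a_j = 0$. To avoid this subtlety, the cleanest route is to recall that $\PS$ (and by the representation here, these series) can be normalized so that the exponents $\lambda_j$ are strictly increasing and each $a_j \neq 0$; under that normalization $j_0 = 0$, the leading term is isolated with valuation $\mu = \lambda_0$, the tail has valuation $\geq \lambda_1 > \lambda_0$, and the non-Archimedean identity $\nu(x+y) = \min(\nu(x),\nu(y))$ when $\nu(x) \neq \nu(y)$ gives $\nu(s) = \lambda_0 = \mu$, i.e. $|s|_\nu = e^{-\mu}$, as claimed. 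If the hypothesis genuinely allows repeated $\lambda_j$, I would insert a preliminary reduction step merging equal-exponent terms and discarding those with zero coefficient sum, noting this does not change either the convergence or the value.

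\textbf{Main obstacle.} The only real delicacy is the bookkeeping around the minimal exponent: ensuring the minimum is attained and that the coefficient (or coefficient-sum) there is nonzero, so that the ultrametric equality $\nu(x+y) = \min$ applies rather than just the inequality. Everything else is a routine application of completeness and the valuation axioms stated just above the lemma. I would also double check that $\nu(a_j) = 0$ for $a_j \in \Com^\times$ follows from the embedding $\Com \hookrightarrow {^\rho\Com}$ via constant sequences together with $\nu([\rho^n]) = n$ and multiplicativity — this is immediate since a nonzero constant sequence has $\lim_\omega \log|a_j|/\log\rho_n = 0$.
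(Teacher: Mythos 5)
Your proof is correct and follows essentially the same route as the paper's: compute $\nu(a_j\rho^{\lambda_j})=\lambda_j$ for $a_j\neq 0$, invoke completeness of ${^\rho\Com}$ and the ultrametric Cauchy criterion for convergence, and use the strong triangle inequality (with equality when the leading term has strictly smallest valuation) for the formula. Your digression about tied exponents is unnecessary: for the ``moreover'' identity to hold at all, ``increasing'' must be read as strictly increasing (otherwise cancellation among equal-exponent terms would falsify it), and the paper's ``WLOG $a_0\neq 0$'' tacitly relies on this.
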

\begin{proof}
Let $\alpha_j = a_j \rho^{\lambda_j}$.
If $a_j = 0$, then $\nu(\alpha_j) = \infty$.
Otherwise, $\nu(\alpha_j) = \nu(\rho^{\lambda_j}) = \lambda_j$.
Since $\lim \lambda_j = \infty$, so $\lim \nu(\alpha_j) = \infty$.
Hence, the series $\sum \alpha_j$ converges in ${^\rho\Com}$ by the convergence criterion in non-Archimedean field.

For the moreover part, let $\sigma_n = \sum_{j=0}^n a_j \rho^{\lambda_j}$ be the associated partial sums. Without loss of generality, we assume that $a_0 \neq 0$, then $\nu(\sigma_n) = \lambda_0$ for all $n$ by the strong triangle inequality. Therefore $|\sum_{j=0}^\infty a_j \rho^{\lambda_j}| = \exp(-\min\{\lambda_j : a_j\neq 0\})$.
\end{proof}

Let $\mathbf{a} = \sum_{j\geq 0} a_j t^{\lambda_j} \in \PS$.
We define $\Psi: \PS \longrightarrow {^\rho\Com}$ as follows.
$$
\Psi(\mathbf{a}) = \sum_{j\geq 0} a_j \rho^{\lambda_j}  \in {^\rho\Com}.
$$
Note that the series converges by Lemma \ref{pseries}.
One can easily verify that $\Psi(\mathbf{a} + \mathbf{b}) = \Psi(\mathbf{a}) + \Psi(\mathbf{b})$ and 
$\Psi(\mathbf{a}\cdot \mathbf{b}) = \Psi(\mathbf{a}) \cdot \Psi(\mathbf{b})$.
Hence we have
\begin{prop}\label{PSEmbedding}
The map
\begin{align*}
\Psi: \PS &\longrightarrow {^\rho\Com}\\
\mathbf{a} = \sum_{j\geq 0} a_j t^{\lambda_j} &\mapsto \sum_{j\geq 0} a_j \rho^{\lambda_j}
\end{align*}
is an embedding of fields and preserves the non-Archimedean norms.
\end{prop}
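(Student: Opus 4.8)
The plan is to check the four properties packaged in the statement: that $\Psi$ is well defined, additive, multiplicative, and norm-preserving (the last forcing injectivity at once, since a field homomorphism with $\Psi(\mathbf a)=0$ would give $|\mathbf a|=|\Psi(\mathbf a)|_\nu=0$, hence $\mathbf a=\mathbf 0$). Well-definedness and norm preservation are essentially already done: for $\mathbf a=\sum_{j\geq 0}a_j t^{\lambda_j}\in\PS$ the exponents $\lambda_j$ form an increasing unbounded sequence of rationals (bounded below, with a common denominator), so Lemma \ref{pseries} shows that $\sum_{j\geq0}a_j\rho^{\lambda_j}$ converges in the complete field ${^\rho\Com}$ and that its norm is $\exp(-\min\{\lambda_j:a_j\neq0\})=|\mathbf a|$. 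So that part of the argument requires nothing beyond Lemma \ref{pseries}, and also immediately gives $\Psi(1)=1$.

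For additivity I would rewrite $\mathbf a$ and $\mathbf b$ over a common exponent set: picking an increasing unbounded enumeration $\{\mu_k\}$ of all exponents occurring in either series, $\mathbf a=\sum_k a'_k t^{\mu_k}$ and $\mathbf b=\sum_k b'_k t^{\mu_k}$, so that $\mathbf a+\mathbf b=\sum_k(a'_k+b'_k)t^{\mu_k}$. Applying $\Psi$ termwise and noting that the partial sums of $\sum_k a'_k\rho^{\mu_k}$ and $\sum_k b'_k\rho^{\mu_k}$ add to those of $\sum_k(a'_k+b'_k)\rho^{\mu_k}$ — all three series converging by Lemma \ref{pseries} — yields $\Psi(\mathbf a+\mathbf b)=\Psi(\mathbf a)+\Psi(\mathbf b)$. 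For multiplicativity, recall $\mathbf a\mathbf b$ is the Cauchy product $\sum_{j,k}a_jb_k t^{\lambda_j+\mu_k}$, which is a legitimate element of $\PS$ because for each bound only finitely many pairs $(j,k)$ produce an exponent below it. The corresponding identity $\bigl(\sum_j a_j\rho^{\lambda_j}\bigr)\bigl(\sum_k b_k\rho^{\mu_k}\bigr)=\sum_{j,k}a_jb_k\rho^{\lambda_j+\mu_k}$ in ${^\rho\Com}$ is the standard fact that in a complete non-Archimedean field two convergent series multiply by their Cauchy family: here $\nu(a_j\rho^{\lambda_j})=\lambda_j\to\infty$ and $\nu(b_k\rho^{\mu_k})=\mu_k\to\infty$, so the double family $(a_jb_k\rho^{\lambda_j+\mu_k})$ has valuations tending to $\infty$, hence is summable with order-independent sum. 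Regrouping this family by the value $v=\lambda_j+\mu_k$ (a finite sub-sum for each $v$) recovers $\Psi(\mathbf a\mathbf b)$, and since $\Psi$ is a unital ring homomorphism of fields it is injective.

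The main obstacle is bookkeeping rather than anything deep: one must make sure that every reindexing, regrouping, and series multiplication is legitimate. The cleanest route is to isolate beforehand the general fact that in a complete non-Archimedean field a family $(c_i)_{i\in I}$ with $\{i:\nu(c_i)\leq C\}$ finite for every $C$ is summable, with sum independent of the ordering and stable under grouping into finite (or summable) blocks, and that two such families multiply via their Cauchy family. Granting this lemma — which follows purely from the ultrametric inequality — additivity, multiplicativity, and preservation of $1$ are immediate, while norm preservation is exactly the "moreover" clause of Lemma \ref{pseries}.
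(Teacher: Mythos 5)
Your proof is correct and follows the same route the paper takes: the paper invokes Lemma \ref{pseries} for convergence and norm preservation, then simply asserts that additivity and multiplicativity "can easily be verified," which is exactly the summable-family bookkeeping you spell out. You have supplied the details the paper elides, but there is no difference in approach.
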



\section{Equivalence of barycentric and Berkovich construction}\label{SectionProofOfConnection}
In this section, it is better to use the upper space model $H$ of the hyperbolic $3$-space $\Hyp^3$.
We can identify $H = \Com\times \R_{>0}$, and a linear map $M(z) = Az+B$ extends to an isometry on $H$ given by
\begin{align}\label{IsometryFormula}
M(z,h) = (Az+B, |A|h).
\end{align}
The distance between two points $(z_1, h_1)$ and $(z_2, h_2)$ is given by the formula
\begin{align}\label{DistanceFormula}
d((z_1, h_1), (z_2, h_2)) = 2\log\frac{\sqrt{|z_1-z_2|^2+(h_1-h_2)^2}+\sqrt{|z_1-z_2|^2+(h_1+h_2)^2}}{2\sqrt{h_1h_2}}.
\end{align}
We will identify $\bm 0$ as the point $(0,1)\in H$.

\subsection*{Construction of the isometric bijection $\Phi$.}
By Corollary \ref{TypeIIOnly}, $\Hyp_{Berk}({^\rho\C})$ consists of only Type II points.
Hence by the Berkovich classification theorem, every point $x\in \Hyp_{Berk}$ can be represented by a closed ball $B(p, R)$.
We consider a linear polynomial of the form 
$$
M(z) = az+ b \in \Affine({^\rho\Com}),
$$
with $M(B(0,1)) = B(p, R)$.
Representing $a$ and $b$ by the sequences $(a_n)$ and $(b_n)$, we get a sequence of M\"obius transformations
$$
M_n (z) = a_n z+ b_n.
$$
Let ${^r\Hyp^3}$ be the asymptotic cone of $\Hyp^3$ with respect to rescaling $r_n = -\log \rho_n$. We define 
\begin{align*}
\Phi: \Hyp_{Berk} &\longrightarrow {^r\Hyp^3}\\
x &\mapsto [(M_n (\bm 0))].
\end{align*}

\begin{prop}\label{IsometricBijection}
$\Phi$ is a well-defined isometric bijection.
\end{prop}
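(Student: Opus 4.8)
The plan is to verify the four assertions hidden in the phrase ``well-defined isometric bijection'': (i) $\Phi$ does not depend on the choice of representing ball $B(p,R)$, nor on the choice of linear polynomial $M$ with $M(B(0,1))=B(p,R)$, nor on the choice of sequences $(a_n),(b_n)$ representing $a,b$; (ii) the sequence $(M_n(\bm 0))$ really defines a point of ${^r\Hyp^3}$, i.e. $d_{\Hyp^3}(M_n(\bm 0),\bm 0)/r_n$ is bounded $\omega$-almost surely; (iii) $\Phi$ is an isometry onto its image; (iv) $\Phi$ is surjective. The unifying computational device is the explicit formula \eqref{IsometryFormula} for the isometry of $H$ induced by $M_n(z)=a_nz+b_n$, namely $M_n(\bm 0)=M_n(0,1)=(b_n,|a_n|)$, together with the distance formula \eqref{DistanceFormula}. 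In particular $d_{\Hyp^3}((b_n,|a_n|),(0,1))$ is, up to a bounded additive error, $2\bigl|\log|a_n|\bigr| + 2\log^+\!\frac{|b_n|}{\max(1,|a_n|)}$, so dividing by $r_n=-\log\rho_n$ and taking $\lim_\omega$ turns $\log|a_n|$ into $-\nu(a)$ and $\log|b_n|$ into $-\nu(b)$. Thus $\Phi(x)$ ``is'' the point of the $\R$-tree whose $\R$-tree-distance to $x^0$ equals the Berkovich distance $d(x_g, x)$ computed via $R=\max(|a|_\nu,|b|_\nu)$ — this is precisely the Berkovich metric formula $d(x,y)=2\log R-\log r-\log s$ with $r=1$.

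For well-definedness (i): if $M'$ is another linear polynomial with $M'(B(0,1))=B(p,R)$, then $M^{-1}M'$ fixes $B(0,1)$, hence has the form $z\mapsto uz+v$ with $|u|_\nu=1$, $|v|_\nu\le 1$; on the level of representing sequences $|u_n|$ stays comparable to $1$ and $|v_n|$ stays bounded $\omega$-a.s., so $M_n(\bm 0)$ and $M'_n(\bm 0)$ are at bounded $\Hyp^3$-distance and agree after rescaling. Independence of the representing sequences $(a_n),(b_n)$ is similar: changing $a$ within its class changes $a_n$ by an iota relative to $a_n$, which changes $\log|a_n|/r_n$ by something with $\omega$-limit $0$ (this is exactly the computation already carried out in the paper showing $\nu$ is well defined); likewise for $b_n$. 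Independence of the ball representative $B(p,R)$ is vacuous once we know Type II points correspond bijectively to such balls. For (ii), boundedness is immediate from the estimate above since $\nu(a),\nu(b)$ are finite real numbers. For (iii), given $x=M(x_g)$ and $y=L(x_g)$ with $L(z)=cz+d$, the $\R$-tree distance $d_\omega(\Phi(x),\Phi(y))=\lim_\omega d_{\Hyp^3}(M_n(\bm 0),L_n(\bm 0))/r_n$; applying \eqref{DistanceFormula} with $(z_1,h_1)=(b_n,|a_n|)$, $(z_2,h_2)=(d_n,|c_n|)$, the dominant term is $2\log\bigl(\max(|a_n|,|c_n|,|b_n-d_n|)\bigr)-\log|a_n|-\log|c_n|+O(1)$, and dividing by $r_n$, taking $\lim_\omega$, and using $\nu(b-d)=\min$-type behavior gives exactly $2\log\widehat R-\log|a|_\nu-\log|c|_\nu$ where $B(\cdot,\widehat R)$ is the smallest ball containing $B(p,|a|_\nu)$ and the corresponding ball for $y$ — that is, the Berkovich distance $d(x,y)$. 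So $\Phi$ is distance-preserving, in particular injective.

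For surjectivity (iv): let $\xi\in{^r\Hyp^3}$ be represented by a sequence $(z_n,h_n)\in H$ with $d_{\Hyp^3}((z_n,h_n),\bm 0)/r_n$ bounded $\omega$-a.s.; I want a Type II point mapping to it. Set $M_n(z)=h_n z+z_n$, so $M_n(\bm 0)=(z_n,h_n)$; the boundedness hypothesis says $\bigl|\log h_n\bigr|/r_n$ and $\log|z_n|/r_n$ are bounded $\omega$-a.s., hence the classes $a=(h_n)$, $b=(z_n)$ lie in $M_0/M_1$ and define $M(z)=az+b\in\PSL_2({^\rho\Com})$ (note $a\ne 0$ since $h_n>0$ has finite valuation). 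Then $x:=M(x_g)$ is a Type II point with $\Phi(x)=\xi$ by construction. The main obstacle I anticipate is purely bookkeeping: keeping track of the additive $O(1)$ errors in \eqref{DistanceFormula} — the ``$+\sqrt{\cdots}$'' versus ``$\sqrt{\cdots}$'' numerator and the various $\max$'s — and confirming that after division by $r_n\to\infty$ all of these wash out, so that the $\omega$-limit of the rescaled $\Hyp^3$-distance matches the Berkovich formula on the nose rather than merely up to a bounded factor. This requires a careful case analysis (whether $|a_n|$, $|c_n|$, or $|b_n-d_n|$ dominates, and the sign of $\log|a_n|$, etc.), but each case reduces to the elementary fact that $\log(A+B)=\max(\log A,\log B)+O(1)$ and that $O(1)/r_n\to 0$ in the ultralimit.
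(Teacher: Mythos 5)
Your overall plan mirrors the paper's — verify well-definedness, isometry, and bijectivity using the explicit upper-half-space formulas — and the isometry and surjectivity parts are sound (with the honest caveat you flag that the $O(1)$ bookkeeping and case analysis must be carried through; the paper instead avoids the full case analysis by decomposing the geodesic $[x_g,M(x_g)]$ into at most two segments and reducing to the special case $M(z)=az$). However, there is a genuine error in your well-definedness argument.

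You claim that if $M^{-1}M'(z)=uz+v$ with $|u|_\nu=1$ and $|v|_\nu\le 1$, then ``on the level of representing sequences $|u_n|$ stays comparable to $1$ and $|v_n|$ stays bounded $\omega$-a.s.,'' and you conclude that $M_n(\bm 0)$ and $M'_n(\bm 0)$ are at bounded $\Hyp^3$-distance. This conflates the non-Archimedean condition $\nu(u)=0$ with Archimedean boundedness. The condition $|u|_\nu=1$ only says $\lim_\omega \log|u_n|/\log\rho_n = 0$, which permits $|u_n|$ to diverge to $\infty$ (or tend to $0$), just sub-polynomially in $\rho_n^{-1}$. For example, with $\rho_n=e^{-n}$ (so $r_n=n$), take $u_n=n^{10}$: then $\nu(u)=\lim_\omega 10\log n/(-n)=0$ so $|u|_\nu=1$, yet $|u_n|\to\infty$ and $d_{\Hyp^3}\bigl((0,1),(0,n^{10})\bigr)=10\log n\to\infty$. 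The correct and weaker conclusion, which is what the paper actually proves, is that for every $\epsilon>0$ one has $\rho_n^{\epsilon}<|u_n|<\rho_n^{-\epsilon}$ $\omega$-almost surely (and similarly for $|v_n|$), which after substitution into the distance formula gives $d_{\Hyp^3}(M_n(\bm 0),M'_n(\bm 0)) = O(\epsilon r_n)$ rather than $O(1)$; letting $\epsilon\to 0$ then shows the rescaled distance has $\omega$-limit $0$, as required by the equivalence relation defining ${^r\Hyp^3}$. Your argument as written would only establish $\sim$-equivalence via a premise that is false, so the $\epsilon$-argument cannot be avoided.
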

\begin{proof}
We will first check that this definition is well defined.
Let $L_n(z) = a'_nz+b'_n$ be a different representation, where $(a'_n), (b'_n)$ represent $a'$ and $b'$.
Since $M_n(B(0,1)) = L_n(B(0, 1)) = B(p, R)$, $|a| = |a'| = R$ and $|b-b'| \leq R$.
Without loss of generality, we assume $|a_n| \geq |a'_n|$ $\omega$-almost surely.

Let $\epsilon > 0$. By definition of the norm on ${^\rho\Com}$,
\begin{enumerate}
\item $\log |b_n-b'_n|/\log \rho_n > -\log R -\epsilon$, \;\;\;  $\omega$-almost surely;
\item $\log\frac{|a_n|}{|a'_n|}/\log \rho_n > -\epsilon$, \;\;\;  $\omega$-almost surely.
\end{enumerate}
Rearranging the inequalities and using the fact that $|a'| = R$, we have,
\begin{enumerate}
\item $|b_n-b'_n|/|a'_n| < \rho_n^{-\epsilon}$, \;\;\; $\omega$-almost surely;
\item $\frac{|a_n|}{|a'_n|} < \rho_n^{-\epsilon}$, \;\;\; $\omega$-almost surely.
\end{enumerate}
Consider $L^{-1}_n \circ M_n(z) = \frac{a_n}{a'_n}z + \frac{(b_n-b'_n)}{a'_n}$, then using equations \ref{IsometryFormula} and \ref{DistanceFormula}, we conclude that on an $\omega$-big set,
\begin{align*}
d(L_n(\bm 0), M_n(\bm 0)) &= d(\bm 0, L^{-1}_n \circ M_n(\bm 0))\\
&=d((0,1),(|\frac{(b_n-b'_n)}{a'_n}|,|\frac{a_n}{a'_n}|))\\
&< 2\log \frac{\sqrt{\rho_n^{-\epsilon}+(\rho_n^{-\epsilon}-1)^2}+\sqrt{\rho_n^{-\epsilon}+(\rho_n^{-\epsilon}+1)^2}}{2}\\
&< 2\log (2\rho_n^{-\epsilon}) \\
&= 2 \log 2 + 2\epsilon r_n = O(2\epsilon \cdot r_n).
\end{align*}
Since $\epsilon$ is arbitrary, we conclude $(L_n(\bm 0))$ and $(M_n(\bm 0))$ represent the same point in ${^r\Hyp^3}$.
Therefore, $\Phi$ is a well-defined map.

We will now show that $\Phi$ is bijective.
To show this, we will construct the inverse map $\Xi: {^r\Hyp^3} \longrightarrow \Hyp_{Berk}$.
Given a point $x\in {^r\Hyp^3}$, we can represent it as $x = [(M_n(\bm 0))]$, where $M_n(z) = a_nz+b_n$.
Using equations \ref{IsometryFormula} and \ref{DistanceFormula}, we conclude that
$|a_n| < \rho_n^{-N}$ and $|b_n| < \rho_n^{-N}$ for some $N \in \N$ $\omega$-almost surely.
Hence $(a_n), (b_n)$ represent $a, b\in {^\rho\Com}$, with $a\neq 0$.
Denote $M(z) = az+b \in \Affine({^\rho\Com})$, and
we define 
$$
\Xi(x) = M(B(0,1)) \in \Hyp_{Berk}.
$$
Similarly, we can easily check that $\Xi$ is well defined, and $\Phi \circ \Xi$, $\Xi \circ \Phi$ are identity maps.
Therefore $\Phi$ is bijective.

We will now show that $\Phi$ is an isometry.
Given $a, b\in {^\rho\Com}$ represented by $(a_n)$ and $(b_n)$, then $M(z) = az+b\in \Affine({^\rho\Com})$ and $[(x_n)]\mapsto [(M_n(x_n))] \in {^r\Hyp^3}$ where $M_n(z) = a_nz+b_n$ are isometries of $\Hyp_{Berk}$ and ${^r\Hyp^3}$ respectively.
Hence, it suffices to show $d(x_g, M(x_g)) = d([(\bm 0)], [(M_n(\bm 0))])$.

If $M(x_g)$ is represented by a closed ball either contained or containing $B(0,1)$, then we can choose $M(z) = az$, and $d(x_g, M(x_g)) = |\log|a||$.
A direct computation using equation \ref{DistanceFormula}, we have $d(\bm 0, M_n(\bm 0)) = |\log |a_n||$, so 
$$
d(x^0, (M_n(\bm 0))) = \lim_\omega -|\log|a_n|| / \log \rho_n = |\log|a||,
$$
where the last equality holds by the definition of norm on ${^\rho\Com}$.

More generally, if $M(x_g)$ is represented by a closed ball $B(p, R)$ disjoint from $B(0,1)$, one can construct a geodesic by connecting $B(0,1)$ to $B(0, |p|)$ and then connecting $B(0, |p|)$ to $B(p, R)$.
By the above argument, one can show that $\Phi$ is an isometry on either geodesic segment.
Since $\Phi$ is a bijection, and ${^r\Hyp^3}$ is a tree, this means $d(x_g, M(x_g)) = d([(\bm 0)], [(M_n(\bm 0))])$. Therefore, $\Phi$ is an isometry.
\end{proof}

\subsection*{The isometric bijection $\Phi$ is a conjugacy.}
Before proving the equivalence theorem, we need the following algebraic lemma.
\begin{lem}\label{ReductionAtGaussPoint}
Let $\mathbf{f} \in \Rat_d({^\rho\Com})$, and $f_n\in \Rat_d(\C)$ be a sequence representing $\mathbf{f}$.
Let $F=\lim_\omega \E f_n$ be the limiting map on ${^r\Hyp^3}$.
Then the reduction of $\mathbf{f}$ has degree $\geq 1$ if and only if $F (x^0) = x^0$.
\end{lem}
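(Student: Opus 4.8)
The statement is a bridge between the algebraic reduction of $\mathbf{f} \in \Rat_d({}^\rho\C)$ at the Gauss point and the behavior of the geometric limiting map $F = \lim_\omega \E f_n$ at the base point $x^0 = \Phi(x_g)$. The plan is to interpret each side of the "if and only if" in terms that can be directly compared on the sequence level. On the Berkovich side, by Theorem \ref{ReductionAsTangentMap} (applied at the Gauss point with $M = L = \mathrm{id}$), the reduction $\tilde{\mathbf{f}}$ \emph{is} the tangent map $D_{x_g}\mathbf{f}$, and $\mathbf{f}(x_g) = x_g$ precisely when $\tilde{\mathbf{f}}$ is nonconstant, i.e. $\deg \tilde{\mathbf{f}} \geq 1$; indeed Proposition \ref{ReductionGEQ1} with $x = y = x_g$, $M = L = \mathrm{id}$ says exactly $\mathbf{f}(x_g) = x_g \iff \tilde{\mathbf{f}}$ has nonconstant reduction. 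So the content to be proved is really: $\mathbf{f}(x_g) = x_g$ on the Berkovich side $\iff F(x^0) = x^0$ on the geometric side. Since $\Phi$ is an isometric bijection with $\Phi(x_g) = x^0$, and (by the remark preceding this lemma, or by the forthcoming equivalence Theorem \ref{EquivTheorem}) one expects $\Phi \circ B = F \circ \Phi$, this would be immediate — but we should \emph{not} invoke the equivalence theorem here, since this lemma is presumably a step toward proving it. So I would argue directly.

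\textbf{Translating to the sequence level.} First I would unwind what $F(x^0) = x^0$ means. By construction $r_n = \max_{y \in \E f_n^{-1}(\bm 0)} d_{\Hyp^3}(y, \bm 0)$ is the rescaling, and $F(x^0) = [\E f_n(\bm 0)]$. So $F(x^0) = x^0$ in ${}^r\Hyp^3$ means $\lim_\omega d_{\Hyp^3}(\E f_n(\bm 0), \bm 0)/r_n = 0$, i.e. $\E f_n(\bm 0)$ stays within $o(r_n)$ of $\bm 0$. By Lemma \ref{bdd} and the discussion around it, $\E f_n(\bm 0)$ is governed by the weak limit of $(f_n)_*\mu_{S^2}$: if degree-$0$ maps lie in the limit set of $\{f_n\}$ then $\E f_n(\bm 0) \to \infty$, otherwise it stays bounded. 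The key point is that the \emph{normalization} making $\mathbf{f} \in \Rat_d({}^\rho\C)$ — scaling coefficients so the max norm is $1$, then reducing — is exactly the algebraic shadow of how fast the coefficients of $f_n$ blow up, which in turn controls whether $(f_n)_*\mu_{S^2}$ degenerates. Concretely, write $f_n = (P_n : Q_n)$ with coefficient vectors normalized to have sup norm $1$ in $\C$; then $\tilde{\mathbf{f}}$ is the algebraic limit $\varphi_{\mathbf{f}}$ of these normalized $f_n$ in $\overline{\Rat_d(\C)}$ (along $\omega$), and $\deg \tilde{\mathbf{f}} \geq 1$ iff the $\omega$-limit is not a degree-$0$ map. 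The subtlety is that the normalization used to define $\mathbf{f}$ over ${}^\rho\C$ (sup norm $1$) need not be the same $\C$-representative that has $\E f_n(\bm 0)$ near $\bm 0$; but $\E f_n(\bm 0)$ near $\bm 0$ is a \emph{conjugacy-invariant-free} statement about the specific representatives, and I would check that the representatives $f_n$ of $\mathbf{f}$ are, up to $SO(3)$ and up to $o(r_n)$-bounded hyperbolic displacement, the ones with $\E f_n(\bm 0) \approx \bm 0$ precisely when no degree-$0$ map appears in the $\omega$-limit.

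\textbf{Assembling the equivalence.} The two directions then run as follows. If $\deg \tilde{\mathbf{f}} \geq 1$: the normalized $f_n$ $\omega$-converge to $\varphi_{\mathbf{f}}$ of degree $\geq 1$, so $(f_n)_*\mu_{S^2} \rightharpoonup (\varphi_{\mathbf{f}})_*\mu_{S^2}$ which is non-atomic with a well-defined barycenter, hence $\E f_n(\bm 0) \to \E\varphi_{\mathbf{f}}(\bm 0)$ — in particular $d_{\Hyp^3}(\E f_n(\bm 0), \bm 0)$ is $O(1)$, so $= o(r_n)$, giving $F(x^0) = x^0$. Conversely, if $\deg \tilde{\mathbf{f}} = 0$: the reduction is a constant $c \in \Proj^1_{\tilde{K}}$, so the normalized $f_n$ $\omega$-converge to the constant map $c$, hence $(f_n)_*\mu_{S^2} \rightharpoonup \delta_c$, forcing $\E f_n(\bm 0) \to \infty$ — but I must show it escapes at the full rate $r_n$, not merely that it escapes. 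Here is the \textbf{main obstacle}: controlling the escape rate. I expect this to follow from the definition of $r_n$ as the \emph{maximal} preimage distance together with the fact that $\E f_n$ is $Cd$-Lipschitz (Theorem \ref{RationalLip}): some preimage $y_n$ of $\bm 0$ has $d(y_n, \bm 0) = r_n$, so $d(\E f_n(\bm 0), \bm 0) \geq d(\E f_n(\bm 0), \E f_n(y_n)) - d(\bm 0, \bm 0)$... more carefully, $d(\bm 0, \E f_n(\bm 0)) = d(\E f_n(y_n), \E f_n(\bm 0))$ needs a \emph{lower} bound, and Lipschitz only gives upper bounds — so instead I would use the valuation-theoretic description: $d(\E f_n(\bm 0), \bm 0)/r_n \to \nu(\text{leading coefficient data of } \mathbf{f})$-type quantity, and when the reduction is constant this valuation is strictly positive, forcing $F(x^0) \neq x^0$. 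The cleanest route is probably: when $\tilde{\mathbf{f}}$ is constant, conjugate $\mathbf{f}$ by a suitable $L \in \PSL_2({}^\rho\C)$ moving $x_g$ to $\mathbf{f}(x_g) \neq x_g$ so that $L^{-1}\mathbf{f}$ has nonconstant reduction (Proposition \ref{ReductionGEQ1}), translate this $L$ into a sequence $L_n \in \PSL_2(\C)$ with $d(\bm 0, L_n(\bm 0))/r_n \to d(x_g, \mathbf{f}(x_g)) > 0$, apply the first direction to $L_n^{-1} f_n$ to get $\E(L_n^{-1}f_n)(\bm 0) = o(r_n)$ from $\bm 0$, and conclude by naturality $\E f_n(\bm 0) = L_n \E(L_n^{-1}f_n)(\bm 0)$ is $d(x_g,\mathbf{f}(x_g)) \cdot r_n + o(r_n)$ from $\bm 0$, hence $F(x^0) = \Phi(\mathbf{f}(x_g)) \neq x^0$. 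This simultaneously handles both directions and pins down the rate.
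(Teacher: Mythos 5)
Your forward direction has a genuine gap, and it is not cosmetic. You claim that if you normalize the coefficient vector of $f_n$ to have Euclidean sup norm $1$, then $\tilde{\mathbf{f}}$ ``is'' the $\omega$-limit $\varphi_{\mathbf f}\in \overline{\Rat_d(\C)}$, so that $\deg\tilde{\mathbf f}\geq 1$ iff $\deg\varphi_{\mathbf f}\geq 1$, and you then conclude $\E f_n(\bm 0)=O(1)$. This is false, because the residue field $\tilde K$ of ${}^\rho\C$ is strictly larger than $\C$: a coefficient $c\in{}^\rho\C$ with $\nu(c)=0$ can be represented by an unbounded sequence such as $c_n=n$ (since $\log n/\log\rho_n\to 0$), and its image in $\tilde K$ is a nonzero element that is \emph{not} any complex constant. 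Concretely, take $f_n(z)=z^2+nz$ representing $\mathbf f(z)=z^2+cz\in\Rat_2({}^\rho\C)$ with $\rho_n=e^{-n^2}$: the reduction $\tilde{\mathbf f}(z)=z^2+\tilde c z$ has degree $2$, but the $\C$-normalized coefficient vector converges to the constant-$\infty$ degree-$0$ map, and $d_{\Hyp^3}(\E f_n(\bm 0),\bm 0)\sim\log n\to\infty$, not $O(1)$. The statement you actually need is the weaker $d_{\Hyp^3}(\E f_n(\bm 0),\bm 0)=o(r_n)$, and to get it the paper does not take a naive $\C$-limit of the normalized $f_n$; instead it first locates the indices $i_{top},i_{bot}$ of the $\omega$-maximal numerator and denominator coefficients and pre-composes by $L_n\in\PSL_2(\C)$ (a scaling, or a translation followed by a scaling) whose hyperbolic displacement is only $o(r_n)$, not $O(1)$; only after this renormalization does $\lim_\omega L_n\circ f_n$ genuinely have degree $\geq 1$ in $\overline{\Rat_d(\C)}$, and then Lemma \ref{bdd} applies. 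Your ``up to $o(r_n)$-bounded hyperbolic displacement'' aside gestures at this, but the argument as written never produces the $L_n$ and hence never closes the gap.

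Your ``cleanest route'' for the converse is sound in outline and in fact parallels the paper's: the paper also invokes Lemma \ref{bdd} and naturality to produce $L_n$ with $o(r_n)$ displacement such that $\lim_\omega L_n\circ f_n$ has degree $\geq 1$, passes to $L\in\PSL_2({}^\rho\C)$ via Proposition \ref{IsometricBijection}, and concludes that $\mathbf f$ has nontrivial reduction. But your version relies on the forward direction as a black box, so it inherits the gap above.
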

\begin{proof}
If the reduction of $\mathbf{f}$ has degree $\geq 1$, we can represent 
$$
\mathbf{f}(z) = \frac{a_d z^d+ ... +a_0}{b_dz^d+ ... +b_0}
$$ 
with $\max\{|a_d|, ..., |a_0|\} = 1$ and $\max\{|b_d|, ..., |b_0|\} = 1$.
We denote 
$$
f_n(z) = \frac{a_{d,n} z^d+ ... +a_{0,n}}{b_{d,n}z^d+ ... +b_{0,n}},
$$
where $(a_{k,n})$ and $(b_{k,n})$ represent $a_k$ and $b_k$ in ${^\rho\Com}$.

Let $i_{top}$ be the largest index $i$ so that 
$$
\lim_\omega |a_{j,n}| / |a_{i,n}| <\infty
$$
for all $j=0,...,d$.
Similarly, we define $i_{bot}$ accordingly.

If $i_{top} \neq i_{bot}$, we let $L_n(z) = b_{i_{bot},n}/a_{i_{top},n}z$, then
\begin{align*}
\lim_\omega L_n \circ f_n &= \lim_\omega \frac{a_{d,n}/a_{i_{top},n} z^d+ ... +a_{0,n}/a_{i_{top},n}}{b_{d,n}/b_{i_{bot},n} z^d+ ... +b_{0,n}/b_{i_{bot},n}}\\
&=\frac{z^{i_{top}} + ...}{z^{i_{bot}}+...}
\end{align*}
has degree $\geq 1$.
Since $r_n \to \infty$, by Proposition \ref{bdd},
$$
\lim_\omega d_{\Hyp^3}(\bm 0, L_n \circ \E f_n (\bm 0))/r_n = 0.
$$
Since $|b_{i_{bot}}| = |a_{i_{top}}| = 1$, by equations \ref{IsometryFormula} and \ref{DistanceFormula}, for any $\epsilon > 0$
$$
d_{\Hyp^3}(\bm 0, L_n(\bm 0)) < \epsilon r_n
$$ 
for all large $n$. 
Hence we have $F (x^0) = x^0$.

If $i_{top} = i_{bot}$, we let $L_n(z) = z - a_{i_{top},n}/b_{i_{bot},n}$ and consider $g_n = L_n \circ f_n$.
Since $|b_{i_{bot}}| = |a_{i_{top}}| = 1$, by equations \ref{IsometryFormula} and \ref{DistanceFormula}, for any $\epsilon > 0$
$$
d_{\Hyp^3}(\bm 0, L_n(\bm 0)) < \epsilon r_n
$$ 
for all large $n$.
Moreover, the map $\mathbf{g}$ represented by $g_n$ has non trivial reduction, and the indices $i_{top}$ and $i_{bot}$ for $g_n$ are different.
By applying the previous argument for $g_n$, we conclude that
$F (x^0) = x^0$.

Conversely, if $F (x^0) = x^0$, by naturality of the barycentric extension and Proposition \ref{bdd}, we can choose $L_n(z) = a_n z+ b_n$ with
$$
\lim_\omega d(\bm 0, L_n(\bm 0))/ r_n = 0
$$
so that $\lim_\omega L_n \circ f_n$ has degree $\geq 1$.
Therefore, by Proposition \ref{IsometricBijection}, $a_n$ and $b_n$ represents $a, b\in {^\rho\C}$ with $|a| = 1$ and $|b| < 1$.
Let $L(z) = az+ b\in \PSL_2({^\rho\C})$. 
Then $L \circ \mathbf{f}$ has non-trivial reduction.
Hence $\mathbf{f}$ has non-trivial reduction.
\end{proof}

We are now ready to prove a more precise version of Theorem \ref{EquivTheorem}:
\begin{theorem}\label{ConnectionToBerkovichDynamics}
Let $f_n \to \infty$ in $\Rat_d(\C)$. Let ${^r\Hyp^3}$ be the asymptotic cone with respect to the rescalings $r_n = \max_{y\in \E f_n^{-1}(\bm 0)}d_{\Hyp^3}(y, \bm 0)$, and ${^\rho\C}$ be the complexified Robinson's field with respect to $\rho_n = e^{-r_n}$.
Then $f_n$ represents a rational map $\mathbf{f}\in \Rat_d({^\rho\C})$, and
$$
\Phi\circ B = F \circ \Phi,
$$
where $F = \lim_\omega \E f_n$ is the limiting map on ${^r\Hyp^3}$, and $B$ is the Berkovich extension of $\mathbf{f}$ on $\Hyp_{Berk}({^\rho\C})$.

Conversely, if $\mathbf{f} \in \Rat_d({^\rho\C})$, and $F = \lim_\omega \E f_n$ for (any) sequence $f_n$ representing $\mathbf{f}$, we have
$$
\Phi\circ B = F \circ \Phi.
$$
\end{theorem}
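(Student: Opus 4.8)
The plan is to verify the conjugacy identity $\Phi\circ B = F\circ\Phi$ pointwise, exploiting the $\PSL_2$-equivariance of the barycentric extension (its conformal naturality) and of $\Phi$, and then to read off the degree assertion.

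First I would normalize: after rescaling the homogeneous coordinates of $f_n$ so that the largest absolute value among its coefficients equals $1$, these coefficients are bounded sequences and hence represent elements of the valuation ring of ${^\rho\C}$; this produces a rational map $\mathbf f$ over ${^\rho\C}$ of degree $\leq d$ represented by $f_n$, and $\deg\mathbf f=d$ will come out at the end. Next, fix $x\in\Hyp_{Berk}({^\rho\C})$. By Corollary \ref{TypeIIOnly} it is a Type II point, and since $\nu(\rho^t)=t$ for every $t\in\R$ the value group of ${^\rho\C}$ is all of $\R$; hence I may write $x = M(x_g)$ with an affine $M(z)=az+b\in\PSL_2({^\rho\C})$, so that by definition $\Phi(x)=(M_n(\bm 0))$, where $M_n(z)=a_nz+b_n$ and $(a_n),(b_n)$ represent $a,b$. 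Since $B$ sends $\Hyp_{Berk}({^\rho\C})$ into itself, $y:=B(x)$ is again a Type II point; write $y=L(x_g)$ with an affine $L(z)=cz+e\in\PSL_2({^\rho\C})$, so $\Phi(y)=(L_n(\bm 0))$. The assertion to be proved then becomes $F(\Phi(x))=\Phi(y)$, that is, $(\E f_n(M_n(\bm 0)))=(L_n(\bm 0))$ in ${^r\Hyp^3}$.

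The key step is to transfer this back to a statement about reductions. Set $\mathbf g:=L^{-1}\circ\mathbf f\circ M$. Since $\mathbf f(x)=y$ with $x=M(x_g)$ and $y=L(x_g)$, Proposition \ref{ReductionGEQ1} says $\mathbf g$ has non-constant reduction. The sequence $g_n:=L_n^{-1}\circ f_n\circ M_n$ represents $\mathbf g$, and conformal naturality of the barycentric extension gives $\E g_n=L_n^{-1}\circ\E f_n\circ M_n$. Since $\mathbf g$ has reduction of degree $\geq 1$, Lemma \ref{ReductionAtGaussPoint} (applied to the sequence $g_n$, which represents $\mathbf g$) tells us that $\lim_\omega\E g_n$ fixes $x^0$, i.e.
$$
\bigl(L_n^{-1}(\E f_n(M_n(\bm 0)))\bigr)=x^0=(\bm 0)\qquad\text{in }{^r\Hyp^3}.
$$
Because $L\in\PSL_2({^\rho\C})$, the map $(w_n)\mapsto(L_n(w_n))$ is a well-defined isometry of ${^r\Hyp^3}$ — precisely the fact noted in the proof of Proposition \ref{IsometricBijection} — so applying it to both sides of the display gives $(\E f_n(M_n(\bm 0)))=(L_n(\bm 0))$, which is exactly $F(\Phi(x))=\Phi(B(x))$. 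Hence $\Phi\circ B=F\circ\Phi$. Finally, since $\Phi$ is a bijection and $F$ is a degree $d$ branched covering by Theorem \ref{DGL}, its conjugate $B$ is a degree $d$ branched covering of $\Hyp_{Berk}({^\rho\C})$, which forces $\deg\mathbf f=d$; this also settles the first clause. The converse statement requires no new work, since the argument used nothing about $f_n$ beyond $\rho_n=e^{-r_n}\to0$, which is part of the construction of ${^\rho\C}$.

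I expect the only genuine obstacle to be Lemma \ref{ReductionAtGaussPoint} itself, the dictionary entry translating ``$\mathbf g$ has non-trivial reduction at the Gauss point'' into ``$\E g_n(\bm 0)$ stays at sublinear distance from $\bm 0$'' — but this has already been proved. Everything else is routine bookkeeping: that composing representing sequences represents the composition (so $g_n$ really represents $\mathbf g$), that $M$ and $L$ may be chosen affine (this is why $\Phi$ was built out of affine maps and where the full value group $\R$ of ${^\rho\C}$ enters), and the well-definedness of $F$, of $\lim_\omega\E g_n$, and of $\Phi$, which rest on Theorem \ref{RationalLip} and Proposition \ref{IsometricBijection}.
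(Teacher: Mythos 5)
Your proof is correct and follows essentially the same route as the paper: both verify the conjugacy pointwise via conformal naturality of the barycentric extension, Lemma \ref{ReductionAtGaussPoint}, and Proposition \ref{ReductionGEQ1}, and both deduce $\deg\mathbf f=d$ at the end from the bijectivity of $\Phi$ and Theorem \ref{DGL}. The only difference is cosmetic: the paper starts from $x\in{^r\Hyp^3}$ and $y=F(x)$ and translates the condition $F(x)=y$ to the Berkovich side, whereas you start from $x\in\Hyp_{Berk}$ and $y=B(x)$ and translate the other way; since Lemma \ref{ReductionAtGaussPoint} and Proposition \ref{ReductionGEQ1} are biconditionals, the two directions are interchangeable.
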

\begin{proof}
Write $f_n(z) = \frac{a_{d,n}z^d + ... + a_{0,n}}{b_{d,n}z^d + ... + b_{0,n}}$ so the maximum of $\lim_{\omega} |a_{i,n}|/\log \rho_n$ and $\lim_{\omega} |b_{i,n}|/\log \rho_n$ is $1$.
Thus, the sequences of coefficients $a_{i,n}$, $b_{i,n}$ represent elements in ${^\rho\C}$. Let $\mathbf{f}$ be the corresponding rational map with coefficients in ${^\rho\C}$.
Note that a priori, the degree of $\mathbf{f}$ may be $\leq d$.

Let $\Phi: \Hyp_{Berk} \longrightarrow {^r\Hyp^3}$ be the map defined as above. Then $\Phi$ is an isometric bijection.
Given $x\in {^r\Hyp^3}$ represented by the sequence $M_n(\bm 0)$ where $M_n \in \PSL_2(\C)$.
Assume that $y = F(x)$ is represented by  $L_n(\bm 0)$ where $L_n \in \PSL_2(\C)$.
Let $M, L \in \PSL_2({^\rho\C})$ be represented by $M_n$ and $L_n$ respectively.
Then by naturality of the barycentric extension and Lemma \ref{ReductionAtGaussPoint}, the reduction
$$
L^{-1} \circ \mathbf{f} \circ M
$$
has degree $\geq 1$.
By Proposition \ref{ReductionGEQ1} and the definition of $\Phi$, $F(\Phi(x)) = \Phi(B(y))$.
Hence $\Phi$ is a conjugacy between $F$ and $B$.

Since $F$ has degree $d$ by Theorem \ref{DGL}, we conclude $B$ has degree $d$ as well.
Therefore, $\mathbf{f}$ has degree $d$, so $\mathbf{f} \in \Rat_d({^\rho\C})$.

The converse part follows by a similar argument.
\end{proof}

\subsection*{A version of holomorphic families.}
Let $f_t$ be a holomorphic family of rational maps of degree $d>1$ defined over the punctured unit disk $\Delta^* = \{t\in\C: 0 < |t| < 1\}$. We also assume that all the coefficients of $f_t$ extend to meromorphic functions on the unit disk $\Delta$.
We may view $\mathbf{f} = f_t$ as a rational map with coefficients in the field of formal Puiseux series $\PS$.
\begin{theorem}\label{ConnectionToBerkovichDynamicsFamily}
Let $\rho_n \to 0$ and $r_n = -\log|\rho_n|$. 
There is an isometric embedding
$$
\Phi: \Hyp_{Berk}(\PS)\hookrightarrow {^r\Hyp^3} \cong \Hyp_{Berk}({^\rho\C}).
$$

Moreover, if $\mathbf{f} = f_t$ is a holomorphic family of rational maps of degree $d>1$ defined over $\Delta^*$, then
$$
\Phi\circ B = F \circ \Phi,
$$
where $F = \lim_\omega \E f_{\rho_n}$ is the limiting map on ${^r\Hyp^3}$, and $B$ is the Berkovich extension of $\mathbf{f}$ on $\Hyp_{Berk}(\PS)$.
\end{theorem}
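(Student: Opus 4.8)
\emph{Strategy.} I would deduce this from Theorem~\ref{ConnectionToBerkovichDynamics} by base change along the norm-preserving field embedding $\Psi\colon\PS\hookrightarrow{^\rho\C}$ of Proposition~\ref{PSEmbedding}. First I promote $\Psi$ to a map of Berkovich hyperbolic spaces $\widehat\Psi\colon\Hyp_{Berk}(\PS)\to\Hyp_{Berk}({^\rho\C})$: on a Type II point $x=M(x_g)$, $M\in\PSL_2(\PS)$, set $\widehat\Psi(x)=\Psi_*(M)(x_g)$, where $\Psi_*(M)\in\PSL_2({^\rho\C})$ applies $\Psi$ to the coefficients of $M$; equivalently, a point represented by a decreasing sequence of closed balls $\{B(a_i,r_i)\}$ is sent to $\{B(\Psi(a_i),r_i)\}$, which is again a decreasing sequence of closed balls because $\Psi$ preserves absolute values, and whose intersection is nonempty by spherical completeness of ${^\rho\C}$ (Theorem~\ref{SphericallyComplete}); since $\nu$ is surjective onto $\R$ the resulting point is Type II, consistent with Corollary~\ref{TypeIIOnly}. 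One checks $\widehat\Psi$ is well defined (interleaving ball sequences map to interleaving ball sequences) and, reading the distance $d(x,y)=2\log R-\log r-\log s$ with $R=\max(r,s,|a-b|)$ off the formula, that $\widehat\Psi$ is an isometric embedding; alternatively one defines it on the dense set of Type II points and extends by completeness of $\Hyp_{Berk}(\PS)$. Composing with the isometric bijection $\Phi_0\colon\Hyp_{Berk}({^\rho\C})\xrightarrow{\ \cong\ }{^r\Hyp^3}$ of Proposition~\ref{IsometricBijection} gives the required isometric embedding $\Phi=\Phi_0\circ\widehat\Psi$; concretely $\Phi(M(x_g))=(M_{\rho_n}(\bm 0))$, where $M_{\rho_n}$ denotes the coefficients of $M$ evaluated at $t=\rho_n$.

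\emph{Identifying the two dynamical systems.} View $\mathbf f=f_t$ as a rational map over $\PS$; it has honest degree $d$ there because $f_t$ has degree $d$ for every $t\in\Delta^*$, so the resultant $t\mapsto\Res(f_t)$ is a nonzero meromorphic function, hence a nonzero element of $\PS$. Applying $\Psi$ coefficientwise yields $\Psi_*\mathbf f\in\Rat_d({^\rho\C})$ (the degree cannot drop since $\Psi$ is an injective ring homomorphism). Because $\Psi(t^\lambda)=\rho^\lambda$ is represented by $(\rho_n^\lambda)$, Lemma~\ref{pseries} together with absolute convergence of the evaluated Laurent series once $\rho_n$ lies inside the disk of convergence (a comparison of partial sums) shows that each coefficient $\sum_j a_j t^{\lambda_j}$ of $\mathbf f$ is carried by $\Psi$ to the class of the corresponding coefficient of $f_{\rho_n}$; hence $\Psi_*\mathbf f$ is represented by the sequence $f_{\rho_n}$. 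Theorem~\ref{ConnectionToBerkovichDynamics}, applied to $f_{\rho_n}$ and $\Psi_*\mathbf f$ — its conjugacy assertion rests on Lemma~\ref{ReductionAtGaussPoint} and needs only $r_n\to\infty$, not the particular rescaling — then gives $\Phi_0\circ B_{{^\rho\C}}=F\circ\Phi_0$, where $B_{{^\rho\C}}$ is the Berkovich extension of $\Psi_*\mathbf f$ and $F=\lim_\omega\E f_{\rho_n}$.

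\emph{Equivariance of $\widehat\Psi$ and conclusion.} It remains to prove $\widehat\Psi\circ B=B_{{^\rho\C}}\circ\widehat\Psi$, where $B$ is the Berkovich extension of $\mathbf f$ over $\PS$. On Type II points this is Proposition~\ref{ReductionGEQ1}: $B(M(x_g))=L(x_g)$ iff $L^{-1}\circ\mathbf f\circ M$ has nonconstant reduction over $\PS$, and $\Psi$ turns this into the same statement for $\Psi_*(L)^{-1}\circ\Psi_*\mathbf f\circ\Psi_*(M)$ over ${^\rho\C}$, because $\Psi$ preserves norms and induces the inclusion of residue fields $\widetilde\PS=\C\hookrightarrow\widetilde{{^\rho\C}}$, so reduction commutes with $\Psi$. (Conceptually this is functoriality of the Berkovich action under base change, visible from the `Proj' description: pushing seminorms on $\PS[X,Y]$ along $\Psi$ commutes with substitution into the homogeneous lift of $\mathbf f$.) One extends the identity from the dense set of Type II points using continuity of $B$ and $B_{{^\rho\C}}$ and the isometric embedding property of $\widehat\Psi$. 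Combining, $\Phi\circ B=\Phi_0\circ\widehat\Psi\circ B=\Phi_0\circ B_{{^\rho\C}}\circ\widehat\Psi=F\circ\Phi_0\circ\widehat\Psi=F\circ\Phi$.

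\emph{Main obstacle.} The delicate step is making $\widehat\Psi$ honest on the non-Type-II locus, both as an isometric embedding and as an intertwiner of the two Berkovich dynamics. Since $\PS$ is not spherically complete and has value group $\Q$, while ${^\rho\C}$ is spherically complete with value group all of $\R$, every Type III and Type IV point of $\Hyp_{Berk}(\PS)$ is absorbed into a Type II point of $\Hyp_{Berk}({^\rho\C})$, and it is precisely here that the spherical completeness of ${^\rho\C}$ and the norm-preservation of $\Psi$ are doing the real work. The remaining ingredients — that $\mathbf f$ genuinely has degree $d$ over $\PS$, and that the conjugacy of Theorem~\ref{ConnectionToBerkovichDynamics} survives an arbitrary rescaling $r_n\to\infty$ — are routine.
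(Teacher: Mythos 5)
Your proposal is correct and follows essentially the same route as the paper's proof: use the norm-preserving embedding $\Psi\colon\PS\hookrightarrow{^\rho\C}$ of Proposition~\ref{PSEmbedding} to induce an isometric embedding of Berkovich hyperbolic spaces, identify $\mathbf f$ over $\PS$ with a map over ${^\rho\C}$ represented by $f_{\rho_n}$, and invoke Theorem~\ref{ConnectionToBerkovichDynamics}. The paper compresses these steps into a few sentences; your write-up fills in exactly the details that are implicit there (spherical completeness absorbing Type III/IV points, the partial-sum comparison identifying $\Psi_*\mathbf f$ with $[f_{\rho_n}]$, the observation that the converse clause of Theorem~\ref{ConnectionToBerkovichDynamics} already covers an arbitrary rescaling $r_n\to\infty$, and functoriality of the Berkovich action under base change).
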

\begin{proof}
By Proposition \ref{PSEmbedding}, the field $\PS$ naturally embeds isometrically into ${^\rho\Com}$. 
Such an embedding gives an isometric embedding of $\Hyp_{Berk}(\PS)$ into $\Hyp_{Berk}({^\rho\Com})$ by the Berkovich classification theorem.
A rational map $\mathbf{f}$ with coefficients in $\PS$ can be thought of as a rational map with coefficients in ${^\rho\Com}$ via the embedding.
Its action on $\Hyp_{Berk}(\PS)$ naturally extends to $\Hyp_{Berk}({^\rho\Com})$.
The theorem now follows from Theorem \ref{ConnectionToBerkovichDynamics}.
\end{proof}

\section{Properties of the limiting map $F$}
In this section, using the equivalence result proved in Theorem \ref{EquivTheorem} and Theorem \ref{ConnectionToBerkovichDynamics}, we summarize some properties of $F$ that will be used later.

\subsection*{Expansion and local degrees.}
Recall that the {\em local degree} $\deg_x F$ is the degree of the reduction $\tilde{\mathbf{g}}$ of $\mathbf{g} = L^{-1}\circ \mathbf{f} \circ M$, where $x = [(M_n(\bm 0))]$, $F(x) = [(L_n(\bm 0))]$ and $M_n, L_n$ represent $M, L \in \PSL_2({^\rho\C})$. 
The local degree in the direction $\vec v\in T_x{^r\Hyp^3}$, denoted by $m_{\vec v} F$, is defined as the degree of $\tilde{\mathbf{g}}$ at $\vec v$.

The following theorem allows us to interpret the local degree quite concretely as a local expansion factor (see Proposition 3.1 in \cite{Rivera-Letelier05}, Theorem 9.26 in \cite{BakerRumely10} and Theorem 4.7 in \cite{Jonsson15}).
\begin{theorem}\label{CritVec}
Let $x\in {^r\Hyp^3}$, and $\vec{v} \in T_x{^r\Hyp^3}$.
\begin{enumerate}
\item For all sufficiently small segment $\gamma = [x,w]$ representing $\vec v$, $F$ maps $\gamma$ homeomorphically to $F(\gamma)$ and expands by a factor of $m_{\vec v} F$.
\item If $\vec w\in T_{F(x)}{^r\Hyp^3}$, and $\vec v_1,..., \vec v_k$ are the preimages of $\vec w$ in $T_x{^r\Hyp^3}$, then
$$
\sum_{i=1}^k m_{\vec v_i} F = \deg_x F.
$$
\end{enumerate}

If $\alpha:(0,\infty) \longrightarrow {^r\Hyp^3}$ is an end associated to $x\in \Proj^1_K$, then for all sufficiently large $K$,  $F$ maps $\alpha([K,\infty))$ homeomorphically to $F(\alpha([K,\infty)))$ and expands by a factor of $\deg_x \mathbf{f}$.
\end{theorem}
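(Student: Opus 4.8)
The plan is to transport the statement to Berkovich dynamics through the isometric conjugacy of Theorem~\ref{ConnectionToBerkovichDynamics}, and then quote the corresponding facts for rational maps on $\Hyp_{Berk}$. Write $\Phi\colon\Hyp_{Berk}({^\rho\C})\to{^r\Hyp^3}$ for the isometric bijection of Proposition~\ref{IsometricBijection}, so that $F=\Phi\circ B\circ\Phi^{-1}$, where $B$ is the Berkovich extension of $\mathbf{f}\in\Rat_d({^\rho\C})$. Since $\Phi$ is an isometry it carries $T_x{^r\Hyp^3}$ isomorphically onto $T_{\Phi^{-1}(x)}\Hyp_{Berk}$, and, directly from the definition of $\deg_xF$ and $m_{\vec v}F$ in the ``Expansion and local degrees'' subsection together with Theorem~\ref{ReductionAsTangentMap}, these integers coincide with the Berkovich local degree of $\mathbf{f}$ at $\Phi^{-1}(x)$ and with the directional multiplicity of the reduction $\tilde{\mathbf{g}}$ of $\mathbf{g}=L^{-1}\circ\mathbf{f}\circ M$ (notation as in Theorem~\ref{ReductionAsTangentMap}). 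So it suffices to prove the three assertions for $B$ and pull them back by $\Phi$.

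For parts (1) and (2): by Corollary~\ref{TypeIIOnly} every point of $\Hyp_{Berk}({^\rho\C})$ is of Type~II, so $x=M(x_g)$ and $B(x)=L(x_g)$ with $M,L\in\PSL_2({^\rho\C})$. Proposition~3.1 of \cite{Rivera-Letelier05} (equivalently Theorem~9.26 of \cite{BakerRumely10}, Theorem~4.7 of \cite{Jonsson15}) then yields part~(1): there is $\epsilon_0>0$ so that $B$ maps every segment $[x,w]$ of length $<\epsilon_0$ representing $\vec v$ homeomorphically onto $[B(x),B(w)]$ and multiplies hyperbolic distance by the integer $\deg_{\vec v}\tilde{\mathbf{g}}$, which under the identification above equals $m_{\vec v}F$. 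Part~(2) is the identity $\sum_i\deg_{\vec v_i}\tilde{\mathbf{g}}=\deg\tilde{\mathbf{g}}=\deg_xB$, valid because $\tilde{\mathbf{g}}\colon\Proj^1_{\widetilde{{^\rho\C}}}\to\Proj^1_{\widetilde{{^\rho\C}}}$ is a rational map of that degree and $\{\vec v_i\}$ is precisely its fiber over $\vec w$ counted with multiplicity; this is again contained in the cited results, and may also be read off from Theorem~\ref{ReductionAsTangentMap}.

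For the statement about ends, I would first observe that an infinite end $\alpha$ of ${^r\Hyp^3}$ corresponds under $\Phi^{-1}$ to a classical point $\xi\in\Proj^1_{{^\rho\C}}$, namely the Berkovich limit of $\Phi^{-1}(\alpha(t))$ as $t\to\infty$, and that $\deg_x\mathbf{f}$ in the statement is by definition the classical local multiplicity $\deg_\xi\mathbf{f}$. The key ingredient is the standard fact that the local degree stabilizes along a geodesic ray approaching a Type~I point: for $y$ on the ray toward $\xi$ and sufficiently close to $\xi$ one has $\deg_y\mathbf{f}=\deg_\xi\mathbf{f}$, and the tangent map $D_yB$ is totally ramified over the two directions along that ray, so the direction at $y$ pointing toward $\xi$ maps with multiplicity $\deg_\xi\mathbf{f}$ to the direction at $B(y)$ pointing toward $\mathbf{f}(\xi)$ (Proposition~3.1 of \cite{Rivera-Letelier05}, Theorem~4.7 of \cite{Jonsson15}). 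Feeding this into part~(1) and concatenating over the tail, $B$ carries $\{\Phi^{-1}(\alpha(t)):t\ge K\}$ homeomorphically onto the tail of the ray toward $\mathbf{f}(\xi)$, expanding by $\deg_\xi\mathbf{f}$, for all large $K$; transporting back by the isometry $\Phi$ finishes the proof.

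I expect parts (1) and (2) to be a bookkeeping translation once the identification of $\deg_xF,m_{\vec v}F$ with their Berkovich counterparts is in place, which is immediate from Theorems~\ref{ConnectionToBerkovichDynamics} and~\ref{ReductionAsTangentMap}. The main obstacle is the ends statement: its genuinely non-formal content --- that near a classical point the local degree equals the classical multiplicity, and that the unique ramified tangent direction is the one along the ray, so the image of a tail of the ray is again a tail rather than a fold --- has to be imported from \cite{Rivera-Letelier05} and \cite{Jonsson15}. The only point specific to the present construction is verifying, via Section~\ref{SectionProofOfConnection} and Section~\ref{CRF}, that the combinatorial data is compatible under $\Phi$: that the ends of ${^r\Hyp^3}$ are in bijection with $\Proj^1_{{^\rho\C}}$, and that the residue field $\widetilde{{^\rho\C}}$ realizing $T_x{^r\Hyp^3}\cong\Proj^1_{\widetilde{{^\rho\C}}}$ is the next field in the cascade of Robinson's fields.
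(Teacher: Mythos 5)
Your proposal is correct and matches the paper's approach: the paper states Theorem~\ref{CritVec} without an explicit proof, simply transporting the corresponding facts from Berkovich dynamics (Proposition~3.1 of \cite{Rivera-Letelier05}, Theorem~9.26 of \cite{BakerRumely10}, Theorem~4.7 of \cite{Jonsson15}) through the conjugacy $\Phi$ of Theorem~\ref{ConnectionToBerkovichDynamics}, exactly as you do. Your proposal supplies slightly more detail than the paper on the ends case (stabilization of the local degree near a classical point and total ramification of the tangent maps along the ray), but the strategy is the same.
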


\subsection*{Approximating disks and annuli}
We say a sequence of simply connected domains $D_n\subset \C$ {\em approximates} an open disk $B(a, r)^- \subseteq {^\rho\C}$ if 
\begin{enumerate}
\item any sequence $z_n \in D_n$ represents a point in the closed disk $B(a,r)$;
\item any sequence $z_n \in \hat\C - D_n$ represents a point in $\Proj^1_{^\rho\C} - B(a,r)^-$.
\end{enumerate}

Similarly, we say a sequence of annuli $A_n \subseteq \C$ approximate an open annulus $A = B(a,R)^- - B(a,r) \subseteq {^\rho\C}$ if
\begin{enumerate}
\item any sequence $z_n \in A_n$ represents a point in $B(a,R)- B(a,r)^-$;
\item any sequence $z_n \in \hat\C - A_n$ represents a point in $(\Proj^1_{^\rho\C} - B(a,R)^-) \cup B(a,r)$.
\end{enumerate}

The following lemma gives the classification for approximating disks and annuli.
To distinguish the disks in $\C$ and ${^\rho\C}$, we use $B_\C(a, r)$ to represent an open disk centered at $a$ with radius $r$ in $\C$.
\begin{lem}\label{lem:cap}
Let $B(0,1)^- \subseteq {^\rho\C}$ be an open disk. 
A sequence of simply connected domains $D_n \subseteq \C$ approximates $B(0,1)^-$ if and only if
$$
B_\C(0, \rho_n^{-s}) \subseteq D_n \subseteq B_\C(0, \rho_n^{-S}) \, \, \text{ $\omega$-almost surely}
$$
for all $s < 0 < S$.

Let $A = B(0,R)^- - B(0,r) \subseteq {^\rho\C}$ be an open annulus.
Denote $A_\C(s, S) = B_\C(0, \rho_n^{-S}) - \overline{B_\C(0, \rho_n^{-s})} \subseteq \C$.
A sequence of annuli $A_n \subseteq  \C$ approximates $A$ if and only if
$$
A_\C(s_1, S_1) \subseteq A_n \subseteq A_\C(s_2, S_2)  \, \, \text{ $\omega$-almost surely}
$$
for all $s_2 < \log r < s_1$, $S_1 < \log R < S_2$.
\end{lem}
\begin{proof}
To simplify the notations, all the statements below are assumed to hold $\omega$-almost surely.
Assume $D_n$ approximates $B(0,1)^-$. Suppose for contradiction that $B_\C(0, \rho_n^{-s}) \not\subseteq D_n$. Let $z_n \in B_\C(0, \rho_n^{-s}) - D_n$. Then $(z_n)$ represents a point in $B(0, e^{-s}) \subseteq B(0, 1)^-$. Since $D_n$ approximates $B(0,1)^-$, $(z_n)$ represents a point in  $\Proj^1_{^\rho\C} - B(0, 1)^-$. This is a contradiction.
Thus, $B_\C(0, \rho_n^{-s}) \subseteq D_n$.
Similarly, $D_n \subseteq B_\C(0, \rho_n^{-S})$.

Conversely, assume $B_\C(0, \rho_n^{-s}) \subseteq D_n \subseteq B_\C(0, \rho_n^{-S})$ for all $s< 0< S$. Let $z_n \in D_n$. Then $z_n$ represents a point $z \in B(0, 1)$. Indeed, otherwise, let $S > 0$ with $e^S < |z|$. Then $z_n \notin B_\C(0, \rho_n^{-S})$ contradicting to the assumption. Similarly, any sequence $z_n\in \hat\C - D_n$ represents a point in $\Proj^1_{^\rho\C} - B(0, 1)^-$.
So $D_n$ approximates $B(0,1)^-$.

The proof for the annulus is similar.
\end{proof}

Recall that for $\vec v\in T_x{^r\Hyp^3}$, $U_{\vec v}$ is the component ${^r\Hyp^3} - \{x\}$ associated to $\vec v$.
By identifying ${^r\Hyp^3}$ with $\Hyp_{Berk}({^\rho\Com})$, $U_{\vec v}$ is identified with an open disk $D^-\subseteq \Proj^1_{^\rho\C}$.
We say a sequence of simply connected domains $D_n\subset \hat\C$ {\em approximates} $U_{\vec{v}}$ if after changing coordinates, $D^- \subseteq {^\rho\C}$ and $D_n$ approximates $D^-$.

Given two points $x, y\in {^r\Hyp^3}$, there is a unique component $U^{x,y}$ of ${^r\Hyp^3} - \{x,y\}$ with boundary $\{x, y\}$.
We can identify it with an open annulus $A \subseteq \Proj^1_{^\rho\C}$.
We define a sequence $A_n \subseteq \hat\C$ {\em approximates} $U^{x,y}$ in a similar way.

\begin{remark}
We remark that if $A_n$ approximate $U^{x,y}$, then the modulus $m(A_n)$ satisfies
$$
\lim_\omega 2\pi m(A_n)/r_n = d(x,y).
$$
In particular, the sequence of moduli $m(A_n)$ goes to infinity.
\end{remark}

\subsection*{Limiting dynamics on disks and annuli}
It is important to understand how the limiting map acts on $U_{\vec v}$ and $U^{x,y}$.
We first define a {\em critical end} as an end of ${^r\Hyp^3}$ associated with a critical point in $\Proj^1_{^\rho\C}$.
The {\em critical tree} $C$ is defined as the convex hull of the critical ends. 
Since the residual characteristic of ${^\rho\C}$ is $0$, any point not in the critical tree has local degree $1$ (see Lemma 4.12 in \cite{Jonsson15}, or \cite{Faber13, Faber13b}).
The following lemma follows from from Theorem 9.42, Theorem 9.46 in \cite{BakerRumely10} and the above observation.
\begin{lem}\label{IsomAwayCritTree}
\begin{enumerate}
\item Let $\vec v \in T_x{^r\Hyp^3}$, and $\vec w = D_x F(\vec v)$. 
Assume that $U_{\vec v}$ does not intersect the critical tree $C$.
Then $F$ is an isometric bijection from $U_{\vec v}$ to $U_{\vec w}$.
\item Let $x, y \in {^r\Hyp^3}$, and $x'=F(x)$, $y'=F(y)$.
Assume that $U^{x,y}$ contains no critical ends.
Then $F$ is a branched covering map from $U^{x,y}$ to $U^{x',y'}$.
\end{enumerate}
\end{lem}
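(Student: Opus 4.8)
The plan is to pull the statement back through the isometric conjugacy $\Phi$ of Theorem~\ref{EquivTheorem} (equivalently Theorem~\ref{ConnectionToBerkovichDynamics}) and argue on the Berkovich side, where $F$ is the Berkovich extension of the rational map $\mathbf{f}\in\Rat_d({^\rho\C})$ represented by $f_n$. Under the identification $\Hyp_{Berk}({^\rho\C})\cong{^r\Hyp^3}$, the component $U_{\vec v}$ corresponds to a Berkovich open disk $D$ and $U^{x,y}$ to a Berkovich open annulus with skeleton $[x,y]$; the bounding point(s) are the Type~II points $x$ (resp.\ $x,y$). Since $\Hyp_{Berk}({^\rho\C})$ has only Type~II points (Corollary~\ref{TypeIIOnly}), the $2d-2$ critical points of $\mathbf{f}$ are classical, and a point of the tree has local degree $>1$ only if it lies on the critical tree $C=\hull(\text{critical ends})$. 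Hence in case (1), $U_{\vec v}\cap C=\emptyset$ is equivalent to $U_{\vec v}$ containing no critical end, and then $D$ contains no critical point of $\mathbf{f}$ whatsoever and no point of $U_{\vec v}$ has local degree $>1$. In case (2), the hypothesis forbids critical ends inside $U^{x,y}$, and the locus where $F$ can fail to be a local isometry is contained in the finite subtree $C\cap\overline{U^{x,y}}$ (finite because $C$ has at most $2d-2$ ends), which lies along the skeleton $[x,y]$.

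For part (1): since $F$ is a degree-$d$ branched covering of ${^r\Hyp^3}$ (Theorem~\ref{DGL}) and has no point of local degree $>1$ on $U_{\vec v}$, its restriction to $U_{\vec v}$ is a genuine covering map onto its image; the image is a connected subtree, hence simply connected, so the covering is trivial and $F|_{U_{\vec v}}$ is a homeomorphism onto its image. By Theorem~\ref{CritVec}(1), every sufficiently short segment of $U_{\vec v}$ representing a tangent vector $\vec u$ is expanded by the factor $m_{\vec u}F=\deg_\zeta\mathbf{f}=1$, so $F|_{U_{\vec v}}$ is in addition an isometric embedding. Finally the tangent-map relation $\vec w=D_xF(\vec v)$ forces the image to lie in $U_{\vec w}$, and the structure theory of images of disks under unramified rational maps on the Berkovich line (Theorems~9.42 and~9.46 of \cite{BakerRumely10}) identifies it with all of $U_{\vec w}$. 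Transporting back through $\Phi$ proves (1).

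For part (2): the same structure theorems show that $\mathbf{f}$ carries the annulus onto a Berkovich open annulus, that it sends the skeleton $[x,y]$ onto $[F(x),F(y)]=[x',y']$ (so the image annulus is $U^{x',y'}$), that $F\colon U^{x,y}\to U^{x',y'}$ has a well-defined finite degree $k=\deg_{U^{x,y}}\mathbf{f}$, and that every point of $U^{x',y'}$ outside the finite nowhere-dense set $F(C\cap\overline{U^{x,y}})$ has exactly $k$ preimages in $U^{x,y}$. Off $C$ the local degree is $1$, so $F$ is a local isometry on $\overline{U^{x,y}}\setminus C$, while along $C\cap\overline{U^{x,y}}$ Theorem~\ref{CritVec} gives the expansion by the local degree and the local isometric-covering behaviour demanded by clause~(4) of the branched-covering definition. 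Taking $S=C\cap\overline{U^{x,y}}$, a finite subtree that is automatically nowhere dense in ${^r\Hyp^3}$ (every point of which has uncountably many tangent directions) and has nowhere-dense image, one checks the four clauses of the definition in Section~\ref{BCR} for $F\colon\overline{U^{x,y}}\to\overline{U^{x',y'}}$; transporting through $\Phi$ gives the assertion for $F\colon U^{x,y}\to U^{x',y'}$.

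The main obstacle, in both parts, is the boundary behaviour borrowed from \cite{BakerRumely10}: one must know that $\mathbf{f}$ sends the open disk (resp.\ annulus) onto an open disk (resp.\ annulus) whose bounding point(s) are exactly $F(x)$ (and $F(y)$) with the predicted tangent direction(s), rather than onto a strictly larger region produced by folding at the boundary or by surjecting onto $\Proj^1_{Berk}$; this is precisely what the hypothesis of no critical ends rules out, and it is where Theorems~9.42 and~9.46 of \cite{BakerRumely10} do the real work. A secondary point is the bookkeeping in part (2): verifying that $C\cap\overline{U^{x,y}}$ is an honest finite subtree and that clause~(4) holds uniformly near it, which follows from the finiteness of the critical tree together with the local description of $F$ in Theorem~\ref{CritVec}.
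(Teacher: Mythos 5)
Your proposal is correct and takes essentially the same route the paper does: the paper simply cites Theorems 9.42 and 9.46 of Baker--Rumely together with the observation (from Jonsson/Faber) that local degree is $1$ off the critical tree, and you unpack precisely those ingredients through the conjugacy $\Phi$. The one remark worth making is that the intermediate ``local isometry $\Rightarrow$ covering $\Rightarrow$ trivial covering'' step in part (1) is redundant — a local isometry between $\R$-trees is automatically injective (a fold at a point would contradict local isometry there), and in any case injectivity/surjectivity onto $U_{\vec w}$ is exactly what the cited Baker--Rumely disk theorem delivers — but this does not affect the correctness of the argument.
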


Let $U^{x,y}$ and $U^{x',y'}$ be as in the above lemma. 
Since $U^{x',y'}$ contains no ends associated with critical values. 
We can choose a sequence $A'_n$ approximating $U^{x',y'}$ which contains no critical values of $f_n$.
Then $f_n^{-1}(A'_n) = A_{1,n} \cup ... \cup A_{k,n}$ is a union of annuli by Riemann-Hurwitz formula.
Note that if the sequence  $z_n \in \hat\C$ represents $z \in \Proj^1_{^\rho\C}$, then $f_n(z_n)$ represents $\mathbf{f}(z)$.
Thus, after changing the subindices, by Lemma \ref{lem:cap}, $A_n = A_{1,n}$ approximates $U^{x,y}$.
Therefore, by the Lemma \ref{IsomAwayCritTree}, we have
\begin{lem}\label{ApproxAnnuli}
Let $x, y \in {^r\Hyp^3}$, and $x'=F(x)$, $y'=F(y)$.
Assume that $U^{x,y}$ contains no critical ends, so $F: U^{x,y}\longrightarrow U^{x',y'}$ is a degree $e$ branched covering.
Then there exist sequences of annuli $A_n$ and $A_n'$ approximating $U^{x,y}$ and $U^{x',y'}$ such that
$$
f_n: A_n \longrightarrow A_n'
$$
is a degree $e$ covering $\omega$-almost surely.
\end{lem}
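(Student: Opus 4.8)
The plan is to lift the purely tree-theoretic content of Lemma~\ref{IsomAwayCritTree}(2) to the level of the maps $f_n$ through the approximation dictionary for annuli. To begin, since $U^{x,y}$ contains no critical ends, Lemma~\ref{IsomAwayCritTree}(2) tells us that $F$ restricts to a branched covering $F\colon U^{x,y}\to U^{x',y'}$; let $e$ denote its degree. By Theorem~\ref{CritVec} this $e$ is also the factor by which $F$ expands the core geodesic $(x,y)$, i.e.\ the common value of $m_{\vec v}F$ for $\vec v$ tangent to $(x,y)$, so $e$ is precisely the integer that should appear as the covering degree of $f_n\colon A_n\to A_n'$.

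The essential intermediate claim is that $U^{x',y'}$ contains no end associated to a critical value of $\mathbf f$; equivalently, no sequence of critical values of $f_n$ represents a point of $U^{x',y'}$. Writing a critical value of $\mathbf f$ as $\mathbf f(c)$ for a critical point (hence an end) $c\notin U^{x,y}$, the point $c$ lies beyond $x$ or beyond $y$, and one must show its image still lies beyond $x'$ or beyond $y'$. I would argue this by studying the tangent map of $F$ along the geodesic joining $c$ to $x$ (or to $y$): using Theorem~\ref{CritVec}, this geodesic maps onto the geodesic joining $\mathbf f(c)$ to $x'$ (resp.\ $y'$) without backtracking, so that $\mathbf f(c)$ is separated from $U^{x',y'}$ by $x'$ (resp.\ $y'$). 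The crux is ruling out a fold along that geodesic: a fold would identify two opposite tangent directions at some interior point $p$, forcing $p$ into the critical tree in a position which, in view of the conservation identity $\sum_i m_{\vec v_i}F=\deg_x F$ at $x$ together with the fact that the direction into $U^{x,y}$ already carries a degree-$e$ branched cover of $U^{x',y'}$, would put a critical end inside $U^{x,y}$. This is the step I expect to be the main obstacle, and it is the one place where the hypothesis is used essentially rather than just via Lemma~\ref{IsomAwayCritTree}.

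Once this claim is in hand, the remainder is bookkeeping with the dictionary. Since $f_n$ has only $2d-2$ critical values and, by the claim, none of their sequences represents a point of $U^{x',y'}$, one can choose a sequence of annuli $A_n'$ approximating $U^{x',y'}$ which for $\omega$-almost every $n$ contains no critical value of $f_n$; then the Riemann--Hurwitz formula forces every component of $f_n^{-1}(A_n')$ to be an annulus. Because $U^{x,y}$ is the convex hull of a finite set of ends and branch points and $F\colon U^{x,y}\to U^{x',y'}$ is onto, the geometric (half-space) description of ``approximate'' recorded just before the lemma singles out, for $\omega$-almost every $n$, a component $A_n$ of $f_n^{-1}(A_n')$ that approximates $U^{x,y}$; here one has to check that this choice is compatible with the equivalence relation defining the ultralimit, which again follows from that half-space description. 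Finally, the covering degrees of $f_n$ on the components of $f_n^{-1}(A_n')$ add up to $d$ and match, component by component, the multiplicities $m_{\vec v}F$ of $F$ in the corresponding boundary directions at $x'$; in particular $f_n\colon A_n\to A_n'$ has degree equal to the expansion factor of $F$ along $(x,y)$, namely $e$, for $\omega$-almost every $n$, which is the assertion of the lemma.
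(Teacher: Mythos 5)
The overall structure of your proof matches the paper's (brief) sketch: invoke Lemma~\ref{IsomAwayCritTree}(2) to get the branched cover $F\colon U^{x,y}\to U^{x',y'}$ of degree $e$, establish that $U^{x',y'}$ contains no ends associated to critical values of $\mathbf f$, pick a sequence $A_n'$ approximating $U^{x',y'}$ that avoids the critical values of $f_n$, apply Riemann--Hurwitz to get annular preimage components, select the component $A_n$ approximating $U^{x,y}$, and match the covering degree with the expansion factor of $F$. You correctly isolate the key intermediate claim --- that $U^{x',y'}$ contains no critical values --- which the paper simply asserts in the sentence preceding the lemma, without argument, as an apparent consequence of Lemma~\ref{IsomAwayCritTree}(2). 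So you have identified exactly the step the paper treats as automatic.

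However, your proposed justification of that claim does not work as written, and you yourself flag it as the main obstacle. There are two problems. First, even if the geodesic $[c,x]$ (with $c$ a critical end beyond $x$) maps to $[\mathbf f(c),x']$ without backtracking, it does not follow that $\mathbf f(c)$ is ``separated from $U^{x',y'}$ by $x'$'': the tangent direction $D_xF(\vec v)$, where $\vec v$ points from $x$ toward $c$, could perfectly well coincide with the direction at $x'$ pointing into $U^{x',y'}$, in which case $[\mathbf f(c),x']$ enters $U^{x',y'}$ with no fold at all. This is precisely the case the conservation identity $\sum_i m_{\vec v_i}F=\deg_xF$ leaves open when $\deg_xF>e$: the remaining degree $\deg_xF-e$ is carried by directions at $x$ pointing away from $U^{x,y}$ but mapped by $D_xF$ into $U^{x',y'}$. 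Second, the argument offered for ruling out a fold is not correct: a fold at an interior point $p\in(x,c)$ does force $p$ into the ramification locus, hence into the critical tree $C$, but $p$ lies outside $U^{x,y}$, and the two critical ends whose geodesic contains $p$ can both lie outside $U^{x,y}$ as well. Nothing in the conservation identity at $x$, nor the fact that $\vec u$ already carries degree $e$, forces a critical end to lie \emph{inside} $U^{x,y}$. So the proposed derivation of the claim is a genuine gap; filling it would require an argument more specific to the Berkovich structure (e.g.\ using the precise structure of preimage components of Berkovich annuli from Baker--Rumely \S9, which the paper cites for Lemma~\ref{IsomAwayCritTree}), rather than local bookkeeping of tangent multiplicities at $x$.
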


\subsection*{Extension on geodesic segments.}
The following lemma allows us to extend the tangent map to geodesic segments (see Lemma 9.38 in \cite{BakerRumely10}):
\begin{lem}\label{ExtendTangent}
Let $\vec v\in T_x{^r\Hyp^3}$, and $\vec w = D_xF (\vec v) \in T_y{^r\Hyp^3}$.
Let $y' \in U_{\vec w}$.
Then there exists $x'\in U_{\vec v}$ such that $F$ maps $[x,x']$ homeomorphically to $[y,y']$.
\end{lem}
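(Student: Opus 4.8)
The plan is to reduce the statement to Theorem \ref{CritVec}(1) by a continuation/maximality argument along the target segment $[y,y']$. First I would invoke Theorem \ref{CritVec}(1): there is a small initial segment $\gamma_0 = [x, x_0]$ representing $\vec v$ that $F$ maps homeomorphically onto an initial segment of $[y, y']$, expanding by the factor $m_{\vec v}F$. Let $S$ be the set of $t \in [0, d(y,y')]$ such that the subsegment of $[y,y']$ of length $t$ starting at $y$ is the homeomorphic image under $F$ of some segment starting at $x$ in the direction $\vec v$; Theorem \ref{CritVec}(1) shows $S$ contains a neighborhood of $0$. The goal is to show $\sup S$ is attained and equals $d(y,y')$.

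The key steps: (i) \emph{Closedness / attainment.} If $t_k \nearrow t_\infty = \sup S$, the corresponding lifts $[x, x_k]$ have lengths $d(x,x_k) = \int$ (sum of reciprocals of the local degrees encountered), which are bounded since the local degree is $\geq 1$ everywhere; by completeness of ${^r\Hyp^3}$ the points $x_k$ converge to some $x_\infty$, and continuity of $F$ (it is Lipschitz, being a branched covering — or invoke that $F$ is continuous) gives $F(x_\infty) = $ the point at distance $t_\infty$ along $[y,y']$; monotonicity of the lift shows $F$ is still a homeomorphism $[x, x_\infty] \to [y, \cdot]$, so $t_\infty \in S$. (ii) \emph{Openness past an interior point.} Suppose $t_\infty < d(y,y')$; let $y_\infty$ be the point at distance $t_\infty$, let $\vec w_\infty \in T_{y_\infty}{^r\Hyp^3}$ be the tangent direction pointing toward $y'$, and let $\vec v_\infty \in T_{x_\infty}{^r\Hyp^3}$ be the direction of $[x, x_\infty]$ continued, i.e. the outgoing direction at $x_\infty$ of the lift. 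The crucial point is that the incoming direction at $x_\infty$ is mapped by $D_{x_\infty}F$ to the incoming direction at $y_\infty$ (the direction back toward $y$); since $D_{x_\infty}F: T_{x_\infty} \to T_{y_\infty}$ is surjective by Theorem \ref{CritVec}(2) (the local degrees of preimages sum to $\deg_{x_\infty}F \geq 1$), some direction $\vec v_\infty \neq$ (incoming) maps to $\vec w_\infty$. Applying Theorem \ref{CritVec}(1) at $x_\infty$ in the direction $\vec v_\infty$ extends the lift a bit further along $[y,y']$, contradicting maximality. Hence $t_\infty = d(y,y')$, and writing $x' = x_\infty$ finishes the proof; one checks finally that $x' \in U_{\vec v}$ since the entire lift, except the endpoint $x$, stays in the component $U_{\vec v}$ (the lift never returns to $x$ because its $F$-image $[y, y')$ never returns to $y$, and $F$ is a local isometry at points where it could only be $x$... more simply: the lift starts in direction $\vec v$ and is an embedded arc not containing $x$ in its interior).

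The main obstacle I anticipate is step (ii), specifically ensuring that at the limit point $x_\infty$ we can \emph{choose the right outgoing direction} $\vec v_\infty$ — one must rule out that $D_{x_\infty}F$ sends \emph{only} the incoming direction to $\vec w_\infty$, which would stall the continuation. This is exactly where surjectivity of the tangent map (Theorem \ref{CritVec}(2)) is essential: even if $x_\infty$ is a branch point of the critical tree with high local degree, there are enough preimage directions, and the incoming direction accounts for at most its own multiplicity, so at least one distinct outgoing direction maps to $\vec w_\infty$. A secondary technical point is the bookkeeping that the concatenation of the old lift $[x, x_\infty]$ with the new small segment is still a \emph{homeomorphism} onto $[y, y_\infty + \epsilon]$ and not merely a local homeomorphism — this follows because the concatenation is an injective continuous map between arcs (injectivity since the $F$-images are nested initial segments of $[y,y']$, which are distinct), hence a homeomorphism onto its image.
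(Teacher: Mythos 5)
The paper does not prove this lemma at all: it simply cites Lemma~9.38 in Baker--Rumely, so there is no proof in the paper to compare against. That said, your continuation argument is the standard way to establish such a path-lifting statement and is essentially correct. Two remarks. First, in your analysis of step~(ii), the discussion of ``the incoming direction accounting for its own multiplicity'' is a red herring: the incoming tangent vector at $x_\infty$ (pointing back toward $x$) is mapped by $D_{x_\infty}F$ to the tangent vector at $y_\infty$ pointing back toward $y$, which is \emph{not} $\vec w_\infty$; hence the incoming direction is simply not in the preimage of $\vec w_\infty$ at all, and surjectivity of $D_{x_\infty}F$ (from Theorem~\ref{CritVec}(2) together with $\deg_{x_\infty}F \geq 1$) immediately produces a fresh outgoing direction $\vec v_\infty$ mapping to $\vec w_\infty$. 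There is no danger of the incoming direction ``using up'' all preimages. Second, the ``$S$ is open and closed'' bookkeeping is slightly loose: because lifts need not be unique (they may branch at points of the critical tree), the lifts $[x,x_k]$ you produce at times $t_k \nearrow t_\infty$ are not automatically nested, and the Cauchy/completeness argument presupposes nestedness. The cleanest fix is to argue on a single increasing family of lifts (take a maximal lift directly, or apply Zorn's lemma to the poset of lifts $[x,\cdot]$ starting in direction $\vec v$, ordered by extension; the supremum of a chain exists by completeness plus Lipschitz continuity of $F$), and then show maximality forces the endpoint to reach $y'$ by the surjectivity argument. With that reformulation your proof is complete; the final observation that $x' \in U_{\vec v}$ follows simply from the fact that $(x,x']$ is a connected subset of ${^r\Hyp^3}-\{x\}$ starting in the direction $\vec v$, hence lies entirely in $U_{\vec v}$.
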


\section{Marked length spectra and periodic ends}\label{MandL}
In this section, we shall illustrate how to use the limiting dynamics on $\R$-trees to study the marked length spectra of rational maps.

\subsection*{Markings and Length Spectra.}
Recall that a conjugacy class of a rational map $[f]$ is {\em hyperbolic} if any of the following equivalent definition holds (see Theorem 3.13. in \cite{McM94}):
\begin{enumerate}
\item The postcritical set $P(f)$ is disjoint from the Julia set $J(f)$.
\item There are no critical points or parabolic cycles in the Julia set.
\item Every critical point of $f$ tends to an attracting cycle under forward iteration.
\item There is a smooth conformal metric $\rho$ defined on a neighborhood of the Julia set such that $|f'(z)|_{\rho}>C>1$ for all $z\in J(f)$.
\item There is an integer $n > 0$ such that $f^n$ strictly expands the spherical metric on the Julia set.
\end{enumerate}
The space of hyperbolic rational maps is open in $\M_d$, and a connected component is called a {\em hyperbolic component}.
For each hyperbolic component $\HC$, there is a topological dynamical system
$$
\sigma: J\longrightarrow J,
$$
such that for any $[f]\in \HC$, there is a homeomorphism 
$$
\phi(f):J \longrightarrow J(f)
$$
which conjugates $\sigma$ and $f$.
A particular choice of such $\phi(f)$ will be called a {\em marking} of the Julia set.

Let $[f]\in \HC$ be a hyperbolic rational map with a marking $\phi: J \longrightarrow J(f)$.
Let $\mathscr{S}$ be the space of periodic cycles of the topological model 
$\sigma:J\longrightarrow J$.
We define the length of a periodic cycle $C\in \mathscr{S}$ for $[f]$ by
$$
L(C, [f]) = \log |(f^q)'(z)|,
$$
where $q = |C|$ and $z \in \phi(C)$. 
The collection $(L(C, [f]): C\in \mathscr{S}) \in \R_{+}^\mathscr{S}$ will be called the {\em marked length spectrum} of $[f]$.

\subsection*{Markings of periodic ends.}
Let $[f_n]\in \HC$ be a degenerating sequence with markings $\phi_n: J \longrightarrow J([f_n])$.
Recall that we have
$$
r_n:= r([f_n])= \inf_{x\in \Hyp^3} \max_{y\in \E f_n^{-1}(x)} d_{\Hyp^3}(x, y) \to \infty
$$
where $f_n$ is a representative of $[f_n]$.
We choose a sequence of representatives $f_n$ of $[f_n]$ so that
$$
\max_{y\in \E f_n^{-1}(\bm{0})} d_{\Hyp^3}(\bm{0}, y) \leq r_n +1.
$$
Let $F=\lim_\omega \E f_n$ be the limiting map on the asymptotic cone ${^r\Hyp^3}$ with rescalings $r_n$.
The sequence of markings $\phi_n: J \longrightarrow J([f_n])$ naturally gives a map $\phi: J \longrightarrow \Proj^1_{^\rho\C}$ by
$$
\phi(t) = [(\phi_n(t))].
$$

Note that if $t$ is a periodic point of period $q$, then
$f_n^q(\phi_n(t)) = \phi_n(t)$ for all $n$.
Hence, $\phi(t)$ is a periodic point of $\mathbf{f} \in \Rat_d({^\rho\C})$ represented by $f_n$.
Therefore, by Theorem \ref{EquivTheorem}, a periodic cycle $C\in \mathscr{S}$ is identified via $\phi$ with a cycle of periodic ends of ${^r\Hyp^3}$ for $F$.

\subsection*{Translation length on ends.}
Let $\alpha: [0,\infty) \longrightarrow {^r\Hyp^3}$ represent an end.
We will first show that the translation length
$$
L(\alpha, F) = \lim_{x_i \to \alpha} d(x_i, x^0) - d(F (x_i), x^0)
$$
is well defined.

If $\alpha$ is not a critical end, then by Theorem \ref{CritVec},
$F$ is an isometry on $\alpha([K,\infty))$ for a sufficiently large $K$.
Hence $d(\alpha(t), x^0) - d(\E (f_n) (\alpha(t)), x^0)$ is constant for $t\geq K$, so the translation length is well defined.

If $\alpha$ is a critical end, then by Theorem \ref{CritVec}, $F$ is expanding with derivative $e\in \N_{\geq 2}$ on $\alpha([K,\infty))$ for a sufficiently large $K$.
Hence, 
\begin{align*}
(d(\alpha(t), x^0) &- d(F (\alpha(t)), x^0)) \\
&- (d(\alpha(K), x^0) - d(F (\alpha(K)), x^0)) \\
&= (1-e)(t-K)
\end{align*}
for all $t\geq K$, so the translation length
$$
L(\alpha, F) = \lim_{x_i \to \alpha} d(x_i, x^0) - d(\E (f_n) (x_i), x^0) = -\infty.
$$

If $C = \{\alpha_1, ..., \alpha_q\}$ is a cycle of periodic ends, we define
$$
L(C, F) = \sum_{i=1}^q L(\alpha_i, F).
$$
We say a periodic end $C$ is attracting, indifferent or repelling if 
$L(C, F) < 0, =0$ or $>0$.

Using the equivalence result in Theorem \ref{EquivTheorem}, we give an algebraic proof of Theorem \ref{TL} (cf. Theorem 1.4 in \cite{L19}):
\begin{proof}[Proof of Theorem \ref{TL}]
By considering iterations if necessary, we may assume $C = \{t\} \in \mathscr{S}$ has period $1$.
By naturality, we may also assume $\phi_n(t) = 0$ for all $n$.
Let $\mathbf{f}\in \Rat_d(^\rho\C)$ be represented by $f_n$.
Note that $\mathbf{f}(0) = 0$.

We first claim that $0\in {^\rho\C}$ is not a critical point of $\mathbf{f}$.
Indeed, since $t\in J$, $|f_n'(\phi_n(t))| > 1$, so $|\mathbf{f}'(z)| \geq 1$.
Hence, we can write
$$
\mathbf{f}(z) = a z (1+ \mathbf{h}(z)),
$$
with $a = \mathbf{f}'(0)$ and $h(0) = 0$.

Let $M, L \in \PSL_2({^\rho\C})$ be $M(z) = kz$ and $L(z) = akz$.
Note that if $|k|$ is sufficiently small, 
$$
L^{-1} \circ \mathbf{f}\circ M(z) = z(1+ \mathbf{h}(kz))
$$
has non-constant reduction.
Hence, we conclude that $F$ sends $(M_n(\bm 0)) \in {^r\Hyp^3}$ to $(L_n(\bm 0))$ where $M_n$ and $L_n$ represent $M$ and $L$ respectively.
Therefore, the translation length is
$$
L(C, F) = \log |a|.
$$
By choosing representatives, we have
$$
\log |a| = -\nu(a) =  -\lim_\omega \log|f_n'(0)|/\log \rho_n = \lim_\omega L(C, [f_n])/ r([f_n]).
$$
Thus, the result follows.

To show the moreover part, we note that any periodic point $z_n$ not in the Julia set is non-repelling. The multiplier $|(f_n^q)'(z_n)|\leq 1$ has bounded norm.
Hence $|(\mathbf{f}^q)'(z)| \leq 1$.
But the total number of periodic points of period $q$ for $\mathbf{f}$ is the same as that of $f_n$, so all repelling periodic points of $\mathbf{f}$ are represented by periodic points in the Julia sets.
Therefore, every cycle of repelling periodic ends is represented by some $C\in \mathscr{S}$.
\end{proof}

Since the ultralimit of a sequence of real numbers belong to its limit set
$\lim_\omega x_n \in \overline{\{x_n\}}$, by choosing subsequences and a diagonal argument, we immediately have
\begin{cor}\label{LimitLengthSpec}
Let $[f_n]\in \HC$ be a degenerating sequence, and $F$ be defined as above. Then after possibly passing to subsequences, we have
$$
L(C, F) = \lim_{n\to\infty} L(C, [f_n])/r([f_n]).
$$
\end{cor}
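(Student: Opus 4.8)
The plan is to deduce this directly from Theorem \ref{TL} by a diagonal extraction, using only the elementary fact that an $\omega$-limit of a real sequence is a subsequential limit in the ordinary sense.

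First I would recall that, by Theorem \ref{TL}, for every periodic cycle $C\in\mathscr{S}$ we have $L(C,F)=\lim_\omega a_n(C)$, where I abbreviate $a_n(C):=L(C,[f_n])/r([f_n])\in\R$. Property (4) of $\omega$-limits recorded in Section \ref{UltralimitAC} says that for each fixed $C$ there is a subsequence $(n_k)$ along which $\lim_{k\to\infty}a_{n_k}(C)=\lim_\omega a_n(C)=L(C,F)$. So the only issue is to choose one subsequence that simultaneously works for all cycles, and for this I would invoke the countability of $\mathscr{S}$: since the topological model $\sigma:J\to J$ has only finitely many periodic points of each period (the same observation used in the reformulation (2$'$) of bounded escape), $\mathscr{S}$ is a countable set. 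Enumerate $\mathscr{S}=\{C_1,C_2,\dots\}$.

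Next I would run the standard diagonal argument: apply the previous paragraph to $C_1$ to extract a subsequence; pass to a further subsequence that also realizes the $\omega$-limit for $C_2$; continue inductively; and take the diagonal subsequence $(n_k)$. Along this single subsequence, $\lim_{k\to\infty}a_{n_k}(C_j)$ exists and equals $\lim_\omega a_n(C_j)=L(C_j,F)$ for every $j$, which is exactly the claimed identity $L(C,F)=\lim_{n\to\infty}L(C,[f_n])/r([f_n])$ for all $C\in\mathscr{S}$.

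The argument is entirely soft, so there is no genuine obstacle once Theorem \ref{TL} is available; the corollary is really just the translation of the $\omega$-limit statement of Theorem \ref{TL} into the language of ordinary limits. The one point that deserves an explicit word is the countability of $\mathscr{S}$, which makes the diagonalization legitimate, and this follows immediately from finiteness of the set of periodic points of each fixed period.
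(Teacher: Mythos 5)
Your proof is correct and follows the same route the paper sketches: invoke Theorem \ref{TL} to get the $\omega$-limit for each cycle, note that an $\omega$-limit is always a subsequential limit, and diagonalize over the countably many cycles to obtain one subsequence that works simultaneously. The paper states this as a one-line remark with ``by choosing subsequences and a diagonal argument,'' and your write-up simply fills in the (elementary) details, including the justification that $\mathscr{S}$ is countable.
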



\section{Hyperbolic components with nested Julia sets}\label{HypComNestJ}
In this section, we will study a special type of hyperbolic components.
Recall that
a hyperbolic rational map $f\in \Rat_d(\Com)$ has {\em nested} Julia set $J = J(f)$ if
\begin{enumerate}
\item There are two points $p_1, p_2\in \hat\C$ such that any component of $J$ separates $p_1$ and $p_2$;
\item $J$ contains more than one component.
\end{enumerate}
A hyperbolic component $\HC$ is said to have nested Julia sets if the Julia set of any rational map in $\HC$ is nested.

We remark that $J$ has uncountably many components by taking preimages and their accumulation points.
The first example of hyperbolic rational map with nested Julia sets was introduced by McMullen in \cite{McM88}, where the Julia set is homeomorphic to a Cantor set times a circle.

\subsection*{Topological properties}
We begin by introducing some terminologies and deducing some topological properties of the Julia set.

Let $f$ be a hyperbolic rational map with nested Julia sets, and $J$ be its Julia set.
Since each component of $J$ separates $2$ points $p_1, p_2$, any Fatou component of $f$ is either simply connected, or is isomorphic to an annulus.
We will call a simply connected Fatou component a {\em disk Fatou component}, and an annulus Fatou component a {\em gap}.
A gap is called {\em critical} if it contains critical points of $f$. 

We say a Julia component $K$ of $J$ is
\begin{itemize}
\item {\em extremal} if $J- K$ is contained in a component of $\hat \C - K$;
\item {\em buried} if $K$ does not intersect the boundary of any Fatou component;
\item {\em unburied} if it is not buried.
\end{itemize}

We first verify the following basic topological properties.
\begin{lem}\label{TopologicalPropertyNestedJuliaSet}
Let $f$ be a hyperbolic rational map with nested Julia sets $J$. Then
\begin{enumerate}
\item The gaps are nested, backward invariant, and any gap is eventually mapped to a disk Fatou component.
\item Any non-extremal periodic Julia component $K$ is buried.
\item Any unburied Julia component $K$ is eventually mapped to an extremal Julia component by a degree $e = e(K)$ covering.
\item Any buried Julia component $K$ is a Jordan curve.
\item A critical gap is mapped to a disk Fatou component whose boundary is contained in an extremal Julia component.
\item The extremal Julia components are mapped to extremal ones.
\end{enumerate}
\end{lem}
\begin{proof}
Since the Julia set is nested, the gaps are nested. Since no disk Fatou component is mapped to a gap, they are backward invariant.
Let $U$ be a periodic Fatou component. Since $f$ is hyperbolic, $U$ is attracting. 
Thus, $U$ contains a critical point of the first return map $f^q$.
By Riemann-Hurwitz formula, $U$ is a disk Fatou component.
By no wandering domain theorem, every Fatou component is pre-periodic, so the statement (1) follows.

Let $K$ be a periodic Julia component of period $q$. By conformal surgery (see Proposition 6.9 in \cite{McM88}), there exists a post-critically finite hyperbolic rational map $g$ so that $f^q: K \longrightarrow K$ is conjugate to $g: J(g) \longrightarrow J(g)$.

If $K$ is not extremal, then exactly two components of $\hat \C - K$ intersects $J$.
Thus, the two corresponding components of $\hat \C - J(g)$ are totally invariant under $g$, so $g$ is conjugate to either $z^e$ or $\frac{1}{z^e}$ for some $e$. Hence $K$ is a Jordan curve.
Since $K$ is not extremal, $K$ does not intersect the boundary of a disk Fatou component.
The component $K$ does not intersect the boundary of a gap either, as otherwise, we would have a periodic gap. Thus $K$ is buried, and the statement (2) follows.

Since unburied Julia components are pre-periodic by no wandering domain theorem, and the image of an unburied Julia component is unburied, the statement (3) follows from (2).

By Theorem 1.2 in \cite{PilgrimTan00}, any wandering component of $J$ is a Jordan curve.
Since extremal components are not buried, and any non-extremal periodic Julia component is a Jordan curve, the statement (4) follows.

By Riemann-Hurwitz formula, critical gaps are mapped to disk Fatou components.
Let $U$ be a critical gap, and $K_1, K_2$ be two components of the Julia set $J$ containing $\partial U$.
Note $K = f(K_1) = f(K_2)$ is a Julia component.
We claim $K$ is an extremal Julia component.
Otherwise, exactly two components $V_1, V_2$ of $\hat\C-K$ intersect $J$.
Since $U$ is mapped to a disk Fatou component, the induced surjective map $f_*: I(K_1) \longrightarrow I(K)$ between ideal boundaries of $K_1$ and $K$ sends the ideal boundary corresponding to $U$ in $I(K_1)$ to $I(K) - I(V_1\cup V_2)$ (cf. \S 6 in \cite{McM88}). Thus there are at least 3 components of $\hat\C - K_1$ intersecting $J$, which is a contradiction and the statement (5) follows.

A similar argument also shows that the extremal Julia components are mapped to extremal ones, giving the statement (6).
\end{proof}

\subsection*{Shishikura tree for nested Julia sets}
We shall see that the dynamics on the gaps for a hyperbolic rational map $f$ with nested Julia sets is captured by the Shishikura tree.
The Shishikura tree was first introduced by Shishikura in \cite{Shishikura89} to study Herman rings.
We will briefly introduce the special case that we are interested in, and refer the readers to \cite{Shishikura89} for details and more general theories.

Let $A$ be an annulus of $\Com$ with modulus $M$. 
Then there is a conformal map unique up to post composing with rotation $\phi_A: A \longrightarrow \{z: 1<|z|<e^{2\pi M}\}$ sending the inner boundary to the inner one, and outer to the outer one.
We define
\begin{align*}
A[z] &:= \phi_A^{-1}(\{\zeta: |\zeta| = |\phi_A(z)|\}),\\
A(x,y) &=\{z\in A: A[z] \text{ separates $x$ and $y$}\}.
\end{align*}
Note that $A(x,y)$ is a sub-annuli of $A$.

Let $f$ be a hyperbolic rational map with nested Julia sets $J$ of degree $d$, and let $\mathscr{A}$ be the collection of gaps. 
Since the union of gaps is contained in the annulus bounded by the two extremal Julia components, which has finite moduli, by Gr\"otzch inequality, $\sum_{A\in \mathscr{A}} m(A) < \infty$.
We define a pseudo metric on $\hat\C$ by
$$
d(x,y) = \sum_{A\in \mathscr{A}} m(A(x,y)).
$$
In the usual fashion, we identify two points $x\sim y$ if $d(x,y) = 0$.

Note that if $x,y$ are in the same component of $\hat\C - \bigcup_{A\in \mathscr{A}} A$, then $d(x,y) = 0$.
More generally, the annulus $A$ is foliated by Jordan curves $A[z]$, and any two points $x, y\in A[z]$ have distance $0$.
Thus, the quotient $\hat\C / \sim$ is constructed by collapsing complementary components and leaves in the foliation.
Since the annuli in the collection $\mathscr{A}$ are nested, $\hat\C / \sim$ is isometric to a closed interval $I$, and we denote
$$
\pi: \hat\C \longrightarrow I
$$
as the projection map.

The dynamics of $f$ on $\hat\C$ determines an associated map on $I$ via
\begin{align*}
f_*:I &\longrightarrow I\\
x &\mapsto \pi\circ f(\partial \pi^{-1}(x))
\end{align*}
where $\partial \pi^{-1}(x)$ is the boundary of $\pi^{-1}(x)\subseteq \hat\C$.
By Theorem 3.6 in \cite{Shishikura89}, $f_*$ is well defined and continuous.
We will now prove some properties of the map $f_*$.

\begin{lem}\label{PropertyShishikuraTree}
Let $I = [a,b]$. Then there exist $a = a_1 < b_1 \leq a_2 < b_2 \leq... \leq a_k < b_k = b$ such that 
\begin{enumerate}
\item $f_*: [a_i, b_i] \longrightarrow I$ is a linear isometry with derivative $\pm d_i$;

\noindent moreover, $d_i \in \Z_{\geq 2}$ and the $\pm$ signs alternating;
\item $U$ is a critical gap if and only if $U = \pi^{-1}((b_i, a_{i+1}))$;

\noindent moreover, $f_*([b_i, a_{i+1}])\subseteq \{a,b\}$;
\item $d := \deg(f) = \sum_{i=1}^k d_i$, and $\sum_{i=1}^k 1/d_i <1$.
\end{enumerate}
\end{lem}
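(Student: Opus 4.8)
The plan is to read the structure of $I$ and of $f_*$ off the topological picture of Lemma~\ref{TopologicalPropertyNestedJuliaSet}. Since the gaps are nested and $\sum_{A\in\mathscr{A}}m(A)<\infty$, the pseudometric $d$ totally orders $\hat\C/\!\sim$, so $I=[a,b]$ with total length $L:=d(a,b)=\sum_{A\in\mathscr{A}}m(A)$: each gap $A$ contributes an open subinterval of length $m(A)$, while the buried Julia components and the simply connected Fatou components collapse to single points. The two endpoints $a,b$ correspond to the two extremal Julia components $K_a=\pi^{-1}(a)$ and $K_b=\pi^{-1}(b)$, and Lemma~\ref{TopologicalPropertyNestedJuliaSet}(4) gives $f_*(\{a,b\})\subseteq\{a,b\}$.

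First I would locate the critical gaps. A critical gap carries at least one critical point (in fact, by Riemann--Hurwitz, $f$ restricted to it has degree $\ge 2$), so there are finitely many of them, say $U_1,\dots,U_{k-1}$; by Lemma~\ref{TopologicalPropertyNestedJuliaSet}(3), if $K$ is a Julia component meeting $\partial U_j$ then $f(K)$ contains a full boundary circle of the simply connected component $f(U_j)$, hence lies in --- and therefore equals --- an extremal Julia component. This is exactly the mechanism collapsing $\pi(\overline{U_j})$ to a single endpoint of $I$, so writing $\pi(\overline{U_j})=[b_j,a_{j+1}]$ we get $f_*([b_j,a_{j+1}])\subseteq\{a,b\}$. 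The same boundary analysis, combined with the degree count in the last paragraph, shows that no critical gap is outermost, so the closures of the complementary intervals are $[a_1,b_1],\dots,[a_k,b_k]$ with $a_1=a$ and $b_k=b$.

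Next I would analyze $f$ on each piece. Put $R_i:=\pi^{-1}((a_i,b_i))$; this region contains no critical point of $f$ (those not lying in simply connected end--components lie in critical gaps), so $f|_{R_i}$ is locally injective, and tracking the boundary via Lemma~\ref{TopologicalPropertyNestedJuliaSet}(4),(5) shows $f$ maps $R_i$ properly onto the core region $\pi^{-1}(I^{\circ})$. Since $R_i$ and $\pi^{-1}(I^{\circ})$ are connected with infinite cyclic fundamental group, this is a connected covering of some degree $d_i$, and each gap of $\pi^{-1}(I^{\circ})$ has a unique preimage gap in $R_i$ covered with degree $d_i$; hence $m(f(A))=d_i\,m(A)$ for every gap $A\subseteq R_i$, and $f$ takes the ``radial'' subannuli $A(x,y)$ to the corresponding subannuli of $f(A)$. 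It follows that $f_*\colon[a_i,b_i]\to I$ is a homeomorphism onto $I$, affine in the modulus coordinate with derivative $\pm d_i$ (the sign being the orientation), carrying the two endpoints of $[a_i,b_i]$ to the two endpoints of $I$. Since $f_*$ is continuous and each $[b_i,a_{i+1}]$ is sent to a single endpoint, the right endpoint of $[a_i,b_i]$ and the left endpoint of $[a_{i+1},b_{i+1}]$ have the same $f_*$-value, which forces the signs of consecutive derivatives to alternate.

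Finally, the numerical claims. Each $d_i\ge 2$: an affine dilation by $d_i$ cannot carry $[a_i,b_i]$ onto $I$ unless $d_i=1$, and $d_i=1$ would force $[a_i,b_i]=I$, hence $k=1$ and no critical gap, so $f$ would restrict to a conformal isomorphism of the core region and a degree count at a generic point would give $\deg f=1$, a contradiction. Counting preimages of a generic point of $\pi^{-1}(I^{\circ})$: exactly $d_i$ lie in each $R_i$, and none lie in any $U_j$ or in any simply connected end--component, since $f$ sends all of these into $\pi^{-1}(\{a,b\})$; thus $\deg f=\sum_{i=1}^k d_i$. And because the $[a_i,b_i]$ are disjoint subintervals of $I$ with $d$-length $L/d_i$, while the $k-1$ critical intervals $[b_i,a_{i+1}]$ have positive $d$-length, $\sum_{i=1}^k 1/d_i=\frac1L\sum_{i=1}^k\mathrm{length}[a_i,b_i]<1$. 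The real work is the topological bookkeeping of the middle two paragraphs --- showing $f$ carries each $R_i$ properly, and with constant degree, onto the entire core region, and that the critical gaps are never outermost; once the picture is in place the rest is a short computation.
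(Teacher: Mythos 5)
Your route is essentially the paper's in reverse order, and the conclusion is right, but two steps are asserted rather than proved, and the presentation has a mild circularity.

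The paper begins from the components $A_i$ of $f^{-1}(A)$ (where $A=\pi^{-1}((a,b))$), shows each $A_i$ is an annulus, and then proves the strict inequality $b_i<a_{i+1}$ by a buried--Jordan--curve contradiction, so that the complementary regions are exactly the critical gaps. You instead begin with the critical gaps, take the $R_i$ to be their complementary regions, and try to prove $f\colon R_i\to A$ is a covering. This is logically dual and in principle fine --- you trade the buried--Jordan--curve step for the need to verify $f(R_i)=A$ and $f^{-1}(A)=\bigcup R_i$ --- but your verification of that trade is incomplete. The crucial structural assertion ``$R_i$ contains no critical point of $f$'' is given only the parenthetical ``those not lying in simply connected end--components lie in critical gaps,'' which restates the claim rather than proving it. A hyperbolic map can perfectly well have critical points in simply connected Fatou components, and nothing you say rules out such a component sitting inside $R_i$. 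The correct logic runs the other way: first show, by tracking $\partial R_i$ via Lemma~\ref{TopologicalPropertyNestedJuliaSet}(4), that $f(\partial R_i)\subset K_a\cup K_b$ so $f(R_i)$ is one component of $\hat\C\setminus(K_a\cup K_b)$, hence equals $A$ (because $R_i$ meets $J$), and $f\colon\overline{R_i}\to\overline A$ is proper; \emph{then} invoke Riemann--Hurwitz on a proper map between annuli ($\chi=0$) to conclude there is no branching, which yields ``no critical points in $R_i$'' as a corollary rather than a hypothesis. Relatedly, your claim that no critical gap is outermost is justified by appeal to ``the degree count in the last paragraph,'' which is a forward reference: the degree count uses the decomposition you are in the middle of constructing. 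The honest argument is the collar argument --- $f(K_a)$ is extremal, and a small collar of $K_a$ on the $A$-side contains Julia set, so $f$ of that collar must lie on the $A$-side of $f(K_a)$, putting the outermost region in $f^{-1}(A)$, not in a critical gap. Once these two points are patched (and the slightly garbled phrasing of the ``$d_i\ge 2$'' step is cleaned up --- what is meant is $|b_i-a_i|\,d_i=|I|$, so $d_i=1$ forces $[a_i,b_i]=I$, hence $k=1$ and $\deg f=1$), your argument closes up and delivers the same decomposition and the same three conclusions as the paper's proof.
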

\begin{figure}[h]
\centering
\includegraphics[width=4.5cm]{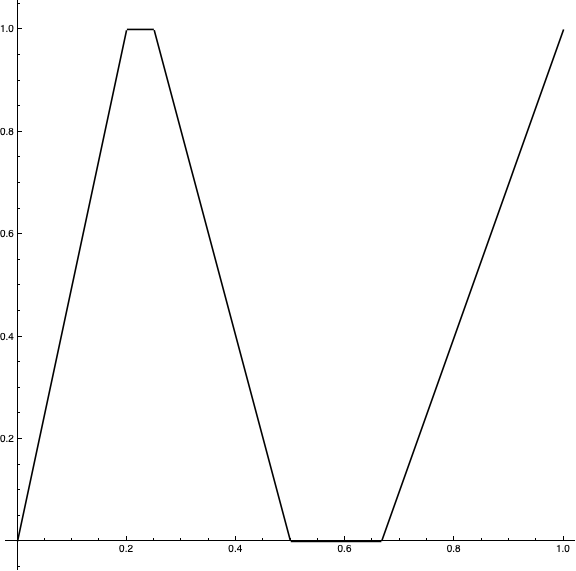}
\caption{An example of the graph of the map $f_*: I \longrightarrow I$. It has invariants $k=3$, $d_1 = 5, d_2 = 4, d_3 = 3$.}
\label{ShishikuraTreeExample}
\end{figure}
\begin{proof}
Consider the annulus $A = \pi^{-1}((a,b))$. 
The boundary $\partial A$ equals to the two extremal Julia components.
Since the Julia set is nested, each component $A_i$ of $f^{-1}(A)$ is an annulus.
Let $a_i, b_i$ be the projection of the boundary $\pi(\partial A_i)$.
We get $a_1<b_1\leq a_2 < b_2 \leq ...\leq a_k<b_k$.

Let $d_k = \deg(f:A_i \longrightarrow A)$. 
Since $A_i\subset A$, each $d_i \geq 2$.
Now restricting to $U \subseteq A_i$ and $U \in \mathscr{A}$, then $f(U) \in \mathscr{A}$. The map $f:U \longrightarrow f(U)$ is a degree $d_i$ covering, so $f_*$ restricting to $\pi(U)$ is linear with derivative $\pm d_i$.
This is true for any such $U$, so $f_*$ is a linear on $[a_i, b_i]$ with derivative $\pm d_i$.

Since the critical gap is mapped to a simply connected Fatou component whose boundary is contained in the extremal Julia component, and all the other gaps are mapped to other gaps, the property $(2)$ follows immediately.

For the last property, we note that the gaps are backward invariant. Hence if we pick an arbitrary gap, there are exactly $d$ preimages counted multiplicities. Therefore $d = \sum_{i=1}^k d_i$.

Note that $\sum_{i=1}^k 1/d_i \leq 1$.
If $\sum_{i=1}^k 1/d_i = 1$, then by the equality case of the Gr\"otzch inequality (see Theorem B.5 in \cite{McM94}), $A_i$ and $A_{i+1}$ share a Jordan curve boundary $\Gamma$.
Since $f(A_i) = f(A_{i+1}) = A$, we can choose simple closed curves $\gamma_i \subseteq A_i$ and $\gamma_{i+1} \subseteq A_{i+1}$ with $f(\gamma_i) = f(\gamma_{i+1})$.
Then $f$ has a critical point bounded between $\gamma_i$ and $\gamma_{i+1}$, so $f$ has a critical point on $\Gamma$.
This is a contradiction as $\Gamma \subseteq J$ and $f$ is hyperbolic, so $\sum_{i=1}^k 1/d_i <1$.
\end{proof}

Let $C =\bigcap_{i=1}^\infty f_*^{-n}([a,b]) \subseteq I$, and $P$ be the set of periodic points in $I$.  By Proposition \ref{PropertyShishikuraTree}, $C$ is a Cantor set and $P\subseteq C$.
\begin{cor}\label{CantorSetPeriodicSet}
The restriction $\pi$ on $J$ gives a surjective continuous map $\pi: J \longrightarrow C$ with connected fiber.
Moreover, the repelling periodic points of $f$ are contained in $\pi^{-1}(P)$.
\end{cor}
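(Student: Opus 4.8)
The plan is to use $\pi$ as a semiconjugacy from $f|_J$ onto $f_*|_C$ and to identify the fibers of $\pi$ over $C$ with the Julia components of $f$; the statement about repelling periodic points then follows from the trivial corresponding fact on the interval $I$. First I would record that $\pi\circ f=f_*\circ\pi$ on $J$, which is immediate from the definition $f_*(x)=\pi\circ f(\partial\pi^{-1}(x))$ once one knows that $\pi^{-1}(x)\cap J$ is a Julia component (which has empty interior since $f$ is hyperbolic). Next I would describe the fibers: if $t\in\pi(A)$ for a gap $A\in\mathscr{A}$, then $\pi^{-1}(t)$ is a single core curve $A[z]\subset A$ — any point off $A[z]$ is strictly separated from $z$ by a nearby curve $A[z']$, hence sits at positive pseudo-distance from $z$ — so such fibers miss $J$; moreover $\pi(A)$ is then an open subinterval of $(a,b)$ with $\pi^{-1}(\pi(A))=A$, and distinct sets $\pi(A)$ are pairwise disjoint because gaps are nested. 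Since $f$ is hyperbolic there is no periodic annular Fatou component (cf. the proof of Lemma~\ref{PropertyShishikuraTree}), so every gap is eventually carried to a simply connected Fatou component; iterating the description of $f_*$ from Lemma~\ref{PropertyShishikuraTree}, for which $f_*^{-1}((a,b))=\bigsqcup_{i=1}^{k}(a_i,b_i)$, then shows that $I\setminus\overline{f_*^{-n}((a,b))}$ is exactly the union of the intervals $\pi(A)$ over those gaps $A$ that become simply connected in at most $n$ steps, whence $C=I\setminus\bigcup_{A\in\mathscr{A}}\pi(A)$. In particular $\pi(J)\subseteq C$, since each $\pi^{-1}(\pi(A))=A$ lies in the Fatou set.

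For surjectivity and connectedness of the fibers, fix $t\in C$. If $\pi^{-1}(t)$ missed $J$ it would, meeting no gap, lie inside simply connected Fatou components; but for any simply connected Fatou component $V$ we have $\pi(\partial V)=\pi(V)$ by continuity, and $\emptyset\neq\partial V\subset J$, so some points of $J$ would lie in the fiber — a contradiction. Hence $\pi^{-1}(t)\cap J\neq\emptyset$, giving surjectivity of the continuous map $\pi|_J:J\longrightarrow C$. For the fiber to be connected it suffices to show that distinct Julia components have distinct $\pi$-images, i.e. that any two distinct Julia components are separated by a gap: the complementary region between them, if it met $J$, would after finitely many refinements — using the expansion of $f$ on a neighborhood of $J$ — contain a doubly connected Fatou component. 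I expect this to be the main obstacle, as it is the only genuinely topological--dynamical step (it rules out an accumulation of Julia components with no intervening gap, which would produce an almost-invariant annulus near $J$); here I would follow the arguments of \cite{PilgrimTan00} and Chapter~11 of \cite{Beardon91}.

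Finally, for the ``moreover'' part, let $z$ be a repelling periodic point of $f$ of period $q$. Then $z\in J(f)$, and if $K$ denotes the Julia component containing $z$ then $f^{q}(K)$ is a Julia component containing $f^{q}(z)=z$, so $f^{q}(K)=K$. Applying the semiconjugacy, $f_*^{q}(\pi(K))=\pi(f^{q}(K))=\pi(K)$, so $\pi(z)=\pi(K)\in P$ and therefore $z\in\pi^{-1}(P)$, as claimed.
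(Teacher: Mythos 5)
Your plan matches the paper's proof in its overall structure: establish $I\setminus C=\bigcup_{A\in\mathscr{A}}\pi(A)$, get $\pi(J)\subseteq C$ and surjectivity by looking at boundaries of simply connected Fatou components, and reduce connectedness of fibers to the statement that two distinct Julia components cannot share a $\pi$-image. The ``moreover'' clause via the semiconjugacy is also the paper's argument. You are right that the last reduction is the crux; the paper dispatches it in a single line (``otherwise $\pi^{-1}(t)$ contains a gap, a contradiction''), deferring the underlying topology to the discussion around Lemma~\ref{PropertyShishikuraTree} and to \cite{PilgrimTan00}, \cite{Beardon91}.

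The one place where your sketch points in a doubtful direction is the heuristic for that crux: that ``after finitely many refinements'' the region between two such Julia components would have to contain a doubly connected Fatou component. This local refinement has no base case. Buried components are accumulated from both sides by Julia components, so the Julia components between $J_0$ and $J_1$ in a putative bad fiber are typically densely linearly ordered, and passing to a ``closer'' pair never terminates at a gap. The clean route is global and dynamical rather than local: since $\sum_A m(A)$ equals the length of $I$, the set $I\setminus\bigcup_A\pi(A)$ has measure zero, hence is totally disconnected; this already yields well-definedness of $f_*$ and the semiconjugacy (the image of a connected set disjoint from all gaps has connected, hence singleton, $\pi$-image). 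With Lemma~\ref{PropertyShishikuraTree} in hand, $f$ restricts to a proper degree-$d_i\ge 2$ map from $\Int\,\pi^{-1}(t)$ onto $\Int\,\pi^{-1}(f_*(t))$, so if $\Int\,\pi^{-1}(t)$ were a nondegenerate annulus its modulus would be multiplied without bound under iteration, impossible inside the finite-modulus annulus $\pi^{-1}((a,b))$; hence $\Int\,\pi^{-1}(t)=\emptyset$ and $\pi^{-1}(t)$ is a single Julia component. Your surjectivity argument and the treatment of the ``moreover'' part are correct and essentially the paper's.
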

\begin{proof}
Since $\pi$ is a semi-conjugacy, it is easy to verify that $I-C = \pi(\cup_{A\in \mathscr{A}}A)$.
Hence $\pi:J \longrightarrow C$ is surjective and continuous.
The fiber $\pi^{-1}(t) \cap J$ is connected as otherwise, $\pi^{-1}(t)$ contains a gap which is a contradiction.
The moreover part follows directly from the fact $\pi$ is a semi-conjugacy. 
\end{proof}

Let $f_*: I = [a,b] \longrightarrow I$. Switching $a$ and $b$, and take the second iteration of $f$ if necessary, we may assume that $f_*(a) = a$.
Note if $k$ is even, then $f_*(b) = a$, and if $k$ is odd, then $f_*(b) = b$.

Recall that $g:U \longrightarrow V$ is called a {\em polynomial like map} if $g$ is a proper holomorphic map, and $\overline{U}\subseteq V$.
The degree of a polynomial like map is defined as the degree of the proper map.
The filled Julia set of the polynomial like map is defined as $K = \bigcap_{k=1}^\infty g^{-k}(V)$.
If $K$ is connected, then $g$ is quasiconformally conjugate (in fact, {\em hybrid conjugate}) to a polynomial $P$ of the same degree, which is unique up to affine conjugation.

For sufficiently small $\epsilon >0$, we have
$$
f_*([a,a+\epsilon)) = [a, a+d_1 \epsilon).
$$
Let $U = \pi^{-1}([a,a+\epsilon))$ and $V = \pi^{-1}([a, a+d_1 \epsilon))$.
Since $d_1 \geq 2$, $U,V$ are open sets with $\overline{U}\subseteq V$ and $f: U \longrightarrow V$ is proper of degree $d_1$.
Hence, $f: U \longrightarrow V$ is a polynomial like map, with connected filled Julia set $K = \pi^{-1}(a)$.
Let $P_a$ be the conjugate polynomial. 
Then $P_a$ is hyperbolic with connected Julia set as $f$ is hyperbolic.
Similarly, if $f_*(b) = b$, then we can associate a hyperbolic polynomial with connected Julia set $P_b$ to the end $b$.

Note that if $f$ varies in the hyperbolic component, $P_a$ (or $P_b$) also varies in the corresponding hyperbolic component of polynomials.
Hence, combining Lemma \ref{PropertyShishikuraTree}, we summarize the invariants that we can associate to a hyperbolic component with nested Julia sets in the following proposition.

\begin{prop}\label{NestJuliaSetInvariants}
Let $\HC$ be a hyperbolic component with nested Julia sets in $\M_d$, and $[f]\in 
\HC$. 
Taking the second iteration $f^2$ if necessary, we assume that $f$ fixes (at least) one of the two extremal Julia components.
We can associate the following set of invariants to $\HC$: 
\begin{enumerate}
\item A natural number $k \geq 2$, and a sequence $d_1,..., d_k$ such that $\sum d_i = d$, and $\sum 1/d_i < 1$, where these numbers are associated to $\HC$ as in Lemma \ref{PropertyShishikuraTree}.
\item If $k$ is even, a hyperbolic component $\HC_a\subseteq \Poly_{d_1}$ with connected Julia set;
\item If $k$ is odd, two hyperbolic components $\HC_a\subseteq \Poly_{d_1}$,  $\HC_b\subseteq \Poly_{d_k}$ with connected Julia set.
\end{enumerate}
\end{prop}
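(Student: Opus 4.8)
The plan is to assemble the proposition from two things already in hand: the combinatorial structure of the Shishikura tree in Lemma \ref{PropertyShishikuraTree}, and the polynomial-like-map construction carried out just before the statement. The only genuinely new content is that the resulting data depends only on the component $H$; that is where the real work lies.

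First I would fix any $f\in H$. Since $J(f)$ is nested, Lemma \ref{PropertyShishikuraTree} applies to $f$ verbatim and produces the interval $I=\hat\C/\sim\,=[a,b]$, the subdivision $a=a_1<b_1<\cdots<a_k<b_k=b$, the degrees $d_1,\dots,d_k\in\Z_{\ge 2}$ with $\sum_i d_i=d$ and $\sum_i 1/d_i<1$, and $k\ge 2$; this is exactly the data in item (1). For items (2) and (3): because the slope signs of $f_*$ on consecutive laps alternate, continuity of $f_*$ forces $f_*(a)=a$ or $f_*(b)=b$, unless $k$ is odd and $f_*$ swaps the two ends. In every case except this ``swap'' case, after relabelling $a\leftrightarrow b$ if needed, $f_*$ has slope $+d_1$ near the fixed end $a$, so for all small $\epsilon$ the sets $U=\pi^{-1}([a,a+\epsilon))$ and $V=\pi^{-1}([a,a+d_1\epsilon))$ are nested neighbourhoods of the extremal component $K_a=\pi^{-1}(a)$ with $\overline U\subset V$ and $f:U\to V$ proper of degree $d_1$ --- a polynomial-like map with connected filled Julia set $K_a$. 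Straightening produces $P_a\in\Poly_{d_1}(\Com)$, which is hyperbolic with connected Julia set since $f$ is, so its hybrid class determines a well-defined hyperbolic component $H_a\subset\Poly_{d_1}(\Com)$; when $k$ is odd the same construction at the second fixed end $b$ yields $H_b\subset\Poly_{d_k}(\Com)$. In the swap case I would first pass to $f^2$, which has the same Julia set --- hence the same gaps, interval $I$ and Cantor set $C$ --- and which now fixes both extremal components; running the same argument then requires a little care with the local degrees to match the statement.

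It then remains to show $k$, the tuple $(d_1,\dots,d_k)$, and the hyperbolic polynomial components $H_a$ (and $H_b$) are independent of the chosen $f\in H$. Since $H$ is connected it suffices to prove they are locally constant. As $f$ varies in $H$ the Julia set and all of its complementary components move by a holomorphic motion, and the two nesting points $p_1,p_2$ vary continuously; hence the extremal Julia components, the large annulus $A=\pi^{-1}((a,b))$ between them, the number $k$ of components of $f^{-1}(A)$, and the covering degrees $d_i$ are locally constant, as is the parity of $k$ and whether $f_*$ fixes or swaps the ends. Likewise the polynomial-like restriction $f:U\to V$ (and the one at the other end) varies continuously with $f$, so its hybrid class stays in one hyperbolic component of $\Poly_{d_1}(\Com)$ (resp.\ $\Poly_{d_k}(\Com)$). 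Thus all the data is locally constant, hence constant on the connected set $H$, and is a genuine invariant of $H$.

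The hard part will be this local-constancy step: producing a single holomorphic motion (or at least a continuous family of quasiconformal conjugacies) that simultaneously tracks $J(f)$, all of its Fatou gaps, and the extremal components, and checking that the domains $U,V$ of the polynomial-like restriction can be chosen to move continuously so that straightening yields continuous families of hyperbolic polynomials. A secondary nuisance, already flagged, is the bookkeeping when $f$ interchanges the two extremal components and one must replace $f$ by $f^2$: one should verify this does not disturb the gaps, $I$, or $C$, so that $H_a$ and $H_b$ are unambiguously attached to $H$.
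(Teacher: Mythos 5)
Your proposal is correct and follows the same route as the paper: extract the combinatorial data from Lemma~\ref{PropertyShishikuraTree}, construct and straighten a polynomial-like restriction $f:U\to V$ near each fixed extremal component to obtain the hyperbolic polynomial components, and observe that the resulting hybrid class varies continuously as $f$ moves through $H$. You are somewhat more explicit about the well-definedness step than the paper, which disposes of it in a single sentence (``if $f$ varies in the hyperbolic component, $P_a$ (and $P_b$) also varies in the corresponding hyperbolic component of polynomials''), and you correctly flag the degree-bookkeeping nuance in the odd-$k$ swap case (where passing to $f^2$ turns the slope $d_1$ near $a$ into $d_1d_k$), which the paper's statement does not address explicitly.
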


Next, we shall see that given any set of data as above, one can construct a hyperbolic component with nested Julia sets with that set of data as invariants.
We emphasize that this set of invariants, however, is not complete: different hyperbolic components may share the same invariant.
We refer the readers to \cite{L20} for a complete invariant.

\subsection*{Construction of hyperbolic component with nested Julia sets.}
In this subsection, we will use quasiconformal surgery to construct examples of nested Julia sets.
We start by reviewing some of the definitions.

Let $U\subseteq \hat \C$. Let $K \geq 1$, and set $k = \frac{K-1}{K+1}$. 
A map $\phi: U \longrightarrow \phi(U)$ is called {\em $K$-quasiconformal} if 
\begin{enumerate}
\item $\phi$ is a homeomorphism;
\item the partial derivatives $\partial_z \phi$ and $\partial_{\bar z} \phi$ exist in the sense of distributions and belong to $L^2_{loc}$ (i.e. are locally square integrable);
\item and satisfy $|\partial_{\bar z} \phi| < k |\partial_z \phi|$ in $L^2_{loc}$.
\end{enumerate}
A map $f: U \longrightarrow f(U)$ is called {\em $K$-quasiregular} if $f = g\circ \phi$, where $\phi$ is quasiconformal and $g$ is holomorphic.

The following fundamental lemma for quasiconformal surgery is known as the Shishikura's principle (see Lemma 1 in \cite{Shishikura87}):
\begin{prop}\label{ShishikuraPrinciple}
Let $g:\hat \C \longrightarrow \hat \C$ be a proper $K$-quasiregular map.
Suppose that there are disjoint open sets $E_i$ of $\hat\C$, quasiconformal maps $\Phi_i : E_i \longrightarrow E_i' \subseteq \hat\C$ ($i=1,..., m$) and integer $N \geq 0$, satisfying the following conditions:
\begin{enumerate}
\item $g(E) \subseteq E$, where $E = E_1 \cup ... \cup E_m$;
\item $\Phi \circ g \circ \Phi_i^{-1}$ is holomorphic in $E_i' = \Phi_i (E_i)$, where $\Phi: E \longrightarrow \hat\C$ is defined by $\Phi|_{E_i} = \Phi_i$;
\item $g_{\bar{z}} = 0$ a.e. on $\hat\C - g^{-N}(E)$.
\end{enumerate}
Then there exists a quasiconformal map $\varphi$ of $\hat\C$ such that $\varphi\circ g \circ \varphi^{-1}$ is a rational map. Moreover, $\varphi\circ \Phi_i^{-1}$ is conformal in $E_i'$ and $\varphi_{\bar{z}} = 0$ a.e. on $\hat \C - \bigcup_{n\geq 0} g^{-n}(E)$.
\end{prop}

We show every combinatorial invariant in Proposition \ref{NestJuliaSetInvariants} is realizable (cf. Theorem 1.1 and Theorem 1.2 in \cite{L20}):

\begin{theorem}\label{NestedMatingPoly}
Given the set of data as in Proposition \ref{NestJuliaSetInvariants}, there is a hyperbolic rational map $f$ with nested Julia sets having the set of data as its invariants.
\end{theorem}
\begin{proof}
Let $k \geq 2$, and a sequence $d_1,..., d_k$ with $\sum 1/d_i < 1$.

We first assume $k$ is an even number.
Let $P$ be a hyperbolic polynomial with connected Julia set.
We normalize $P$, and choose $R, \epsilon > 0$ so that
\begin{enumerate}
\item $\overline{B(0,1)}$ and $\hat\C-B(0,R)$ are contained in the bounded and unbounded Fatou components of $P$ respectively.
\item We have
$$
P(\partial B(0, R^{\epsilon+1/d_1})) \subseteq \hat\C-\overline{B(0,R)}.
$$ 
\item For $i = 2,..., k-1$, $A_i \subseteq B(0, R^{1-\epsilon -1/d_k}) - B(0, R^{\epsilon+1/d_1})$ is a round annulus centered at $0$ with modulus $\frac{\log R}{2d_i\pi}$.
The annuli $A_i$ is contained in the bounded component of $\Com - A_{i+1}$ with disjoint closures.
\end{enumerate}

By conjugating $P$ with an affine map and choosing $R$ large enough, the condition (1) can be achieved.
Given a sufficiently small $\epsilon>0$, the condition $(3)$ can be achieved for any $R$ as $\sum 1/d_i < 1$.
Fix $\epsilon > 0$ so that (3) is satisfied, by choosing $R$ large enough, the condition $(2)$ is achieved, as $P$ is a polynomial with degree $d_i$.

By condition (2), $P^{-1}(B(0,R)) \subseteq B(0, R^{\epsilon+1/d_1})$.
We first define 
$$
 g(z)=
\begin{cases}
P(z) & \text{ on } P^{-1}(B(0,R)),\\
C_i z^{(-1)^{i+1}d_i} & \text{ on } A_i, i =2,..., k-1,,\\
C_k z^{-d_k} & \text{ on } \hat\C - \overline{B(0, R^{1-1/d_k})},
\end{cases}
$$
where $C_i > 0$, $i < k$ is chosen so that $g(A_i) = B(0,R) - \overline{B(0,1)}$,
and $C_k>0$ is chosen so that $g(\partial B(0, R^{1-1/d^k})) = \partial B(0, R)$.
The construction is possible as $A_i$ is a round annulus centered at $0$ with modulus $\frac{\log R}{2d_i\pi}$.

By our construction, $g(\hat\C - \overline{B(0, R)}) = B(0, 1)$.
Let 
$$
U := P^{-1}(B(0,R)) \cup (\bigcup_{i=2}^{k-1}A_i) \cup(\hat\C - \overline{B(0, R^{1-1/d_k})}),
$$
be the domain of $g$.
Then $g$ extends continuously to $\overline U$.

Let $E_1,..., E_{k-1}$ be components of $\hat\C - \overline{U}$ with the natural ordering.
Then each $E_i$ is an annulus, and $g(\partial E_i) = \partial B(0,1)$ or $g(\partial E_i) = \partial B(0,R)$ depending $i$ is even or odd.
We extend $g$ to a proper quasiregular map $g: E_i \longrightarrow B(0,1)$ or $g: E_i \longrightarrow \hat\C - \overline{B(0,R)}$ respectively.
Note that $g$ is a degree $d_i$ and $d_{i+1}$ covering on the two boundary components of $\partial E_i$.
Thus, by the Riemann-Hurwitz formula, $E_i$ contains $d_i+ d_{i+1}$ critical points counted multiplicities.
In this way, we constructed a quasiregular map $g: \hat\C \longrightarrow \hat\C$.

Let $E_0 = \bigcup_{i=0}^\infty P^i(B(0,1))$ and $E_k = \hat\C - \overline{B(0, R)}$, and $E = \bigcup_{i=0}^k E_i$.
Note that since $P$ is hyperbolic with connected Julia set, and $\overline{B(0,1)}$ is contained in a bounded Fatou component of $P$, $E_0$ is disjoint from $E_j$ for all $j=1,..., k$.
By construction, $g(E) \subseteq E$.

Define $\Phi_0$ and $\Phi_k$ as the identity map.
Using the quasiregular map $g$, we can pull back the standard complex structure on $E_0$ and $E_k$ to $\mu_i$ on $E_i$, and let $\Phi_i$ be the corresponding quasiconformal map for $\mu_i$.
By construction, $\Phi \circ g \circ \Phi_i^{-1}$ is holomorphic.
Note that $g$ is holomorphic on $\hat \C - E$. Thus we can apply Proposition \ref{ShishikuraPrinciple} and conclude that $g$ is quasiconformally conjugate to a rational map $f$.

The map $f$ is hyperbolic as the critical points of $g$ are contained either in the bounded Fatou component of $P$ or one of $E_i$'s. 
Each $E_i$ is mapped to a bounded Fatou component of $P$ under the second iteration.
It can also be easily verified that the Julia set $f$ is nested, and it has the invariants the set of invariants
$k, d_1,..., d_k$ and $P$.

Let $k$ be an odd number, and $P, Q$ be two hyperbolic polynomials with connected Julia set.
Denote $M_R(z) = \frac{R}{z}$, $R > 0$. We normalize $P, Q$, and choose $R, \epsilon > 0$ so that
\begin{enumerate}
\item $\overline{B(0,1)}, \hat\C - B(0,R)$ are contained in Fatou components of $P$ and of $M_R \circ Q \circ M_R^{-1}$.
\item We have
$$
P(\partial B(0, R^{\epsilon+1/d_1})) \subseteq \hat\C-\overline{B(0,R)}
$$ 
and 
$$
M_R \circ Q \circ M_R^{-1}(\partial B(0, R^{1-\epsilon -1/d_k})) \subseteq B(0, 1).
$$
\item For $i = 2,..., k-1$, $A_i \subseteq B(0, R^{1-\epsilon -1/d_k}) - B(0, R^{\epsilon+1/d_1})$ is a round annulus centered at $0$ with modulus $\frac{\log R}{2d_i\pi}$.
The annuli $A_i$ is contained in the bounded component of $\Com - A_{i+1}$ with disjoint closures.
\end{enumerate}
A similar argument shows that we can achieve the three conditions. Define
$$
 g(z)=
\begin{cases}
P(z) & \text{ on } P^{-1}(B(0,R)),\\
C_i z^{(-1)^{i+1}d_i} & \text{ on } A_i, i =2,..., k-1,\\
M_R \circ Q \circ M_R^{-1}(z) & \text{ on } (M_R \circ Q \circ M_R^{-1})^{-1}(\hat\C - B(0,1)),
\end{cases}
$$
where $C_i > 0$ is chosen so that $F(A_i) = B(0, R) - \overline{B(0,1)}$.
Then a similar interpolation construction gives a hyperbolic rational map with nested Julia sets associated with this set of invariants.
\end{proof}

\begin{figure}[h!]
\centering
\includegraphics[width=5cm]{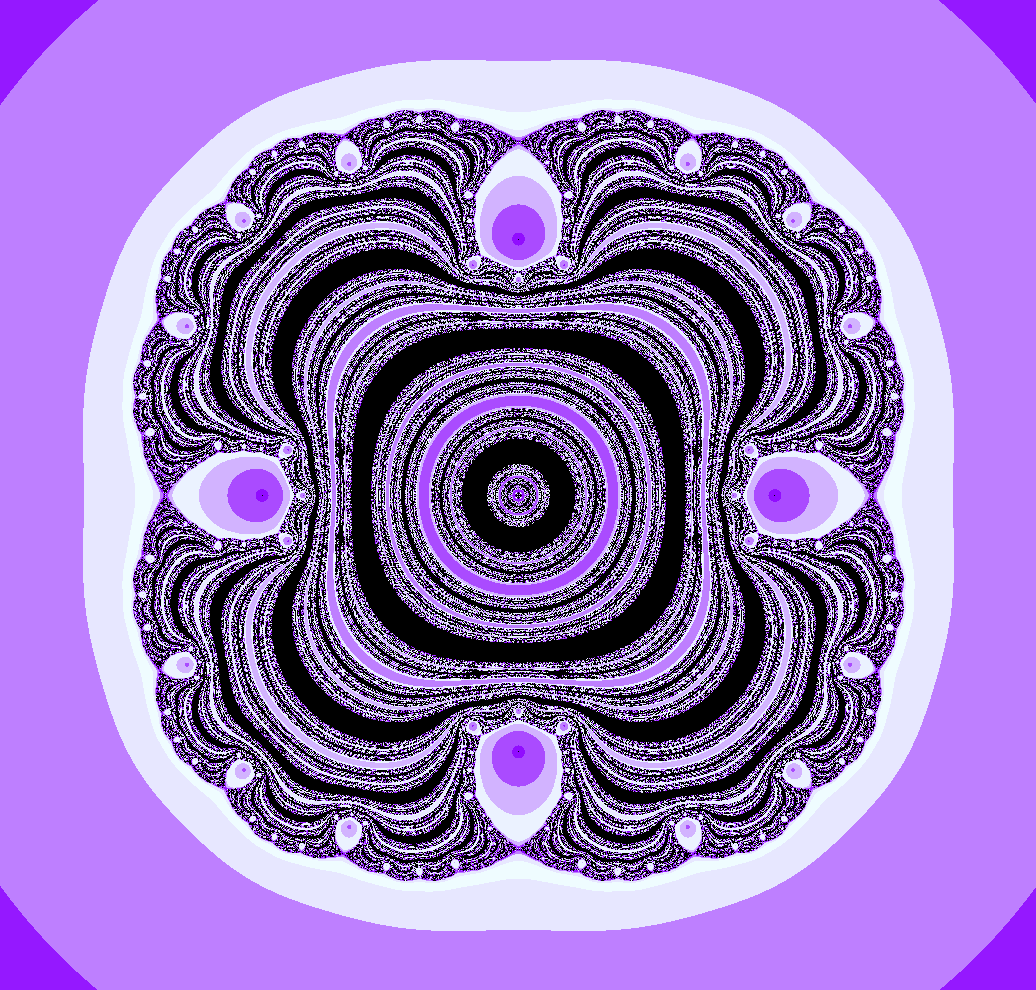}
\includegraphics[width=5cm]{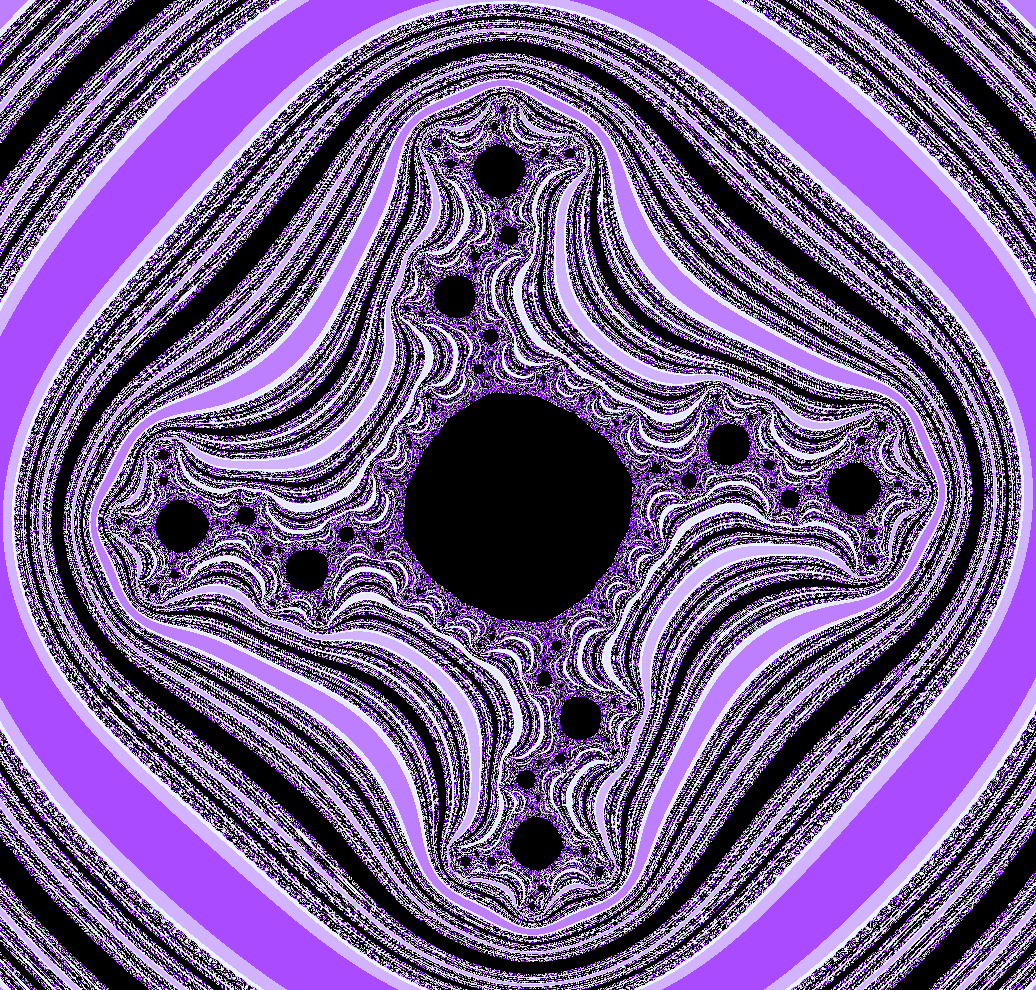}
\includegraphics[width=5cm]{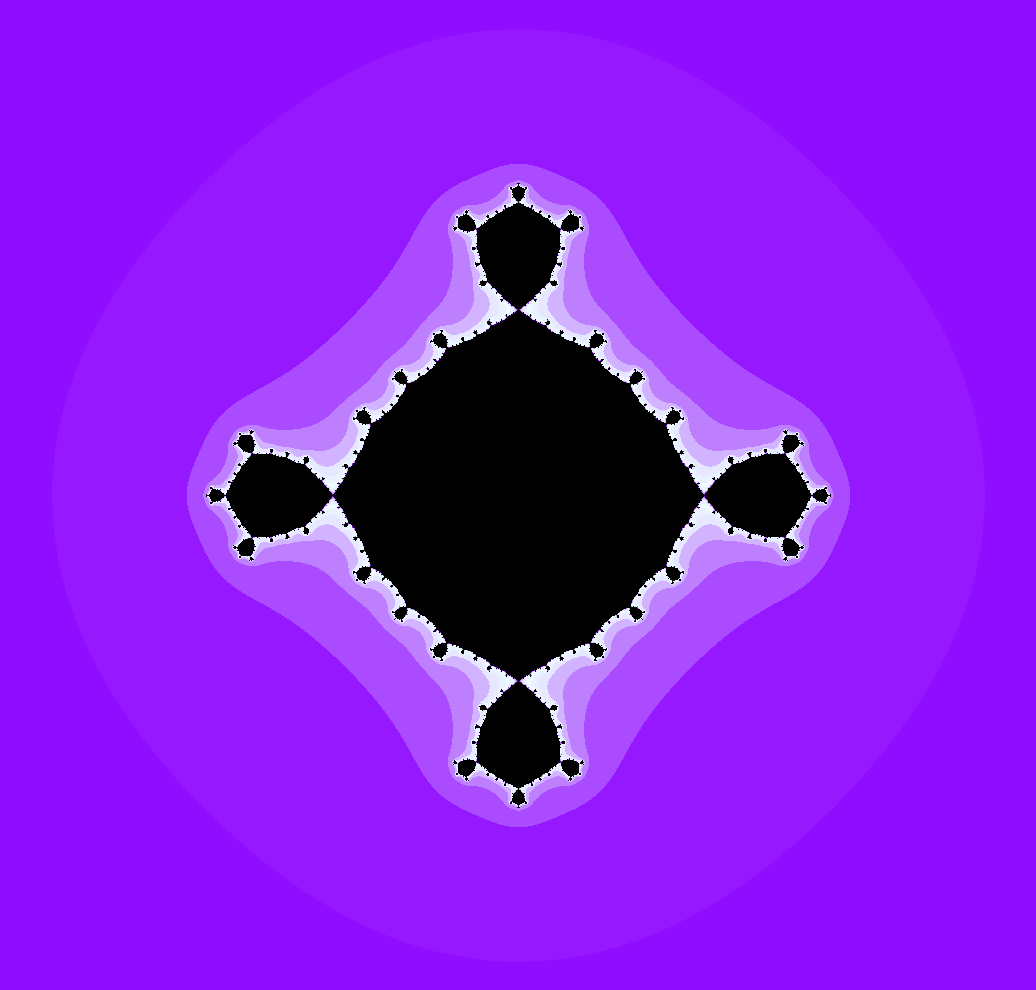}
\includegraphics[width=5cm]{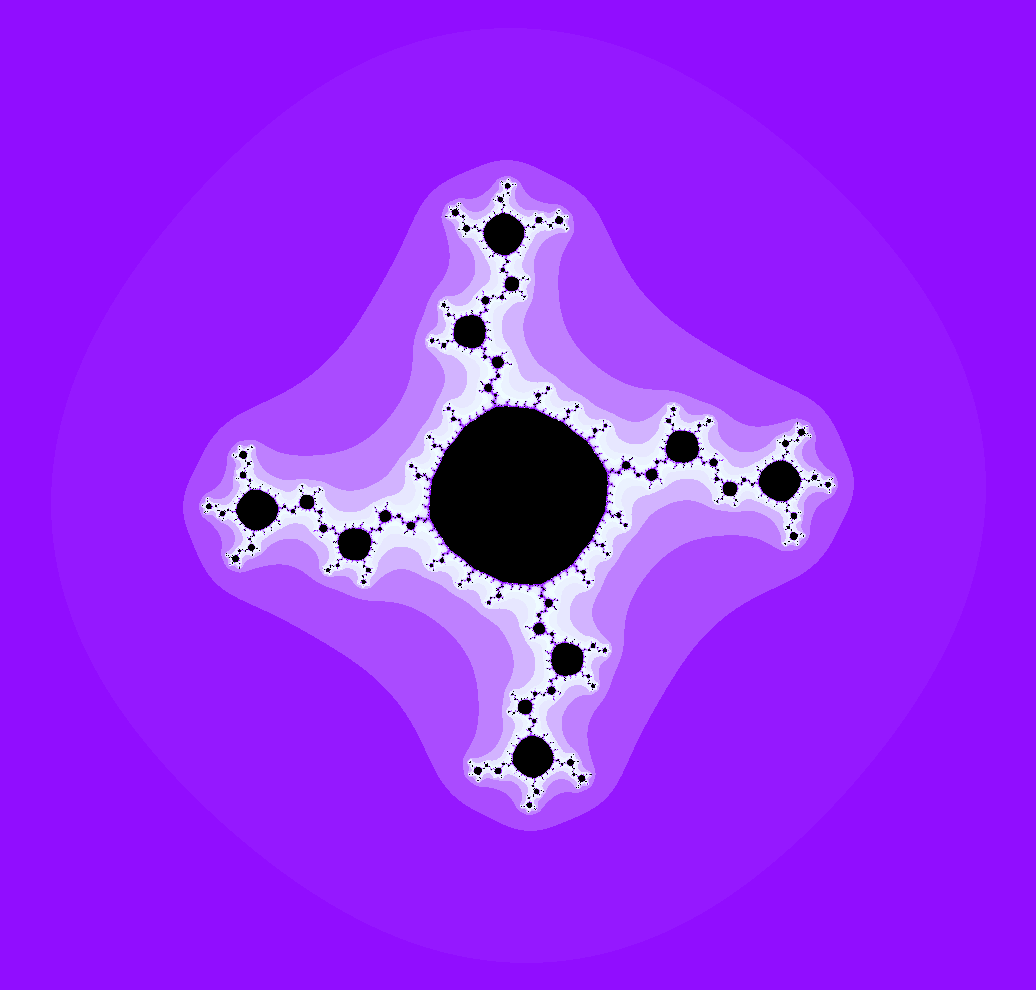}
\caption{An example of the Julia set of `nested mating' of $z^4-1$ (see lower left figure) and $z^4-1.10658-0.24848i$ (see lower right figure). The outermost Julia component (see upper left figure) is an inverted copy the Julia set of $z^4-1$, while the innermost Julia component (see upper right figure) is a copy of the Julia set of $z^4-1.10658-0.24848i$. The two component is separated by a Cantor set of nontrivial continuum, with countably many being coverings of either the Julia set of $z^4-1$ or $z^4-1.10658-0.24848i$.}
\end{figure}

\begin{figure}[h!]
\centering
\includegraphics[width=5cm]{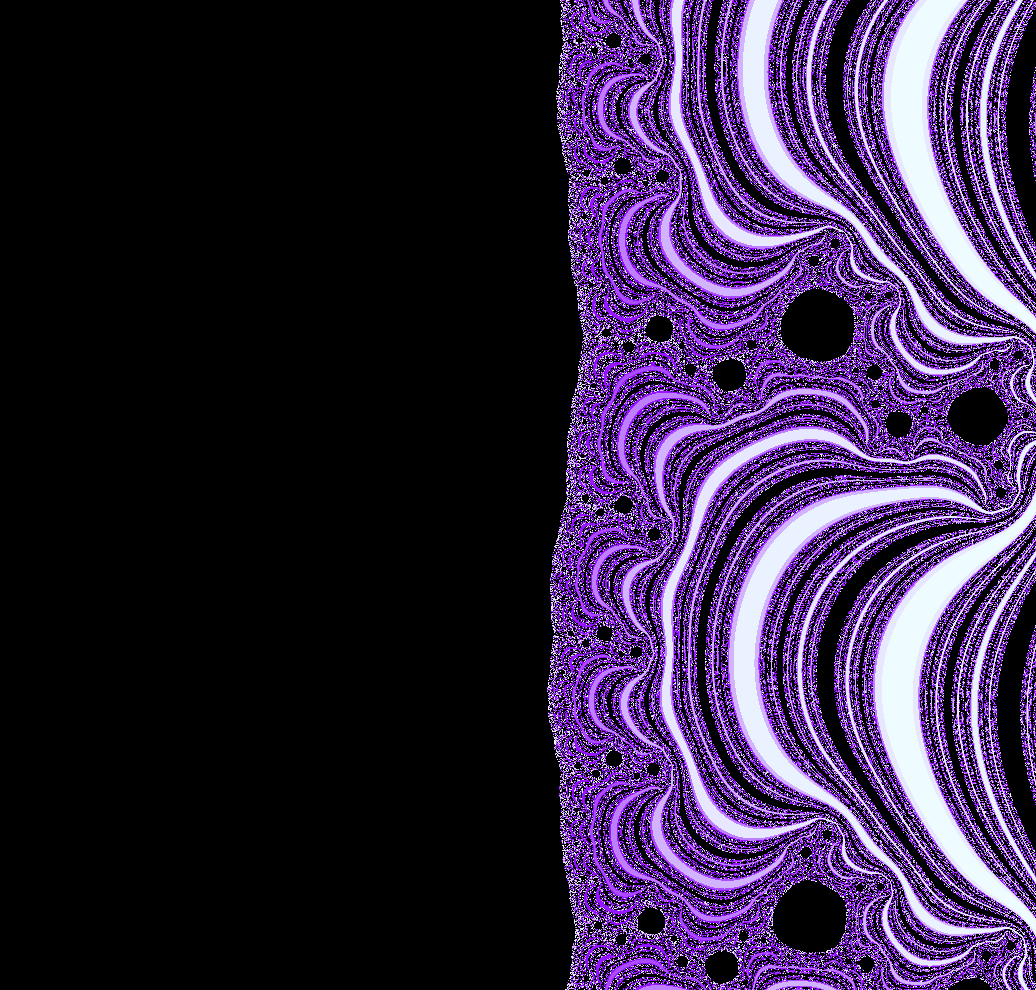}
\includegraphics[width=5cm]{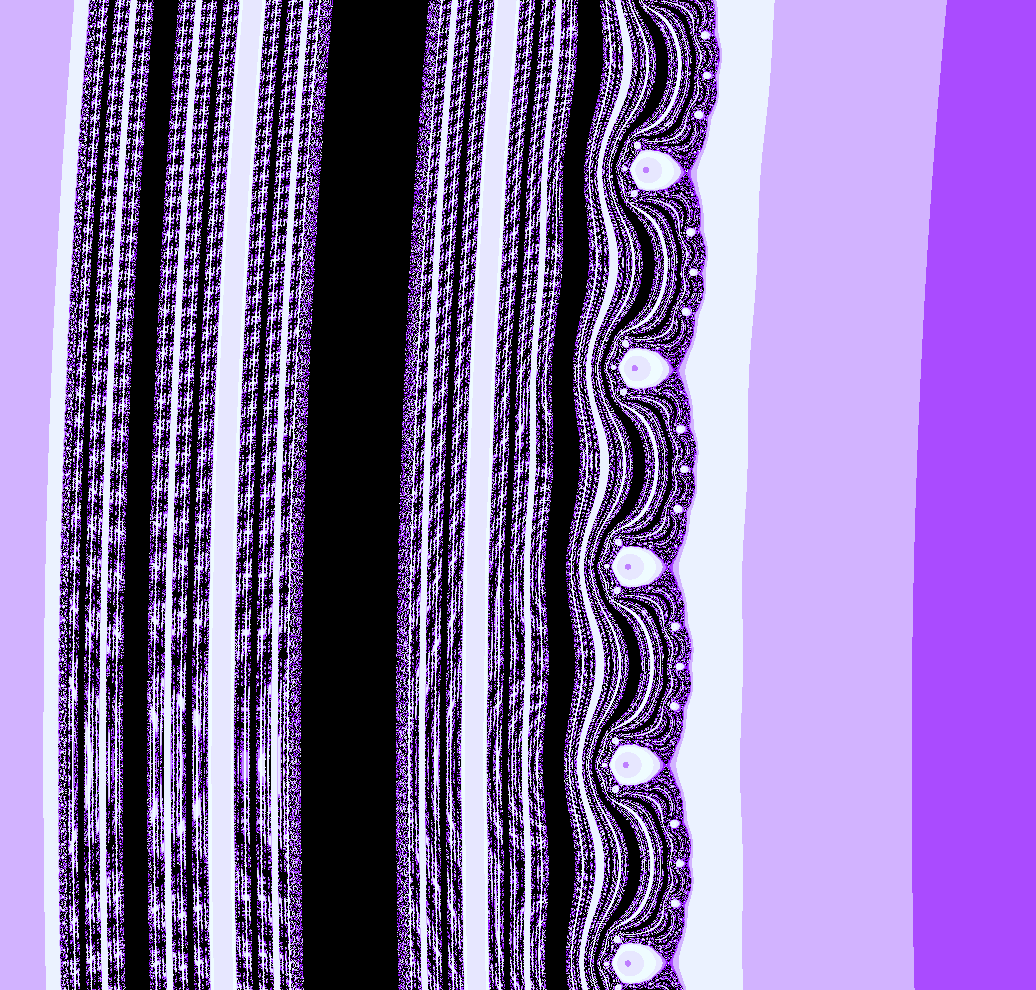}
\caption{A zoom near the boundary of the black region (see left figure) shows that it is a covering of the Julia set of $z^4-1.10658-0.24848i$, while a zoom near the boundary of the pink region (see right figure) shows that it is a covering of the Julia set of $z^4-1$.}
\end{figure}

We remark that in \cite{QiuYangYin15}, Qiu, Yang and Yin have a similar result for rational maps with Cantor set of circles.


\section{Equivalence of bounded escape and nested Julia sets}\label{ProofOfCantorCircle}
Recall that a hyperbolic component $\HC$ is said to admit {\em bounded escape} if there exists a sequence $[f_n] \in \HC$ with markings $\phi_n$ so that
\begin{enumerate}
\item $[f_n]$ is degenerating;
\item For any periodic cycle $C\in \mathscr{S}$ of the topological model $\sigma: J \longrightarrow J$, the sequence of lengths $L(C, [f_n])$ is bounded.
\end{enumerate}
Note that by definition, the hyperbolic component $\HC$ is, in particular, unbounded.
In this section, we shall prove that a hyperbolic component $\HC$ has nested Julia sets if and only it admits bounded escape.

\subsection{Nested Julia sets $\implies$ bounded escape}
In this subsection, we shall prove the following using quasiconformal stretching.
\begin{theorem}\label{NestedJuliaSetImpliesBoundedEscape}
Let $\HC$ be a hyperbolic component with nested Julia sets. Then it admits bounded escape.
\end{theorem}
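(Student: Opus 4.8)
The plan is to build the degenerating sequence directly inside $H$ by running the surgery of Proposition \ref{NestedMatingPoly} with the scale parameter sent to infinity, and then to read off the boundedness of the length spectrum from the fact that the quasiregular model is assembled out of a \emph{fixed} finite amount of data. First I would fix some $f\in H$ and record its full combinatorial data: the invariants $k$, $d_1,\dots,d_k$ and the hyperbolic polynomial $P_a$ (and $P_b$ when $k$ is odd) of Proposition \ref{NestJuliaSetInvariants}, together with the itineraries of the critical points, this being the data that pins down the component $H$ and not merely the invariants listed in Proposition \ref{NestJuliaSetInvariants}. Then I would carry out the construction in the proof of Proposition \ref{NestedMatingPoly} realizing exactly this data, but with the scale $R=R_n\to\infty$, so that the annular Fatou pieces $A_i$ of the model acquire modulus $\tfrac{\log R_n}{2\pi d_i}\to\infty$, while arranging that the interpolation annuli on which the quasiconformal surgery is performed keep \emph{bounded} modulus (there is room, since $\sum 1/d_i<1$). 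The outcome is a $K$-quasiregular model $F_n$ with $K$ independent of $n$, and after straightening by Shishikura's principle, a sequence $f_n=\psi_n\circ F_n\circ\psi_n^{-1}\in H$ with $\psi_n$ a $K$-quasiconformal homeomorphism.

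I would then check that $[f_n]\to\infty$ in $M_d$. Since $\psi_n$ is $K$-quasiconformal, $f_n$ has annular Fatou components $\psi_n(A_i)$ of modulus $\geq\tfrac{1}{K}\cdot\tfrac{\log R_n}{2\pi d_i}\to\infty$; equivalently, as $R_n\to\infty$ the polynomial part of the model and the remainder pull apart across an annulus of modulus $\asymp\log R_n$, so after a suitable M\"obius normalization $f_n$ converges algebraically to a map of degree $d_1<d$. Either way $[f_n]$ leaves every compact subset of $M_d$ (this is the same mechanism as for the family $z^2+1/(nz^3)$ of \cite{McM88}), and in particular $H$ is unbounded in $M_d$.

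The heart of the argument is the uniform bound on multipliers. By construction $F_n$ is holomorphic away from the grand orbit of a fixed collection of Fatou pieces, and near any point of its Julia set its action is locally one of three types: a branch of the fixed polynomial $P_a$ on a compact neighbourhood of $K(P_a)$, a branch of $P_b$ near $K(P_b)$, or a power map $z\mapsto c_iz^{\pm d_i}$. The first two contribute to a multiplier a factor bounded by a constant $C$ depending only on $P_a$ and $P_b$; a power-map step contributes exactly the factor $d_i\le d$. Hence $|(F_n^q)'(z)|\le C^{q}$ for every periodic point $z\in J(F_n)$ of period $q$, with $C$ independent of $n$. Composing with the $K$-quasiconformal conjugacy $\psi_n$ distorts the multiplier of a repelling cycle by at most a factor depending only on $K$, so the multiplier of any period-$q$ repelling cycle of $f_n$ is bounded in terms of $q$, $C$ and $K$ alone, hence uniformly in $n$. (In the language of the Shishikura tree: by Corollary \ref{CantorSetPeriodicSet} every repelling cycle of $f_n$ projects under $\pi_n$ to a periodic point of $f_{n,*}$ in the Cantor set; the cycles over an endpoint of $I$ lie in a polynomial-like copy of $P_a$ or $P_b$, and those over an interior point lie on a buried Jordan curve, by Lemma \ref{TopologicalPropertyNestedJuliaSet}, on which $f_n^q$ is a bounded-degree annular self-covering, and in both cases the multiplier is controlled by the fixed data and $K$.) Since the period of a cycle is arbitrary but fixed, this is precisely condition (2'), and together with the previous paragraph it gives that $H$ admits bounded escape.

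The step I expect to be most delicate is the multiplier bound: it must survive the very degeneration just described, namely the blow-up of the gap moduli. This is the reason for running the surgery with $P_a$, $P_b$ \emph{and} the interpolation dilatation $K$ held fixed, so that the deep (buried) Julia components come entirely from the power-map part of the model and contribute only products of the $d_i$ together with a bounded quasiconformal error. A secondary point needing care is that the maps $f_n$ genuinely lie in the prescribed component $H$, rather than in some hyperbolic component merely sharing the invariants of Proposition \ref{NestJuliaSetInvariants}; this is why one realizes the full combinatorial data (critical itineraries included), and one should verify that varying $R_n$ with all other data fixed traces a path inside a single hyperbolic component.
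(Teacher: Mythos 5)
Your overall strategy is the same as the paper's — quasiconformal surgery plus Shishikura's principle to manufacture a degenerating sequence with bounded multipliers — but you take a genuinely different route in executing it, and that route has gaps the paper's proof is specifically engineered to avoid.

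\textbf{Membership in $H$.} You rebuild the map from scratch via Proposition~\ref{NestedMatingPoly} with the scale $R=R_n\to\infty$. As you yourself flag, Proposition~\ref{NestedMatingPoly} only realises the invariants of Proposition~\ref{NestJuliaSetInvariants}, which the paper explicitly notes do \emph{not} determine $H$; enhancing the construction to carry the critical itineraries is not done anywhere in the paper, and even with a full set of combinatorial data one still has to argue that the output lands in the given $H$ and not some other component sharing that data, and that varying $R$ traces a path inside a single component. The paper sidesteps all of this by starting from the actual $f\in H$, altering it only on one critical gap $A$ (keeping $F_n=f$ on a neighbourhood of $\hat\C-A$), and observing that after straightening the resulting $f_n$ automatically lie in $H$ and come with a canonical marking $\phi_n$. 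This modify-in-place step is the structural choice that makes the rest of the argument go through cleanly, and it is missing from your proposal.

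\textbf{The multiplier bound on $F_n$.} The step "a power-map step contributes exactly the factor $d_i\le d$" is not correct as stated: the derivative of $z\mapsto C_i z^{\pm d_i}$ at $z$ has modulus $d_i\,|F(z)|/|z|$, not $d_i$. What is true is that for a periodic cycle staying entirely in the concentric power-map annuli the radial factors $|F(z)|/|z|$ telescope around the cycle, leaving $\prod d_{i_j}$. But nothing in your argument establishes that buried periodic cycles of $F_n$ stay in those annuli. They may pass through $P^{-1}(B(0,R_n))\setminus K(P)$, where $F_n=P$ is only asymptotically a power map and the telescope acquires corrections, and this region grows with $R_n$; you would need to show those corrections stay uniformly bounded for each fixed period $q$. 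The parenthetical appeal to the Shishikura tree and annular self-coverings is closer to a proof, and in fact is essentially what the paper does: the paper transports a fixed polynomial-like restriction $f^p\colon U\to V$ near an extremal component, or a fixed degree-$e$ annular covering around a buried curve, to $f_n$ via $\phi_n$, notes that a gap inside the transported annulus eventually covers $A$ and hence has modulus blowing up, and invokes compactness of polynomial-like maps (respectively compact convergence of the annular coverings) to bound the multipliers. No explicit derivative computation and no uniform control on the dilatation of the straightening map are needed.

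\textbf{Uniform dilatation of $\psi_n$.} Your computation bounds multipliers for the quasiregular model $F_n$ and then invokes the straightening $\psi_n$ being $K$-quasiconformal uniformly in $n$ to transfer the bound to $f_n$. This needs a justification: the qc constant of the Shishikura straightening depends on the dilatation of the interpolation \emph{and} the combinatorics of how orbits revisit the support of the non-conformality, and while this can plausibly be arranged to be uniform, you would have to check it. The paper's argument never needs to control the dilatation of $\phi_n$ at all, precisely because it uses compactness of restrictions rather than distortion estimates on multipliers.

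In short: the paper's key lemma is "modify $f$ on a single gap, transport polynomial-like and annular-covering restrictions by the straightening, apply compactness"; your proposal replaces this with "rebuild the map from the invariants, compute multipliers explicitly in the model, transfer by quasiconformal distortion". The second plan can probably be made to work, but each of the three steps above is a real gap, and the first (landing in $H$) is the most serious.
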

\subsection*{Linear stretch of an annulus}
Let $s< 1$, and let $A(s)$ denote the round annulus $B(0,1) - \overline{B(0,s)}$.
Given two round annuli $A(s)$ and $A(s')$, we define a {\em linear stretch} between them as
\begin{align*}
\psi_{s\to s'} : A(s) &\longrightarrow A(s')\\
(\rho,\theta) &\mapsto (\frac{1-s'}{1-s}\rho+\frac{s'-s}{1-s}, \theta)
\end{align*}
where $(\rho,\theta)$ is the polar coordinate.
We can extend the map continuously to unit disks $\psi_{s\to s'}: B(0,1) \longrightarrow B(0,1)$ by setting $\psi_{s\to s'}(z) = \frac{s'}{s}z$ on $\overline{B(0,s)}$.
Note that $\psi_{s\to s'}$ is $K$-quasiconformal, where $K = \max\{\frac{\log s}{\log s'}, \frac{\log s'}{\log s}\}$.

\subsection*{Stretch of a hyperbolic rational map}
Let $f$ be a hyperbolic rational map with nested Julia set, and let $A_1, ..., A_k$ be critical gaps of $f$. Then $D_i = f(A_i)$ is a disk Fatou component. Note that $D_i$ may equal to $D_j$.
Let $\phi_i: B(0,1) \longrightarrow D_i$ be a Riemann mapping.
Since $f$ is hyperbolic, there exist $s < 1$ so that $\phi_i(A(s))$ contains no post-critical points for all $i$.
We define $g_n: \hat \C \longrightarrow \hat\C$ by replacing $f$ on $A_i$, $i=1,..., k$ to
$$
g_n|_{A_i}:= \phi_i \circ \psi_{s \to e^{-n}} \circ \phi_i^{-1} \circ f|_{A_i}.
$$ 
Note that for all sufficiently large $n$, $g_n$ is $n/\log s$-quasiregular.

Let $E_{i,j} = f^j(A_i)$, and $E =  \bigcup_{j\geq 0} \bigcup_{i=1}^k E_{i,j}$. Since $A_i$ is pre-periodic, there are only finitely many components in $E$.
Define $\Phi_{i,j,n}$ as identity if $j \neq 0$.
Using the quasiregular map $g_n$ to pull back the standard complex structure on $E_{i,1}$ to $\mu_{i, 0, n}$ on $E_{i,0}$, and let $\Phi_{i,0,n}$ be the corresponding quasiconformal map for $\mu_{i,0,n}$.
Then by Proposition \ref{ShishikuraPrinciple}, $g_n$ is quasiconformally conjugate to a rational map $f_n$. We call $f_n$ a {\em quasiconformal stretch} of $f$.
Note that the modulus of $\Phi_{i,0,n}(A_i) = \Phi_{i,0,n}(E_{i,0})$ is going to infinity, so the modulus of any critical annulus of $f_n$ is going to infinity.

By construction, each $f_n$ is quasiconformally conjugate to $f$, so $[f_n] \in \HC$.
Let $A$ be a gap of $f$. Since $A$ is eventually mapped to a critical gap, the modulus of the corresponding annulus $A_n$ of $f_n$ is going to infinity.
This implies that $[f_n]$ is degenerating.
Indeed, otherwise, we can choose representatives and passing to a subsequence so that $f_n$ converges to a rational map $g$ of the same degree.
Since the modulus of $A_n$ is going to infinity, at least one component of $\hat\C - A_n$ is shrinking to a point. So infinitely many pre-periodic points collapse to a single point, giving a contradiction.

We are now ready to prove Theorem \ref{NestedJuliaSetImpliesBoundedEscape}.
\begin{proof}[Proof of Theorem \ref{NestedJuliaSetImpliesBoundedEscape}]
Let $[f]\in \HC$ be such a hyperbolic rational map.
Let $f_n$ be a quasiconformal stretch of $f$, with markings $\phi_n$.
Let $f_*: I = [a,b] \longrightarrow [a,b]$ be the corresponding map on the Shishikura tree.
Let $x$ be a periodic point of $f$ of period $p$.
We consider two cases:

Case (1): $x$ is on the extremal Julia component $K$.
After passing to a second iterate, we may assume $K$ is fixed by $f$ and $\pi(K) = a \in I$.
Choose $\epsilon > 0$ so that $[a, a+ d_1 \epsilon) \subseteq [a_1, b_1]$.
Let $U = \pi^{-1}([a, a+ \epsilon))$, $V = \pi^{-1}([a, a+ d_1 \epsilon))$.
Then $f: U \longrightarrow V$ is a polynomial like map.

Let $U_n = \phi_n(U)$ and $V_n = \phi(V)$. 
Then $f_n:U_n \longrightarrow V_n$ is a polynomial like map.
By construction of the quasiconformal stretch, we have $m(V_n - U_n) \to \infty$.
Since polynomial like map of degree $e$ $f:U\longrightarrow V$ with $m(V - U)$ bounded below is compact up to affine conjugacy (see Theorem 5.8 in \cite{McM94}), so $f_n$ converges compactly to $f_\infty : U_\infty \longrightarrow V_\infty$ of the same degree, so the multipliers of $\phi_n(x)$ stay bounded.

Case(2): $x$ is on the non-extremal Julia component $K$.
Then by Lemma \ref{TopologicalPropertyNestedJuliaSet}, $K$ is a Jordan curve and is buried.
Let $\pi(K) = t \in (a_i, b_i) \subseteq I$.
Let $e$ be the norm of the derivative of $(f_*)^p$ at $t$.
Choose $\epsilon > 0$ so that $(t - e \epsilon, t+ e \epsilon) \subseteq (a_i, b_i)$.
Let $U = \pi^{-1} ((t - \epsilon, t+ \epsilon))$ and $V = \pi^{-1}((t - e \epsilon, t+ e \epsilon))$.
Then $U \subseteq V$ and $f^p: U \longrightarrow V$ is a degree $e$ covering map.

Let $U_n = \phi_n(U)$ and $V_n = \phi_n(V)$. 
By construction of the quasiconformal stretch, the moduli of the two annuli $U_n - \phi_n(K)$ go to infinity.
We normalize so that $V_n$ separates $0, \infty$, and $\phi_n(x) = 1$.
Let $A_n$ be an annuli component of $U_n - \phi_n(K)$.
Without loss of generality, we may assume that $A_n$ is in the unbounded component of $\C - \phi_n(K)$.
Then $A_n$ separates $\{0,1\}$ from $\infty$.
Since $m(A_n) \to \infty$, $\partial A_n - \phi_n(K) \to \infty$ (see Theorem 4.7 in \cite{A73}).
Similarly, the other component of $\partial U_n$ converges to $0$.
Hence, both $U_n$ and $V_n$ converges to $\Com-\{0\}$.
After passing to a subsequence, $f_n^p$ converges compactly on $\Com-\{0\}$ to a non-constant rational map.
So the multiplier at $\phi_n(x)$ is bounded in the subsequence.

Since the set of periodic cycles is countable, a diagonal argument allows us to pass to a subsequence $f_{n_k}$ and conclude $\HC$ admits bounded escape.
\end{proof}


\subsection{Bounded escape $\implies$ nested Julia sets}
We shall now prove the converse of Theorem \ref{NestedJuliaSetImpliesBoundedEscape}.
Let $\HC\subseteq \M_d$ be a hyperbolic component that admits bounded escape.
Let $\sigma :J \longrightarrow J$ be the topological model for the actions on the Julia set for $\HC$,
and $\mathscr{S}$ be the set of periodic cycles of $\sigma$.
Then we have a sequence $[f_n] \in \HC$ with markings $\phi_n$ which is degenerating, and for any $C\in \mathscr{S}$, $L(C, f_n)$ stay bounded.
Recall that in \S \ref{MandL}, we have constructed a limiting dynamics
$$
F : {^r\Hyp^3} \longrightarrow {^r\Hyp^3},
$$
with the rescalings $r_n = r([f_n])$.
The markings provide a marking on the end of the tree (see \S \ref{MandL}), so each periodic cycle $C\in \mathscr{S}$ represents a cycle of periodic ends for $F$ on ${^r\Hyp^3}$.

The proof consists of two steps. We first classify the limiting dynamics that can appear.
We then use the limiting dynamics to deduce the topological properties on the Julia set.

\subsection*{Classification of limiting map $F$}
Recall a periodic end $\alpha$ is {\em repelling} if $L(\alpha, F) > 0$.
Suppose $F$ has a cycle of repelling periodic ends, then by Theorem \ref{TL}, it is represented by $C\in \mathscr{S}$ with $\lim_\omega L(C, f_n) = \infty$, which is a contradiction.
Hence, $F$ has no repelling periodic ends.
We shall first classify those limiting dynamics with no repelling periodic ends.

\begin{lem}\label{RepellingEnd}
Let $F: {^r\Hyp^3} \longrightarrow {^r\Hyp^3}$, and $x_0, x_1 \in {^r\Hyp^3}$.
Let $\vec{v_0} \in T_{x_0}{^r\Hyp^3}$ associated to $x_1$, and $\vec{v_1} \in T_{x_1}{^r\Hyp^3}$ such that
\begin{enumerate}
\item $F (x_1) = x_0$;
\item $D_{x_1}F(\vec{v_1}) = \vec{v_0}$;
\item $U_{\vec{v_1}}$ does not intersect the critical tree nor contain $x_0$.
\end{enumerate}
Then $F$ has a repelling fixed end.
\end{lem}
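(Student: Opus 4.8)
The plan is to produce the repelling fixed end explicitly, by inverting $F$ on the half-tree $U_{\vec v_1}$ and then iterating that inverse. Write $T_0=\overline{U_{\vec v_0}}=U_{\vec v_0}\cup\{x_0\}$ and $T_1=\overline{U_{\vec v_1}}=U_{\vec v_1}\cup\{x_1\}$; these are closed subtrees of ${^r\Hyp^3}$, hence complete $\R$-trees. Since $\vec v_0$ is associated to $x_1$ we have $x_1\in U_{\vec v_0}$, so $x_0\neq x_1$ and $\ell:=d(x_0,x_1)>0$. The first step is to record, using hypothesis~(3) (that $U_{\vec v_1}$ is disjoint from the critical tree) together with hypothesis~(2) and Lemma~\ref{IsomAwayCritTree}(1), that $F$ restricts to an isometric bijection $U_{\vec v_1}\to U_{\vec v_0}$; by continuity of $F$ and hypothesis~(1) this extends to an isometric bijection $F\colon T_1\to T_0$ with $F(x_1)=x_0$. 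The second part of this step is to check that $T_1\subsetneq T_0$: since $x_0\notin U_{\vec v_1}$, for every $y\in U_{\vec v_1}$ the geodesic $[x_0,y]$ passes through $x_1$, hence $y$ lies in the component $U_{\vec v_0}$ of ${^r\Hyp^3}-\{x_0\}$ containing $x_1$; and $x_1\in T_0\setminus T_1$, so the inclusion is strict.

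Next I would set $g:=(F|_{T_1})^{-1}\colon T_0\to T_1\subseteq T_0$, an isometric self-embedding of $T_0$ with $g(x_0)=x_1$; since $g$ maps $T_0$ into $T_1=U_{\vec v_1}\cup\{x_1\}$, it sends the only direction of $T_0$ at $x_0$ (namely $\vec v_0$) to the direction $\vec v_1$ at $x_1$. Put $p_n:=g^n(x_0)$, so $d(p_n,p_{n+1})=\ell$. The heart of the argument is to prove, by induction on $n$, that the concatenation $[p_0,p_1]\cup[p_1,p_2]\cup\cdots\cup[p_n,p_{n+1}]$ is a geodesic segment: apply $g$ to the length-$n$ geodesic to see that $[p_1,p_2]\cup\cdots\cup[p_{n+1},p_{n+2}]$ is geodesic, then check there is no backtracking at $p_1$, using that $[p_0,p_1]=[x_0,x_1]$ abuts $x_1$ in the direction of $x_0$, which is $\neq\vec v_1$ (as $x_0\notin U_{\vec v_1}$), whereas $[p_1,p_2]=g([p_0,p_1])$ leaves $x_1$ in direction $g_*(\vec v_0)=\vec v_1$. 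This produces a genuine geodesic ray $\gamma$ from $x_0$ (the segment up to $p_n$ has length $n\ell$); let $\alpha$ be its end. Parametrising $\gamma$ by arclength with $\gamma(0)=x_0$, the isometry $g$ satisfies $g(\gamma(0))=\gamma(\ell)$ and $g(\gamma)$ is the subray $\gamma|_{[\ell,\infty)}$, so $g(\gamma(t))=\gamma(t+\ell)$ and hence $F(\gamma(t))=\gamma(t-\ell)$ for $t\geq\ell$. In particular $F$ maps the representative ray $\gamma|_{[\ell,\infty)}$ of $\alpha$ onto the equivalent ray $\gamma|_{[0,\infty)}$, so $F(\alpha)=\alpha$.

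Finally I would verify that $\alpha$ is repelling. Its tail $\gamma|_{(\ell,\infty)}$ lies inside $U_{\vec v_1}$, which is disjoint from the critical tree, so $\alpha$ is not a critical end; hence (as established in Section~\ref{MandL}) $L(\alpha,F)$ is well defined and can be evaluated along $\gamma$. Taking the nearest-point projection $\gamma(s_0)$ of $x^0$ onto $\gamma$, one has $d(\gamma(t),x^0)=(t-s_0)+d(\gamma(s_0),x^0)$ for $t\geq s_0$, so combining with $F(\gamma(t))=\gamma(t-\ell)$ gives $L(\alpha,F)=\ell>0$. Thus $\alpha$ is a repelling fixed end of $F$, which is what we want.

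I expect the main obstacle to be the $\R$-tree bookkeeping in the middle step: proving that the iterated concatenation is genuinely a geodesic ray, which reduces to tracking how tangent directions behave under the isometric embedding $g$ and to handling the boundary points $x_0,x_1$ carefully when passing to the closed half-trees $T_0,T_1$. One minor wrinkle worth flagging: hypothesis~(3) as literally written says ``$U_{\vec v_1}$ $\ldots$ nor contains $x_1$'', which is automatic; it should read ``nor contains $x_0$'', and that is the form actually used above.
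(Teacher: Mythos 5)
Your proof is correct and takes essentially the same approach as the paper: use Lemma~\ref{IsomAwayCritTree}(1) to see that $F$ is an isometric bijection $U_{\vec v_1}\to U_{\vec v_0}$ with $U_{\vec v_1}\subset U_{\vec v_0}$, then iterate the preimage of $x_0$ inside $U_{\vec v_1}$ to build a geodesic ray whose end is fixed with translation length $d(x_0,x_1)>0$. You supply more detail than the paper (verifying no backtracking at each step, and computing $L(\alpha,F)$ via the nearest-point projection), and you correctly flag the typo in hypothesis~(3), which should read ``nor contains $x_0$''; the paper's own proof confirms this by invoking exactly that condition.
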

\begin{proof}
Since $U_{\vec{v_1}}$ does not intersect the critical tree, $F$ is an isometric bijection from $U_{\vec{v_1}}$ to its image $U_{\vec{v_0}}$.
Since $U_{\vec{v_1}}$ does not intersect $x_0$, $U_{\vec{v_1}}\subseteq U_{\vec{v_0}}$.

Let $x_2$ be the preimage of $x_1$ in  $U_{\vec{v_1}}$. Then $d(x_0,x_1) = d(x_1,x_2)$, and 
$x_2 \in U_{\vec{v_0}}$.
Hence, we can define $x_n$ inductively by taking the preimage of $x_{n-1}$ in $U_{\vec{v_1}}$.
The union of the geodesic segments $\alpha:=\cup_{k=0}^\infty [x_k, x_{k+1}]$ is an end which is fixed by $F$.
It is repelling as $L(\alpha, F) = d(x_0, x_1) > 0$.
\end{proof}

The following lemma follows from our Theorem \ref{ConnectionToBerkovichDynamics} and Theorem 10.83 in \cite{BakerRumely10} (see also Proposition 9.3 in \cite{Rivera-Letelier05} and Lemma 6.2 in \cite{Rivera-Letelier03}). 
\begin{lem}\label{FixedPointMultiplicityGEQ2}
Assume the limiting map $F: {^r\Hyp^3} \longrightarrow {^r\Hyp^3}$ has no repelling periodic ends.
Then it has a fixed point $x\in {^r\Hyp^3}$ of local degree $\deg_x F \geq 2$.
\end{lem}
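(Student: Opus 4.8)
Via Theorem~\ref{ConnectionToBerkovichDynamics} I pass to the Berkovich side: write $\mathbf f\in\Rat_d({^\rho\C})$ (with $d\ge 2$) for the rational map represented by the sequence underlying $F$, so that $F$ is conjugate to the Berkovich extension $B$ of $\mathbf f$, and recall from Corollary~\ref{TypeIIOnly} that ${^r\Hyp^3}\cong\Hyp_{Berk}({^\rho\C})$ has only Type~II points and hence no finite ends: every end is an infinite end, corresponding to a classical point of $\mathbf f$. Under this dictionary and Theorem~\ref{CritVec}, a repelling fixed end is exactly a repelling classical fixed point of $\mathbf f$, and a fixed point $x$ with $\deg_x F\ge 2$ is exactly a Type~II fixed point of $B$ of local degree $\ge 2$. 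So the statement is equivalent to: a degree $d\ge 2$ rational map over ${^\rho\C}$ with no repelling classical fixed point has a Type~II fixed point of local degree $\ge 2$ (this is essentially Theorem~10.83 of \cite{BakerRumely10}); I will carry out a self-contained argument on the tree.

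\textbf{Step A: $F$ has a fixed point in ${^r\Hyp^3}$.} Since $\mathbf f(z)=z$ has $d+1$ roots in $\Proj^1_{^\rho\C}$, there is a fixed end $\alpha$, and by hypothesis $L(\alpha,F)\le 0$. If $F$ has no fixed point in ${^r\Hyp^3}$, a standard argument shows it fixes a unique end $\omega$ to which all forward orbits converge; $\omega$ is classical and, by hypothesis, not repelling, so $L(\omega,F)\le 0$. If $L(\omega,F)=0$ (and $\omega$ non-critical), Theorem~\ref{CritVec} forces $F$ to fix a tail of the ray to $\omega$ pointwise, contradicting the absence of fixed points; otherwise $F$ pushes that tail strictly toward $\omega$, and I pull the ray back under $F$ — continuing the pull-back across branch points with Lemma~\ref{ExtendTangent} — until it terminates. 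If it terminates at an interior point, that point is fixed (contradiction); if it terminates at an end $\beta$, then $F$ translates the resulting $\beta$-to-$\omega$ geodesic \emph{away} from $\beta$, so $L(\beta,F)>0$, a repelling fixed end, contradicting the hypothesis. Hence $\mathrm{Fix}(F)\neq\emptyset$.

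\textbf{Step B: some fixed point has local degree $\ge 2$.} Suppose not; then $\deg_x F=1$ for every $x\in\mathrm{Fix}(F)$, so by Theorem~\ref{ReductionAsTangentMap} each $D_xF$ is a M\"obius transformation of $T_x{^r\Hyp^3}\cong\Proj^1_{\widetilde{{^\rho\C}}}$; since $\widetilde{{^\rho\C}}$ is again an algebraically closed Robinson field, $D_xF$ always fixes a tangent direction. Fix $x_0\in\mathrm{Fix}(F)$ with fixed direction $\vec v_0$; then $m_{\vec v_0}F=1$ and, by Theorem~\ref{CritVec}(1), $F$ fixes pointwise a nondegenerate segment out of $x_0$ in direction $\vec v_0$. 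Let $S\supseteq\{x_0\}$ be the maximal closed subtree fixed pointwise by $F$; since $F$ has degree $\ge 2$, $S\ne{^r\Hyp^3}$, so $S$ has a boundary point $x$ (the nearest-point projection of any $p\notin S$) with an escaping direction $\vec w$, i.e.\ $U_{\vec w}\cap S=\emptyset$. Then $F(x)=x$, $\deg_xF=1$, and $D_xF(\vec w)\ne\vec w$ (otherwise Theorem~\ref{CritVec}(1) would extend $S$ into $U_{\vec w}$). Now: (i) if $U_{\vec w}$ is disjoint from the critical tree $C$, Lemma~\ref{IsomAwayCritTree} together with the preimage structure of $D_xF$ around the fixed point $x$ produces exactly the fold-back configuration of Lemma~\ref{RepellingEnd}, yielding a repelling fixed end — contradiction; (ii) if $U_{\vec w}$ meets $C$, one follows the dynamics into $C$ and, using that $C$ has only finitely many branch points, the additivity of local degrees (Theorem~\ref{CritVec}(2)), and the absence of a totally invariant point (Theorem~\ref{DGLC}), locates a fixed point of $C$ with local degree $\ge 2$ — contradiction; (iii) if instead $S$ runs out to an end $\alpha$, then $\alpha$ is classical with $L(\alpha,F)=0$ and $F$ fixes a tail of its ray, and combining this with Theorem~\ref{DGLC} again produces a fold-back, hence a repelling fixed end. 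All cases contradict our assumptions, so some fixed point of $F$ has local degree $\ge 2$.

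\textbf{Main obstacle.} I expect the technical heart to be case (ii) of Step~B (and the analogous pull-back in Step~A): making precise that the ``fixed-direction flow'', once it enters the finite critical tree $C$, must terminate at a fixed point of local degree $\ge 2$. This is where one needs a careful bookkeeping of local degrees along $C$ combined with the no-totally-invariant-point property of Theorem~\ref{DGLC} and the finiteness of the branch set of $C$; by contrast, the ``fold-back $\Rightarrow$ repelling end'' steps are essentially reruns of the proof of Lemma~\ref{RepellingEnd}, and Step~A's remaining bookkeeping is routine.
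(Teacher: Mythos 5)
Your overall plan is reasonable, and your framing (pass to the Berkovich side, then argue on the tree) is faithful to the spirit of the paper, which itself notes the lemma follows from Theorem~10.83 of \cite{BakerRumely10} together with Theorem~\ref{ConnectionToBerkovichDynamics}. However, the route you take is genuinely different from the paper's self-contained proof, and as written it has several unresolved gaps — some of which you yourself flag.

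The paper's proof does not separate ``existence of a fixed point'' from ``existence of a fixed point with local degree $\ge 2$.'' Instead it introduces a \emph{strongly involutive} dichotomy. In Case~1 (every point strongly involutive), one starts at an arbitrary $x$ with $\deg_x F\ge 2$ (which exists by Theorem~\ref{CritVec}), takes the segment $[x, F(x)]$, and uses a supremum argument to locate a fixed point $t$; the strongly involutive hypothesis ensures $F$ is linear with derivative $e\ge 2$ along $[x,t]$, forcing $\deg_t F\ge e$. In Case~2 (some $x_1$ not strongly involutive), one pulls back $[x_0,x_1]$ repeatedly via Lemma~\ref{ExtendTangent}; the total length must be finite, else a repelling fixed end appears, so the limit is a fixed point with local degree $\ge 2$. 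This is economical precisely because it never has to handle a ``degree one everywhere'' configuration: the dichotomy makes the degree $\ge 2$ point fall out of the construction directly.

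Your Step~A/Step~B decomposition requires more to be proved than the paper does, and both steps have real gaps.
\begin{itemize}
\item In Step~A, the assertion ``a standard argument shows it fixes a unique end $\omega$ to which all forward orbits converge'' is not justified. For isometries of $\R$-trees this is classical, but $F$ is a branched covering that locally expands, and the isometry theory does not transfer without argument. The subsequent pull-back ``until it terminates'' also leaves the non-terminating case open; the paper handles the analogous issue (in its Case~2) by noting the total length of the pulled-back path must be finite, else a repelling fixed end arises.
\item In Step~B, the crucial case~(ii) — following the fixed-direction flow into the critical tree $C$ and locating a fixed point of local degree $\ge 2$ there — is exactly the thing to be proved, and you explicitly leave it as an expected obstacle. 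Case~(iii) (when the maximal pointwise-fixed subtree $S$ runs to an end) is similarly unresolved: you need to rule out that this end is indifferent and of local degree one, in which case no contradiction appears. There is also a small gap at the boundary point $x$ of $S$: you conclude $D_xF(\vec w)\ne\vec w$ ``otherwise Theorem~\ref{CritVec}(1) would extend $S$,'' but that only follows if $m_{\vec w}F=1$; if $m_{\vec w}F\ge 2$ the segment would expand rather than be fixed, and you would instead need to observe $\deg_x F\ge 2$ directly, which you are assuming does not happen.
\end{itemize}

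In short: the approach can likely be pushed through (it essentially reconstructs the Rivera-Letelier/Baker-Rumely result), but the two hardest points — the behavior when the pull-back or fixed-direction flow enters the critical tree, and the precise trichotomy at the ends — are where the paper's ``strongly involutive'' trick does real work, and your proposal does not yet replace it with a complete argument.
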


\begin{prop}
Assume the limiting map $F : {^r\Hyp^3} \longrightarrow {^r\Hyp^3}$ has no repelling periodic ends.
Let $x\in {^r\Hyp^3}$ be a fixed point of multiplicity $\geq 2$ (which exists by Lemma \ref{FixedPointMultiplicityGEQ2}). Then the set
$$
P = \bigcup_{i=0}^\infty F^{-i}(x)
$$
is contained in a geodesic segment.
\end{prop}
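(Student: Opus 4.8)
The plan is to show that $\widehat P:=\hull(P)$ is a geodesic segment through $x$, which suffices since $P\subseteq\widehat P$; equivalently, that $\widehat P$ has no branch point. Two observations set the stage. First, $F(P)=P$: indeed $F(x)=x$, and $F(F^{-i}(x))=F^{-(i-1)}(x)$ for $i\ge 1$ since $F$ is surjective. Second, by Lemma \ref{IsomAwayCritTree} the map $F$ restricts to an isometric bijection on every component $U_{\vec v}$ of $T-\{z\}$ that is disjoint from the critical tree $C$; hence the only mechanism by which the backward orbit of $x$ can ``spread out'' is folding along $C$. One also notes at the outset a folding fact: if $z\in P\setminus\{x\}$ and $F^i(z)=x$ with $i\ge 1$ minimal, then, since a branched covering (and hence each iterate) is a local isometry off a nowhere dense set, $F^i$ cannot carry the non-degenerate arc $[x,z]$ onto the single point $\{x\}$; so for some $j<i$ the arc $[x,F^j(z)]$ contains a point of local degree $\ge 2$, i.e.\ a point of $C$, and therefore $[x,z]$ meets $\bigcup_{j\ge 0}F^{-j}(C)$. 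The fixed point $x$ with $\deg_x F\ge 2$ furnished by Lemma \ref{FixedPointMultiplicityGEQ2} is what anchors the induction below.

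The heart of the proof is the claim that $I_i:=\hull(\{x\}\cup F^{-i}(x))$ is a geodesic segment through $x$ for every $i$, with $I_0\subseteq I_1\subseteq\cdots$; granting this, $\widehat P$ is contained in the geodesic $\bigcup_i I_i$ and we are done. The base case $I_0=\{x\}$ is trivial. For the inductive step, take $y\in F^{-i}(x)$; then $F(y)\in F^{-(i-1)}(x)\subseteq I_{i-1}$, so it remains to show $y$ lies on the geodesic extending $I_{i-1}$. If it did not, then tracing the arc $[y,F(y)]$ and pushing the picture backward with Lemma \ref{ExtendTangent} (which extends the tangent map along geodesic segments), together with the isometric-bijection property of Lemma \ref{IsomAwayCritTree} on components disjoint from $C$, one would manufacture a point $x_1$ and a direction $\vec v_1\in T_{x_1}$ not pointing toward $F(x_1)$ with $U_{\vec v_1}\cap C=\emptyset$, such that $D_{x_1}F(\vec v_1)$ is the direction at $F(x_1)$ pointing back toward $x_1$. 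This is precisely the configuration of Lemma \ref{RepellingEnd}, which then produces a repelling fixed end, contradicting the standing hypothesis that $F$ has no repelling periodic ends. The hypothesis $\deg_x F\ge 2$ enters to control the preimage directions at each stage and to keep the ``turn-around'' arcs from degenerating.

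The step I expect to be the main obstacle is exactly this inductive step: ruling out that a preimage branches off the current segment. The trouble is that folding along $C$ is genuinely the mechanism by which preimages could escape, so one must bookkeep carefully how $F^{-i}(x)$ meets $C$ and its grand orbit and, in each hypothetical escape, extract the exact hypotheses needed to invoke Lemma \ref{RepellingEnd} --- in particular treating the case where a preimage lands on $C$ itself, and pinning down in which directions the critical ends of $F$ sit at the relevant points. Once that conversion is carried out cleanly, the remainder is routine manipulation with the $\R$-tree structure and the cited lemmas.
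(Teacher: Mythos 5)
Your high-level strategy is the same as the paper's: show that branching would produce the configuration of Lemma \ref{RepellingEnd} and hence a repelling fixed end, and identify folding along the critical tree as the only mechanism by which preimages of $x$ can escape the segment. Recasting this as an induction on $I_i=\hull(\{x\}\cup F^{-i}(x))$ is a legitimate reorganization of the paper's contradiction argument (the paper picks a tripod $\hull(x,y,y')$ with $F(y)=F(y')=x$ after passing to an iterate). However, the proposal has a genuine gap, and you flag it yourself: the entire content of the proof is the step you defer, namely producing a point $x_1$ and a direction $\vec v_1$ with $U_{\vec v_1}$ disjoint from the critical tree $C$ and not containing $x_0=F(x_1)$, such that $D_{x_1}F(\vec v_1)$ points back at $x_1$. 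A single ``pull back along $[y,F(y)]$ with Lemma \ref{ExtendTangent}'' does not deliver this, because the pulled-back branch point can land back on (or near) $C$, and nothing forces the component cut off by the new direction to avoid $C$.

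The paper's mechanism for achieving this is the ``fan'' construction: starting from the tripod, it repeatedly pulls back $[x,y]$ using Lemma \ref{ExtendTangent} to obtain arcs $[x,z_n]$ (or $[q_n,z_n]$), together with a two-parameter family of tangent directions $\vec u_{n,k}$, so that these directions are pairwise distinct. Since the critical tree is a \emph{finite} tree, only finitely many of these directions can cut off components meeting $C$; choosing $k$ large therefore guarantees the avoidance needed to invoke Lemma \ref{RepellingEnd}. Moreover, this has to be done in two essentially different ways depending on whether the tangent vector $\vec v$ at $x$ toward $y$ has infinitely many or finitely many $D_xF$-preimages (the latter case requires the ``generalized fan'' in which the auxiliary branch points $q_n\in[x,p]$ are forced to converge to $x$ using local expansion from Theorem \ref{CritVec}). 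None of this appears in your proposal; without it, the sentence ``one would manufacture $x_1$, $\vec v_1$ with $U_{\vec v_1}\cap C=\emptyset$'' is an assertion of exactly what must be proved. You have the right skeleton and the right target lemma, but the proof is not there yet: you need to carry out the fan construction (and the finite-preimage case) to close the gap.
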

\begin{proof}
By Theorem \ref{DGLC}, there is no totally invariant point. 
Hence, the preimage of $x$ contains more than $1$ point.
Note it suffices to show $P$ is contained in a line.
Indeed, if we prove this, and $P$ escapes to one end, then replace $F$ by its second iterate if necessary, we get a repelling fixed end which is a contradiction.

We will now argue by contradiction to prove $P$ is contained in a line.
Suppose not. Then there are two points $y, y'$ which are eventually mapped to $x$, and the convex hull of $x, y, y'$ forms a `tripod'. 
Replacing $F$ by its iterate if necessary, we may assume that 
$$
F(y) = F(y') = x.
$$ 
Let $\vec{v}$ be the tangent vector at $x$ associated to $y$.
There are two cases to consider:

Case $(1)$: The preimage of $\vec{v}$ under $D_{x}F$ in $T_x {^r\Hyp^3}$ is infinite.
Then we can construct a `fan' as follows (see Figure \ref{Fan}).
Using Lemma \ref{ExtendTangent}, we construct $z_0 = y, z_1,..., z_n,...$ inductively so that $F$ sends $[x, z_{n+1}]$ homeomorphically to $[x, z_n]$.
Let $\vec{w}_k$ denote the tangent vector at $x$ associated to $z_k$.
Let $\vec{u}_{0,k}$ be the tangent vectors at $z_0 = y$ which is mapped to $\vec{w}_k$ 
(there might be many such vectors, if that's the case, we just choose one).

Inductively, we let $\vec{u}_{n,k}$ be vector at $z_n$ which is mapped to $\vec{u}_{n-1, k}$.
Note that the components $U_{\vec{u}_{n,k}}$ are all disjoint. 
Since the critical tree for $F$ is a finite tree, there is a $K$, such that for all $k\geq K$ and all $n$, the component $U_{\vec{u}_{n,k}}$ does not intersect the critical locus.
Since $\vec{u}_{n,K}$ is mapped to $\vec{u}_{n-1,K}$, 
$F^{K+1}$ is an isometric bijection from $U_{\vec{u}_{K,K}}$ to its image $U_{\vec{w}_K}$ by Lemma \ref{IsomAwayCritTree}.
Since the critical tree intersects $[x,y]$ (as $F$ is not injective on $[x,y]$), $x \notin U_{\vec{u}_{K,K}}$.
Now by Lemma \ref{RepellingEnd}, we conclude that there is a repelling periodic end of period $K+1$, which is a contradiction.

\begin{figure}[h!]
\centering
\includegraphics[width=10cm]{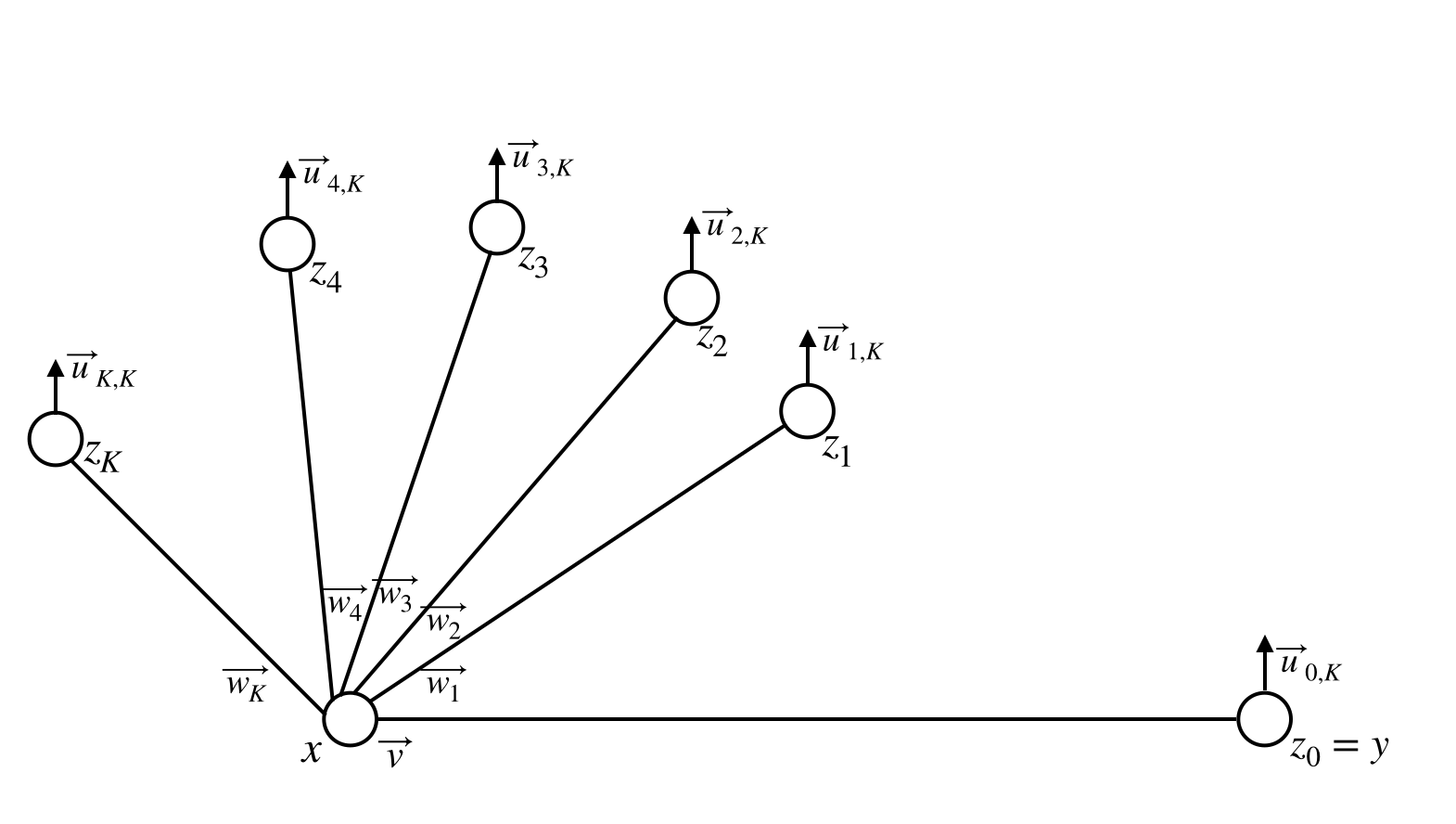}
\caption{The `fan' for Case $(1)$.}
\label{Fan}
\end{figure}

Case $(2)$: The preimage of $\vec{v}$ under $D_{x}F$ in $T_x {^r\Hyp^3}$ is finite.
We remark that the idea is similar to the previous case, but the notations and indices become more complicated.
By Theorem \ref{CritVec}, replacing $\E(f_n)$ by its second iterate if necessary, we may assume $\vec{v}$ is totally invariant under $D_{x}F$.
Hence $F$ is (locally) expanding in the direction $\vec{v}$ by Theorem \ref{CritVec}.
Let $p$ be the branched point of the tripod $\hull(x,y,y')\subseteq {^r\Hyp^3}$.
Using Lemma \ref{ExtendTangent}, we choose $q\in U_{\vec v}$ so that $F$ maps $[x,q]$ homeomorphically to $[x,p]$.
We also choose $z_1, z_1'$ so that $F$ maps $[q, z_1]$ and $[q, z_1']$ homeomorphically to $[p, y]$ and $[p, y']$ respectively.

We claim that at least one of $[x, z_1]$ and $[x, z_1']$ branches off in $[x,p]$.
Indeed, since $F$ is (locally) expanding in the direction $\vec{v}$, $d(x, q) < d(x, p)$, so at least one of $[x, z_1]$ and $ [x,z_1']$ must branch off $[x,p]$ (note that it may even happen before $q$).
We assume $[x, z_1]$ branches off $[x,p]$.

We denote $z_0 = y$, $q_1$ as the branched point of $\hull(x,z_0,z_1)$.
Note that $[x,F(q_1)]\subseteq [x,p] \subseteq [x, z_0]$.
We construct a generalized `fan' using Lemma \ref{ExtendTangent} as follows (see Figure \ref{GeneralizedFan}).

First, by inductively taking preimages of $F:[x,q_1] \longrightarrow [x,F(q_1)]$, we construct $q_1,...q_n,...$ so that $F$ sends $[x, q_{n+1}]$ homeomorphically to $[x, q_n]$.
Since $F$ is (locally) expanding in the direction $\vec{v}$ and $[x,F(q_1)]\subset [x,p]$, $q_n \in [x,p]$ and $d(x, q_n) < d(x, q_{n-1})$ and $d(x, q_n) \to 0$.

We construct $z_0 = y, z_1,..., z_n,...$ inductively so that $F$ sends $[q_{n+1}, z_{n+1}]$ homeomorphically to $[q_n, z_n]$.
Let $\vec{w}_k$ denote the tangent vector at $q_k$ associated to $z_k$.
We define $q_{0,k}$ so that $F$ maps $[z_0, q_{0,k}]$ homeomorphically to $[x, q_k]$.
Let $\vec{u}_{0,k}$ be a tangent vector at $q_{0,k}$ which is mapped $\vec{w}_k$.

Since the critical tree is finite, for large $n$, the component $U_{\vec{w}_n}$ does not intersect the critical locus.
Thus $F$ is eventually an isometry between $[q_n, z_n]$ and $[q_{n-1}, z_{n-1}]$, so there exists $\epsilon > 0$ so that the length of $[q_n, z_n]$ is at least $\epsilon$ for all $n$.
Since $d(x, q_k) \to 0$, for sufficiently large $k$, we can construct $q_{1,k} \in U_{\vec{w}_1}$ so that $F$ maps $[z_1, q_{1,k}]$ homeomorphically to $[z_0, q_{0,k}]$. 
Inductively, for sufficiently large $k$, we define $q_{n,k} \in U_{\vec{w}_n}$ so that $[z_n, q_{n,k}]$ is mapped homeomorphically to $[z_{n-1}, q_{n-1,k}]$.
We may assume that $q_{n,k} \in [z_n, q_{n, k-1}]$.
Let $\vec{u}_{n,k}$ be a tangent vector at $q_{n,k}$ which is mapped $\vec{u}_{n-1,k}$.

The argument is now similar to Case $(1)$.
Note that the components $U_{\vec{u}_{n,k}}$ are disjoint. 
Since the critical tree for $F$ is a finite tree, there is a $K$, such that for all $k\geq K$ and all $n$, the component $U_{\vec{u}_{n,k}}$ does not intersect the critical locus.
Since $\vec{u}_{n,K}$ is mapped to $\vec{u}_{n-1,K}$, 
$F^{K+1}$ is an isometry from $U_{\vec{u}_{K,K}}$ to its image $U_{\vec{w}_K}$.
Since the critical tree intersect $[x,y]$, and $q_{K,K} \in U_{\vec{w}_K}$, so $q_K \notin U_{\vec{u}_{K,K}}$.
Now by Lemma \ref{RepellingEnd}, we conclude that there exists a repelling periodic end of period $K+1$, which is a contradiction.
\begin{figure}[h!]
\centering
\includegraphics[width=10cm]{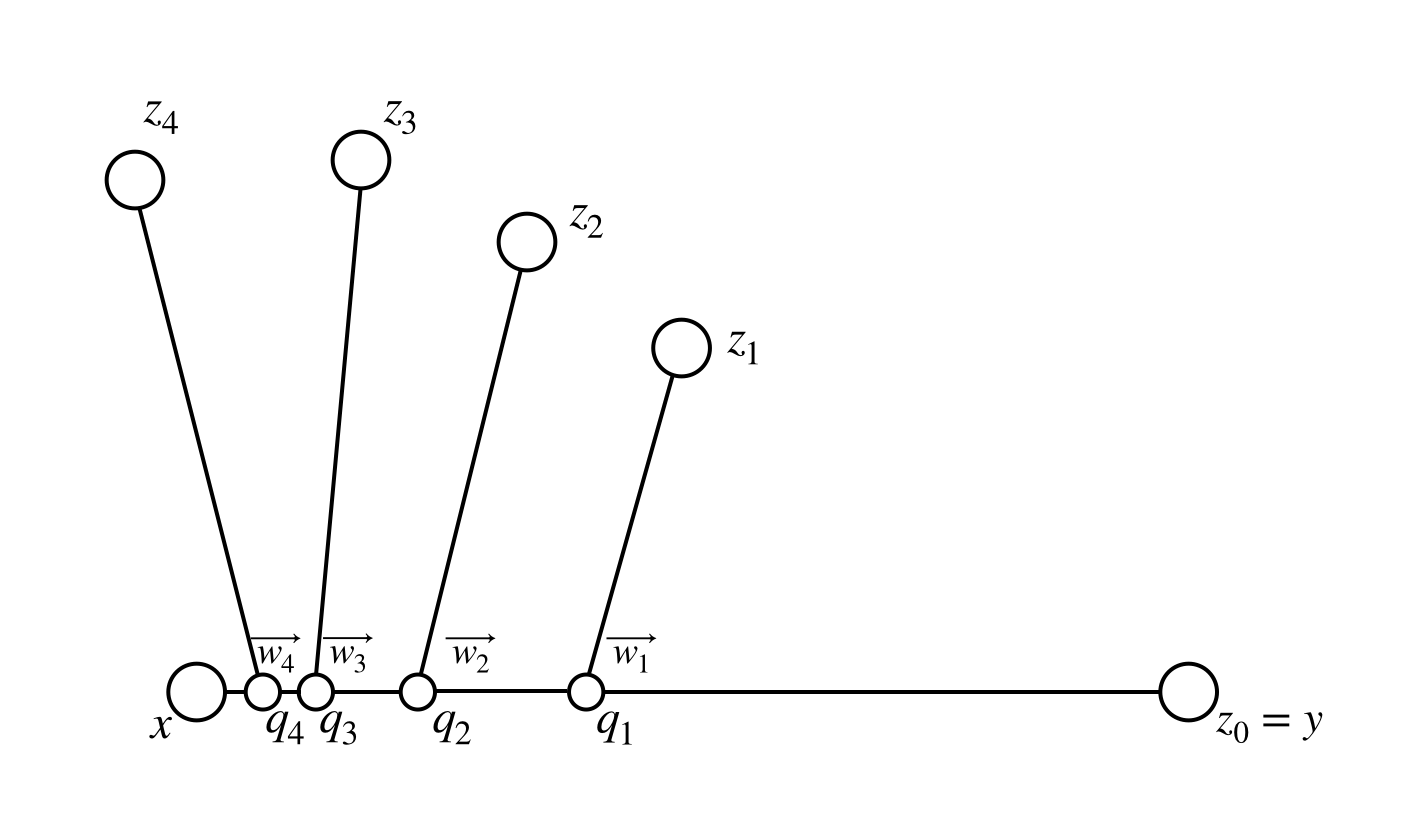}
\caption{The generalized `fan' for Case $(2)$.}
\label{GeneralizedFan}
\end{figure}
\end{proof}

Let $I = [a,b]$ be the smallest geodesic segment containing $P$. 
Then $F$ sends the boundary $\{a, b\}$ to the boundary $\{a, b\}$. As otherwise, we can find a point in $P$ with preimage outside of $[a,b]$, which is a contradiction.

Let $J\subseteq I$ be the closure of a component of $F^{-1}(\Int(I)) \cap I$. 
Then $F$ maps $J$ homeomorphically to $I$.
Indeed, if the map is not injective, then there is a point $t\in \Int(J)$ with tangent vectors $\vec{v}_1, \vec{v}_2$ at $t$ associated to $a$ and $b$ respectively so that 
$$
D_t F (\vec{v}_1) = D_t F (\vec{v}_2).
$$
But $D_t F$ is surjective by Theorem \ref{CritVec}, so there is a tangent vector $\vec{v}$ which is mapped to the tangent vector at $F(t)$ associated to either $a$ or $b$. 
This means that $P$ intersects non-trivially with $U_{\vec{v}}$, which is a contradiction.
The map $F$ is surjective by a similar argument: if $F$ is not surjective and let $I'\subseteq I$ be the image.
Note that $I'$ is a subinterval of $I$, so $P$ intersects $I-I'$ non-trivially.
By Theorem \ref{CritVec} and Lemma \ref{ExtendTangent}, the preimage of $I-I'$ is not contained in $I$, which is a contradiction.

We also note that $F$ has constant derivative on $J$. Indeed, if not, then we can find a point $t\in J$ with tangent vectors $\vec{v}_1, \vec{v}_2$ at $t$ associated to $a$ and $b$ respectively so that the local degrees at $\vec{v}_1$ and $\vec{v}_2$ are different.
Then applying Theorem \ref{CritVec}, one of $D_t F (\vec{v}_i)$ has a preimage $\vec{v}$ in $T_t {^r\Hyp^3}$ other than $\vec{v}_i$. 
Then there is a point of $P$ in $U_{\vec{v}}$ by Lemma \ref{ExtendTangent}, which is a contradiction.

By looking at the local degrees at the preimages of the point $x$, we conclude the sum of the derivatives on different components $J$ equals $d$.

To summarize, we have the following proposition, which describes the limiting dynamics with no repelling periodic ends.

\begin{prop}\label{PropertyLimitingMap}
Assume the limiting map $F : {^r\Hyp^3} \longrightarrow {^r\Hyp^3}$ has no repelling periodic ends.
Let $x \in {^r\Hyp^3}$ be a fixed point with multiplicity $\geq 2$ and $P = \cup_{i=0}^{\infty} F^{-i}(x)$.
Let $I = [a,b]$ be the smallest geodesic segment that contains $P$. Then there exist $a = a_1 < b_1 \leq a_2 < b_2 \leq.. \leq a_k < b_k = b$ such that 
\begin{enumerate}
\item $F: [a_i, b_i] \longrightarrow I$ is a linear isometry with derivative $\pm d_i$ and $d_i \in \Z_{\geq 2}$ and the $\pm$ sign alternating;
\item $d = \sum_{i=1}^k d_i$.
\end{enumerate}
\end{prop}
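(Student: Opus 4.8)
The plan is to make precise the structural description already sketched, organizing everything around one elementary observation about $P := \bigcup_{i\ge 0} F^{-i}(x)$. First note that $P$ is stable under taking preimages, $F^{-1}(P) = \bigcup_{i\ge 1} F^{-i}(x) \subseteq P$, and that by the preceding proposition $P$ lies in a geodesic segment, so the smallest such segment $I = [a,b]$ exists and $a,b \in \overline{P}$. The device I would use repeatedly (call it \emph{confinement}) is the following: at a point $t$ in the interior of $I$ the only two tangent vectors $\vec v \in T_t{^r\Hyp^3}$ with $U_{\vec v}\cap I \neq \emptyset$ are the two pointing along $I$; consequently, if $\vec v$ is a vector at $t$ with $U_{\vec v}\cap I = \emptyset$ and $D_tF(\vec v)$ points along $I$, then by Lemma \ref{ExtendTangent} we may carry a segment from $t$ in the direction $\vec v$ homeomorphically onto a subsegment of $I$ running toward an end of $I$, which therefore contains a point of $P$; its preimage then sits in $U_{\vec v}$, contradicting $F^{-1}(P)\subseteq P\subseteq I$. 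The same reasoning, applied at $a$ (where only one direction points into $I$) together with surjectivity of $D_aF$ (Theorem \ref{CritVec}(2)), first yields $F(\{a,b\})\subseteq\{a,b\}$: continuity forces $F(a)\in\overline P\subseteq I$, and $F(a)\in\Int I$ would supply a direction at $a$ violating confinement.

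Next I would set up the decomposition. Since $F^{-1}(\Int I)$ is open and disjoint from $\{a,b\}$, the set $I\cap F^{-1}(\Int I)$ is a disjoint union of open subintervals $(a_1,b_1),\dots,(a_k,b_k)$ with $a=a_1$, $b=b_k$, and each endpoint $a_i,b_i$, being a point of $I$ outside $F^{-1}(\Int I)$, maps into $\{a,b\}$; moreover $k$ is finite since each $(a_i,b_i)$ contains at least one preimage of a fixed generic point of $\Int I$ and there are only $d$ such preimages. I claim $F$ restricts to a homeomorphism of each $[a_i,b_i]$ onto $I$. For injectivity: if $F$ identified two directions at an interior point $t\in(a_i,b_i)$ — necessarily the two along $I$, mapping to one along-$I$ direction $\vec w$ at $F(t)$ — then surjectivity of $D_tF$ would give a preimage of the other along-$I$ direction at $F(t)$ that is not along $I$ at $t$, violating confinement. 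Given injectivity, $F([a_i,b_i])$ is a nondegenerate closed subinterval of $I$ with both endpoints in $\{a,b\}$, hence equals $I$, which is the surjectivity.

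For the expansion factors I would invoke Theorem \ref{CritVec}(1): along $[a_i,b_i]$ the one-sided expansion of $F$ at $t$ in a direction $\vec v$ equals the local degree $m_{\vec v}F$, and these degrees must agree at every interior point, for otherwise (using $\sum_{\vec v'\mapsto\vec w}m_{\vec v'}F = \deg_tF$) an along-$I$ direction at $F(t)$ would acquire a preimage off $I$, again contradicting confinement. Hence $F|_{[a_i,b_i]}\colon[a_i,b_i]\to I$ is linear with a single slope $d_i = m_{\vec v}F\in\Z_{\ge1}$, i.e.\ a linear isometry of derivative $\pm d_i$, the sign recording orientation; as $\{F(a_i),F(b_i)\}=\{a,b\}$ and consecutive pieces share a boundary value the signs alternate. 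That $d_i\ge 2$ is where Theorem \ref{DGLC} enters: if some $d_i=1$ then $[a_i,b_i]$ has the full length of $I$, so $[a_i,b_i]=I$ and $k=1$, making $F|_I$ an isometry of $I$ — the identity or the flip — whence $F^{-1}(x)\cap I=\{x\}$ (for the flip, its unique fixed midpoint must be the fixed point $x\in I$); but $F^{-1}(x)\subseteq P\subseteq I$, so $x$ would be totally invariant, contradicting Theorem \ref{DGLC}. Finally $\sum_i d_i = d$: via Theorem \ref{EquivTheorem}, $F$ corresponds to a degree $d$ rational map on $\Hyp_{Berk}({^\rho\C})$, so $x$ has exactly $d$ preimages counted with local degree; each $[a_i,b_i]$ contains a unique preimage $c_i$ of $x$, confinement at $c_i$ forces $\deg_{c_i}F = d_i$, and confinement together with the fact that $F$ maps the gaps $(b_i,a_{i+1})$ and $\{a,b\}$ away from the relevant along-$I$ direction at $x$ shows the $c_i$ exhaust $F^{-1}(x)$; summing gives $d$.

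The main obstacle is not conceptual but one of careful bookkeeping at branch points: every step reduces to the confinement observation, and its four invocations — at the endpoints $a,b$, for injectivity, for constancy of the slope, and in the preimage count for $x$ — each require verifying which tangent directions at the relevant point actually meet $I$, a point that needs attention in the degenerate case $b_i = a_{i+1}$ (where the two pieces abut) and at points of ${^r\Hyp^3}$ of high branching. The subsidiary issues — checking the sign alternation when genuine gaps are present and reducing the case in which the chosen fixed point $x$ happens to be an endpoint of $I$ to the interior case — are routine once this machinery is in place.
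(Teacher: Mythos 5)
Your proof follows essentially the same path as the paper's: first $F(\{a,b\})\subseteq\{a,b\}$, then decompose $I$ by the components of $F^{-1}(\Int I)$, show each closure maps homeomorphically onto $I$ and has constant expansion by exploiting surjectivity of the tangent map $D_tF$ and the fact that $F^{-1}(P)\subseteq P\subseteq I$, and finally sum local degrees over $F^{-1}(x)$ to get $\sum d_i = d$. Your ``confinement'' observation is exactly the contradiction the paper invokes repeatedly (only two tangent directions at $t\in\Int I$ can see $I$; any extra preimage of an along-$I$ direction would pull a point of $P$ outside $I$ via Lemma \ref{ExtendTangent}), just stated once up front, which is a helpful reorganization rather than a different method. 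One genuine addition: you give an explicit argument for $d_i\geq 2$ (if some $d_i=1$ then $k=1$, $F|_I$ is a $\pm 1$-isometry, hence $F^{-1}(x)=\{x\}$, contradicting Theorem \ref{DGLC}); the paper leaves this implicit, though it also follows from $\sum d_i=d$ once that is established. The places you flag at the end as routine bookkeeping — especially the case $x\in\{a,b\}$, where the preimage $c_i$ of $x$ lands on an endpoint $a_i$ or $b_i$ and the degree count must account for points shared between consecutive intervals and possibly for preimages in the gaps $[b_i,a_{i+1}]$ — are in fact where the details become delicate; the paper's exposition is equally terse there, but it would be worth either verifying these cases or arguing $x\in\Int I$ before claiming they are routine.
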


An immediate corollary of the above proposition is the following:
\begin{cor}\label{cor:nim}
Let $t\in I = [a,b]$ which is mapped into $(a,b)$. Then $U_{\vec{v}}$ contains no critical ends for all $\vec{v}$ at $t$ not associated to $a$ or $b$.
\end{cor}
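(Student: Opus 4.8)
The plan is to localize the critical tree of $F$ and show that it cannot branch off the segment $I$ at an interior point of any of the subintervals $[a_i,b_i]$ furnished by Proposition~\ref{PropertyLimitingMap}; the statement of the corollary is exactly this, phrased in terms of tangent vectors.

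First I would pin down where the hypothesis forces $t$ to lie. By the structure in Proposition~\ref{PropertyLimitingMap}, the points $a,b$ are sent into $\partial I=\{a,b\}$, each endpoint $a_i,b_i$ is sent into $\partial I$ (since $F$ restricts to a homeomorphism $[a_i,b_i]\to I$), and every point of a gap $(b_i,a_{i+1})$ is sent outside $\Int(I)$: if a gap point lay in $F^{-1}(\Int(I))$, the component of $F^{-1}(\Int(I))$ containing it would meet $I$, so its closure would be one of the $[a_j,b_j]$, which is impossible. Hence $F(t)\in(a,b)$ forces $t\in(a_i,b_i)$ for some $i$, and a tangent vector $\vec{v}$ at $t$ ``not associated to $a$ or $b$'' is precisely a direction transverse to $I$.

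Next, at such a $t$ write $y=F(t)\in\Int(I)$ and let $\vec{v}_{-},\vec{v}_{+}$ be the two directions of $I$ at $t$. As in the proof of Proposition~\ref{PropertyLimitingMap} (using $P\subset I$ together with Lemma~\ref{ExtendTangent}), the only tangent vectors at $t$ whose image is a direction of $I$ at $y$ are $\vec{v}_{-},\vec{v}_{+}$, each of local degree $d_i$, so $\deg_t F=d_i$ by Theorem~\ref{CritVec}. Applying the Riemann--Hurwitz formula to the tangent map $D_tF=\tilde g$ of Theorem~\ref{ReductionAsTangentMap}, which is a degree $d_i$ rational map of $\Proj^1_{\tilde K}$, gives $\sum_{\vec{u}}(m_{\vec{u}}F-1)=2d_i-2$; since $\vec{v}_{-},\vec{v}_{+}$ already account for $2(d_i-1)$, every transverse direction $\vec{u}$ at $t$ has $m_{\vec{u}}F=1$. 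The same argument applies at each preimage of $y$: such a preimage lies in the interior of some $[a_j,b_j]$ (as $F(\partial[a_j,b_j])\subset\{a,b\}$), has $\deg F=d_j$, and — since $\sum_j d_j=d$ — the preimages of $y$ are exactly one such point per interval; moreover over any transverse direction $\vec{w}$ at $y$ there are exactly $d$ inverse directions, each of local degree $1$, occupying $d$ distinct components $U_{\vec{u}}$.

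Finally, fix a transverse $\vec{v}$ at $t$ and set $\vec{w}=D_tF(\vec{v})$. For $z'$ close to $y$ along $\vec{w}$, all $d$ preimages of $z'$ accumulate to the preimages of $y$, and near each preimage the map is a local isometry in the relevant transverse direction, so $z'$ has exactly one preimage in $U_{\vec{v}}$. Also $F(\overline{U_{\vec{v}}})\subset\overline{U_{\vec{w}}}$, because a path leaving $t$ into $U_{\vec{v}}$ whose image returned to $y$ would produce a preimage of $y$ off $I$. Hence $F$ restricted to $\overline{U_{\vec{v}}}$ is a branched covering onto its image of degree $1$, i.e. an isometric embedding; in particular $U_{\vec{v}}$ contains no critical end, since $F$ expands by a factor $\ge 2$ along a critical end by Theorem~\ref{CritVec}. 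The main obstacle is precisely this last step: one must justify that $F|_{\overline{U_{\vec{v}}}}$ is a genuine degree $1$ branched covering, so that the single-preimage count near $y$ globalizes and forbids a critical end deeper inside $U_{\vec{v}}$. This is a localization of the global branched covering $F$ of Theorem~\ref{DGL} to the star $\overline{U_{\vec{v}}}$, in the spirit of Lemmas~\ref{IsomAwayCritTree} and~\ref{ApproxAnnuli}, with the inclusion $F(\overline{U_{\vec{v}}})\subset\overline{U_{\vec{w}}}$ keeping the fiber count honest.
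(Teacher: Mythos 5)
Your argument is essentially correct, and since the paper states this corollary with no written proof (it records it as an ``immediate'' consequence of Proposition~\ref{PropertyLimitingMap}), there is no proof to compare against. The three structural facts you extract are exactly the right ones: (i) $F(t)\in(a,b)$ together with $F(\partial[a_i,b_i])\subset\{a,b\}$ and the degree count $\sum d_i=d$ forces $t\in(a_i,b_i)$ and forces all $d$ preimages of $y=F(t)$ to lie on $I$, one in each open subinterval; (ii) since the two directions of $I$ at $t$ are each totally ramified of degree $d_i=\deg_tF$, Riemann--Hurwitz for the tangent map $D_tF$ on $\Proj^1_{\tilde K}$ (char~$0$, so tame) gives $m_{\vec{v}}F=1$ for every transverse $\vec{v}$; and (iii) the intermediate-value argument that a path $F([t,z])$ with $z\in U_{\vec{v}}$ cannot leave $\overline{U_{\vec{w}}}$ without hitting $y$, together with the absence of preimages of $y$ in $U_{\vec{v}}$, gives $F(\overline{U_{\vec{v}}})\subset\overline{U_{\vec{w}}}$.

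The step you flag as the ``main obstacle'' is genuine but requires no fresh idea: once one has $F(\overline{U_{\vec{v}}})\subset\overline{U_{\vec{w}}}$ with $F(t)=y$ the boundary point, the Berkovich Riemann--Hurwitz formula for disks (the same Baker--Rumely Theorems 9.42--9.46 circle of ideas that the paper cites for Lemma~\ref{IsomAwayCritTree}) says that $F\colon U_{\vec{v}}\to U_{\vec{w}}$ is a proper surjection of degree exactly $m_{\vec{v}}F$, and that the number of critical points of $\mathbf{f}$ in $U_{\vec{v}}$, counted with multiplicity, equals $m_{\vec{v}}F-1=0$. There is no need to globalize the single-preimage count by hand; it is built into that theorem. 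The only change I would make to your write-up is to replace the closing ``localization of the global branched covering'' gesture with a direct appeal to the disk Riemann--Hurwitz, which closes the argument in one line; you should also be slightly careful that the claim ``the closure of that component would be one of the $[a_j,b_j]$'' is not literally part of the statement of Proposition~\ref{PropertyLimitingMap} --- it is cleaner to deduce $t\in(a_i,b_i)$ from the fact that $\sum d_i=d$ accounts for all $d$ preimages of any point of $(a,b)$, so no point off $\bigcup(a_i,b_i)$ can map into $(a,b)$.
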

\begin{proof}
Assume $t \in (a_i, b_i) \subseteq I$, so the norm of the derivative of $F$ at $t$ is $d_i$.
We first note that the local degree of $D_tF$ at $\vec{v}$ is $1$.
Otherwise, by counting critical points of $D_t F$, we conclude that the degree of $D_tF$ is $> d_i$.
Let $\vec{w}_a, \vec{w}_b \in T_t{^r\Hyp^3}$ and $\vec{w}'_a, \vec{w}'_b \in T_{F(t)}{^r\Hyp^3}$ correspond to $a, b$. Then by Theorem \ref{CritVec}, there exists $\vec{w} \neq \vec{w}_a, \vec{w}_b$ so that $D_tF(\vec{w}) = \vec{w}'_a$.
By Lemma \ref{ExtendTangent}, there exists $x\in U_{\vec{w}}$ mapped to $a$. 
This is a contradiction as $I$ is backward invariant.

Suppose $U_{\vec{v}}$ contains a critical ends. Since the local degree of $D_tF$ at $\vec{v}$ is $1$, Proposition 9.41 in \cite{BakerRumely10} gives that $F(U_{\vec{v}}) = {^r\Hyp^3}$. Again, this is a contradiction as $I$ is backward invariant.
\end{proof}

\begin{rmk}
We remark that we did not use the fact that $f_n$ comes from the same hyperbolic component yet.
In our communication with Favre, a similar classification also appears in an unpublished manuscript by Favre and Rivera-Letelier. We would refer to \cite{FR10} where many such examples are studied.

We remark the similarities and the distinctions of the classification with the induced map on the Shishikura's tree.
In Proposition \ref{PropertyLimitingMap}, it is possible $\sum_{i=1}^k 1/d_i = 1$, which cannot occur in Lemma \ref{PropertyShishikuraTree}.
We shall see that if we assume $[f_n]$ are contained a hyperbolic component $\HC$, then $\sum_{i=1}^k 1/d_i < 1$.
\end{rmk}

\subsection*{From limiting map $F$ to nested Julia sets}
We start with the following lemma. The proof uses standard expansion and pullback argument. 
For completeness, we sketch the proof and refer the readers to \S 5 in \cite{PilgrimTan00} for a more general setup.
\begin{lem}\label{lem:sal}
Let $A_1, A_0 \subseteq \C$ be two annuli with  $\overline{A_1} \subseteq A_0$, and let $g: A_1 \longrightarrow A_0$ be a degree $e$ covering with $e\geq 2$. Then the non-escaping set $J = \bigcap_{k=0}^\infty \overline{g^{-k}(A_0)}$ is a Jordan curve.
\end{lem}
\begin{proof}
Let $A_n = g^{-n}(A_0)$, and $A_n^\pm$ be two annuli component of $A_n - A_{n-1}$ so that $A_n^-$ is in the bounded component of $\C - A_n^+$.
By Schwarz lemma, with respect to the hyperbolic metric on $A_1$ and $A_0$ in the domain and the codomain, the inclusion map $i: A_1 \longrightarrow A_0$ is a contraction, and the covering map $g: A_1 \longrightarrow A_0$ is an isometry.
Thus, the map $g: A_1 \subseteq A_0 \longrightarrow A_0$ is expanding with respect to hyperbolic metric $\rho$ on $A_0$.
Since $A_2$ has compact closure in $A_1$, $g$ expands $\rho$ a definite factor $\lambda>1$ on $A_2$.

Let $\delta_n^{\pm}$ be the component of $\partial A_n$ so that $\delta_n^-$ is bounded by $\delta_n^+$.
By expansion, there exists a constant $M$ so that $\max_{y\in \delta_n^+} \min_{x\in J} d_\rho(x,y) \leq M/\lambda^n$ and $\max_{x\in J} \min_{y\in \delta_n^+} d_\rho(x,y) \leq M/\lambda^n$ (see Lemma 2 in \S 5 of \cite{PilgrimTan00}), and similarly for $\delta_n^-$.
Thus, $\hat\C - J$ has exactly two components $U_1, U_2$, and $\partial U_1 = \partial U_2 = J$.

Let $\gamma_1^\pm$ be smooth simple closed curves that generate the fundamental group of $A_1^\pm$, with parameterization $\zeta_1^\pm: S^1 \longrightarrow \gamma_1^\pm$.
Inductively, let $\gamma_n^\pm \subseteq A_n^\pm$ be the lift of $\gamma_{n-1}^\pm$.
Choose a homotopy between $\gamma_2^\pm$ and $\gamma_1^\pm$.
By lifting this homotopy inductively, we construct parameterizations $\zeta_n^\pm: S^1 \longrightarrow \gamma_n^\pm \subseteq A_0$ (see Lemma 0 in \S 5 of \cite{PilgrimTan00}).
By expansion, $\zeta_n^\pm$ converges uniformly to continuous maps $\zeta_\infty^\pm:S^1 \longrightarrow A_0$ (see Lemma 1 in \S 5 of \cite{PilgrimTan00}).
Since $\gamma_n^\pm$ is squeezed between $\delta_n^\pm$, the image $\zeta_\infty^\pm(S_1) = J$. In particular, $J$ is locally connected.
By Lemma 5.1 in \cite{PilgrimTan00}, $J$ is a Jordan curve.
\end{proof}

Let  $[f_n] \in \HC$ with markings $\phi_n$ that gives the bounded escape.
Let $F: {^r\Hyp^3} \longrightarrow {^r\Hyp^3}$ be the limiting map.
Let $I = [a, b]$ be the geodesic segment as in Proposition \ref{PropertyLimitingMap}, and $x$ be a fixed point in $(a,b)$.
Then there exists an open set $U^{x-t, x+t}\subseteq {^r\Hyp^3}$ with boundary points $x-t$ and $x+t$ which is mapped to $U^{x-et, x+et}$ for some integer $e \geq 2$.
By Corollary \ref{cor:nim}, $U^{x-t, x+t}$ contains no critical ends.

Let $U_n, V_n$ be sequences of annuli approximating $U^{x-t, x+t}, U^{x-et, x+et}$ as in Lemma \ref{ApproxAnnuli}.
Then $f_n:U_n \longrightarrow V_n$ is a degree $e$ covering $\omega$-almost surely.
Since $U^{x-t, x+t} \subseteq U^{x-et, x+et}$, by Lemma \ref{lem:cap}, $\overline U_n\subseteq V_n$ $\omega$-almost surely.

Let $N$ be in the $\omega$-big set so that the above holds.
Define the non-escaping set of $f_N : U_N \longrightarrow V_N$ by
$$
K = \bigcap_{k=0}^\infty \overline{f_N^{-k}(V_N)}.
$$
Then $K$ is a Jordan curve by Lemma \ref{lem:sal}.

Since all $f_n$ comes from a single hyperbolic component, we will abuse notations and regard $K$ as in the topological model $\sigma:J \longrightarrow J$ of the Julia set.
The realization of $K$ in $J_n = J(f_n)$ will be denoted by $\phi_n(K)$.

Let $\mathcal{K}_m: = \cup_{i=0}^m \sigma^{-i}(K)$ and $\mathcal{K} := \cup_{m=0}^\infty \mathcal{K}_m$.
\begin{lem}
$\mathcal{K}$ is a nested set of circles.
\end{lem}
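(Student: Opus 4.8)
The plan is to transport the statement to the fixed map $f_N$ and then read off the nesting from the structure of the limiting map $F$ on the geodesic segment $I=[a,b]$ furnished by Proposition~\ref{PropertyLimitingMap}. Since $\phi_N\colon J\longrightarrow J(f_N)$ is a homeomorphism conjugating $\sigma$ to $f_N$, and since ``being a Jordan curve'' and ``being a collection of pairwise disjoint Jordan curves all separating one fixed pair of points'' are topological properties, it suffices to show that $\widehat{\mathcal K}:=\bigcup_{i\ge 0}f_N^{-i}(\widehat K)$ is a nested set of circles in $\hat\C$, where $\widehat K\subset J(f_N)$ is the Jordan curve $\bigcap_k\overline{U_{k,N}}$ produced by the construction (so that $\mathcal K=\phi_N^{-1}(\widehat{\mathcal K})$ and $K=\phi_N^{-1}(\widehat K)$).

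First I would check that every connected component of $f_N^{-i}(\widehat K)$ is a Jordan curve. As $f_N$ is hyperbolic its critical points lie in the Fatou set, and as the Fatou set is forward invariant the critical values of $f_N^{\,i}$ lie in the Fatou set as well; since $\widehat K\subset J(f_N)$, the restriction $f_N^{\,i}\colon f_N^{-i}(\widehat K)\longrightarrow\widehat K$ is therefore an unbranched covering. A component $W$ of $f_N^{-i}(\widehat K)$ is closed in $\hat\C$, hence compact, and covers the circle $\widehat K$; a compact connected covering space of $S^1$ is a circle, so $W$ is a Jordan curve. In particular $f_N^{-i}(\widehat K)$ has only finitely many components, so $\widehat{\mathcal K}$ is a countable collection of Jordan curves.

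The heart of the proof is to extract disjointness and the nesting from the $\R$-tree. The curve $\widehat K$ corresponds to the fixed point $x$: the nested annuli $U_{k,N}$ approximate nested neighbourhoods $U^{x-t_k,\,x+t_k}$ of $x$ with $t_k\to 0$. Iterating Lemma~\ref{ApproxAnnuli} and Lemma~\ref{IsomAwayCritTree} — applicable because, by the corollary following Proposition~\ref{PropertyLimitingMap}, none of the components $U_{\vec v}$ involved meets a critical end — I would set up a bijection between the components of $f_N^{-i}(\widehat K)$ and the points $y\in F^{-i}(x)$, the component $\widehat K_y$ attached to $y$ being the nested intersection of a sequence of annuli approximating nested neighbourhoods of $y$. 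By Proposition~\ref{PropertyLimitingMap} the whole set $P=\bigcup_{i\ge 0}F^{-i}(x)$ lies on the single geodesic segment $I=[a,b]$. Hence for two distinct points $y_1,y_2\in P$, say with $y_1$ between $a$ and $y_2$ along $I$, the approximating annuli at $y_1$ separate a neighbourhood of the end $a$ from $y_2$, and therefore from $\widehat K_{y_2}$; transported to $\hat\C$ this says $\widehat K_{y_1}$ and $\widehat K_{y_2}$ lie in distinct complementary disks of one another, so they are disjoint and nested. Moreover every $y\in P$ lying in $(a,b)$ separates the two ends $a$ and $b$ in ${^r\Hyp^3}$; fixing points $p_1,p_2\in\hat\C$ that realise these two ends (they are non-repelling periodic ends by the standing hypothesis on $F$), every circle of $\widehat{\mathcal K}$ then separates $p_1$ from $p_2$. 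Thus $\widehat{\mathcal K}$, and hence $\mathcal K$, is a nested set of circles. (Alternatively, once the components are known to be circles, one can prove disjointness and nesting by induction on the level $i$, pulling back a nested configuration of circles through the branched cover $f_N$ over $J(f_N)$.)

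I expect the only genuine difficulty to lie in the third step: making the dictionary between connected components of $f_N^{-i}(\widehat K)$ in $\hat\C$ and points of $F^{-i}(x)$ in the tree precise, verifying that it sends the linear order along $I=[a,b]$ to the nesting order of the circles, and carrying out the ultrafilter bookkeeping so that all of this is realised simultaneously at the one fixed index $N$. The first two steps are essentially formal.
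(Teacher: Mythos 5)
Your step flagged as ``the only genuine difficulty'' is not a bookkeeping nuisance but the substance of the proof, and your proposal does not resolve it. You want a single index $N$ at which \emph{all} the approximation statements hold: that every $y\in P=\bigcup_i F^{-i}(x)$ has an annulus $U^{y-t,y+t}$ realised by an annulus for $f_N$, that these realised annuli are pairwise disjoint, and that they are ordered in $\hat\C$ according to the order of $P$ along $I=[a,b]$. Each individual statement of this sort holds $\omega$-almost surely, but $P$ is infinite, and a non-principal ultrafilter only lets you realise \emph{finitely many} $\omega$-big conditions simultaneously; an infinite intersection of $\omega$-big sets can be empty. So the bijection ``component of $f_N^{-i}(\widehat K)$ $\longleftrightarrow$ point of $F^{-i}(x)$'' at one fixed $N$ does not exist as you describe it, and without it steps 3--4 do not go through.

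The paper sidesteps this by an induction on the level $n$, which is the idea you relegate to a parenthesis but without the crucial mechanism. At step $n$ only finitely many tree-domains are involved, so one can choose $N=N(n)$ in the intersection of finitely many $\omega$-big sets; in $\hat\C$ for $f_{N(n)}$ one verifies that the new circle $\phi_{N(n)}(C)$ sits inside an annulus $U_{N(n)}$ disjoint from $\phi_{N(n)}(\mathcal K_n)$ and compatibly ordered; and then, because ``nested set of circles'' is a topological property of a subset of $J$, one transports the conclusion back to the common model via $\phi_{N(n)}^{-1}$ before changing $N$ at the next step. The index $N$ is allowed to (and must) change with $n$; the inductive hypothesis lives in $J$, not in any single $J(f_N)$. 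Your parenthetical alternative, ``pulling back a nested configuration through the branched cover $f_N$,'' is also incomplete as stated: the preimages of a nested family under a degree-$d$ branched cover are pairwise disjoint circles but are \emph{not} automatically nested --- controlling their cyclic arrangement is precisely what Proposition~\ref{PropertyLimitingMap} (that $P$ lies on a single geodesic segment) together with Lemma~\ref{ApproxAnnuli} supplies, so the tree input cannot be dropped from that version either. Your step 2 (components of $f_N^{-i}(\widehat K)$ are Jordan curves, via the unbranched-covering argument) is fine.
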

\begin{proof}
Let $\vec{v} \in T_a{^r\Hyp^3}$ and $\vec{w} \in T_b{^r\Hyp^3}$ so that $U_{\vec{v}}$ and $U_{\vec{w}}$ are disjoint from $I = [a,b]$.
Let $D_{\vec{v},n}$ and $D_{\vec{w},n}$ be domains approximating $U_{\vec{v}}$ and $U_{\vec{w}}$ respectively.
Choose $z_{\vec{v},n} \in D_{\vec{v}, n}$ and $z_{\vec{w},n} \in D_{\vec{w},n}$.

We will now argue by induction:
we assume that $\mathcal{K}_m$ is a nested set of circles.
Let $P_m := \cup_{i=0}^m F^{-i}(x)$. 
If $y\in P_{m+1}-P_m$ with $F(y) = w \in P_m$, then there exists $t$ and integer $e \geq 2$ so that $F$ maps $U^{y-t, y+t}\subset  {^r\Hyp^3}$ to $U^{w-et, w+et}$.
We may choose $t$ small enough so that $U^{y-t, y+t} \cap P_m = \emptyset$.
By Corollary \ref{cor:nim}, $U^{y-t, y+t}$ contains no critical ends.
Let $U_n$ and $V_n$ be sequences approximating $U^{y-t, y+t}$ and $U^{w-et, w+et}$ as in Lemma \ref{ApproxAnnuli}.
Then
$$
f_n:U_n \longrightarrow V_n
$$
is a degree $e$ covering $\omega$-almost surely.
Since $U^{y-t, y+t} \cap P_n = \emptyset$, by Lemma \ref{lem:cap}
$U_n \cap \phi_n(\mathcal{K}_m) = \emptyset$
$\omega$-almost surely.
Since $U^{y-t, y+t}$ separates $D_{\vec{v},n}$ and $D_{\vec{w},n}$, by Lemma \ref{lem:cap}, $U_n$ separates $z_{\vec{v},n}, z_{\vec{w}, n}$ $\omega$-almost surely.
Similarly, any circle in $\mathcal{K}_m$ separates $z_{\vec{v},n}, z_{\vec{w}, n}$ $\omega$-almost surely.

Let $N$ be in the $\omega$-big set that satisfies the above properties.
Let $\phi_N(C)$ be the component of $\phi_N^{-(m+1)}(K)$ in $U_N$. 
Then $\phi_N(C) \cup \phi_N(\mathcal{K}_m)$ is still a nested set of circles, so $C \cup \mathcal{K}_m$ is a nested set of circles.
We can now add more $m+1$-th preimages of $K$ into $C \cup \mathcal{K}_m$ in a similar way.
Therefore, by induction, $\mathcal{K}$ is a nested set of circles.
\end{proof}

From the construction above, we also have
\begin{prop}\label{PiSemiConjugacy}
The natural ordering on the nested set $\mathcal{K}$ is compatible with the linear ordering on $P= \cup_{n=0}^\infty P_n$, and the map $\pi$ sending a component $C$ of $\mathcal{K}$ to the associated point in $P$ is a semi-conjugacy.
\end{prop}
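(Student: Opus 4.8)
The plan is to prove both assertions by unwinding the inductive construction of $\mathcal{K}$ and $\mathcal{P}$, verifying them level by level. Write $\mathcal{P}_n = \cup_{i=0}^{n} F^{-i}(x)$ and $\mathcal{K}_n = \cup_{i=0}^{n} \sigma^{-i}(K)$, so that $\pi$ restricts to a map from the components of $\mathcal{K}_n$ to $\mathcal{P}_n$. At level $0$ we have $\mathcal{K}_0 = \{K\}$, $\mathcal{P}_0 = \{x\}$ and $\pi(K) = x$; since $x$ is a fixed point of $F$ with $\deg_x F = e \geq 2$, the Jordan curve $K$ is fixed by $\sigma$ (its realization $\phi_N(K)$ is the invariant curve $\bigcap_k \overline{U_{k,N}}$), so the base cases of both the ordering statement (vacuous) and the relation $\pi\sigma = F\pi$ hold trivially. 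Note that $\sigma$ and $F$ then preserve $\mathcal{K}$ and $\mathcal{P}$ respectively, so both sides of $\pi\sigma = F\pi$ make sense.

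For the inductive step toward the ordering claim, fix $y \in \mathcal{P}_{n+1}\setminus\mathcal{P}_n$, put $w = F(y) \in \mathcal{P}_n$, and let $C$ be the component of $\mathcal{K}_{n+1}$ the construction attaches to $y$, so $\pi(C) = y$. By Proposition \ref{PropertyLimitingMap}, $y$ lies in the interior of one linear piece $[a_i,b_i]$ on which $F$ is an orientation-$(\pm)$ isometry of derivative $\pm d_i$; choosing $t$ small, $F$ carries $[y-t,y+t]$ linearly onto $[w-et,w+et]$. Correspondingly, by Lemma \ref{ApproxAnnuli} one has approximating annuli $U_n, V_n$ for $U^{y-t,y+t}$ and $V^{w-et,w+et}$ with $f_n: U_n \to V_n$ a degree-$e$ covering, chosen in the construction so that $U_n \cap \phi_N(\mathcal{K}_n) = \emptyset$. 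The defining property of approximation — a sequence inside $U_n$ represents a point of $\overline{U^{y-t,y+t}}$, a sequence outside represents a point outside — translates the nesting order of the new curves $\phi_N(C') \subset U_N$ into the linear order of their labels $\pi(C') \in [y-t,y+t]$, with orientation reversed exactly when $d_i$ carries a minus sign. Since $\mathcal{K}$ and $\mathcal{P}$ are produced by the same recursion, the sign reversals occur in lockstep, so $\pi$ is order-compatible on $\mathcal{K}_{n+1}$; gluing over all $y$ and all $n$ gives compatibility on $\mathcal{K}$.

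For the semi-conjugacy, observe that in the construction $\phi_N(C)$ is a component of $\phi_N^{-(n+1)}(K)$ contained in $U_N$, while $f_N$ maps $U_N$ onto $V_N$, which approximates the annulus around $w$; hence $\sigma(C)$ is precisely the component of $\mathcal{K}_n$ labelled by $w$, i.e. $\pi(\sigma(C)) = w = F(y) = F(\pi(C))$. Because all $f_n$ lie in the single hyperbolic component $H$ and $\phi_n$ conjugates $\sigma$ to $f_n$, the combinatorial position of each component of $\mathcal{K}$ — hence its label $\pi(C)$ and the $\sigma$-action on it — is independent of the auxiliary choices, so $\pi\sigma = F\pi$ on all of $\mathcal{K}$, which completes the proof. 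The main obstacle is exactly this bookkeeping of consistency: one must check that the finitely many choices made at each of the countably many stages (which index $N$ in the $\omega$-big set, which approximating annulus) leave the labelled, ordered, $\sigma$-equivariant structure of $\mathcal{K}$ unchanged — and this is where Lemma \ref{ApproxAnnuli} together with the rigid piecewise-linear description of $F$ on $I$ from Proposition \ref{PropertyLimitingMap} does the real work, pinning down the coverings $f_n: U_n \to V_n$ up to data invisible to $\pi$.
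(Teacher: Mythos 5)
The paper itself offers no proof: Proposition \ref{PiSemiConjugacy} is introduced with ``From the construction above, we also have,'' so the argument is understood to be implicit in the proof of the preceding lemma. Your decision to unwind that inductive construction level by level is the right move, and the base case and the ordering-compatibility step are essentially sound.

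The semi-conjugacy step, however, has a genuine gap. You argue that since $f_N$ maps $U_N$ onto $V_N$, ``which approximates the annulus around $w$,'' it follows that $\sigma(C)$ is the component of $\mathcal{K}_n$ labelled by $w$. That ``hence'' is not justified: a priori $V_N$ can contain the $\phi_N$-realizations of several components of $\mathcal{K}_n$ (those with labels in $(w-et,w+et)\cap\mathcal{P}_n$), and you do not say why $f_N(\phi_N(C))$ is the one labelled $w$ rather than one of the others. Note that the paper's choice ``$t$ small enough so that $U^{y-t,y+t}\cap\mathcal{P}_n=\emptyset$'' forces $(w-et,w+et)\cap\mathcal{P}_{n-1}=\emptyset$ (push a would-be intersection point back through the linear homeomorphism $F:[y-t,y+t]\to[w-et,w+et]$), but it does not force $(w-et,w+et)\cap\mathcal{P}_n=\{w\}$. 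Closing this requires (a) shrinking $t$ further so that $(w-et,w+et)\cap\mathcal{P}_n=\{w\}$, and, more substantively, (b) a strengthened inductive hypothesis tying labels to positions — roughly, that for every $C'\in\mathcal{K}_n$ and every sufficiently small $s>0$, $\phi_N(C')$ lies inside every approximating annulus of $U^{\pi(C')-s,\pi(C')+s}$ and outside those of $U^{p-s,p+s}$ for $p\in\mathcal{P}_n\setminus\{\pi(C')\}$, $\omega$-almost surely in $N$. Order-compatibility alone does not yield this, because it speaks only of the relative nesting order of the circles, not of their positions with respect to approximating annuli. With hypothesis (b) in hand, $f_N(\phi_N(C))=\phi_N(\sigma(C))\subset V_N$ immediately forces $\pi(\sigma(C))=w$; without it, your closing ``bookkeeping of consistency'' paragraph does not actually pin the label down. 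So: correct strategy, same inductive skeleton the paper tacitly relies on, but the semi-conjugacy leg of the induction needs this extra position-to-label hypothesis carried along to be airtight.
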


We are now ready to prove:
\begin{theorem}\label{BoundedEscapeImpliesNestedJuliaSet}
Let $\HC$ be a hyperbolic component. If $[f_n] \in \HC$ is degenerating with markings $\phi_n$ such that 
$$
F : {^r\Hyp^3} \longrightarrow {^r\Hyp^3}
$$ 
has no repelling periodic ends, then $\HC$ has nested Julia sets.
\end{theorem}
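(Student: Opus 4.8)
The plan is to reduce to a single map in $H$ and then to read off the topology of its Julia set from the nest of circles $\mathcal K$ constructed above. Fix an index $N$ in the $\omega$-big set on which all of the preceding approximation statements hold. Since any two rational maps in a hyperbolic component are quasiconformally conjugate on $\hat\C$ (see e.g. \cite{McM94}), and both conditions defining ``nested Julia set'' are invariant under homeomorphisms of $\hat\C$, it suffices to prove that $J(f_N)$ is nested.

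First I would show that $J(f_N)=\overline{\phi_N(\mathcal K)}$. Since $f_N$ is hyperbolic of degree $d\ge 2$, its Julia set contains no exceptional point, so for any $y_0$ on the ($f_N$-invariant) circle $K\subset J(f_N)$ produced above one has $J(f_N)=\overline{\bigcup_{i\ge 0}f_N^{-i}(y_0)}$; as $y_0\in K$, $\phi_N(\mathcal K)=\bigcup_{i\ge 0}f_N^{-i}(K)$ by the Lemma preceding Proposition~\ref{PiSemiConjugacy}, and $\phi_N(\mathcal K)\subseteq J(f_N)$, this gives $J(f_N)=\overline{\phi_N(\mathcal K)}$.

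Next I would exploit the combinatorial model. Recall from Proposition~\ref{PropertyLimitingMap} that $I=[a,b]$ is the smallest segment containing $P=\bigcup_i F^{-i}(x)$, so $\mathcal P\subseteq(a,b)$ while $a,b\in\overline{\mathcal P}$; in particular $\mathcal P$ is infinite, and by Proposition~\ref{PiSemiConjugacy} the circles $\phi_N(\mathcal K)$ form a nested (linearly ordered) family in $\hat\C$ ordered compatibly with $\mathcal P\subseteq I$. Choose $p_1$ and $p_2$ in the Fatou set of $f_N$, one on each side of the nested family (i.e. ``beyond'' its two extremal ends); then every circle of $\phi_N(\mathcal K)$ separates $p_1$ from $p_2$. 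Let $\Gamma$ be any connected component of $J(f_N)$. By the previous paragraph there are circles $K_j\in\phi_N(\mathcal K)$ with points $z_j\in K_j$ converging to some $z_\infty\in\Gamma$; passing to a subsequence, $K_j\to L$ in the Hausdorff metric, where $L$ is a continuum with $L\subseteq J(f_N)$ and $z_\infty\in L$, hence $L\subseteq\Gamma$. Since each $K_j$ separates $p_1$ from $p_2$ and $p_1,p_2\notin J(f_N)$, an elementary limiting argument shows $L$ separates $p_1$ from $p_2$, and a fortiori so does $\Gamma\supseteq L$. Finally, $J(f_N)$ has more than one component, since by construction $f_N$ has a non-simply-connected Fatou component separating the circle $K$ from the outermost circles of $\phi_N(\mathcal K)$ (the annular region $V_N\setminus\overline{U_N}$ is carried by iterates of $f_N$ off $V_N$ and eventually into a simply connected Fatou component, hence lies in the Fatou set). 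Thus $J(f_N)$ is nested, and $H$ has nested Julia sets.

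The main obstacle is bridging the combinatorial and the metric pictures: the nest $\mathcal K$, the semiconjugacy $\pi$, and Proposition~\ref{PropertyLimitingMap} record only the behaviour of the $f_n$ ``at scale $\omega$'', and hyperbolicity of $f_N$ must be brought in to guarantee that the elements of $\phi_N(\mathcal K)$ really are disjoint Jordan curves in $J(f_N)$ whose Hausdorff limits are nondegenerate continua, and that the region separating $K$ from the outermost circles truly lands in the Fatou set. The key quantitative input is the modulus identity $m(\Omega_m)=m(V_N)/(d_{i_1}\cdots d_{i_m})$ for the pullback annuli of $V_N$ along an address $i_1 i_2\cdots$, combined with the expansion and bounded distortion coming from hyperbolicity; this is exactly where the argument makes contact with the classical analysis of Cantor sets of circles (\cite{McM88}, \cite{PilgrimTan00}, Chapter~11.8 of \cite{Beardon91}). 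One must also choose $t$ and $N$ so that $V_N$ contains no critical point of $f_N$, which the Corollary following Proposition~\ref{PropertyLimitingMap} makes possible.
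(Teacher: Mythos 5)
Your overall architecture matches the paper's: reduce to a single representative $f_N$, show $J(f_N)=\overline{\phi_N(\mathcal K)}$, and use the nested family $\phi_N(\mathcal K)$ together with the semiconjugacy $\pi$ to deduce that every Julia component separates two fixed points. Your Hausdorff-limit argument for the separation property is a reasonable fleshing out of the paper's terser statement, and making the equality $J(f_N)=\overline{\phi_N(\mathcal K)}$ explicit via density of backward orbits is a good addition.

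However, there is a genuine gap in the step where you conclude $J(f_N)$ is disconnected. You claim that the annular region $V_N\setminus\overline{U_N}$ ``is carried by iterates of $f_N$ off $V_N$ and eventually into a simply connected Fatou component, hence lies in the Fatou set.'' This is not justified, and it is in fact false in the generality needed. The fixed point $x\in(a,b)$ is repelling with derivative $e=d_i\ge 2$ on $[a_i,b_i]$, so the preimages $F^{-m}(x)$ accumulate at $x$ in the tree (e.g. pulling back any $w\in F^{-1}(x)\setminus\{x\}$ through the linear contraction $(F|_{[a_i,b_i]})^{-m}$ produces points of $\mathcal P$ at distance $|w-x|/e^m\to 0$ from $x$). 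Consequently, for the fixed $t$ used to define $U^{x-t,x+t}$ and $V^{x-et,x+et}$, the ``collar'' $V^{x-et,x+et}\setminus\overline{U^{x-t,x+t}}$ in the tree contains points of $\mathcal P$, and hence $V_N\setminus\overline{U_N}$ contains circles of $\phi_N(\mathcal K)\subset J(f_N)$ for $\omega$-a.e.\ $N$. Nor is there any reason that iterates of $f_N$ carry $V_N\setminus\overline{U_N}$ off $V_N$: Lemma~\ref{ApproxAnnuli} controls $f_N$ only on $U_N$, not on $V_N\setminus\overline{U_N}$, and when $k>2$ other pullback annuli $A_{j,N}$ of $V_N$ live inside $V_N\setminus\overline{U_N}$.

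The disconnectedness really does require a quantitative input, and this is where your proof diverges from (and falls short of) the paper's. The paper proves disconnectedness by constructing, from the semiconjugacy of Proposition~\ref{PiSemiConjugacy}, an annulus $A$ (a union of successive annuli bounded by the circles $\pi^{-1}(p_n)$, $\pi^{-1}(q_n)$ with $p_n\to a$, $q_n\to b$) together with disjoint subannuli $A_i\subset A$, $i=1,\dots,k$, each homotopic to the core of $A$ and mapped by $f_N$ onto $A$ as a degree $d_i$ covering, so $m(A_i)=m(A)/d_i$. Superadditivity of moduli gives $\sum_{i=1}^k 1/d_i\le 1$; if equality held, the rigidity in the Gr\"otzsch inequality would force consecutive $A_i$, $A_{i+1}$ to share a Jordan curve boundary in $J(f_N)$, hence a critical point on the Julia set, contradicting hyperbolicity. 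Thus $\sum 1/d_i<1$, $\overline{\mathcal P}$ is a proper subset of $I$, and $J=\overline{\mathcal K}$ is disconnected. (This is also precisely what fails for flexible Latt\`es maps, where $\sum 1/d_i=1$ and $J=\hat\C$.) Your proposal does gesture at a modulus identity $m(\Omega_m)=m(V_N)/(d_{i_1}\cdots d_{i_m})$, but you apply it only to show that the circles $\phi_N(\mathcal K)$ are nondegenerate, not to produce a gap in the Cantor set $\overline{\mathcal P}$, which is the heart of the matter. To repair the argument, replace the unsupported Fatou-escape claim with the $\sum 1/d_i<1$ estimate.
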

\begin{proof}
Let $[f]\in \HC$. Abusing the notation, we assume the topological model of the action on the Julia set is given by $f:J \longrightarrow J$.

First, we will show that the Julia set $J$ is disconnected. 
To show this, we will show $\sum_{i=1}^k 1/d_i < 1$, where $d_i$ is defined as in Proposition \ref{PropertyLimitingMap}.
Replacing $F$ by its second iterates and switch the role of $a$ and $b$ if necessary, we may assume $a$ is fixed by $F$.

Let $p_n, q_n \in P$ with $p_n \to a$, $q_n \to b$, and $C_n = \pi^{-1}(p_n), D_n = \pi^{-1}(q_n)$.
We define $A_n$ to be the annulus bounded by $C_n$ and $D_n$, then
$$
A = \cup_{n=1}^\infty A_n
$$ 
is again an annulus.
Let $p_{i,n}$ and $q_{i,n}$ be the $i$-th in the linear ordering on $[a,b]$ of the preimages of $p_n$ and $q_n$, and $C_{i,n} = \pi^{-1}(p_{i,n})$ and $D_{i,n} = \pi^{-1}(q_{i,n})$ respectively.
Let $A_{i,n}$ be the annulus bounded by $C_{i,n}$ and $D_{i,n}$, and
$$
A_i = \cup_{n=1}^\infty A_{i,n}.
$$

For each $n$, since $[p_{i,n}, q_{i,n}] \subseteq (p_m, q_m)$ for sufficiently large $m$, $A_{i,n} \subseteq A$.
Thus $A_i\subseteq A$ and the inclusion map is an isomorphism on fundamental groups.
Since $\pi$ is a semi-conjugacy by Proposition \ref{PiSemiConjugacy}, $A_i$ is mapped to $A$ as a degree $d_i$ covering, so $m(A_i) = m(A) / d_i$.
If $\sum_{i=1}^k 1/d_i = 1$, then by the equality case of the Gr\"otzch inequality (see Theorem B.5 in \cite{McM94}), $A_i$ and $A_{i+1}$ shares a Jordan curve boundary. 
Since $f(A_i) = f(A_{i+1}) = A$, $f$ has a critical point on this boundary, which is a contradiction as this boundary is in the Julia set, and $f$ is hyperbolic.

Since $\sum_{i=1}^k 1/d_i < 1$, the non-escaping set $\bigcap_{i=0}^\infty F^{-i}(I)$ is a Cantor set. Since $P \subseteq \bigcap_{i=0}^\infty F^{-i}(I)$, $P$ is not dense in $I$, so $J = \overline{\mathcal{K}}$ is not connected.

Now we will prove every component separates two points.
Let $p_n$ be close to $a$, and $C_n = \pi^{-1}(p_n)$.
Let $U_n$ be bounded by $C_n$ containing the Julia component associated with $a$.
Then $f:U_n \longrightarrow f(U_n)$ is a polynomial like map.
Since the backward orbits of $C_n$ under the polynomial like restriction are all Jordan curves, the non-escaping set of $f:U_n \longrightarrow f(U_n)$ is connected.

Thus, the extremal Julia component $K_a$ associated with $a$ is the Julia set associated to this polynomial like restriction.
Since $f$ is hyperbolic, there exists a Fatou component $U_1$ of $f$ whose boundary is contained in $K_a$.
Similarly, we can find a Fatou component $U_2$ of $f$ whose boundary is contained in $K_b$.

Let $p_1 \in U_1, p_2\in U_2$ be two points.
Then any Jordan curve in $\mathcal{K}$ separates $p_1, p_2$.
Since $J = \overline{\mathcal{K}}$, every component of $J$ separates $p_1, p_2$.
\end{proof}

\begin{rmk}
If we have a degenerating sequence of flexible Latt\`es maps of degree $d^2$, the limiting map $F$ also provides an example with no repelling periodic ends.
In this case, we have $k = d$ and each $d_i = d$ (so $\sum_{i=1}^k 1/d_i = 1$).
Furthermore, the nested set of circle $\mathcal{K}$ is dense in $\hat\C$ giving $J = \hat\C$.
\end{rmk}

Now Theorem \ref{CantorCircle} is a straight forward consequence of the above results:
\begin{proof}[Proof of Theorem \ref{CantorCircle}]
Combining Theorem \ref{NestedJuliaSetImpliesBoundedEscape} and Theorem \ref{BoundedEscapeImpliesNestedJuliaSet}, we get Theorem \ref{CantorCircle}.
\end{proof}

We also have the following theorem, which gives quantitative control on how periodic cycles escaping to infinity if $\HC$ is {\em not} nested.
\begin{theorem}\label{LimitingLengthSpectrum}
Let $H$ be a hyperbolic component which does not have nested Julia sets, and let $[f_n]\in \HC$ be a degenerating sequence with markings $\phi_n$. Then there exists some $C\in \mathscr{S}$ with $\lim_\omega L(C, [f_n])/r([f_n]) \neq 0$.
\end{theorem}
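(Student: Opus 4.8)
The plan is to deduce this immediately from the contrapositive of Theorem \ref{BoundedEscapeImpliesNestedJuliaSet} together with Theorem \ref{TL}. The point is that ``$H$ not nested'' forces the limiting dynamics on the $\R$-tree to have a repelling periodic end, and such an end is detected by a periodic cycle $C\in\mathscr{S}$ whose marked length escapes comparably to $r([f_n])$.

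First I would fix, as in Section \ref{MandL}, representatives $f_n$ of $[f_n]$ for which $r([f_n]) = \max_{y\in \E f_n^{-1}(\bm 0)} d_{\Hyp^3}(y,\bm 0)$, and form the limiting map $F = \lim_\omega \E f_n$ on the asymptotic cone ${^r\Hyp^3}$ with rescalings $r_n = r([f_n])$. The markings $\phi_n$ induce a marking $\phi$ on periodic ends, under which each $C\in\mathscr{S}$ corresponds to a cycle of periodic ends of $F$. Now suppose toward a contradiction that $F$ has no repelling periodic ends; then the hypotheses of Theorem \ref{BoundedEscapeImpliesNestedJuliaSet} are met and it forces $H$ to have nested Julia sets, contrary to assumption. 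Hence $F$ has a cycle of repelling periodic ends $\{\alpha_1,\dots,\alpha_q\}$, i.e. $\sum_{i=1}^q L(\alpha_i, F) > 0$.

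By the ``moreover'' part of Theorem \ref{TL}, this cycle of repelling periodic ends is represented by some $C\in\mathscr{S}$, so that $\phi(C) = \{\alpha_1,\dots,\alpha_q\}$ and $L(C,F) = \sum_{i=1}^q L(\alpha_i, F) > 0$. Applying the first assertion of Theorem \ref{TL} then gives
\[
\lim_\omega L(C,[f_n]) / r([f_n]) = L(C,F) > 0,
\]
so in particular $\lim_\omega L(C,[f_n])/r([f_n]) \neq 0$, which is the desired conclusion.

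The substance of the argument has all been established in the preceding sections, so no genuinely hard step remains; the only things to be careful about are that the hypotheses of Theorem \ref{BoundedEscapeImpliesNestedJuliaSet} are indeed met for the specific choice of representatives $f_n$, the rescaling $r_n = r([f_n])$, and the induced marking $\phi$ on ends — which is exactly the set-up recorded in Section \ref{MandL} — and that it is precisely the ``moreover'' clause of Theorem \ref{TL} that converts the abstract repelling periodic end produced by the classification back into a bona fide periodic cycle $C\in\mathscr{S}$ of the topological model.
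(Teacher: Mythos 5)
Your proof is correct and follows essentially the same route the paper takes: negate the conclusion, invoke Theorem~\ref{BoundedEscapeImpliesNestedJuliaSet} to produce a repelling periodic end cycle, and use both clauses of Theorem~\ref{TL} to convert it into a cycle $C\in\mathscr{S}$ with $\lim_\omega L(C,[f_n])/r([f_n]) = L(C,F) > 0$. The paper's one-line proof also cites Corollary~\ref{LimitLengthSpec}, but since the statement is phrased in terms of $\lim_\omega$ rather than an honest subsequential limit, your argument shows that corollary is not actually needed.
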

\begin{proof}
Combining Theorem \ref{BoundedEscapeImpliesNestedJuliaSet} and Theorem \ref{TL}, we get the result.
\end{proof}

\subsection*{Open questions}
We conclude our discussion on the length spectrum with the following open questions:
\begin{question}\label{Q1}
Let $\HC$ be a hyperbolic component admitting bounded escape. 
Do all degenerating sequences in this hyperbolic component $\HC$ have bounded multipliers?
\end{question}

Let $M(f_n)$ be the modulus of the annulus bounded by the two extremal Julia components.
If $M(f_n)$ are bounded from below, then a similar argument as in the proof of Theorem \ref{NestedJuliaSetImpliesBoundedEscape} can be used to show that the lengths of a periodic cycle $C$ stay bounded.
Therefore, to get an unbounded length spectrum, $M(f_n)$ has to tend to $0$.
Conversely, if $[f_n]$ is degenerating and has bounded length spectrum, the proof of Theorem \ref{BoundedEscapeImpliesNestedJuliaSet} implies that $M(f_n)$ tends to $\infty$.
Therefore, the above is equivalent to the following:
\begin{question}\label{Q2}
Does there exist a positive lower bound for $M(f)$ in $\HC$?
\end{question}



\begin{thebibliography}{McM09a}

\bibitem[AM77]{AM77}
William Abikoff and Bernard Maskit.
\newblock Geometric decompositions of Kleinian groups.
\newblock {\em Amer. J. Math.}, 99(4):687--697, 1977.

\bibitem[Ahl73]{A73}
Lars Ahlfors.
\newblock {\em Conformal invariants: topics in geometric function theory.}
\newblock McGraw Hill, 1973.

\bibitem[Arf17]{Arfeux17}
Matthieu Arfeux.
\newblock Dynamics on trees of spheres.
\newblock {\em J. Lond. Math. Soc. (2)}, 95(1):177--202,
  2017.

\bibitem[Bea91]{Beardon91}
Alan Beardon.
\newblock {\em Iteration of rational functions}.
\newblock Graduate texts in mathematics; 132. Springer-Verlag, New York, 1991.

\bibitem[Bes88]{Bestvina88}
Mladen Bestvina.
\newblock Degenerations of the hyperbolic space.
\newblock {\em Duke Math. J.}, 56(1):143--161, 1988.

\bibitem[Bes01]{Bestvina01}
Mladen Bestvina.
\newblock $\mathbb{R}$-trees in topology, geometry, and group theory.
\newblock In R.J. Daverman and R.B. Sher, editors, {\em Handbook of Geometric
  Topology}, pages 55 -- 91. North-Holland, Amsterdam, 2001.

\bibitem[BF14]{BrannerFagella14}
Bodil Branner and N\'uria Fagella.
\newblock {\em Quasiconformal Surgery in Holomorphic Dynamics}.
\newblock Cambridge Studies in Advanced Mathematics. Cambridge University
  Press, 2014.

\bibitem[BR10]{BakerRumely10}
Matthew Baker and Robert~S. Rumely.
\newblock {\em Potential Theory and Dynamics on the Berkovich Projective Line}.
\newblock Mathematical surveys and monographs. American Mathematical Soc.,
  2010.

\bibitem[Chi91]{Chiswell91}
Ian Chiswell.
\newblock Non-standard analysis and the morgan-shalen compactification.
\newblock {\em Q. J. Math.}, 42(1):257--270, 1991.

\bibitem[DE86]{DouadyEarle86}
Adrien Douady and Clifford~J. Earle.
\newblock Conformally natural extension of homeomorphisms of the circle.
\newblock {\em Acta Math.}, 157:23--48, 1986.

\bibitem[DeM05]{DeM05}
Laura DeMarco.
\newblock Iteration at the boundary of the space of rational maps.
\newblock {\em Duke Math. J.}, 130(1):169--197, 2005.

\bibitem[DeM07]{DeM07}
Laura DeMarco.
\newblock The moduli space of quadratic rational maps.
\newblock {\em J. Amer. Math. Soc.}, 20(2):321--355,
  2007.

\bibitem[dFM09]{deFM09}
Tommaso de~Fernex and Mircea Musta\c{t}\u{a}.
\newblock Limits of log canonical thresholds.
\newblock {\em Ann. Sci. Norm. Sup\'er. (4)}, 42(3):491--515, 2009.

\bibitem[DM08]{DeM08}
Laura DeMarco and Curtis McMullen.
\newblock Trees and the dynamics of polynomials.
\newblock {\em Ann. Sci. Norm. Sup\'er. (4)}, 41(3):337--383, 2008.

\bibitem[Eps00]{Epstein00}
Adam Epstein.
\newblock Bounded hyperbolic components of quadratic rational maps.
\newblock {\em Ergodic Theory Dynam. Systems}, 20(3):727--748, 2000.

\bibitem[Fab13a]{Faber13}
Xander Faber.
\newblock Topology and geometry of the Berkovich ramification locus for
  rational functions, I.
\newblock {\em Manuscripta Math.}, 142(3):439--474, 2013.

\bibitem[Fab13b]{Faber13b}
Xander Faber.
\newblock Topology and geometry of the Berkovich ramification locus for
  rational functions, II.
\newblock {\em Math. Ann.}, 356(3):819--844, 2013.

\bibitem[FRL10]{FR10}
Charles Favre and Juan Rivera-Letelier.
\newblock Th\'eorie ergodique des fractions rationnelles sur un corps
  ultram\'etrique.
\newblock {\em Proc. Lond. Math. Soc. (3)},
  100(1):116--154, 2010.

\bibitem[Gro92]{Gromov92}
Misha Gromov.
\newblock {\em Asymptotic invariants of infinite groups. Geometric group
  theory. Volume 2}.
\newblock London Math. Society Lecture Notes, 182. Cambridge Univ. Press.,
  1992.

\bibitem[Hub06]{Hubbard06}
John Hubbard.
\newblock {\em Teichm{\"u}ller Theory and Applications to Geometry, Topology,
  and Dynamics: Teichm{\"u}ller theory}.
\newblock Teichm{\"u}ller Theory and Applications to Geometry, Topology, and
  Dynamics. Matrix Editions, 2006.

\bibitem[Jon15]{Jonsson15}
Mattias Jonsson.
\newblock {\em Dynamics on Berkovich Spaces in Low Dimensions}, pages 205--366.
\newblock Springer International Publishing, Cham, 2015.

\bibitem[Kiw15]{Kiwi15}
Jan Kiwi.
\newblock Rescaling limits of complex rational maps.
\newblock {\em Duke Math. J.}, 164(7):1437--1470, 05 2015.

\bibitem[KL95]{KapovichLeeb95}
Michael Kapovich and Bernhard Leeb.
\newblock On asymptotic cones and quasi-isometry classes of fundamental groups
  of 3-manifolds.
\newblock {\em Geom. Funct. Anal.}, 5(3):582--603,
  1995.

\bibitem[LR75]{LightstoneRobinson75}
A.H. Lightstone and Abraham Robinson.
\newblock {\em Nonarchimedean Fields and Asymptotic Expansions}.
\newblock North-Holland Mathematical Library. Elsevier, 1975.

\bibitem[Luo19]{L19}
Yusheng Luo.
\newblock Limits of rational maps, $\mathbb{R}$-trees and barycentric
  extension.
\newblock Preprint. arXiv:1905.00915, 2019.

\bibitem[Luo20]{L20}
Yusheng Luo.
\newblock On hyperbolic rational maps with finitely connected Fatou sets.
\newblock {\em J. Lond. Math. Soc. (2)} to apear, arXiv:2004.02797, 2020.

\bibitem[McM88]{McM88}
Curtis McMullen.
\newblock Automorphisms of rational maps.
\newblock In {\em Holomorphic Functions and Moduli I}, volume~10 of {\em
  Mathematical Sciences Research Institute Publications}, pages 31--60.
  Springer, 1988.

\bibitem[McM94]{McM94}
Curtis McMullen.
\newblock {\em Complex Dynamics and Renormalization (AM-135)}.
\newblock Princeton University Press, 1994.

\bibitem[McM08]{McM08}
Curtis McMullen.
\newblock Thermodynamics, dimension and the {Weil-Petersson} metric.
\newblock {\em Invent. Math.}, 173(2):365--425, 2008.

\bibitem[McM09a]{McM09b}
Curtis McMullen.
\newblock A compactification of the space of expanding maps on the circle.
\newblock {\em Geom. Funct. Anal.}, 18(6):2101--2119, 2009.

\bibitem[McM09b]{McM09}
Curtis McMullen.
\newblock Ribbon $\mathbb{R}$-trees and holomorphic dynamics on the unit disk.
\newblock {\em J. Topol.}, 2(1):23--76, 2009.

\bibitem[McM10]{McM10}
Curtis McMullen.
\newblock Dynamics on the unit disk : short geodesics and simple cycles.
\newblock {\em Comment. Math. Helv.}, 85(4): 723--749, 2010.

\bibitem[MS84]{MorganShalen84}
John Morgan and Peter Shalen.
\newblock Valuations, trees, and degenerations of hyperbolic structures, {I}.
\newblock {\em Ann. of Math. (2)}, 120(3):401--476, 1984.

\bibitem[MS98]{McMS98}
Curtis McMullen and Dennis Sullivan.
\newblock Quasiconformal homeomorphisms and dynamics III. the Teichm\"uller space
  of a holomorphic dynamical system.
\newblock {\em Adv. Math.}, 135(2):351 -- 395, 1998.

\bibitem[NP20]{NP18}
Hongming Nie and Kevin Pilgrim.
\newblock Boundedness of hyperbolic components of newton maps.
\newblock {\em Israel J. Math.}, 238:831 -- 869, 2020.

\bibitem[Ota96]{O96}
Jean-Pierre Otal.
\newblock Le th\'eor\`eme d'hyperbolisation pour les vari\'et\'es fibr\'ees de
  dimension 3.
\newblock {\em Asterisque, Soci\'et\'e Math\'ematique de France}, (235), 1996.

\bibitem[Pau88]{Paulin88}
Fr{\'e}d{\'e}ric Paulin.
\newblock Topologie de {Gromov} {\'e}quivariante, structures hyperboliques et
  arbres r{\'e}els.
\newblock {\em Invent. Math.}, 94(1):53--80, 1988.

\bibitem[Pet11]{Petersen11}
Carsten Petersen.
\newblock Conformally natural extensions revisited.
\newblock arXiv:1102.1470, 2011.

\bibitem[PT00]{PilgrimTan00}
Kevin Pilgrim and Lei Tan.
\newblock Rational maps with disconnected Julia set.
\newblock {\em Asterisque, Soci\'et\'e Math\'ematique de France}, 261:349--384,
  01 2000.

\bibitem[QYY15]{QiuYangYin15}
Weiyuan Qiu, Fei Yang, and Yongcheng Yin.
\newblock Rational maps whose julia sets are cantor circles.
\newblock {\em Ergodic Theory Dynam. Systems}, 35(2):499--529, 2015.

\bibitem[RL03]{Rivera-Letelier03}
Juan Rivera-Letelier.
\newblock Espace hyperbolique $p$-adique et dynamique des fonctions
  rationnelles.
\newblock {\em Compos. Math.}, 138(2):199--231, 2003.

\bibitem[RL05]{Rivera-Letelier05}
Juan Rivera-Letelier.
\newblock Points p\'eriodiques des fonctions rationnelles dans l'espace
  hyperbolique $p$-adique.
\newblock {\em Comment. Math. Helv.}, 80(3):593--629, 2005.

\bibitem[Roe03]{Roe03}
John Roe.
\newblock {\em Lectures on Coarse Geometry}.
\newblock American Mathematical Society, Providence, 2003.

\bibitem[Shi87]{Shishikura87}
Mitsuhiro Shishikura.
\newblock On the quasiconformal surgery of rational functions.
\newblock {\em Ann. Sci. \'Ec. Norm. Sup\'er. (4)}, 20(1):1--29, 1987.

\bibitem[Shi89]{Shishikura89}
Mitshuhiro Shishikura.
\newblock Trees associated with the configuration of Herman rings.
\newblock {\em Ergodic Theory Dynam. Systems}, 9(3):543--560, 1989.

\bibitem[Sti93]{Stimson93}
James Stimson.
\newblock {\em Degree two rational maps with a periodic critical point.}
\newblock PhD thesis, University of Liverpool, 1993.

\end{thebibliography}
\end{document}